\newtheorem{thm}{Theorem}[section]
\newtheorem{lema}[thm]{Lemma}
\newtheorem{cor}[thm]{Corollary}
\newtheorem{prop}[thm]{Proposition}
\theoremstyle{definition}
\newtheorem*{rmk}{Remark}
\newcommand{\DF}[2]{{\displaystyle\frac{#1}{#2}}}
\newcommand{\s}[0]{{\textup{\textbf{s}}}}
\newcommand{\I}{\mathbb{I}}
\newcommand{\D}{\mathbb{D}}
\newcommand{\T}{\mathbb{T}}
\newcommand{\R}{\mathbb{R}}
\newcommand{\C}{\mathbb{C}}
\newcommand{\HR}{\mathbb{H}}
\newcommand{\EC}{\widehat{\mathbb{C}}}
\newcommand{\Crit}{\textup{Crit}}
\newcommand{\diam}{\textup{diam}}
\newcommand{\dist}{\textup{dist}}
\newcommand{\dm}{d_{\max}}
\newcommand{\Jac}{\textup{Jac}}
\makeatletter\@addtoreset{equation}{section}\makeatother 
\titleformat{\section}{\centering\normalsize}{\textsc{\thesection.}}{0.5em}{\textsc}
\titleformat{\subsection}[runin]{\normalsize}{\thesubsection.}{0.3em}{\textbf}
\begin{document}

\author{WEIYUAN QIU}
\address{Weiyuan Qiu, School of Mathematical Sciences, Fudan University, Shanghai, 200433, P. R. China}
\email{wyqiu@fudan.edu.cn}

\author{FEI YANG}
\address{Fei Yang, Department of Mathematics, Nanjing University, Nanjing, 210093, P. R. China}
\email{yangfei\rule[-2pt]{0.2cm}{0.5pt}math@163.com}

\author{YONGCHENG YIN}
\address{Yongcheng Yin, Department of Mathematics, Zhejiang University, Hangzhou, 310027, P. R. China}
\email{yin@zju.edu.cn}

\title{QUASISYMMETRIC GEOMETRY OF THE CANTOR CIRCLES AS THE JULIA SETS OF RATIONAL MAPS}

\begin{abstract}
We give three families of parabolic rational maps and show that every Cantor set of circles as the Julia set of a non-hyperbolic rational map must be quasisymmetrically equivalent to the Julia set of one map in these families for suitable parameters. Combining a result obtained before, we give a complete classification of the Cantor circles Julia sets in the sense of quasisymmetric equivalence. Moreover, we study the regularity of the components of the Cantor circles Julia sets and establish a sufficient and necessary condition when a component of a Cantor circles Julia set is a quasicircle.
\end{abstract}

\subjclass[2010]{Primary 37F45; Secondary 37F20, 37F10}

\keywords{Julia sets; Cantor circles; quasisymmetrically equivalent}

\date{\today}



\maketitle


\section{Introduction}

Let $X$ and $Y$ be two metric spaces. If there exist a homeomorphism $f:X\rightarrow Y$ and a distortion control function $\eta:[0,\infty)\rightarrow [0,\infty)$ which is also a homeomorphism such that
\begin{equation}\label{defi-qs}
\frac{|f(x)-f(a)|}{|f(x)-f(b)|}\leq \eta\left(\frac{|x-a|}{|x-b|}\right)
\end{equation}
for every triple of distinct points $x,a,b\in X$, then $X$ and $Y$ are called \emph{quasisymetrically equivalent} to each other and $f$ is called a \textit{quasisymmetric} map between $X$ and $Y$ \cite[\S 10]{Hei}. A basic and widely open question in quasiconformal geometry is to determine whether two given homeomorphic spaces are quasisymmetrically equivalent to each other.

The quasisymmetric classification problem arises also in the hyperbolic spaces and word hyperbolic groups in the sense of Gromov \cite{BP,Hai,Kle}. For the definition of (word) hyperbolic groups, see \cite{Gro}. The Kapovich-Kleiner conjecture in the geometry group theory predicts that if a Gromov hyperbolic group $G$ has a boundary at infinity $\partial_\infty G$ that is a Sierpi\'{n}ski carpet, then $G$ should admit an action on a convex subset of hyperbolic 3-space $\mathbb{H}^3$ with non-empty totally geodesic boundary where the action is isometric, properly discontinuous, and cocompact \cite{KK}. This conjecture is equivalent to the following statement: if $\partial_\infty G$ is a Sierpi\'{n}ski carpet, then $\partial_\infty G$ is quasisymmetrically equivalent to a round Sierpi\'{n}ski carpet in the Riemann sphere $\EC$ (Recall that a Sierpi\'{n}ski carpet is called \textit{round} if each boundary component of this carpet is a round circle). Recently, Bonk gave a sufficient condition on the Sierpi\'{n}ski carpets such that they can quasisymmetrically equivalent to some round Sierpi\'{n}ski carpets \cite{Bon}.

For other quasisymmetrically inequivalent fractal sets, one can see also \cite{Bou} for the the examples of quasisymmetrically inequivalent spaces modelled on the universal Menger curve. The study of the topological properties of the Julia sets of rational maps is an important problem in complex dynamics. However, for the study of the quasisymmetric properties on the Julia sets of rational maps, very few results appeared in the literatures. Recently, Bonk, Lyubich and Merenkov studied the quasisymmetries of the Julia sets which are Sierpi\'{n}ski carpets \cite{BLM}. In \cite[Theorem 1]{HP}, Ha\"{i}ssinsky and Pilgrim gave the first example of homeomorphic but quasisymmetrically inequivalent hyperbolic Julia sets. They proved that there exist quasisymmetrically inequivalent Cantor circles as the Julia sets of rational maps.

In this paper, we will give a more comprehensive study on the quasiconformal geometry of Cantor circles that arise as the Julia sets of rational maps, including the quasisymmetric classification under the dynamics and the study of the regularity of the components of the Cantor circles Julia sets, etc.

Recall that a subset of the Riemann sphere $\overline{\mathbb{C}}$ is called a \textit{Cantor set of circles} (or \textit{Cantor circles} in short) if it consists of uncountably many closed Jordan curves which is homeomorphic to $\mathcal{C}\times \mathbb{S}^1$, where $\mathcal{C}$ is the middle third Cantor set and $\mathbb{S}^1$ is the unit circle. McMullen is the first one who found a rational map with this type Julia set (see \cite[\S 7]{McM}). He proved that the Julia set of $f(z)=z^2+\lambda/z^3$ is a Cantor set of circles if $\lambda\neq 0$ is small enough.
Henceforth, many authors have focused on the family
\begin{equation}\label{McM-family}
f_{\lambda}(z)=z^k+\lambda/z^l,
\end{equation}
which is commonly referred as the \emph{McMullen maps}, where $k,l\geq 2$ (see \cite{DLU,Ste,QWY} and the references therein). It is known that when $1/k+1/l<1$ and $\lambda\neq 0$ is small enough, then the Julia set of $f_\lambda$ is a Cantor set of circles (see \cite[\S 7]{McM} and \cite[\S 3]{DLU}).

As an answer to the natural question that to find more rational maps whose Julia sets are Cantor circles, the following Theorem \ref{thm-QYY} was proved in \cite{QYY}.

\begin{thm}[{\cite{QYY}}]\label{thm-QYY}
For each $p\in\{0,1\}$ and $n\geq 2$ positive integers $d_1,\cdots,d_n$ satisfying $\sum_{i=1}^{n}(1/d_i)<1$, there exist suitable parameters $a_1,\cdots,a_{n-1}$ such the Julia set of
\begin{equation}\label{family-QYY}
f_{p,d_1,\cdots,d_n}(z)=z^{(-1)^{n-p} d_1}\prod_{i=1}^{n-1}(z^{d_i+d_{i+1}}-a_i^{d_i+d_{i+1}})^{(-1)^{n-i-p}}
\end{equation}
is a Cantor set of circles. Moreover, any rational map whose Julia set is a Cantor set of circles must be topologically conjugate to $f_{p,d_1,\cdots,d_n}$ for some $p$ and $d_1,\cdots,d_n$ on their corresponding Julia sets with suitable parameters $a_1,\cdots,a_{n-1}$.
\end{thm}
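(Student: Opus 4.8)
The statement has two halves---an existence/construction half and a classification (rigidity) half---and I would attack them separately, using the explicit algebraic form of $f_{p,d_1,\dots,d_n}$ as the bridge. The first thing to record is the global dynamical skeleton of the model map. Computing the leading behaviour at $0$ and $\infty$ shows that, according to the parities of $n$ and $p$, the set $\{0,\infty\}$ is forward invariant and carries the super-attracting dynamics: near $0$ the map behaves like $z^{\pm d_1}$ and near $\infty$ like $z^{\pm d_n}$, the signs (governed by $(-1)^{n-p}$ and $(-1)^{1-p}$) dictating whether $0,\infty$ are two fixed points, form a $2$-cycle, or consist of a fixed point together with a preimage of it. Counting with Riemann--Hurwitz, $\deg f_{p,d_1,\dots,d_n}=d_1+\cdots+d_n$, and apart from the local branching at $0$ and $\infty$ there are exactly $\sum_{i=1}^{n-1}(d_i+d_{i+1})$ free critical points, grouped into $n-1$ packets of sizes $d_i+d_{i+1}$ near the ``critical circles'' $|z|=|a_i|$---which is precisely why the $i$-th factor carries the exponent $d_i+d_{i+1}$. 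The model is thus reverse-engineered so that its zeros and poles sit at $0,\infty$ and on the $n-1$ circles $\{|z|=|a_i|\}$ with the correct multiplicities.

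For the existence half I would work in a separated-scales regime, choosing the moduli $|a_1|\ll|a_2|\ll\cdots\ll|a_{n-1}|$ in a fast geometric progression, the higher-$n$ analogue of McMullen's small-parameter hypothesis. In this regime the $n-1$ critical circles are widely spaced, every free critical point lies deep inside a Fatou component, and the critical orbits fall into the super-attracting basin of $\{0,\infty\}$. I would then isolate a fundamental round annulus $A$ between the innermost and outermost basin components and verify a Markov/covering structure: $f^{-1}(A)\cap A$ consists of finitely many essential subannuli, each mapped by $f$ as an unramified covering onto $A$ of the appropriate degree. The standard ``Cantor repeller'' argument then identifies $J(f)=\bigcap_{m\ge 0}f^{-m}(A)$ as a Cantor set of circles. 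The hypothesis $\sum_{i=1}^{n}1/d_i<1$ enters exactly here, as the modulus inequality guaranteeing that the preimage subannuli are essential and strictly thinner than $A$, so that the nested intersection is a genuine Cantor family of circles rather than degenerating or reconnecting; a quasiconformal surgery promoting the combinatorial model to a rational map is an alternative route to the same conclusion.

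For the classification half I would start from an arbitrary rational map $g$ with $J(g)$ a Cantor set of circles and read off the invariants $(p;d_1,\dots,d_n)$ from its dynamics. Topologically, $\EC\setminus J(g)$ consists of two disks---the innermost and outermost complementary regions---together with countably many annular gaps, and these are exactly the Fatou components of $g$. The nesting of the circles furnishes a total order that $g$ must respect, and I would combine this with Sullivan's no-wandering-domains theorem and the classification of periodic Fatou components to pin down the structure: the two extreme disks must be components of super-attracting basins, and the entire Fatou set must be the grand orbit of these two disks. Conjugating the resulting cycle to $\{0,\infty\}$ by a M\"obius map and matching the degrees of the circle coverings at the two ends and across the chain of annular levels recovers the integers $d_1,\dots,d_n$, while the permutation of the two ends yields the parity datum $p$. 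A component-by-component matching of B\"ottcher-type coordinates at the two ends, glued across the Markov partition, then produces a topological conjugacy between $g|_{J(g)}$ and $f_{p,d_1,\dots,d_n}|_{J(f_{p,d_1,\dots,d_n})}$ for suitably chosen $a_i$; note that only a conjugacy on the Julia sets is claimed, so the model may be taken hyperbolic even when $g$ is parabolic.

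The main obstacle is the rigidity in the classification half: passing from purely topological data to the normal form. The delicate point is to exclude rotation domains---in particular to rule out that any annular gap is a Herman ring, since such a ring is topologically indistinguishable from an ordinary annular gap yet would create an invariant structure with no counterpart in the model. One must instead show that every annular component iterates into one of the two disk basins, which forces the extreme components to be super-attracting and fixes the cycle $\{0,\infty\}$; one must then verify that $(p;d_1,\dots,d_n)$ is a \emph{complete} set of invariants, so that no further modulus is hidden in the dynamics on $J(g)$ and the explicit family already exhausts every topological conjugacy class. By contrast the existence half, once the separated-scales annulus estimates are in place, follows the now-standard McMullen template.
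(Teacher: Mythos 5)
First, a point of orientation: Theorem \ref{thm-QYY} is imported from \cite{QYY} and is not reproved in this paper; what the paper does contain are the parabolic analogues of both halves (the existence half reappears as the proof of Theorem \ref{parameter-parabolic-resta}, and the classification half as Theorem \ref{this-is-all-resta}). Measured against those arguments, your overall strategy is the right one and essentially the same as the authors': for existence, separated scales $|a_i|$, localization of the $\sum_{i=1}^{n-1}(d_i+d_{i+1})$ free critical points near the circles $|z|=|a_i|$, a fundamental annulus $V=\EC\setminus(U_0\cup U_\infty)$ whose preimage consists of $n$ essential subannuli $V_j$ with $f:V_j\to V$ an unramified covering of degree $d_j$, and $J(f)=\bigcap_m f^{-m}(V)$; for classification, Sullivan plus Riemann--Hurwitz to show the two simply connected components absorb all Fatou dynamics, Gr\"otzsch to force $\sum 1/d_i<1$, and a pullback of a boundary conjugacy through the Markov structure. (Two small points: the paper's convention is $|a_1|\gg\cdots\gg|a_{n-1}|$, the reverse of yours, and one still needs local connectivity of the Julia components --- the paper invokes \cite{PT} and \cite{TY} --- to know each component of the nested intersection is actually a Jordan curve.)

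There are, however, two concrete problems in your classification half. First, your assertion that ``the two extreme disks must be components of super-attracting basins'' is false, and the present paper exists precisely because it is false: since there are no critical points on $J(f)$, the periodic simply connected Fatou components are attracting \emph{or parabolic}, and the parabolic case genuinely occurs (the families $P$, $Q$, $R$ of Theorems \ref{parameter-parabolic}--\ref{parameter-parabolic-two-periodic}). For the purely topological conjugacy claimed in Theorem \ref{thm-QYY} this is repairable --- one conjugates the attracting/parabolic dynamics on $\partial D_\infty$ to the super-attracting model dynamics, exactly as the paper later does quasiconformally via polynomial-like straightening and Theorem \ref{thm_parabolic-conj-std} --- but your stated reasoning (``which forces the extreme components to be super-attracting'') does not go through and in fact contradicts your own subsequent remark that $g$ may be parabolic. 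Second, you correctly identify the exclusion of rotation domains as the delicate point but leave it unresolved; this is a genuine gap, since a priori a doubly connected periodic Fatou component is a Herman ring and a simply connected one could be a Siegel disk. The exclusion does follow from the structure of the Julia set (every Julia component is a Jordan curve, so $J(f)$ carries no critical points, and a critical-point count over the two simply connected components and the annular components mapping onto them exhausts all $2\deg(f)-2$ critical points, leaving none to sustain a rotation domain), but some such argument must actually be supplied rather than merely flagged.
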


From the topological point of view, all Cantor circles are the same since they are all topologically equivalent (homeomorphic) to the `standard' Cantor circles $\mathcal{C}\times \mathbb{S}^1$. Theorem \ref{thm-QYY} gives a complete topological classification of the Cantor circles as the Julia sets of rational maps under the dynamical behaviors. To obtain much richer structure of the set of all Cantor sets of circles, we can look at the Cantor circles Julia sets equipped with metric from the point of view of quasisymmetric geometry. Therefore, the question to give a classification of the Cantor circles as the Julia sets of rational maps in the sense of quasisymmetric equivalence arises naturally.

A rational map is \textit{hyperbolic} if all critical points are attracted by attracting periodic orbits. It is known that the maps in the family \eqref{family-QYY} are hyperbolic and the Julia components are quasicircles if the parameters $a_1,\cdots,a_{n-1}$ are chosen properly such the Julia set of $f_{p,d_1,\cdots,d_n}$ is a Cantor set of circles. However, there maybe exist cusps on the components of the Cantor circles Julia sets since the periodic Fatou components of the corresponding rational maps can be parabolic. According to \eqref{defi-qs}, an orientation preserving homeomorphism between a quasicircle and a Jordan curve with cusps cannot be quasisymmetric since the former has bounded \textit{turning} but the latter does not have \cite[Theorem 8.6]{LV}.

In this paper, we first give the specific expressions of three families of parabolic rational maps and prove that the Julia sets of them are Cantor circles if the parameters are chosen properly. Then, we prove that every Cantor circles as the Julia set of a non-hyperbolic rational map must be quasisymmetrically equivalent to the Julia set of one map in these three families (see Theorem \ref{this-is-all-resta}). Combining Theorem \ref{thm-QYY}, we give a complete quasisymmetric classification of the Cantor circles Julia sets under the dynamical behaviors (see Theorem \ref{this-is-all-intro} and Table \ref{Tab-classif}).

Let $d_1,\cdots,d_n$ be $n\geq 2$ positive integers such that $\sum_{i=1}^{n}(1/d_i)<1$. We define
\begin{equation}\label{family-para}
P_{d_1,\cdots,d_n}(z)=A_n\,\frac{d_1 z^{(-1)^{n-1} d_n}}{(d_1-1)z^{d_1}+1}
\prod_{i=1}^{n-1}(z^{d_i+d_{i+1}}-a_i^{d_i+d_{i+1}})^{(-1)^{i-1}}+B_n,
\end{equation}
where
\begin{equation*}\label{A-B-n}
 A_n=\frac{1}{1+C_n}\prod_{i=1}^{n-1}(1-a_i^{d_i+d_{i+1}})^{(-1)^i},B_n=\frac{C_n}{1+C_n}\text{~and~}
 C_n=\sum_{i=1}^{n-1}\frac{(-1)^{i-1}(d_i+d_{i+1})\,a_i^{d_i+d_{i+1}}}{1-a_i^{d_i+d_{i+1}}}
\end{equation*}
are three numbers depending only on the $n-1$ small \textit{complex} numbers $a_1,\cdots,a_{n-1}$ which satisfies $1\gg |a_1|\gg\cdots\gg |a_{n-1}|>0$. Here the notation `$a\gg b>0$' means `$a/b$ is a very big number. The choice of $A_n, B_n$ and $C_n$ here can guarantee that $1$ is a parabolic fixed point of $P_{d_1,\cdots,d_n}$ with multiplier $1$ (see Lemma \ref{para-fixed}(1)).

\begin{thm}\label{parameter-parabolic}
For each given $n\geq 2$ positive integers $d_1,\cdots,d_n$ satisfying $\sum_{i=1}^{n}(1/d_i)<1$, there exist suitable parameters $a_1,\cdots,a_{n-1}$, such that $P_{d_1,\cdots,d_n}$ has exactly one parabolic fixed point and the Julia set of $P_{d_1,\cdots,d_n}$ is a Cantor set of circles.
\end{thm}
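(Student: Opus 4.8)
The plan is to adapt, step by step, the argument that establishes the Cantor circles structure for the hyperbolic family in Theorem~\ref{thm-QYY}, the essential new feature being that the superattracting basin there is replaced by the parabolic basin of the fixed point $1$. First I would record the elementary dynamics of $P=P_{d_1,\cdots,d_n}$. A direct count gives $\deg P=\sum_{i=1}^n d_i$; the zeros and poles are concentrated at $0$, at $\infty$, at the roots of $(d_1-1)z^{d_1}+1$, and at the $(d_i+d_{i+1})$-th roots of $a_i^{d_i+d_{i+1}}$, so the Riemann--Hurwitz formula sorts the critical points into \emph{structural} ones, forced by these exponents, and finitely many \emph{free} ones. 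The normalization of $A_n,B_n,C_n$ was chosen precisely so that $P(1)=1$ and, by Lemma~\ref{para-fixed}(1), $P'(1)=1$; I would also read off from the local expansion at $z=1$ the number of attracting petals, which governs the shape of the immediate parabolic basin.

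The core of the proof is an annular mapping scheme. For parameters obeying $1\gg|a_1|\gg\cdots\gg|a_{n-1}|>0$, the round circles of radii comparable to the $|a_i|$ cut the sphere into $n$ essential round annuli $A_1,\cdots,A_n$, together with an inner region around $0$ and an outer region containing the parabolic fixed point $1$. On each $A_i$ a single factor of $P$ dominates and is a pure power, the remaining factors being uniformly close to nonzero constants because the ratios $|a_i|/|a_{i+1}|$ are enormous; hence $P|_{A_i}$ is a small perturbation of a power map, and $P$ carries each $A_i$ onto a definite target region as a covering of the degree prescribed by the exponents. This reproduces exactly the combinatorial mapping scheme of the hyperbolic family, now with the parabolic basin of $1$ occupying the position formerly held by the superattracting basin; the separation hypothesis on the $|a_i|$ is what makes this near-power-map description uniform in the moduli.

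Next I would show that, after shrinking the $|a_i|$ in the prescribed order, each free critical value falls into the parabolic basin of $1$ or one of its preimages, so that the whole post-critical set lies in the Fatou set; at the same time, a multiplier computation shows that none of the other fixed points is parabolic, giving the uniqueness of the parabolic fixed point. Granting this, the Fatou set is exactly the basin of the parabolic cycle together with its preimages, and the Julia set is the nested intersection of the preimages of $A_1\cup\cdots\cup A_n$ under the mapping scheme; the combinatorics of that scheme forces this intersection to carry the topological structure of $\mathcal{C}\times\mathbb{S}^1$.

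The one genuinely new obstacle is the expansion estimate needed to promote this combinatorial nesting to a genuine Cantor set of circles. For the hyperbolic family this follows from uniform expansion in the orbifold metric, but the parabolic fixed point destroys uniform hyperbolicity. I would circumvent this by passing to the hyperbolic metric of $\EC$ minus the postcritical set: with respect to this metric $P$ is strictly expanding outside every horodisc at $1$, while the hypothesis $\sum_{i=1}^n 1/d_i<1$ makes the orbifold underlying the scheme hyperbolic. Away from the parabolic point this expansion drives the moduli of the nested annuli to zero, so that each nested family of essential annuli closes up on a single Jordan curve separating $0$ from $\infty$, while the Fatou annuli between consecutive Julia curves keep moduli bounded below. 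A Fatou-coordinate analysis inside the petals at $z=1$ handles the finitely many annuli that approach the parabolic point, where the slow parabolic drift is offset by the petal geometry. Patching the two regimes shows that distinct nested families close up on disjoint curves, so that the Julia set of $P_{d_1,\cdots,d_n}$ is precisely a Cantor set of circles.
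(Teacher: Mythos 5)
Your outline matches the paper's architecture for most of the argument: normalize $A_n,B_n,C_n$ so that $1$ is a parabolic fixed point with multiplier $1$ (Lemma \ref{para-fixed}), build an annular mapping scheme in which thin annuli near the circles $|z|=|a_i|$ carry the nontrivial critical points and are mapped alternately into a tiny disk about $0$ and into the parabolic basin at $\infty$ (Lemmas \ref{crit-close-Parameter} and \ref{lemma-want}), count degrees via Riemann--Hurwitz, and read off the Julia set as a nested intersection coded by $\Sigma_n$. The paper obtains the crucial containment $P_n(\overline{U}_\infty)\subset U_\infty\cup\{1\}$ by viewing $P_n$ as a perturbation of $h_{d_1}$ and proving a stability lemma for immediate parabolic basins under locally uniform convergence (Lemmas \ref{P_n_limit}--\ref{lema-h-n-h-mn}, Corollary \ref{key-lemma-complex}); your ``near-power-map on each annulus'' picture is the same idea. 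Where you genuinely diverge is the final upgrade from combinatorial nesting to Jordan curves: the paper runs no expansion argument, instead citing local connectivity for geometrically finite maps \cite{TY}, the Pilgrim--Tan analysis of Julia components \cite{PT} for itineraries staying in the central annulus, and Douady--Hubbard straightening of the polynomial-like restrictions near $\infty$ and $0$ to identify $\partial D_\infty$ and $\partial D_0$ with Julia sets of explicit parabolic or power polynomials. Your plan — expansion in the hyperbolic metric of the complement of the postcritical set plus Fatou coordinates in the petals — amounts to reproving those cited results; it is viable but is the most delicate and least detailed part of your write-up, and ``the finitely many annuli that approach the parabolic point'' understates the issue, since every itinerary containing arbitrarily long blocks of the symbol $1$ accumulates on preimages of the cusp locus. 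Two small corrections: for odd $n$ the component $D_0$ is an attracting basin, not a preimage of the parabolic basin, so not every free critical value enters the parabolic basin (the correct claim is only that the postcritical set lies in the Fatou set); and uniqueness of the parabolic fixed point needs no multiplier computation at other fixed points, since the mapping scheme shows $D_\infty$ is the only periodic parabolic Fatou component.
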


Note that the parameters $a_1,\cdots,a_{n-1}$ are all very small since $1\gg |a_1|\gg\cdots\gg |a_{n-1}|>0$. This means that $C_n\approx 0$, $B_n\approx 0$ and $A_n\approx 1$ for fixed $d_1,\cdots,d_n$. For any $n\geq 2$, the rational map $P_{d_1,\cdots,d_n}$ can be seen as a small perturbation of the parabolic rational map
\begin{equation*}
h_{d_1}(z) = \frac{d_1 z^{d_1}}{(d_1-1)z^{d_1}+1}=\frac{1}{z}\circ \frac{z^{d_1}+d_1-1}{d_1}\circ\frac{1}{z}.
\end{equation*}
The map $h_{d_1}$ has a fixed parabolic basin containing $\infty$ whose boundary contains the parabolic fixed point $1$ and its Julia set is a Jordan curve with infinitely many cusps (Lemmas \ref{P_n_limit} and \ref{lema-h-n-h-mn}). If $n\geq 2$ is even, the Fatou component of $P_{d_1,\cdots, d_n}$ containing the origin is the preimage of the parabolic Fatou component containing the infinity. If $n\geq 3$ is odd, then the Fatou component of $P_{d_1,\cdots, d_n}$ containing the origin is an attracting Fatou component (see \S \ref{sec-para-1-fixed} for details).

For example, let $n=2$, $d_1=d_2=3$ and $a_1=0.25$ (As stated above, the parameter $a_1$ is required small enough. We choose $a_1=0.25$ here for generating more clear pictures of Julia sets). By a straightforward calculation, we have $A_2\approx 0.99878079$ and $B_2\approx 0.00146306$. The Julia set of
\begin{equation*}
P_{3,3}(z)=\frac{3 A_2 (z^6-a_1^6)}{z^3(2z^3+1)}+B_2,
\end{equation*}
is a Cantor set of circles and $P_{3,3}$ has a unique periodic Fatou component which is parabolic and contains $\infty$ (see left picture in Figure \ref{Fig_Cantor-cicle} and compare the left picture in Figure \ref{Fig_Cantor-cicle_local}).

Let $n=3$, $d_1=d_2=d_3=4$, $a_1=0.1$ and $a_2=0.01$. Then $A_3\approx 1-7\times 10^{-8}$ and $B_3\approx 8\times 10^{-8}$. The Julia set of
\begin{equation*}
P_{4,4,4}(z)=\frac{4 A_3 z^4(z^8-a_1^8)}{(3z^4+1)(z^8-a_2^8)}+B_3,
\end{equation*}
is a Cantor set of circles. Moreover, $P_{4,4,4}$ has an unbounded parabolic fixed Fatou component and a bounded attracting Fatou component containing the origin (see right picture in Figure \ref{Fig_Cantor-cicle} and compare the right picture in Figure \ref{Fig_Cantor-cicle_local}).

\begin{figure}[!htpb]
  \setlength{\unitlength}{1mm}
  \centering
  \includegraphics[width=70mm]{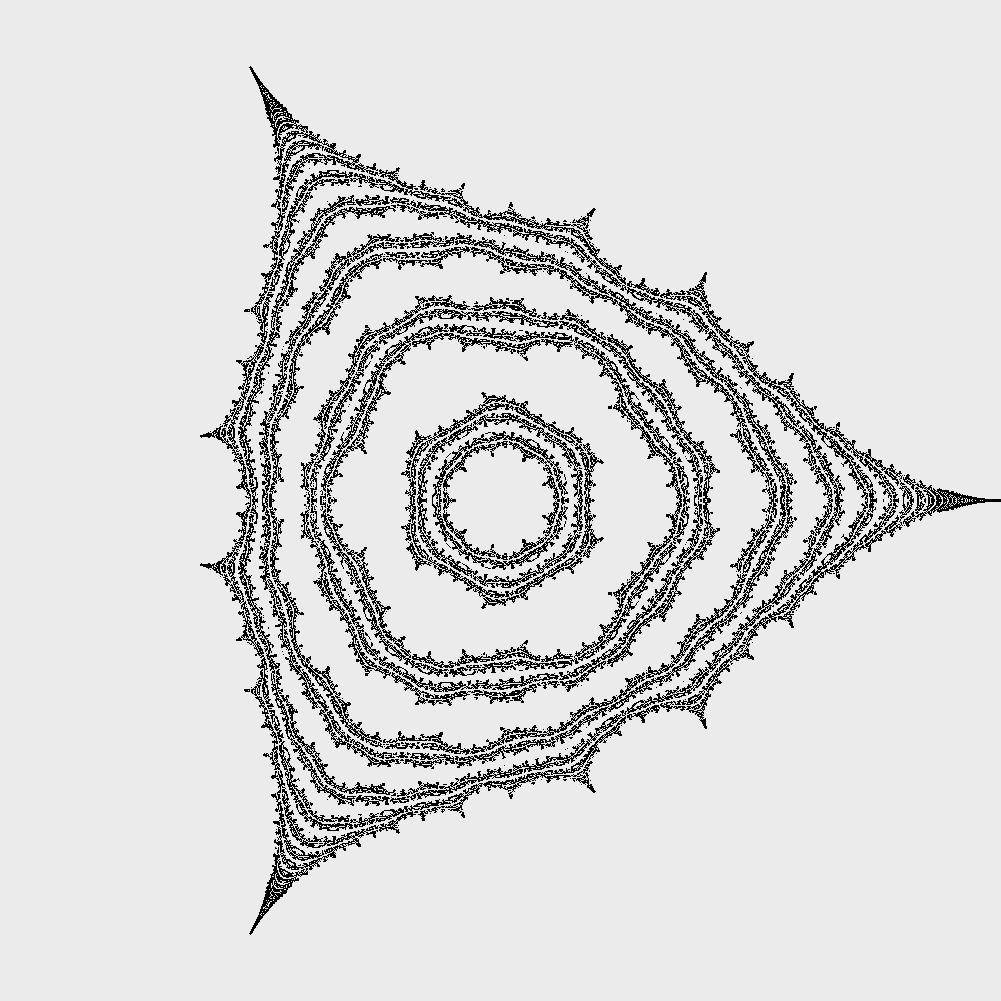}
  \includegraphics[width=70mm]{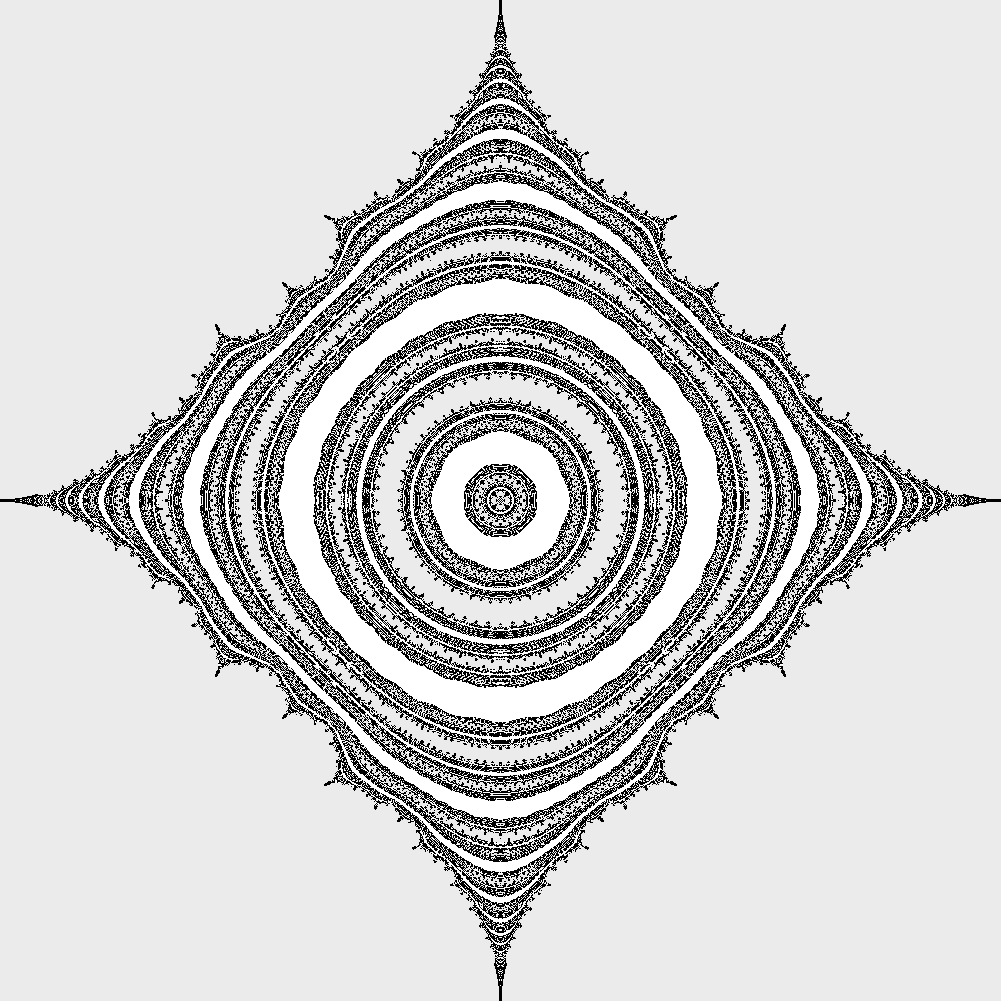}
  \caption{The Julia sets of $P_{3,3}$ and $P_{4,4,4}$ with suitable parameters (from left to right). The gray parts in both pictures denote the Fatou components which are eventually iterated to the unbounded  parabolic fixed Fatou components while the white parts in the right picture denote the Fatou components which are eventually iterated to the attracting Fatou component containing the origin (compare Figure \ref{Fig_Cantor-cicle_local}). Figure ranges: both are $[-1,1]^2$.}
  \label{Fig_Cantor-cicle}
\end{figure}

Note that $P_{d_1,\cdots,d_n}$ with odd $n\geq 3$ has a bounded attracting Fatou component containing the origin, we want to get a new map from the original one such that the attracting  Fatou component becomes parabolic. Define
\begin{equation}\label{family-para-fixed-intro}
Q_{d_1,\cdots,d_n}(z)=\frac{d_1 z^{d_n}}{(d_1-1)X_n z^{d_1}+Y_n z+Z_n}
\prod_{i=1}^{n-1}(z^{d_i+d_{i+1}}-b_i^{d_i+d_{i+1}})^{(-1)^{i-1}}+W_n,
\end{equation}
where $b_1,\cdots,b_{n-1}$ are $n-1$ small \textit{positive real} numbers satisfying $1\gg b_1\gg\cdots\gg b_{n-1}>0$ and $X_n\approx 1$, $Y_n\approx 0$, $Z_n\approx 1$, $W_n\approx 0$ are four real numbers depending only on $b_1,\cdots,b_{n-1}$ (see Lemmas \ref{lema-para-fixed-II} and \ref{zero-solution}).

\begin{thm}\label{parameter-parabolic-two-fixed}
For each odd $n\geq 3$ and positive integers $d_1,\cdots,d_n$ satisfying $\sum_{i=1}^{n}(1/d_i)<1$, there exist suitable parameters $b_1,\cdots,b_{n-1}$, such that $Q_{d_1,\cdots,d_n}$ has two fixed simply connected parabolic Fatou components and the Julia set of $Q_{d_1,\cdots,d_n}$ is a Cantor set of circles.
\end{thm}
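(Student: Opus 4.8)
The plan is to treat $Q_{d_1,\cdots,d_n}$ as a small perturbation of the parabolic model $h_{d_1}$, exactly in the spirit of the proof of Theorem \ref{parameter-parabolic}, but now to exploit the four extra real parameters $X_n,Y_n,Z_n,W_n$ in order to force the \emph{attracting} Fatou component at the origin (which is what occurs for odd $n$ in the $P$-family) to become parabolic as well. Since $b_1,\cdots,b_{n-1}$ are real and $X_n,Y_n,Z_n,W_n$ are real, $Q_{d_1,\cdots,d_n}$ has real coefficients and commutes with complex conjugation, so the two candidate parabolic fixed points may be sought on the real axis. As $b_i\to 0$ and $(X_n,Y_n,Z_n,W_n)\to(1,0,1,0)$ the map degenerates to $h_{d_1}$, whose Julia set is a Jordan curve with cusps (Lemmas \ref{P_n_limit} and \ref{lema-h-n-h-mn}) and for which $z=1$ is a parabolic fixed point of multiplier $1$ bounding the unbounded basin, while the origin is superattracting. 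The whole content of the construction is that, for $b_i\neq 0$, the parameters can be retuned so that this superattracting origin is replaced by a genuine second parabolic fixed point.

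First I would install the two parabolic fixed points. Preserving $z=1$ as a fixed point of multiplier $1$ imposes the analog of the normalization used for the $P$-family (compare Lemma \ref{para-fixed}(1)), while demanding a second real fixed point $z_0$ near the origin with $Q'_{d_1,\cdots,d_n}(z_0)=1$ imposes two further conditions. This is precisely what Lemmas \ref{lema-para-fixed-II} and \ref{zero-solution} are meant to supply: for each sufficiently small $(b_1,\cdots,b_{n-1})$ with $1\gg b_1\gg\cdots\gg b_{n-1}>0$ there is a solution $(X_n,Y_n,Z_n,W_n)$ close to $(1,0,1,0)$ making both $z=1$ and $z_0$ parabolic fixed points of multiplier $1$. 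I would then check that each is a \emph{simple} parabolic point (a single attracting petal), so that by the Leau--Fatou flower theorem each bounds exactly one immediate parabolic basin; one basin is the unbounded component containing $\infty$ and the other is the bounded component containing the origin. A short argument as in the $P$-family case then shows that both immediate basins are fixed and simply connected, the critical origin being the critical point captured by the second basin.

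Next I would control the remaining critical points and erect the annular covering structure, following the scheme of Theorem \ref{parameter-parabolic}. Counting critical points by Riemann--Hurwitz and tracking them under the perturbation $b_i\to 0$, one shows that every critical orbit is eventually captured by one of the two parabolic basins, so the Fatou set consists only of the eventual preimages of these two fixed components. The hypothesis $\sum_{i=1}^n 1/d_i<1$ then furnishes, as in McMullen's analysis and the $P$-family, a fixed round annulus $A$ whose $Q_{d_1,\cdots,d_n}$-preimage inside $A$ is a disjoint union of essential sub-annuli on which the map is a proper covering of degrees governed by the $d_i$. Iterating this covering and using that the moduli of the nested preimage annuli grow geometrically, the set of points that never enter either parabolic basin, namely the Julia set, is exhibited as homeomorphic to $\mathcal{C}\times\mathbb{S}^1$, i.e. a Cantor set of circles, with cusps appearing on those component circles lying in the grand orbit of the two parabolic fixed points.

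The main obstacle I anticipate is the simultaneous, non-degenerate realization of two parabolic fixed points. One must verify both that the parameter system can actually be solved (the content of Lemma \ref{zero-solution}) and that the second fixed point $z_0$ neither collides with $z=1$ nor with a critical point as $b_i\to 0$, since such a collision would destroy either the flower structure or the annular covering. The delicate quantitative point is checking that the multiplier at $z_0$ is \emph{exactly} $1$ rather than merely close to $1$, uniformly as the $b_i$ shrink; once this and the disjointness of the two immediate basins are secured, the Cantor-circles conclusion follows from the same modulus-expansion argument already used for Theorem \ref{parameter-parabolic}.
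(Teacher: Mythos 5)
Your overall architecture matches the paper's: normalize so that $1$ and a second real point $z_0$ are both fixed with multiplier $1$ (this is exactly Lemmas \ref{lema-para-fixed-II} and \ref{zero-solution}, solved by Newton's method plus Brouwer), then rebuild the annular covering structure of Lemma \ref{lemma-want} and run the argument of Theorem \ref{parameter-parabolic-resta}. However, there is one genuine gap in the step where you claim that ``tracking the critical points under the perturbation $b_i\to 0$'' shows every critical orbit is captured by one of the two parabolic basins. Viewing $Q_{d_1,\cdots,d_n}$ only as a perturbation of $h_{d_1}$ gives you control of the basin of the fixed point $1$ (via Lemma \ref{lema-approx}, as in Corollary \ref{key-lemma-complex}), but it gives you \emph{nothing} about the immediate basin of the second parabolic point $z_0=s^\nu$: that point and its basin collapse to the origin in the limit $s\to 0$, so the limit map $h_{d_1}$ carries no information about it. What you actually need is a lower bound on the size of that basin at the scale $s^\nu$, because the critical values $Q_{d_1,\cdots,d_n}(\overline{A}_i)$ for odd $i$ sit at distance $\asymp s$ from the origin and must be shown to lie \emph{inside} the immediate basin of $s^\nu$ (this works only because $\nu<1$, so $s\ll s^\nu$).

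The paper closes this gap with a second perturbation picture: conjugating by $\varphi(z)=s^\nu/z$, the map $\widehat{Q}_n=\varphi\circ Q_{d_1,\cdots,d_n}\circ\varphi^{-1}$ converges locally uniformly on $\EC\setminus\{0\}$ to $h_{d_n}$ (Lemma \ref{Q_n_limit}), whose immediate basin of $1$ contains $\EC\setminus\overline{\D}$ by Lemma \ref{lema-h-n-h-mn}; applying Lemma \ref{lema-approx} in these rescaled coordinates shows the immediate basin of $s^\nu$ essentially contains $\D(0,s^\nu)$, e.g.\ the disk $\D(s^\nu/4,3s^\nu/4)$ used as $U_0$. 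Without this rescaling (or some equivalent device), your plan cannot verify condition (2) of the Lemma~\ref{lemma-want} analogue for odd $i$, and hence cannot establish the Cantor circle structure. Relatedly, the exact choice of the constant $\tau$ in \eqref{range-s-fixed} and the exponent $\nu$ in \eqref{my-nu} is what makes the second fixed point land precisely at $s^\nu$ and makes the rescaled limit come out as $h_{d_n}$; these are not free choices and your proposal does not indicate where they would come from.
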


Finally, we construct a family of parabolic rational maps whose Julia set are Cantor circles but their two simply connected Fatou components are periodic with period two. Define
\begin{equation}\label{family-para-R_n-intro}
R_{d_1,\cdots,d_n}(z)=\frac{S_n}{z^{d_n}} \prod_{i=1}^{n-1}(z^{d_i+d_{i+1}}-c_i^{d_i+d_{i+1}})^{(-1)^i}+T_n,
\end{equation}
where $c_1,\cdots,c_{n-1}$ are $n-1$ small \textit{positive real} numbers satisfying $1\gg c_1\gg\cdots\gg c_{n-1}>0$ and $S_n\approx 0$, $T_n\approx 0$ are numbers depending only on $c_1,\cdots,c_{n-1}$ (see Lemma \ref{lema-para-periodic-2}).

\begin{thm}\label{parameter-parabolic-two-periodic}
For each odd $n\geq 3$ and positive integers $d_1,\cdots,d_n$ satisfying $\sum_{i=1}^{n}(1/d_i)<1$, there exist suitable parameters $c_1,\cdots,c_{n-1}$, such that $R_{d_1,\cdots,d_n}$ has two simply connected periodic parabolic Fatou components with period two and the Julia set of $R_{d_1,\cdots,d_n}$ is a Cantor set of circles.
\end{thm}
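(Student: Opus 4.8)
The plan is to treat $R_{d_1,\cdots,d_n}$ exactly as in the proofs of Theorems \ref{parameter-parabolic} and \ref{parameter-parabolic-two-fixed}, as a controlled perturbation along the hierarchy $1\gg c_1\gg\cdots\gg c_{n-1}>0$. First I would record the elementary structure. Replacing each factor by $z^{d_i+d_{i+1}}$ shows that the net order of $R_{d_1,\cdots,d_n}$ at $\infty$ equals $-d_n+\sum_{i=1}^{n-1}(-1)^i(d_i+d_{i+1})=-d_1$ and that the map has a pole of order $d_n$ at $0$; hence $\deg R_{d_1,\cdots,d_n}=\sum_{i=1}^n d_i$, the origin is a critical point of local degree $d_n$ with $R(0)=\infty$, and $\infty$ is a critical point of local degree $d_1$ with $R(\infty)=T_n$. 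Since $T_n\approx 0$ lies in the Fatou component $U_0$ containing $0$ while $R(0)=\infty\in U_\infty$, the two components obey $R(U_0)=U_\infty$ and $R(U_\infty)=U_0$ and therefore form a cycle of period two. I would then invoke Lemma \ref{lema-para-periodic-2}, whose role is precisely to fix $S_n,T_n$ so that a period-two cycle $\{p,q\}$ with $(R^2)'(p)=1$ appears on $\partial U_0\cup\partial U_\infty$, making $U_0$ and $U_\infty$ simply connected parabolic Fatou components of period two.

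The heart of the matter is to show the Julia set is a Cantor set of circles, for which I would reproduce the annulus construction used for the McMullen maps and for \eqref{family-QYY} in \cite{McM,DLU,QYY}. Write $M=\EC\setminus(U_0\cup U_\infty)$ for the annular region between the two periodic disks. The relations $R(U_0)=U_\infty$, $R(U_\infty)=U_0$ give $R^{-1}(M)\subset M$, and the $n-1$ root circles $\{|z|=c_i\}$ of the factors split $R^{-1}(M)$ into $n$ disjoint essential sub-annuli $A_1,\dots,A_n$ on which $R$ is a covering of $M$ of degrees $d_1,\dots,d_n$, respectively. The $2\sum_{i=1}^n d_i-d_1-d_n$ finite critical points (the solutions of $R'=0$) are governed by the scales $c_1,\dots,c_{n-1}$ and, in the regime $1\gg c_1\gg\cdots\gg c_{n-1}$, are carried into $U_0\cup U_\infty$, so that no critical point lies on the Julia set. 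The covering relation forces the modulus of $A_i$ to be $1/d_i$ times that of $M$, and the hypothesis $\sum_{i=1}^n 1/d_i<1$ means the $A_i$ occupy strictly less than the full modulus of $M$; iterating, the Julia set, realized as $\bigcap_{k\ge 0}R^{-k}(M)$, is a nested intersection of essential annuli and hence a Cantor set of circles. The two periodic components stay simply connected, the strictly preperiodic components are annuli, and the parabolic cycle only adds cusps to the boundary circles.

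I expect the main obstacle to be the absence of uniform expansion near the parabolic cycle, which prevents a verbatim use of the hyperbolic ``blow-up of annuli'' estimates. I would handle this as in Theorems \ref{parameter-parabolic} and \ref{parameter-parabolic-two-fixed}: localize at $\{p,q\}$, use the attracting and repelling petal structure of $R^2$ to replace the missing contraction, and check that orbits entering an attracting petal converge to the cycle, hence lie in the Fatou set, while orbits in the complementary region are genuinely expanded, so that the modulus estimates persist. A second delicate point, and the place where the odd parity of $n$ is used, is confirming that $U_0$ and $U_\infty$ are genuinely interchanged rather than fixed as for $Q_{d_1,\cdots,d_n}$; this is exactly the content of $R(0)=\infty$ and $R(\infty)=T_n\in U_0$ together with the multiplier-one condition on $R^2$ supplied by Lemma \ref{lema-para-periodic-2}. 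Finally, the perturbative control along $1\gg c_1\gg\cdots\gg c_{n-1}$ upgrades the limiting configuration to the asserted statement for a suitable choice of $c_1,\dots,c_{n-1}$.
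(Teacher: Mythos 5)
Your overall strategy is the same as the paper's: produce the parabolic $2$-cycle via Lemma \ref{lema-para-periodic-2}, locate the free critical points at the scales $c_1,\dots,c_{n-1}$, and run the nested-annulus covering argument on $\EC\setminus(U_0\cup U_\infty)$. The structural bookkeeping you do (degree count, $R(0)=\infty$, $R(\infty)=T_n$, the period-two exchange of $U_0$ and $U_\infty$) is correct. The genuine gap is at the step you dispatch with ``$T_n\approx 0$ lies in $U_0$'' and ``the perturbative control upgrades the limiting configuration.'' The limiting configuration is degenerate in a way that defeats a soft perturbation argument: the bounded periodic component $U_0$ collapses to the point $0$ as $s\to 0^+$, its diameter being comparable to $s^\nu$ (the second parabolic point is $z_0\approx d_1d_n\mu s^\nu\in\partial U_0$). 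So ``$T_n\approx 0$'' proves nothing; what must be established is that the immediate parabolic basin of $z_0$ contains essentially the whole disk $\D(0,|z_0|)$ of radius $\asymp s^\nu$, and that $T_n\approx(d_1d_n-1)\mu s^\nu$ together with all critical values coming from the even-indexed annuli land inside that shrinking disk, while those from the odd-indexed annuli escape to the basin of $1$. The paper does this by (i) renormalizing with $\psi(z)=z_0/z$ and proving $\widehat{R}_n^{\circ 2}=\psi\circ R_n^{\circ 2}\circ\psi^{-1}\to h_{d_1d_n}$ so that Lemma \ref{lema-approx} pins down the basin of $z_0$ at the correct scale (Lemma \ref{R_n_limit}), and (ii) the two-sided estimates $|R_n(z)|\succeq s^{\nu-1}$ on $\overline{A}_i$ for odd $i$ versus $|R_n(z)|<(d_1d_n-\tfrac{1}{2})\mu s^\nu<|z_0|$ for even $i$, which hinge on the exact normalizations $S_n\asymp\mu s^\nu$ with $\mu=(d_1 d_n)^{-d_n/(d_n-1)}\dm^{2(d_n-d_1)/(d_1(d_n-1))}$ and $\nu=\tfrac{d_n}{d_n-1}\sum_{i=1}^{n-1}(1/d_i)$. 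This scale-matching is precisely what the paper flags as the hardest point of all four constructions, and without it your claim that $R^{-1}(M)$ decomposes into $n$ essential annuli covering $M$ is unsupported: one cannot even assert that $U_0$ is a Fatou component of the stated type, let alone that the critical orbits enter it.

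Two secondary points. First, Lemma \ref{lema-para-periodic-2} only produces a period-two orbit $\{1,z_0\}$ with $(R^{\circ 2})'=1$; that this orbit sits on $\partial U_0\cup\partial U_\infty$ and that $U_0,U_\infty$ are its immediate basins is a conclusion of the basin-size analysis above, not an input. Second, your treatment of the Jordan-curve property of the individual components (``petal structure replaces the missing contraction'') is too vague to close the argument near the parabolic cycle; the paper instead quotes the Pilgrim--Tan results for geometrically finite maps for the wandering components and straightens the two polynomial-like restrictions onto the parabolic polynomials $g_{d_1}$ and $g_{d_1,d_n}$, whose Julia sets are known Jordan curves, to handle the components in the grand orbit of $\partial U_0\cup\partial U_\infty$.
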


Actually, $R_{d_1,\cdots,d_n}$ can be regarded as a rational map which comes from $f_{0,d_1,\cdots,d_n}$ for odd $n$ in some sense since they are actually topologically conjugate on their Julia sets for suitable parameters (see Table \ref{Tab-classif}). We will give the specific examples of $Q_{d_1,\cdots,d_n}$ and $R_{d_1,\cdots,d_n}$ with Cantor circles Julia sets in \S\ref{sec-2-para-fixed} and \S\ref{sec-para-period=2} respectively (see also Figures \ref{Fig_C-C-F} and \ref{Fig_C-C-F-periodic}).

According to Cui's result \cite{Cui}, each attracting Fatou component of a geometrically finite rational map $f$ can be `replaced' by a parabolic basin of a new map $g$ and the two rational maps $f$ and $g$ are topologically conjugate on their corresponding Julia sets. Therefore, by Theorem \ref{thm-QYY}, one can firmly believe that there exist the four kinds of parabolic rational maps with Cantor circles Julia sets (see Table \ref{Tab-classif}). However, the main work in the first part of this paper is to find their specific expressions.

Let $f$ be a rational map whose Julia set $J(f)$ is a Cantor set of circles. Then $f$ has exactly two simply connected Fatou components $D_0$ and $D_\infty$. By Riemann-Hurwitz's formula and Sullivan's eventually periodic theorem, we have $f(D_0\cup D_\infty)\subset D_0\cup D_\infty$.
Moreover, there exists no critical points in $J(f)$ since each Julia component is a Jordan closed curve (see \cite[Lemma 3.1]{QYY}). This means that each periodic Fatou component of $f$ must be attracting or parabolic. In fact, we have following quasisymmetric uniformization theorem of the Cantor circles as the Julia sets of rational maps.

\begin{thm}\label{this-is-all-intro}
Let $f$ be a rational map whose Julia set is a Cantor set of circles. Then there exist $n\geq 2$ positive integers $d_1,\cdots,d_n$ satisfying $\sum_{i=1}^{n}(1/d_i)<1$ such that $f$ is conjugate to either $f_{p,d_1,\cdots,d_n}$, $P_{d_1,\cdots,d_n}$, $Q_{d_1,\cdots,d_n}$ or $R_{d_1,\cdots,d_n}$ on their corresponding Julia sets for suitable parameters by a quasiconformal mapping.
\end{thm}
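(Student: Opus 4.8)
The plan is to reduce $f$ to the already available topological classification and then promote the resulting conjugacy to a quasiconformal one, the selection among the four families being dictated by the dynamical type of the periodic Fatou components. First I would invoke Theorem \ref{thm-QYY}: since $J(f)$ is a Cantor set of circles, $f$ is topologically conjugate on its Julia set to $f_{p,d_1,\cdots,d_n}$ for some $p\in\{0,1\}$ and positive integers $d_1,\cdots,d_n$ with $\sum_{i=1}^{n}1/d_i<1$. This pins down the combinatorial data $(p;d_1,\cdots,d_n)$ -- the number of nested annular layers and the local degrees -- and, crucially, records whether the two simply connected Fatou components $D_0,D_\infty$ form a pair of fixed components, a period-two cycle, or a trap-door configuration in which one is the preimage of the other.

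Next I would read off the correct model. By \cite[Lemma 3.1]{QYY} there are no critical points in $J(f)$, so $f$ is geometrically finite and every periodic Fatou component is attracting or parabolic. I would then branch on the period structure of $\{D_0,D_\infty\}$ from Step~1 together with the attracting/parabolic type of the periodic simply connected components: if all such periodic components are attracting, $f$ is hyperbolic and the model is $f_{p,d_1,\cdots,d_n}$; if exactly one parabolic simply connected cycle occurs (the other periodic component, if present, being attracting), the model is $P_{d_1,\cdots,d_n}$; if both fixed components are parabolic, the model is $Q_{d_1,\cdots,d_n}$; and if $D_0,D_\infty$ form a parabolic period-two cycle, the model is $R_{d_1,\cdots,d_n}$. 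Theorems \ref{parameter-parabolic}--\ref{parameter-parabolic-two-periodic} guarantee that a model of the required type carrying exactly the data $(d_1,\cdots,d_n)$ and the matching Fatou dynamics exists, while Cui's deformation \cite{Cui} is what ensures each attracting configuration has a parabolic counterpart with the same dynamics on the Julia set.

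The remaining and principal task is to upgrade the topological conjugacy $h$ to a quasiconformal conjugacy with the chosen model $g$. In the hyperbolic case I would use expansion on the Julia set: both maps have uniform quasicircle Julia components with bounded geometry, so $h$ is quasisymmetric along $J(f)$ and extends to a global quasiconformal conjugacy. In the non-hyperbolic cases I would first build the conjugacy on the Fatou set -- using Koenigs/B\"ottcher linearization on each attracting basin and Fatou (cylinder) coordinates $w\mapsto w+1$ on each parabolic basin, conjugating on a fundamental domain and spreading the result over all iterated preimages of $D_0,D_\infty$. This produces a homeomorphism of $\EC$ that is quasiconformal on the Fatou set, agrees with $h$ on $J(f)$, and conjugates $f$ to $g$. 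I would then transport the standard conformal structure by this map to an $f$-invariant Beltrami coefficient -- vanishing on $J(f)$, which contributes nothing since it has zero area -- and straighten it by the measurable Riemann mapping theorem; this is the substance of the non-hyperbolic uniformization carried out in Theorem \ref{this-is-all-resta}.

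I expect the parabolic cycles to be the main obstacle. Near a parabolic fixed point the dynamics is not expanding, so neither the bounded-geometry argument nor a naive pullback controls the dilatation there; instead one must match the local models of $f$ and $g$ through their Fatou coordinates, verifying that the number of attracting petals (equivalently, the local multiplicity, already fixed by the data $d_i$ and by the matching of the cusps along the Julia components) coincides, and that the gluing across $J(f)$ at these cusps can be effected quasiconformally. Checking that the fundamental-domain maps have uniformly bounded dilatation up to the parabolic points, and confirming that $J(f)$ has zero area in all four configurations, is where the genuine technical effort lies.
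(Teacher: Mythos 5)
Your overall architecture (case split by the dynamics on the two simply connected Fatou components, choice of model among the four families, then promotion of a topological conjugacy to a quasiconformal one) matches the paper's, and you correctly locate the difficulty at the parabolic cycles. However, the mechanism you propose for the promotion step contains a genuine gap. You build a conjugacy on the Fatou set via linearizing/Fatou coordinates, observe that it is quasiconformal there, and then argue that the pulled-back Beltrami coefficient ``contributes nothing'' on $J(f)$ because $J(f)$ has zero area. This is exactly the point that does not work as stated: a homeomorphism of $\EC$ that is quasiconformal off a compact set of zero area need not be globally quasiconformal --- zero area does not imply removability for quasiconformal maps --- and you invoke no removability theorem for $J(f)$, nor do you prove that $J(f)$ has zero area, that the map assembled from Fatou coordinates on infinitely many components has uniformly bounded dilatation, or that it extends continuously across $J(f)$ as a conjugacy (the boundary values of basin-by-basin linearizations have no reason to match the topological conjugacy coming from Theorem \ref{thm-QYY}). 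The measurable Riemann mapping theorem straightens an invariant structure, but it cannot by itself certify that your glued homeomorphism is quasiconformal.

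The paper's proof of Theorem \ref{this-is-all-resta} avoids all of this by a pullback/normal-families argument: it first produces a single global quasiconformal map $\phi_0$ conjugating $f$ to the model on $\partial D_0\cup\partial D_\infty$ --- this is where the parabolic difficulty is actually resolved, via polynomial-like straightening together with Theorem \ref{thm_parabolic-conj-std} and Corollary \ref{cor_parabolic-conj-std}, whose content is precisely the quasiconformal interpolation in the linearizing coordinate across the degenerate annulus touching the parabolic point --- and then lifts $\phi_0$ inductively to global quasiconformal maps $\phi_k$ conjugating $f$ to the model on $f^{-k}(\partial D_0\cup\partial D_\infty)$ with $K(\phi_k)=K(\phi_1)$ for all $k\geq 1$. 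Compactness of $K$-quasiconformal maps then yields a limit $\phi_\infty$ that is automatically globally quasiconformal and conjugates on the dense set $\bigcup_{k\geq 0}f^{-k}(\partial D_\infty)$, hence on $J(f)$ by continuity. To salvage your route you would need a removability statement for the Cantor circles Julia set, a uniform dilatation bound over all Fatou components, and a continuity-plus-matching argument at $J(f)$; the lift-and-limit scheme sidesteps all three. Note also that the paper does not ``upgrade'' the topological conjugacy of Theorem \ref{thm-QYY}: the data $d_1,\cdots,d_n$ are read off directly from the degrees of $f$ on the annular preimages of $\EC\setminus(D_0\cup D_\infty)$, with $\sum_{i=1}^{n}1/d_i<1$ coming from Gr\"otzsch's inequality, and the hyperbolic case is quoted from Theorem \ref{this-is-all-hyper}.
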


According to \cite[Theorem 11.14]{Hei}, an orientation preserving homeomorphism between the Riemann sphere is quasisymmetric if and only if it is quasiconformal. This means that Theorem \ref{this-is-all-intro} gives a complete dynamical classification of the Cantor circles Julia sets in the quasisymmetric sense and we have following Table \ref{Tab-classif}.

\begin{table}[htpb]
\renewcommand{\arraystretch}{1.1}
\begin{center}
\begin{tabular}{|c|c|c|c|c|c|}
 \hline
 Classifications  & $f(D_0)$     & $f(D_\infty)$   & $D_0$       & $D_\infty$   & models                                          \\ \hline\hline
 Hyperbolic I      & $D_\infty$   & $D_\infty$      & ---             & attracting    & $f_{1,d_1,\cdots,d_n}$, even $n$     \\ \hline
 Hyperbolic II     & $D_0$         & $D_\infty$      & attracting   & attracting    & $f_{1,d_1,\cdots,d_n}$, odd $n$       \\ \hline
 Hyperbolic III    & $D_\infty$   & $D_0$            & attracting   & attracting    & $f_{0,d_1,\cdots,d_n}$, odd $n$       \\ \hline\hline
 Parabolic I       & $D_\infty$   & $D_\infty$      & ---             & parabolic     & $P_{d_1,\cdots,d_n}$, even $n$         \\ \hline
 Parabolic II      & $D_0$         & $D_\infty$     & attracting    & parabolic     & $P_{d_1,\cdots,d_n}$, odd $n$          \\ \hline
 Parabolic III     & $D_0$         & $D_\infty$     & parabolic     & parabolic     & $Q_{d_1,\cdots,d_n}$, odd $n$          \\ \hline
 Parabolic IV     & $D_\infty$   & $D_0$           & parabolic     & parabolic      & $R_{d_1,\cdots,d_n}$, odd $n$          \\ \hline
\end{tabular}
\vskip0.2cm
 \caption{The quasisymmetric classifications of the Cantor circles as the Julia sets of rational maps. Every Cantor circles Julia set has a quasisymmetric model in this table.}
 \label{Tab-classif}
\end{center}
\end{table}

\vskip-0.5cm
Note that the definition of quasisymmetric equivalence has no relation to the dynamics. However, Theorem \ref{this-is-all-intro} provides a \textit{dynamical} quasisymmetric equivalence between Cantor circles Julia sets, i.e. the homeomorphism in \eqref{defi-qs} not only maps one Cantor circles Julia set to another, but also conjugates the corresponding dynamics. Unfortunately, we still don't know whether the Julia sets in the different families of the parabolic rational maps must be quasisymmetrically inequivalent without considering the dynamics and the difference of the degrees.

For the regularity of the Julia components of the Cantor circles Julia sets, we have following Theorem \ref{thm-regularity-intro}.

\begin{thm}\label{thm-regularity-intro}
Let $f$ be a rational map with Cantor circles Julia set and $J_0$ a Julia component of $f$. Then $J_0$ is a quasicircle if and only if the closure of the forward orbit of $J_0$ is disjoint with the boundaries of the parabolic periodic Fatou components.
\end{thm}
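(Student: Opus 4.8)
\emph{Strategy.} The plan is to use Ahlfors' three-point characterization: a Jordan curve $\gamma\subset\EC$ is a quasicircle if and only if it has \emph{bounded turning}, i.e.\ there is a constant $C\ge 1$ so that the subarc $\gamma[x,y]$ of smaller diameter joining any two points $x,y\in\gamma$ satisfies $\diam\gamma[x,y]\le C|x-y|$ (compare \cite[Theorem 8.6]{LV}). Write $E$ for the closure of $\bigcup_{k\ge 0}f^{k}(J_0)$ and let $\Gamma$ be the finite collection of boundaries of parabolic periodic Fatou components. By Theorem \ref{this-is-all-intro} I may first pass, through a global quasiconformal conjugacy (which preserves quasicircles and the parabolic structure, and conjugates the component dynamics), to one of the model maps, so that at each parabolic point the germ of $f$ is the normal form coming from $h_{d_1}$. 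The first step is then to record the local geometry: by the Leau--Fatou flower theorem there are Fatou coordinates in which the return map is $w\mapsto w+1$, and the inverse Fatou coordinate shows that the two arcs of a curve in $\Gamma$ meeting a parabolic point $p$ are mutually tangent there. Thus each curve in $\Gamma$ carries genuine cusps (Lemmas \ref{P_n_limit} and \ref{lema-h-n-h-mn}) and fails bounded turning; quantitatively, a Julia component that traces near such a cusp on both of its sides, within Euclidean distance $\varepsilon$, has bounded-turning constant tending to $\infty$ as $\varepsilon\to 0$. This is the only mechanism by which the quasicircle property can fail, and the two directions of the theorem amount to deciding when a component feels it.

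For sufficiency, assume $E\cap\bigcup_{D\in\Gamma}\partial D=\varnothing$. Since $E$ and the finitely many curves in $\Gamma$ are compact, $\dist(E,\partial D)>0$ for each $D$, so the whole forward orbit of $J_0$ stays in a compact set $K\subset J(f)$ disjoint from the parabolic points. On $K$ the map $f$ has no critical points (none lie on $J(f)$) and, being geometrically finite with its only non-repelling behaviour on the parabolic cycles, it is uniformly expanding with bounded distortion there. I would then run the standard telescoping argument: each Julia component carries an annular Fatou collar, the univalent inverse branches recovering $J_0$ from $f^{k}(J_0)$ act on these collars, and the Koebe distortion bound (valid because the orbit never enters a parabolic petal, so the collars have modulus bounded below) gives a uniform bound on the shape, and hence on the bounded-turning constant, of $J_0$. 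Therefore $J_0$ is a quasicircle.

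For necessity I argue contrapositively. If $E$ meets some $\partial D\in\Gamma$ there are times $k_i$ with $f^{k_i}(J_0)\to\partial D$ in the Hausdorff sense, so $f^{k_i}(J_0)$ eventually traces arbitrarily near the cusp of $\partial D$ and, by the local estimate above, carries pairs of points whose arc/chord ratio tends to $\infty$. The clean case is when $J_0$ is preperiodic onto the cycle of $\partial D$: then $f^{k}(J_0)=\partial D$ for some $k$, and since $f^{k}$ is locally univalent along $J_0$ (no critical points on $J(f)$) and conformal maps preserve tangency, the cusp of $\partial D$ is pulled back verbatim, so $J_0$ has a cusp and is not a quasicircle. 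In general one must transport the degenerating arc/chord configurations on $f^{k_i}(J_0)$ back to $J_0$ through the univalent branches $f^{-k_i}$ and show the ratios remain unbounded.

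This transfer is the main obstacle. The difficulty is that the orbit realizing the accumulation necessarily passes near the parabolic point, where $f$ is not expanding and the Fatou collars of $f^{k_i}(J_0)$ degenerate as they enter the cusp, so a naive Koebe estimate for $f^{-k_i}$ is unavailable. I would resolve this by working in the Fatou coordinate at $p$, where the return map is a distortion-free translation and the local Julia set is $\Z$-periodic, so that shapes and moduli stay bounded in the $w$-plane; combining this with the precise asymptotics of the inverse Fatou coordinate (which quantifies how sharply the cusp forms at depth $\re w$) controls exactly the distortion introduced by passing between the $z$- and $w$-coordinates, and shows that the unbounded arc/chord ratios survive the pullback. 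Hence $J_0$ fails bounded turning and is not a quasicircle, which completes the dichotomy.
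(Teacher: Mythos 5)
Your sufficiency direction is essentially the paper's argument: compactness of the orbit closure away from the parabolic boundaries gives a $\delta$-neighbourhood disjoint from the critical orbit, and a stopping-time plus Koebe-distortion telescoping argument yields a uniform bounded-turning constant. That half is fine (the paper does not literally invoke uniform expansion, but its argument --- iterate forward until the short arc reaches a definite scale, then pull back univalently on a disk of radius $\delta$ --- is the same mechanism you describe).

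The necessity direction, however, has a genuine gap, and you have located it yourself: you call the transfer of the degenerating arc/chord configurations from $f^{k_i}(J_0)$ back to $J_0$ ``the main obstacle,'' and then only sketch a proposed resolution via Fatou coordinates at the parabolic point, without carrying it out. That proposed route is also harder than it needs to be, and it is not clear it closes: the pullback $f^{-k_i}$ is a long composition of the \emph{global} map, not just the local parabolic germ, and the orbit segments that realize the Hausdorff accumulation $f^{k_i}(J_0)\to\partial D$ need not spend all their time in a single petal where the Fatou coordinate linearizes the dynamics; controlling two-sided arc/chord ratios through such a composition is exactly the kind of estimate that fails naively near a parabolic point. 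The paper avoids the problem entirely with one observation you are missing: $\partial D$ has \emph{infinitely many} cusps (every preimage on $\partial D$ of the parabolic point is a cusp), so one may choose a cusp $\zeta$ that is \emph{not} the parabolic periodic point. A fixed round disk $\D(\zeta,\rho_0)$ then meets neither the critical orbit nor its accumulation set, so every inverse branch of $f^{\circ n_k}$ is univalent on $\D(\zeta,\rho_0)$ with a Koebe distortion constant independent of $k$ and of the scale $\rho_m<\rho_0/2$ at which the turning is measured. Pulling back the crossing points $J_{n_k}\cap\partial\D(\zeta,\rho_m/2)$ through these branches transports the unbounded turning to $J_0$ directly, with no analysis at the parabolic point at all. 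Your ``clean case'' of preperiodic components is correct but does not cover the generic accumulating component, which is the actual content of the theorem; as written, the necessity direction is incomplete.
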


This theorem gives a sufficient and necessary condition to decide when a Julia component in Cantor circles Julia set is a quasicircle. Moreover, the quasicircle components are dense in the parabolic Cantor circles Julia set (see \S \ref{sec-regularity} and Figure \ref{Fig_Cantor-quasicircle}). As an immediate corollary of Theorem \ref{thm-regularity-intro}, we have following result.

\begin{cor}\label{cor-regularity-hyper}
Let $f$ be a hypebolic rational map whose Julia set is a Cantor set of circles. Then every Julia component of $f$ is a quasicircle.
\end{cor}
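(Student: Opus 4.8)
The plan is to reduce the statement to the vacuous case of the criterion furnished by Theorem \ref{thm-regularity-intro}. First I would invoke the definition of hyperbolicity recalled in the introduction: all critical points of $f$ are attracted by attracting periodic orbits. As already observed in the paper, for a rational map whose Julia set is a Cantor set of circles there are no critical points on $J(f)$, and consequently every periodic Fatou component of $f$ is either attracting or parabolic. Under the hyperbolicity assumption, however, a parabolic cycle is impossible: a parabolic periodic point always lies on the Julia set, and the associated parabolic basin must capture a critical point in one of its attracting petals; that critical point would then be attracted to a non-attracting cycle, contradicting hyperbolicity (equivalently, a parabolic basin is incompatible with the expansion of $f$ on $J(f)$). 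Hence every periodic Fatou component of $f$ is attracting, and $f$ possesses no parabolic periodic Fatou component at all.

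With this in hand, the second step is immediate. Let $J_0$ be any Julia component of $f$. Since the collection of boundaries of parabolic periodic Fatou components is empty, the closure of the forward orbit of $J_0$ is trivially disjoint from it. The hypothesis of Theorem \ref{thm-regularity-intro} is therefore satisfied, and we conclude that $J_0$ is a quasicircle. As $J_0$ was an arbitrary Julia component, every Julia component of $f$ is a quasicircle, which is the assertion of the corollary.

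There is essentially no genuine obstacle beyond correctly recording that hyperbolicity forces the parabolic class to be empty. The only point deserving care is the justification that a hyperbolic map admits no parabolic periodic Fatou component; once that disjointness condition is seen to hold vacuously, the corollary follows at once from Theorem \ref{thm-regularity-intro}.
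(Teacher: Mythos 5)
Your proposal is correct and follows the paper's own route: the corollary is presented there as an immediate consequence of Theorem \ref{thm-regularity-intro}, with the disjointness hypothesis holding vacuously because a hyperbolic map (each immediate parabolic basin would have to contain a critical point attracted to a non-attracting cycle) admits no parabolic periodic Fatou components. Your justification of that last point is the standard one and is all that needs to be recorded.
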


This paper is organized as follows: In \S \ref{sec-para-1-fixed}, we give the suitable parameters $a_1,\cdots,a_{n-1}$ in \eqref{family-para} and prove Theorem \ref{parameter-parabolic} after doing some estimations, including locating the positions of the critical orbits and controlling the size of the parabolic basins. In \S\S \ref{sec-2-para-fixed} and \ref{sec-para-period=2}, we show that the Julia sets of $Q_{d_1,\cdots,d_n}$ and $R_{d_1,\cdots,d_n}$ are Cantor circles if the parameters $b_1,\cdots,b_{n-1}$ and $c_1,\cdots,c_{n-1}$ are chosen properly and prove Theorems \ref{parameter-parabolic-two-fixed} and \ref{parameter-parabolic-two-periodic}. We will give the detailed proof of the existence of the four quantities $X_n,Y_n,Z_n$ and $W_n$ appeared in \eqref{family-para-fixed-intro} by using Newton's method. In \S \ref{sec-quasi-unifor}, we give the proof of quasisymmetric uniformization theorem of the Cantor circles as the Julia sets of rational maps and hence prove Theorem \ref{this-is-all-intro}. In \S \ref{sec-regularity}, we will prove Theorem \ref{thm-regularity-intro} and analyze the regularity of the Julia components of the parabolic Cantor circles Julia sets by dividing the arguments into four cases.

\vskip0.2cm
\noindent\textit{Acknowledgements.} The first author was supported by the NSFC under grant No.\,11271074 and the National Research Foundation for the Doctoral Program of Higher Education of China under grant No.\,20130071110029, the second author was supported by the Natural Science Foundation of Jiangsu Province under grant No. BK20140587 and the NSFC under grant No.\,11401298, and the third author was supported by the NSFC under grant No.\,11231009.

\section{Cantor circles with one parabolic fixed point}\label{sec-para-1-fixed}

In this section, we construct a family of non-hyperbolic rational maps whose Julia sets are Cantor circles such that each one of them has exactly one parabolic fixed point. Let $n\geq 2$ and $d_1,\cdots,d_n$ be $n$ positive integers such that $\sum_{i=1}^{n}(1/d_i)<1$ and $P_{d_1,\cdots,d_n}$ the family of rational maps defined in \eqref{family-para}. We use $\dm:=\max\{d_1,\cdots,d_n\}\geq 3$ to denote the maximal number among $d_1,\cdots,d_n$ and set
\begin{equation}\label{range-s}
|a_1|=(\dm^2 s)^{1/d_1} \text{~ and ~} |a_i|=s^{1/d_i}|a_{i-1}| \text{~for~} 2\leq i\leq n-1,
\end{equation}
where $s>0$ is small enough. Note that $a_1,\cdots,a_{n-1}$ are $n-1$ small complex parameters satisfying $1\gg |a_1|\gg \cdots\gg |a_{n-1}|>0$ if the parameter $s>0$ is sufficiently small.

Let $A(s)\geq 0$ and $B(s)\geq 0$ be two quantities depending only on $s>0$. We denote $A(s)\preceq B(s)$ if there exists a constant $C_0:=C_0(d_1,\cdots,d_n)\geq 0$ depending only on $d_1,\cdots,d_n$ such that $A(s)\leq C_0 B(s)$ when $s$ is small enough. Moreover, we use $A(s)\asymp B(s)$ to denote the two quantities in the same order of $s$ if there exists a constant $C_1:=C_1(d_1,\cdots,d_n)\geq 1$ depending only on $d_1,\cdots,d_n$ such that
\begin{equation*}
C_1^{-1}B(s)\leq A(s)\leq C_1 B(s)
\end{equation*}
when $s>0$ is small enough. We first give some estimations which will be useful in the later discussions.

\begin{lema}\label{very-useful-est}
$(1)$ For $1\leq i\leq n-1$,  then $|a_i|\asymp s^{\sum_{j=1}^i(1/d_j)}$.

$(2)$ If $1\leq j< i\leq n-1$, then $|a_i/a_j|^{d_i+d_{i+1}}\asymp s^{\alpha_1}$ and $|a_i/a_j|^{d_j+d_{j+1}}\asymp s^{\alpha_2}$, where $\alpha_1=\alpha_1(i,j)\geq 1+2/\dm$ and $\alpha_2=\alpha_2(i,j)\geq 1+2/\dm$.

$(3)$ Let $m\geq 2$ be an integer, $a\in\mathbb{C}\setminus\{0\}$ and $0<\varepsilon<1/2$. If $|z^m-a^m|\leq \varepsilon |a|^m$, then $|z-ae^{2\pi \textup{i} j/m}|< \varepsilon |a|$ for some $1\leq j\leq m$.
\end{lema}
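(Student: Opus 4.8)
The plan is to prove the three parts essentially in order, since part (1) feeds into part (2), while part (3) is a self-contained elementary estimate. For part (1), the defining relations \eqref{range-s} give $|a_1|=(\dm^2 s)^{1/d_1}$ and $|a_i|=s^{1/d_i}|a_{i-1}|$ for $2\leq i\leq n-1$. First I would unwind this recursion telescopically: iterating $|a_i|=s^{1/d_i}|a_{i-1}|$ down to $|a_1|$ yields $|a_i|=\dm^{2/d_1}\,s^{\sum_{j=1}^{i}(1/d_j)}$. Since $\dm^{2/d_1}$ is a constant depending only on $d_1,\dots,d_n$ (bounded above and below by positive constants independent of $s$), this immediately gives $|a_i|\asymp s^{\sum_{j=1}^{i}(1/d_j)}$ with the comparison constant $C_1=\dm^{2/d_1}$, and the claim follows directly from the definition of $\asymp$.

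For part (2), with $1\leq j<i\leq n-1$, I would simply substitute the sharp expression from part (1). Writing $\sigma_k=\sum_{m=1}^{k}(1/d_m)$, we have $|a_i/a_j|\asymp s^{\sigma_i-\sigma_j}=s^{\sum_{m=j+1}^{i}(1/d_m)}$. Raising to the power $d_i+d_{i+1}$ gives the exponent $\alpha_1=(d_i+d_{i+1})\sum_{m=j+1}^{i}(1/d_m)$, and raising to $d_j+d_{j+1}$ gives $\alpha_2=(d_j+d_{j+1})\sum_{m=j+1}^{i}(1/d_m)$. The content is then the two lower bounds $\alpha_1,\alpha_2\geq 1+2/\dm$. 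For $\alpha_1$, since the sum $\sum_{m=j+1}^{i}(1/d_m)$ contains the term $1/d_i$ and at least that term, one has $\alpha_1\geq (d_i+d_{i+1})/d_i=1+d_{i+1}/d_i\geq 1+d_{i+1}/\dm\geq 1+2/\dm$, using $d_{i+1}\geq 2$ (so that $\sum 1/d_k<1$ can hold with $n\geq 2$) and $d_i\leq\dm$. For $\alpha_2$, the summand $1/d_{j+1}$ is present, so $\alpha_2\geq (d_j+d_{j+1})/d_{j+1}=1+d_j/d_{j+1}\geq 1+d_j/\dm\geq 1+2/\dm$ by the same reasoning with $d_j\geq 2$. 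The main thing to be careful about here is verifying that each $d_k\geq 2$: this follows because $\sum_{i=1}^{n}(1/d_i)<1$ forces every $d_i\geq 2$ (indeed if some $d_i=1$ the sum would be at least $1$).

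For part (3), the plan is a direct factorization argument. Factor $z^m-a^m=\prod_{j=1}^{m}(z-a e^{2\pi\mathrm{i} j/m})$, so that $\prod_{j=1}^{m}|z-a e^{2\pi\mathrm{i} j/m}|=|z^m-a^m|\leq\varepsilon|a|^m$. Let $z-ae^{2\pi\mathrm{i} j_0/m}$ be the factor of smallest modulus, say achieving value $\delta:=|z-ae^{2\pi\mathrm{i} j_0/m}|$; I want to show $\delta<\varepsilon|a|$. The idea is that if $\delta$ were at least $\varepsilon|a|$, then $z$ would be far from every root, forcing the product to exceed $\varepsilon|a|^m$. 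Concretely, if $\delta\geq\varepsilon|a|$ then $z$ lies at distance at least $\varepsilon|a|$ from its nearest root; since the roots are equally spaced on the circle of radius $|a|$, a crude bound shows each of the other $m-1$ factors has modulus at least some fixed fraction of $|a|$, and combined with $\delta\geq\varepsilon|a|$ the product would be bounded below by a quantity strictly exceeding $\varepsilon|a|^m$, a contradiction. The cleanest route is to argue by contradiction directly from the product inequality: I expect the main obstacle across the whole lemma to be making this elementary geometric estimate fully rigorous while keeping the constant tracking clean, since parts (1) and (2) are essentially bookkeeping once the telescoping is set up.
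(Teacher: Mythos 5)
Parts (1) and (2) of your proposal are correct and essentially identical to the paper's argument: the same telescoping of the recursion in \eqref{range-s} for (1), and the same exponent computation for (2). (The paper keeps a few more terms of $\sum_{k=j+1}^{i}1/d_k$ before bounding below, whereas you truncate to a single term to get $\alpha_1\geq 1+d_{i+1}/d_i$ and $\alpha_2\geq 1+d_j/d_{j+1}$; both give $\geq 1+2/\dm$, and your observation that $\sum_i 1/d_i<1$ forces every $d_i\geq 2$ is the same justification the paper uses.)

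Part (3) is where there is a genuine gap. The paper does not argue by contradiction from the factorization: it writes $z^m=a^m(1+re^{\textup{i}\theta})$ with $r\leq\varepsilon$, so that $z=ae^{2\pi \textup{i} j/m}(1+re^{\textup{i}\theta})^{1/m}$ for some $j$, and then bounds $|(1+re^{\textup{i}\theta})^{1/m}-1|<\varepsilon$ directly (in essence because $|u-1|=r/|1+u+\cdots+u^{m-1}|$ and the denominator exceeds $1$ when $m\geq 2$ and $u$ is the principal root). Your route, as sketched, does not close: if each of the other $m-1$ factors is only bounded below by a fixed fraction $c<1$ of $|a|$, then together with $\delta\geq\varepsilon|a|$ you obtain $\prod_j|z-ae^{2\pi\textup{i} j/m}|\geq \varepsilon c^{m-1}|a|^m$, which is \emph{weaker} than $\varepsilon|a|^m$ and yields no contradiction. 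Nor can you take $c=1$: adjacent $m$-th roots of $a^m$ are at distance $2|a|\sin(\pi/m)\ll|a|$ for large $m$, so a point at distance exactly $\varepsilon|a|$ from its nearest root may be far closer than $|a|$ to several neighbouring roots. What the contradiction actually requires is $\prod_{j\neq j_0}|z-ae^{2\pi\textup{i} j/m}|>|a|^{m-1}$, and this is false in general (at $z=0$ the product equals $|a|^{m-1}$ exactly); it only becomes available after one has already used the hypothesis to confine $z$ near the circle $|z|=|a|$ and near one of the roots. You should either switch to the paper's direct root-extraction argument or supply a genuinely stronger lower bound on the off-minimal factors that exploits $|z^m-a^m|\leq\varepsilon|a|^m<|a|^m/2$.
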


\begin{proof}
(1) Since $|a_1|\asymp s^{1/d_1}$ and $|a_i|=s^{1/d_i}|a_{i-1}|$ for $2\leq i\leq n-1$ by \eqref{range-s}, we have
\begin{equation*}
|a_i|=s^{1/d_i}|a_{i-1}|=s^{(1/d_i)+(1/d_{i-1})}|a_{i-2}|=\cdots=s^{\sum_{j=2}^i(1/d_j)}|a_1| \asymp s^{\sum_{j=1}^i(1/d_j)}
\end{equation*}
for $i\geq 2$. Note that $|a_1|\asymp s^{1/d_1}$, the proof of (1) is complete.

(2) By (1), it can be seen that $|a_i/a_j|^{d_i+d_{i+1}}\asymp s^{\alpha_1}$, where
\begin{equation}\label{alpha_1}
\alpha_1:= (d_i+d_{i+1})\sum_{k=j+1}^i \frac{1}{d_k}
=1+ d_i\sum_{k=j+1}^{i-1} \frac{1}{d_k}+ d_{i+1}\sum_{k=j+1}^i \frac{1}{d_k}\geq 1+\frac{2}{\dm}
\end{equation}
since $2\leq d_k\leq \dm$ for $1\leq k\leq n$. Similarly, $|a_i/a_j|^{d_j+d_{j+1}}\asymp s^{\alpha_2}$, where
\begin{equation}\label{alpha_2}
\alpha_2:= (d_j+d_{j+1})\sum_{k=j+1}^i \frac{1}{d_k}
=1+ d_{j+1}\sum_{k=j+2}^{i} \frac{1}{d_k}+ d_j\sum_{k=j+1}^i \frac{1}{d_k}\geq 1+\frac{2}{\dm}.
\end{equation}

(3) Let $z^m=a^m(1+re^{i\theta})$ for $0\leq r\leq \varepsilon$ and $0\leq\theta<2\pi$, then $z=ae^{2\pi \textup{i}{j}/m}(1+re^{i\theta})^{1/m}$ for some $1\leq j\leq m$ and we have
\begin{equation*}
|z-ae^{2\pi \textup{i}{j}/m}|=|(1+re^{i\theta})^{1/m}-1|\cdot|a| \leq ((1+\varepsilon)^{1/m}-1)\cdot|a| < \varepsilon |a|
\end{equation*}
if $m\geq 2$. The proof is complete.
\end{proof}

For simplicity, we use $P_n$ to denote $P_{d_1,\cdots,d_n}$ for fixed integers $d_1,\cdots,d_n$ in the rest part of this section. For $1\leq i\leq n-1$, we denote $D_i:=d_i+d_{i+1}$. Then $5\leq D_i\leq 2\dm$. We now prove that $P_n$ is always parabolic with a parabolic fixed point $1$ and then use the order of the parameter $s$ to control the sizes of the terms $A_n$, $B_n$, $C_n$ which appeared in \eqref{family-para}.

\begin{lema}\label{para-fixed}
$(1)$ The map $P_n$ is parabolic. In particular, $P_n(1)=1$ and $P_n'(1)=1$.

$(2)$ If the parameter $s>0$ is small enough, then
\begin{equation*}
 |C_n|\preceq  s^{\beta},~~ |B_n|\preceq  s^{\beta} \text{~and~} |A_n-1| \preceq  s^{\beta},
\end{equation*}
where $\beta=\beta(d_1,\cdots,d_n)\geq 1+2/\dm$.
\end{lema}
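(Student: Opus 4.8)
The plan is to exploit the factorization $P_n(z)=A_n\,G(z)+B_n$, where
\[
G(z)=\frac{d_1 z^{(-1)^{n-1} d_n}}{(d_1-1)z^{d_1}+1}\prod_{i=1}^{n-1}(z^{D_i}-a_i^{D_i})^{(-1)^{i-1}},
\]
and to verify directly that the constants $A_n,B_n,C_n$ are engineered so that $1$ is a parabolic fixed point. First I would evaluate $G$ at $z=1$: the rational prefactor collapses to $d_1/((d_1-1)+1)=1$, so $G(1)=\prod_{i=1}^{n-1}(1-a_i^{D_i})^{(-1)^{i-1}}$. Since this product is exactly the reciprocal of the product appearing in the definition of $A_n$, one obtains the clean identity $A_n G(1)=1/(1+C_n)$, whence $P_n(1)=1/(1+C_n)+C_n/(1+C_n)=1$. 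This settles the first assertion of $(1)$.

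For the multiplier I would compute $G'(1)$ by logarithmic differentiation. Summing the logarithmic derivatives of the three factors and evaluating at $z=1$ gives
\[
\frac{G'(1)}{G(1)}=(-1)^{n-1}d_n-(d_1-1)+\sum_{i=1}^{n-1}(-1)^{i-1}\frac{D_i}{1-a_i^{D_i}}.
\]
Splitting $D_i/(1-a_i^{D_i})=D_i+D_i a_i^{D_i}/(1-a_i^{D_i})$ isolates a copy of $C_n$ and reduces the matter to the purely combinatorial identity $(-1)^{n-1}d_n-(d_1-1)+\sum_{i=1}^{n-1}(-1)^{i-1}(d_i+d_{i+1})=1$. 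This is a telescoping computation: the alternating sum collapses to $d_1+(-1)^n d_n$, and the two $d_n$ contributions cancel against $(-1)^{n-1}d_n$, leaving $1$. Hence $G'(1)/G(1)=1+C_n$, so $P_n'(1)=A_n G'(1)=A_n G(1)\cdot(1+C_n)=1$. The main obstacle here is purely bookkeeping: tracking the signs $(-1)^{i-1}$ and $(-1)^{n-1}$ carefully so that the telescoping cancellation yields exactly $1$ and not merely a constant. This is precisely the step at which the prescribed values of $A_n,B_n,C_n$ are forced.

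For part $(2)$ everything rests on Lemma \ref{very-useful-est}(1). Setting $\gamma_i:=D_i\sum_{j=1}^i(1/d_j)$, that lemma gives $|a_i^{D_i}|\asymp s^{\gamma_i}$. The diagonal term $j=i$ contributes $(d_i+d_{i+1})/d_i=1+d_{i+1}/d_i$ to $\gamma_i$; since $d_{i+1}\ge 2$, $d_i\le \dm$, and all remaining terms are positive, one has $\gamma_i\ge 1+2/\dm$ for every $i$. Put $\beta:=\min_{1\le i\le n-1}\gamma_i\ge 1+2/\dm$. Because $1-a_i^{D_i}\to 1$ as $s\to 0$, each summand of $C_n$ is $O(s^{\gamma_i})$, so $|C_n|\preceq s^{\beta}$; then $|B_n|=|C_n|/|1+C_n|\le 2|C_n|\preceq s^{\beta}$ for small $s$. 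Finally, expanding $A_n=(1+C_n)^{-1}\prod_{i}(1-a_i^{D_i})^{(-1)^i}$ and noting that both factors equal $1+O(s^{\beta})$ yields $|A_n-1|\preceq s^{\beta}$. No genuine difficulty arises beyond ensuring the implicit constants depend only on $d_1,\dots,d_n$.
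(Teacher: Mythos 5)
Your proposal is correct and follows essentially the same route as the paper: the identity $A_nG(1)=1/(1+C_n)=1-B_n$ gives $P_n(1)=1$, the logarithmic derivative evaluated at $1$ (which is exactly the paper's formula for $zP_n'(z)/(P_n(z)-B_n)$) together with the telescoping identity $(-1)^{n-1}d_n-(d_1-1)+\sum_{i=1}^{n-1}(-1)^{i-1}D_i=1$ gives $P_n'(1)=(1-B_n)(1+C_n)=1$, and part (2) is the same estimate $\beta_i=D_i\sum_{j=1}^i(1/d_j)\geq 1+2/\dm$ drawn from Lemma \ref{very-useful-est}(1). No gaps.
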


\begin{proof}
(1) According to a straightforward calculation by following the expressions of $A_n$, $B_n$ and $C_n$ in the introduction, one can check that $P_n(1)=1$ directly. Note that
\begin{equation}\label{solu-crit-parabolic}
\frac{zP_n'(z)}{P_n(z)-B_n}
= \sum_{i=1}^{n-1}\frac{(-1)^{i-1}D_i z^{D_i}}{z^{D_i}-a_i^{D_i}}+(-1)^{n-1} d_n-\frac{(d_1-1)d_1z^{d_1}}{(d_1-1)z^{d_1}+1}.
\end{equation}
We have
\begin{equation*}\label{A-B-equation-1}
\begin{split}
   \frac{P_n'(1)}{P_n(1)-B_n}
= &~ \frac{P_n'(1)}{1-B_n} = \sum_{i=1}^{n-1}\frac{(-1)^{i-1}D_i\,a_i^{D_i}}{1-a_i^{D_i}}+\sum_{i=1}^{n-1}(-1)^{i-1}D_i +(-1)^{n-1} d_n -(d_1-1)\\
= &~ \sum_{i=1}^{n-1}\frac{(-1)^{i-1}D_i\,a_i^{D_i}}{1-a_i^{D_i}}+1=C_n+1.
\end{split}
\end{equation*}
Therefore, by the expressions of $B_n$ and $C_n$ in \eqref{family-para}, we have
\begin{equation*}\label{A-B-equation-2}
P_n'(1)=(1-B_n)(C_n+1)=1.
\end{equation*}
This means that $1$ is a parabolic fixed point of $P_n$ with multiplier $1$.

(2) By Lemma \ref{very-useful-est}(1), for $1\leq i\leq n-1$, we have
\begin{equation*}
|a_i|^{d_i+d_{i+1}}\asymp s^{\beta_i},
\end{equation*}
where
\begin{equation*}
\beta_i=(d_i+d_{i+1})\sum_{j=1}^i \frac{1}{d_j}
=1+d_i\sum_{j=1}^{i-1} \frac{1}{d_j}+d_{i+1}\sum_{j=1}^{i} \frac{1}{d_j}\geq 1+\frac{2}{\dm}.
\end{equation*}
Set $\beta:=\min_{1\leq i\leq n-1}\beta_i$. Then $|C_n|\preceq  s^{\beta}$, and hence $|B_n|\preceq  s^{\beta}$ and $|A_n-1| \preceq  s^{\beta}$.
\end{proof}

Let us explain the ideas behind the construction of $P_n$. For integer $d_1\geq 2$, we start with the unicritical parabolic polynomial $g_{d_1}(z)=(z^{d_1}+d_1-1)/d_1$. The holomorphic conjugacy of $g_{d_1}$ under $\psi(z)=1/z$ is
\begin{equation}\label{defi-h}
h_{d_1}(z):=\psi\circ g_{d_1}\circ\psi^{-1}(z)=\frac{d_1 z^{d_1}}{(d_1-1)z^{d_1}+1}.
\end{equation}
Then $\infty$ is a critical point of $h_{d_1}$ with multiplicity $d_1-1$ which is attracted to the parabolic fixed point $1$. In order to obtain a rational map whose Julia set is a Cantor set of circles, we want to replace the term $z^{d_1}$ in the numerator of $h_{d_1}$ by another term and add some new terms, which can guarantee that $1$ is always a parabolic fixed point of the new map after the substitution. The following lemma indicates that the rational map $P_n$ can be served as a small perturbation of $h_{d_1}$.

\begin{lema}\label{P_n_limit}
The rational map $P_n$ converges to $h_{d_1}$ locally uniformly on $\EC\setminus\{0\}$ as the parameter $s>0$ tends to zero.
\end{lema}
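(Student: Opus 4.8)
The plan is to exhibit $P_n$ as $h_{d_1}$ multiplied by a scalar factor tending to $1$, plus a constant tending to $0$, so that all of the $s$-dependence is packaged into quantities I already control. The starting point is a purely algebraic simplification. Writing $z^{D_i}-a_i^{D_i}=z^{D_i}(1-(a_i/z)^{D_i})$ and raising to the power $(-1)^{i-1}$, I would pull the leading power of $z$ out of each factor of the product in \eqref{family-para}. The total exponent of $z$ thereby produced in the numerator is
\begin{equation*}
(-1)^{n-1}d_n+\sum_{i=1}^{n-1}(-1)^{i-1}D_i=(-1)^{n-1}d_n+\bigl(d_1+(-1)^n d_n\bigr)=d_1,
\end{equation*}
the middle equality being the telescoping of the alternating sum $\sum_{i=1}^{n-1}(-1)^{i-1}(d_i+d_{i+1})$ (the intermediate $d_j$ cancel in pairs), and the two $d_n$-terms cancelling because $(-1)^{n-1}+(-1)^n=0$. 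Consequently
\begin{equation*}
P_n(z)=A_n\,h_{d_1}(z)\,E_s(z)+B_n,\qquad E_s(z):=\prod_{i=1}^{n-1}\bigl(1-(a_i/z)^{D_i}\bigr)^{(-1)^{i-1}},
\end{equation*}
with $h_{d_1}$ as in \eqref{defi-h}. This identity is the heart of the matter: it isolates the entire $s$-dependence into the constants $A_n,B_n$ and the function $E_s$.

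Next I would control these three. By Lemma \ref{para-fixed}(2), $A_n\to 1$ and $B_n\to 0$ as $s\to 0$. For $E_s$, fix a compact $K\subset\EC\setminus\{0\}$ and choose $\delta>0$ with $|z|\geq\delta$ for all $z\in K$ (this covers a neighbourhood of $\infty$ when $\infty\in K$). By Lemma \ref{very-useful-est}(1) each $|a_i|\to 0$, so $|(a_i/z)^{D_i}|\leq(|a_i|/\delta)^{D_i}\to 0$ uniformly on $K$; since there are only finitely many factors and none degenerates (each $1-(a_i/z)^{D_i}\to 1\neq 0$), the finite product satisfies $E_s\to 1$ uniformly on $K$. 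Hence $A_nE_s\to 1$ and $B_n\to 0$ uniformly on $K$.

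Finally I would assemble these into local uniform convergence in the spherical metric, which is the only genuinely delicate point, since $h_{d_1}$ has poles at the $d_1$-th roots of $-1/(d_1-1)$ (the point $\infty$ being regular, with $h_{d_1}(\infty)=d_1/(d_1-1)$). Away from these poles $h_{d_1}$ is bounded on $K$, so
\begin{equation*}
|P_n-h_{d_1}|=\bigl|(A_nE_s-1)\,h_{d_1}+B_n\bigr|\longrightarrow 0
\end{equation*}
uniformly there. Near a pole $z_0$ I would pass to the reciprocal chart: writing $u=1/h_{d_1}(z)$, which is small near $z_0$, one has $1/P_n=u/(A_nE_s+B_n u)$, and since $A_nE_s\to1$ and $B_n\to0$ the denominator stays close to $1$, giving $1/P_n\to 1/h_{d_1}$ uniformly near $z_0$; this is precisely spherical convergence there. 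Combining the two regimes yields uniform convergence on $K$ in the spherical metric, and since $K$ was an arbitrary compact subset of $\EC\setminus\{0\}$, the lemma follows.

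I expect the principal difficulty to be bookkeeping rather than conceptual: verifying that the exponent telescopes to exactly $d_1$, and checking that every pole and zero introduced by the perturbation—those coming from the factors $(z^{D_i}-a_i^{D_i})^{(-1)^{i-1}}$ and from $z^{(-1)^{n-1}d_n}$—collapses to the origin as $s\to 0$, so that for small $s$ it escapes any fixed compact $K\subset\EC\setminus\{0\}$ and cannot interfere with the convergence.
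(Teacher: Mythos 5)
Your proof is correct and follows essentially the same route as the paper's: both arguments reduce the claim to $A_n\to 1$, $B_n\to 0$ (Lemma \ref{para-fixed}(2)) and $a_i\to 0$ (Lemma \ref{very-useful-est}(1)) combined with the algebraic structure of \eqref{family-para}. Your explicit identity $P_n=A_n\,h_{d_1}\,E_s+B_n$, with the telescoped exponent equal to $d_1$, together with the reciprocal-chart verification of spherical convergence near the poles of $h_{d_1}$, is a cleaner and more careful rendering of what the paper does by simply writing out the two parity cases \eqref{family-para-even}--\eqref{family-para-odd} and asserting the convergence.
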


\begin{proof}
Let $A_n$ and $B_n$ be the two numbers in \eqref{family-para} which depend on the $n-1$ numbers $a_1,\cdots,a_{n-1}$. If $n\geq 2$ is even, then
\begin{equation}\label{family-para-even}
P_n(z)=\frac{A_n d_1 }{(d_1-1)z^{d_1}+1}\,\frac{z^{d_1+d_2}-a_1^{d_1+d_2}}{z^{d_2+d_3}-a_2^{d_2+d_3}}\cdots\frac{z^{d_{n-1}+d_n}-a_{n-1}^{d_{n-1}+d_n}}{z^{d_n}}+B_n.
\end{equation}
If $n\geq 3$ is odd, then
\begin{equation}\label{family-para-odd}
P_n(z)=\frac{A_n d_1 z^{d_n}}{(d_1-1)z^{d_1}+1}\,\frac{z^{d_1+d_2}-a_1^{d_1+d_2}}{z^{d_2+d_3}-a_2^{d_2+d_3}}
\cdots\frac{z^{d_{n-2}+d_{n-1}}-a_{n-2}^{d_{n-2}+d_{n-1}}}{z^{d_{n-1}+d_n}-a_{n-1}^{d_{n-1}+d_n}}+B_n.
\end{equation}
By Lemma \ref{very-useful-est}(1) and Lemma \ref{para-fixed}(2), it follows that $A_n$ tends to $1$, $a_1,\cdots,a_{n-1}$ and $B_n$ all tend to $0$ as $s>0$ tends to $0$. By the expressions of \eqref{family-para-even} and \eqref{family-para-odd}, this means that $P_n$ converges to $h_{d_1}$ locally uniformly on $\EC\setminus\{0\}$ as $s>0$ tends to zero.
\end{proof}

We now study the limit behaviors of the immediate parabolic basins of a sequence of parabolic rational maps which is locally uniformly convergent.
Let $f(z)=z+z^2+a_3 z^3+\cdots$ be a holomorphic germ defined in a domain $\Omega\subset\C$ such that the immediate parabolic basin $U$ of the origin of $f$ is compactly contained in $\Omega$. Note that $U$ is not necessarily simply connected. Suppose that $f_k(z)=z+a_{2,k} z^2+a_{3,k} z^3+\cdots$ is a sequence of holomorphic  germs defined in $\Omega$ which converges to $f$ locally uniformly, where $a_{2,k}\neq 0$ and $k\in\mathbb{N}$. We use $U_k\subset\Omega$ to denote the immediate parabolic basin of the origin of $f_k$.

\begin{lema}\label{lema-approx}
Let $K$ be a compact set in $U$. Then $K$ is contained in $U_k$ if $k$ is large enough. If $K$ is compact in $V=\{z:z\in U\text{~and~}\textup{Re}(z)<0\}\cup\{0\}$. Then $K\setminus\{0\}$ is contained in $U_k$ if $k$ is large enough.
\end{lema}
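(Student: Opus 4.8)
The plan is to pass to the coordinate $w=-1/z$ at the parabolic point, in which $f$ becomes a map close to the unit translation. Writing $F_k(w):=-1/f_k(-1/w)$ and $F(w):=-1/f(-1/w)$, the expansions $f_k(z)=z+a_{2,k}z^2+\cdots$ and $f(z)=z+z^2+\cdots$ give $F_k(w)=w+a_{2,k}+O(1/w)$ and $F(w)=w+1+O(1/w)$, where $a_{2,k}\to 1$ and --- this is the first point to check carefully --- the remainders $O(1/w)$ are bounded by $C/|w|$ uniformly in all large $k$ on $\{|w|>R_1\}$, which follows from the local uniform convergence $f_k\to f$ near $0$ together with Cauchy estimates. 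Granting this, for $R_0\ge R_1$ large and $k$ large the half-plane $\{\re(w)>R_0\}$ --- which is the round disc $P:=\{z:\re(-1/z)>R_0\}$ tangent to the imaginary axis at $0$ --- is mapped strictly into itself by $F_k$, and every orbit has $\re(w)\to+\infty$. Hence $P\setminus\{0\}$ lies in the immediate basin of $0$ for every large $k$; that is, $P\setminus\{0\}\subset U_k$, and likewise $P\setminus\{0\}\subset U$.

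For the first assertion, let $K\subset U$ be compact, so $K$ avoids $0$ (which lies on $\partial U$). Since $U$ is an open connected Fatou component, I would first enlarge $K$ to a connected compact set $\Gamma\subset U$ that also contains a chosen point $z_*\in P\setminus\{0\}$. Because $P$ is forward invariant and absorbs every orbit in $U$ (Leau--Fatou flower theorem), every point of $\Gamma$ enters $P$ after finitely many iterates of $f$; by compactness and forward invariance there is a single $N$ with $f^N(\Gamma)$ compactly contained in $P$. As $\bigcup_{j\le N}f^j(\Gamma)$ is a compact subset of $U\subset\Omega\setminus\{0\}$, the finite iterates converge uniformly, $f_k^N\to f^N$ on $\Gamma$, so $f_k^N(\Gamma)\subset P\setminus\{0\}\subset U_k$ once $k$ is large. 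The conclusion is then purely topological: since the Fatou set is completely invariant, $\Gamma\subset f_k^{-N}(\mathrm{Fatou})=\mathrm{Fatou}(f_k)$, and a connected subset of the Fatou set lies in a single Fatou component; as $\Gamma$ meets $U_k$ (it contains $z_*\in P\setminus\{0\}\subset U_k$), I conclude $\Gamma\subset U_k$, hence $K\subset U_k$.

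For the second assertion it suffices, after applying the first part to the compact set $K\cap\{|z|\ge\rho\}$, to treat points close to $0$: I claim there are $\rho>0$ and $k_0$ so that $\mathcal N:=\{z:\re(z)<0,\ 0<|z|<\rho\}\subset U_k$ for all $k\ge k_0$ (note $\mathcal N\supset V\cap\{0<|z|<\rho\}$). In the $w$-coordinate $\mathcal N=\{\re(w)>0\}\cap\{|w|>1/\rho\}$, and taking $\rho=1/R_0$ this region contains the petal $\{\re(w)>R_0\}=P$. The key estimate is that any orbit starting in $\mathcal N$ escapes: since $\re(F_k(w))>\re(w)+\tfrac45$ whenever $\re(w)>0$ and $|w|>R_1$, the real parts increase without bound, and a short modulus bookkeeping (using $|w_n|\ge\max\{\re(w_n),\,|\mathrm{Im}(w_0)|-\tfrac15 n\}$) shows the orbit never re-enters $\{|w|\le R_1\}$, so it eventually lands in $P$ and converges to $0$ under $f_k$. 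Thus $\mathcal N\subset\mathrm{Fatou}(f_k)$; moreover $\mathcal N$ is connected and contains $P\setminus\{0\}\subset U_k$, so the same component argument gives $\mathcal N\subset U_k$. Combining the two ranges yields $K\setminus\{0\}\subset U_k$ for all large $k$.

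The main obstacle is the uniform control near the parabolic point: establishing that a single petal $P$ and a single escape region serve simultaneously for $f$ and for all large $f_k$, i.e.\ the lower semicontinuity of the immediate basins in the attracting direction. This is precisely where parabolic perturbations are normally dangerous; the reason it is safe here is that the multiplier is pinned at $1$ and the number of petals does not bifurcate (as $a_{2,k}\neq0$), so no parabolic implosion occurs. The remaining work is the routine but slightly fiddly verification that the $w$-coordinate estimates hold uniformly in $k$ and that the escaping orbits in the second part stay in the region $\{|w|>R_1\}$ where those estimates are valid.
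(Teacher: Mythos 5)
Your proof is correct, and for the first assertion it follows the paper's route almost verbatim: pass to the coordinate $w=-1/z$, produce a single petal $\mathcal{P}$ (the disk corresponding to $\{\textup{Re}(w)>R_0\}$) that is forward invariant under every $f_k$ with $k$ large -- the paper obtains the drift $\textup{Re}(F_k(w))>\textup{Re}(w)+1/2$ from the parabolic linearization theorem plus uniform convergence, while you obtain it from the uniform expansion $F_k(w)=w+a_{2,k}+O(1/w)$ via Cauchy estimates, which is the same content -- then push $K$ into $\mathcal{P}$ by a fixed iterate of $f$, transfer to $f_k$ by uniform convergence of $f_k^{\circ N}$, and finish with a connectedness argument to land in the \emph{immediate} basin. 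Where you genuinely diverge is the second assertion. The paper iterates $K$ forward under the limit map $f$ (splitting $\widehat{K}=\xi(K\setminus\{0\})$ into a compact piece and a piece with large imaginary part to get $F^{\circ l}(\widehat K)\subset\HR_R$) and then transfers to $f_k$ by uniform convergence of the fixed iterate $f_k^{\circ l}$; that transfer is delicate because $f^{\circ l}(K)$ accumulates at $0\in\partial\mathcal{P}$, so closeness of $f_k^{\circ l}$ to $f^{\circ l}$ does not by itself force $f_k^{\circ l}(K)\subset\mathcal{P}\cup\{0\}$. You instead run the drift and modulus estimates directly for the perturbed maps $F_k$ on the fixed region $\{\textup{Re}(w)>0,\ |w|>R_0\}$, concluding that the entire left half-disk $\mathcal{N}=\{\textup{Re}(z)<0,\ 0<|z|<\rho\}$ lies in $U_k$ for every large $k$, and then reduce $K\setminus\{0\}$ to $\mathcal{N}$ together with the compact set $K\cap\{|z|\ge\rho\}$ handled by part one. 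This buys uniformity in $k$ precisely at the boundary point $0$ where the paper's argument is most fragile, at the minor cost of the extra bookkeeping verifying that orbits starting in $\{\textup{Re}(w)>0,\ |w|>R_0\}$ never re-enter $\{|w|\le R_1\}$; your sketch of that bookkeeping is sound provided $R_0$ is taken large compared with $R_1$, and the remaining glossed-over points (that intermediate iterates $f_k^{\circ j}(\Gamma)$ stay in $\Omega$, and that the "Fatou component" language should be read as components of the basin of the germ) are routine.
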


\begin{proof}
Define $w=\xi(z)=-1/z$ and denote $\widehat{\Omega}=\{\xi(z):z\in\Omega\setminus\{0\}\}$ and $\widehat{U}=\{\xi(z):z\in U\}$. Then $\widehat{\Omega}\supset\widehat{U}\supset\HR_R$ for some $R>0$, where $\HR_R=\{w:\textup{Re}(w)\geq R\}$ is a right half-plane \cite[\S 10]{Mil}. The maps $F=\xi\circ f\circ\xi^{-1}$ and $F_k=\xi\circ f_k\circ\xi^{-1}$ are defined on $\widehat{\Omega}$. By parabolic linearization theorem, there exists a conformal embedding $\alpha:\HR_R\to\C$ such that $\alpha(F(w))=\alpha(w)+1$ and $\textup{Re}(F(w))>\textup{Re}(w)+1/2$ if $R>0$ is large enough \cite[Theorem 10.9 and Lemma 10.10]{Mil}.

Since $f_k$ converges to $f$ locally uniformly on $\Omega$, it follows that $F_k$ converges to $F$ uniformly on $\HR_R$ for large $R>0$ since $0\in\Omega$. On the other hand, note that the map $f$ and each $f_k$ have a parabolic fixed point at the origin with multiplier $1$. Hence, there exists an integer $N_1>0$ such that $\textup{Re}(F_k(w))>\textup{Re}(w)+1/2$ for $w\in\HR_R$ if $k\geq N_1$. Therefore, $F_k(\HR_R)\subset\HR_{R+1/2}$ if $k\geq N_1$. Let $\mathcal{P}=\xi(\HR_R)$ be a parabolic petal in $U$. Then $f_k(\mathcal{P})\subset\mathcal{P}$ if $k\geq N_1$. This means that $\mathcal{P}$ is contained in the immediate parabolic basin of $0$ of $f_k$ if $k\geq N_1$.

Let $K$ be a compact set in $U$. Then there exists an integer $m\geq 0$ such that $F^{\circ m}(\xi(K))\subset\HR_R$. Equivalently, $f^{\circ m}(K)\subset\mathcal{P}$. Note that $f_k^{\circ m}$ converges to $f^{\circ m}$ uniformly on $K$ as $k\to\infty$. It follows that $f_k^{\circ m}(K)\subset\mathcal{P}$ if $k\geq N_2$ for some $N_2\geq 1$. Let $N=\max\{N_1,N_2\}$. Then we know that $K$ is contained in the parabolic basin of $0$ of $f_k$ if $k\geq N$. For the compact set $K$, there exists a continuous closed curve $\gamma\subset U$ connecting $K$ and $\mathcal{P}$. Note that $\gamma$ is also compact in $U$. This means that $\gamma$ is contained in the parabolic basin of $0$ of $f_k$. Above all, we know that $K$ is contained in the immediate parabolic basin of $0$ of $f_k$.

Suppose that $K$ is a compact set in $V$. Note that $V$ is the union of $0$ and an open subset of $U$. In this case $K$ is allowed to `touch' the parabolic fixed point $0$. By the definition of $V$, we have $\widehat{K}=\xi(K\setminus\{0\})\subset \HR_0\cap \widehat{U}$. Note that $\widehat{K}$ can be unbounded and hence is not compact in $\widehat{U}$. Suppose that $w\in\widehat{U}$. There exists a large $M>0$ such that if $|\textup{Im}(w)|>M$ or $w\in\HR_R$, then $F^{\circ n}(w)$ is well defined and $\textup{Re}(F^{\circ n}(w))>\textup{Re}(w)+n/2$ for every $n\geq 1$ (see \cite[\S 6.5]{Bea}). Then we can write the set $\widehat{K}$ as $K_1\cup K_2$ such that $K_1\subset\HR_0$ is compact and $K_2\subset\{w:|\textup{Im}(w)|>M\}\cap\HR_0$. If $w\in K_1$, then there exists an integer $m_1\geq 0$ such that $F^{\circ m_1}(K_1)\subset\HR_R$ since $K_1$ is compact and contained in $\widehat{U}$. If $w\in K_2$, there exists also an integer $m_2\geq 0$ such that $F^{\circ m_2}(K_2)\subset\HR_R$ since $\textup{Re}(F^{\circ n}(w))>\textup{Re}(w)+n/2$ for every $n\geq 1$. Hence, $F^{\circ l}(\widehat{K})\subset\HR_R$ if $l\geq m$, where $m= \max\{m_1,m_2\}$. Equivalently, $f^{\circ l}(K)\subset\mathcal{P}\cup\{0\}$.

Note that $f_k^{\circ l}$ converges to $f^{\circ l}$ uniformly on $K$ as $k\to\infty$. It follows that $f_k^{\circ l}(K)\subset\mathcal{P}\cup\{0\}$ if $k$ is large enough. This means that $K\setminus\{0\}$ is contained in the parabolic basin of $0$ of $f_k$ if $k$ is large enough. Similar argument can be shown that $K\setminus\{0\}$ is contained in the immediate parabolic basin of $0$ of $f_k$. The proof is complete.
\end{proof}

As a remark, we would like to point out that the immediate parabolic basin $U_k$ of the origin of $f_k$ needs not to converge to $U$ in the Hausdorff topology. For example, consider the quadratic rational map $f_c(z)=(z^2+cz)/(z+1)$
with a parabolic fixed point at $\infty$ with multiplier $1$. If $c=-1$, then $f_c$ has another parabolic fixed point $0$ with multiplier $-1$. Just do a suitable perturbation on $c$, the Julia set of new rational map is a Cantor set with a parabolic fixed point at $\infty$ (by parabolic implosion). This means that the immediate parabolic basin of $\infty$ of $f_c$ is not continuous at $c=-1$ in the Hausdorff topology.

Let $\mathbb{D}(a,r):=\{z\in\C:|z-a|<r\}$ be the Euclidean disk centered at $a$ with radius $r$. For $0\leq r\leq 1$, we denote $D_r:=\D(1-r,r)\subset\D$. Note that $1\in\partial D_r$.

\begin{lema}\label{lema-g-n-g-mn}
Let $m,n\geq 2$ be two integers. Then
\begin{equation}\label{g_m-g-mn}
g_n(z)=\frac{z^n+n-1}{n} \text{~and~} g_{m,n}(z)=\frac{(z^{m}+(mn-1))^{n}}{(mn)^{n}}
\end{equation}
are both parabolic polynomials containing a parabolic fixed point $1$ with multiplier $1$. For any $0\leq r\leq 1$, we have $g_n(\overline{D}_r)\subset D_r\cup\{1\}$ and $g_{m,n}(\overline{D}_r)\subset D_r\cup\{1\}$. In particular, the immediate parabolic basins of $1$ of $g_n$ and $g_{m,n}$ both contain the unit disk $\D$.
\end{lema}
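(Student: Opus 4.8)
The plan is to treat the three assertions separately. The statements about the parabolic fixed point are immediate computations: writing $g_n(z)=\frac1n z^n+\frac{n-1}{n}$ gives $g_n(1)=1$ and $g_n'(1)=z^{n-1}\big|_{z=1}=1$, while writing $g_{m,n}(z)=\phi(z^m)^n$ with $\phi(w)=1+\frac{w-1}{mn}$ gives $g_{m,n}(1)=1$ and $g_{m,n}'(1)=n\,\phi(1)^{n-1}\phi'(1)\,m=1$. Thus both maps have a parabolic fixed point at $1$ with multiplier $1$.

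For the invariance of the disks I would pass to the coordinate $T(z)=\frac{1}{1-z}$, which carries each $D_r$ onto the half-plane $\{\re(w)>\tfrac1{2r}\}$, sends $1$ to $\infty$, and sends $\partial\D\setminus\{1\}$ onto $\{\re(w)=\tfrac12\}$; in this coordinate $z\in\overline D_r\setminus\{1\}$ means precisely $\re\,T(z)\ge\tfrac1{2r}$. Since for $z\in\D$ one has $g(z)\ne1$ for both maps (the solutions of $g(z)=1$ lying in $\overline\D$ are roots of unity, hence on $\partial\D$), the containment $g(\overline D_r)\subset D_r\cup\{1\}$ for $0<r<1$ reduces to the single inequality
\begin{equation*}
\re\,T(g(z))>\re\,T(z),\qquad z\in\D. \tag{$\star$}
\end{equation*}
The remaining case $r=1$ (where $\overline D_1=\overline\D$) is then finished by the elementary remark that $g$ maps $\partial\D$ into $\overline\D$ with $1$ the only point returned to $\partial\D$: indeed $g_n(z)=\frac{n-1}{n}+\frac1n z^n$ lands in $\overline D_{1/n}$, and $|g_{m,n}(z)|=|\phi(z^m)|^n<1$ unless $z^m=1$.

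To prove $(\star)$ for $g_n$ I would use that $T(g_n(z))=\frac{n}{1-z^n}$ and the partial-fraction identity
\begin{equation*}
\re\,T(g_n(z))-\re\,T(z)=\re\sum_{k=1}^{n-1}\frac{1}{1-\bar\zeta_k z},\qquad \zeta_k=e^{2\pi\textup{i}k/n},
\end{equation*}
together with the fact that $w\mapsto\frac1{1-w}$ maps $\D$ into $\{\re>\tfrac12\}$; as $\bar\zeta_k z\in\D$ for $z\in\D$, every summand has real part $>\tfrac12$, so the increment exceeds $\tfrac{n-1}{2}>0$. For $g_{m,n}$ I would exploit the factorisation $g_{m,n}(z)=q(z^m)$ with $q(w)=\phi(w)^n$ and treat the two stages separately. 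The outer stage gives, by the same partial fractions applied to $\frac{1}{1-\phi(v)^n}$ and the relation $1-\phi(v)=\frac{1-v}{mn}$,
\begin{equation*}
\re\,T(q(v))=m\,\re\,T(v)+\frac1n\sum_{j=1}^{n-1}\re\frac{1}{1-\bar\eta_j\phi(v)}>m\,\re\,T(v)+\frac{n-1}{2n},\qquad \eta_j=e^{2\pi\textup{i}j/n},
\end{equation*}
while the inner stage gives $m\,\re\,T(z^m)>\re\,T(z)+\tfrac{m-1}{2}$ exactly as for $g_n$ with $n$ replaced by $m$. Setting $v=z^m$ and combining yields $\re\,T(g_{m,n}(z))>\re\,T(z)+\tfrac{m-1}{2}+\tfrac{n-1}{2n}>\re\,T(z)$, which is $(\star)$.

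The crux, and the step I expect to be the main obstacle, is justifying that each summand in the outer stage has real part $>\tfrac12$, i.e. that $\bar\eta_j\phi(v)\in\D$ for $v\in\D$ and $1\le j\le n-1$. This rests on the geometric observation that the affine map $\phi$ sends $\overline\D$ onto the disk $\overline D_{1/(mn)}$, which is internally tangent to $\partial\D$ at $1$; hence $|\phi(v)|<1$ for $v\in\overline\D\setminus\{1\}$, and rotation by $\bar\eta_j$ keeps the point inside $\D$. Once $(\star)$ holds, the \emph{uniform} lower bounds $\tfrac{n-1}{2}$ and $\tfrac{m-1}{2}+\tfrac{n-1}{2n}$ on the increment of $\re\,T$ force $\re\,T(g^{\circ k}(z))\to\infty$, hence $g^{\circ k}(z)\to1$ for every $z\in\D$; since each $D_r$ with $r<1$ is connected, forward invariant, and has $1$ on its boundary, the union $\D=\bigcup_{r<1}D_r$ lies in the immediate parabolic basin of $1$, which is the last assertion.
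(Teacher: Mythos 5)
Your proof is correct, but it takes a genuinely different route from the paper's. The paper argues directly in the $z$-plane: for $1/n<r\le 1$ it uses $|g_n(z)-(n-1)/n|\le 1/n$, and for $0\le r\le 1/n$ it writes $z=1-r+\rho e^{i\theta}$ and bounds $|g_n(z)-((1-r)^n+n-1)/n|$ by a binomial/triangle-inequality estimate, tracking when equality occurs; the $g_{m,n}$ case is then declared ``completely similar'' and omitted. You instead conjugate by $T(z)=1/(1-z)$, so that the nested disks $D_r$ become nested half-planes, and exploit the exact partial-fraction identity $\frac{n}{1-z^n}=\sum_{k=0}^{n-1}\frac{1}{1-\bar\zeta_k z}$ together with the fact that $w\mapsto 1/(1-w)$ maps $\D$ into $\{\re>\tfrac12\}$. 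This buys two things the paper's argument does not make explicit: a uniform lower bound on the displacement $\re\,T(g(z))-\re\,T(z)$ (which immediately gives $g^{\circk}(z)\to 1$ on all of $\D$ without invoking petal theory), and a genuine, detailed treatment of $g_{m,n}$ via the two-stage factorization $g_{m,n}=q\circ(z\mapsto z^m)$ with $q=\phi^n$ and $1-\phi(v)=(1-v)/(mn)$ — precisely the part the paper skips. The only places where you lean on small unproved computations are the claims that $g_{m,n}(z)=1$ with $|z|\le 1$ forces $z^m=1$ and that $\phi(\overline\D)=\overline D_{1/(mn)}$ touches $\partial\D$ only at $1$; both are elementary and you essentially verify the first through the second, so there is no gap.

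**Note on formatting:** in the compiled version replace $g^{\circk}$ by $g^{\circ k}$.
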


\begin{proof}
It is easy to see that $1$ is the parabolic fixed point of $g_n$ and $g_{m,n}$ with multiplier $1$ and the attracting axis of the unique parabolic petal is on the left of $1$. If $z\in\overline{\D}$, then $|g_n(z)-(n-1)/n|=|z|^n/n\leq 1/n$. This means that $g_n(\overline{\D})\subset \overline{D}_{1/n}\cup\{1\}$. In particular, $g_n(\overline{D}_r)\subset \overline{D}_{1/n}\cup\{1\}\subset D_r\cup\{1\}$ if $1/n< r\leq 1$. Now suppose $0\leq r\leq 1/n$. Let $z=1-r+\rho e^{i\theta}\in \overline{D}_r$, where $\rho\in[0,r]$ and $\theta\in[0,2\pi)$. We have
\begin{equation}\label{est-parabolic-basin}
\begin{split}
      &~ \left|g_n(z)-\frac{(1-r)^n+n-1}{n}\right|=\frac{|(1-r+\rho e^{i\theta})^n-(1-r)^n|}{n} \\
\leq &~ \frac{1}{n}\left|\sum_{k=1}^nC_n^k\rho^ke^{ik\theta}(1-r)^{n-k}\right|\leq\frac{1}{n}\sum_{k=1}^nC_n^k\rho^k(1-r)^{n-k}\leq\frac{1-(1-r)^n}{n}.
\end{split}
\end{equation}
This means that $g_n(\overline{D}_r)\subset \overline{D}_s$, where $s=(1-(1-r)^n)/n\leq r$ since $0\leq r\leq 1/n$. Note that \eqref{est-parabolic-basin} can get the equality sign ``$=$" if and only if $\theta=0$ and $\rho=r$. This means that $g_n(\overline{D}_r)\subset D_r\cup\{1\}$ for $0\leq r\leq 1/n$. In summary, we have $g_n(\overline{D}_r)\subset D_r\cup\{1\}$ for $0\leq r\leq 1$. In particular, $\D$ is contained in the immediate parabolic basin of $1$ of $g_n$.

The conclusions for $g_{m,n}$ can be proved completely similar to that of $g_n$. We omit the details here. See Figure \ref{Fig_parabolic-polynomial} for the pictures of the Julia sets of $g_4$ and $g_{4,4}$.
\end{proof}

For $r\geq 1$, we denote $D'_r:=\EC\setminus \overline{\D}(1-r,r)\subset\EC\setminus\overline{\D}$. Note that $1\in\partial D_r'$.

\begin{lema}\label{lema-h-n-h-mn}
Let $m,n\geq 2$ be two integers. Then
\begin{equation}\label{new-parabolic-first}
h_n(z)=\frac{n z^n}{1+(n-1)z^n} \text{~and~} h_{m,n}(z)=\frac{(mn)^{n}z^{mn}}{(1+(mn-1)z^{m})^{n}}
\end{equation}
are both parabolic rational maps containing a parabolic fixed point $1$ with multiplier $1$. For any $r\geq 1$, we have $h_n(\overline{D_r'})\subset D_r'\cup\{1\}$ and $h_{m,n}(\overline{D_r'})\subset D_r'\cup\{1\}$. In particular, the immediate parabolic basin of $1$ contains the outside of the closed unit disk $D_1'=\EC\setminus\overline{\D}$.
\end{lema}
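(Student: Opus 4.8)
Lemma \ref{lema-h-n-h-mn} asserts that the rational maps
\[
h_n(z)=\frac{n z^n}{1+(n-1)z^n}, \qquad h_{m,n}(z)=\frac{(mn)^{n}z^{mn}}{(1+(mn-1)z^{m})^{n}}
\]
each fix $1$ parabolically with multiplier $1$, and that $h_n(\overline{D_r'})\subset D_r'\cup\{1\}$, $h_{m,n}(\overline{D_r'})\subset D_r'\cup\{1\}$ for every $r\ge 1$, where $D_r'=\EC\setminus\overline{\D}(1-r,r)$.

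The plan is to transport everything to the already-proved Lemma \ref{lema-g-n-g-mn} via the conjugacy $\psi(z)=1/z$. First I would observe the algebraic identities $h_n=\psi\circ g_n\circ\psi^{-1}$ and $h_{m,n}=\psi\circ g_{m,n}\circ\psi^{-1}$. These are direct computations: for instance $\psi^{-1}(z)=1/z$, so $g_n(1/z)=(z^{-n}+n-1)/n=(1+(n-1)z^n)/(n z^n)$, and applying $\psi$ inverts this to exactly $h_n(z)$; the analogous check works for $h_{m,n}$ starting from $g_{m,n}$. Since $\psi(1)=1$ and $\psi'(1)=-1\ne 0$, conjugation by $\psi$ preserves the fixed point $1$ and its multiplier, so $h_n(1)=h_{m,n}(1)=1$ with multiplier $1$; this establishes the parabolicity claim immediately from Lemma \ref{lema-g-n-g-mn}.

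For the invariance of the exterior disks $D_r'$, the key geometric fact is that $\psi(z)=1/z$ is a Möbius map (hence sends circles to circles) that carries the interior disk $D_r=\D(1-r,r)$ to the exterior region $D_{r'}'$ for an appropriate radius $r'$. More precisely, I would verify that $\psi$ maps $\partial D_r$, a circle through $1$ and through $1-2r$, to $\partial D_{r'}'$ where the image radius $r'=r/(2r-1)$ is determined by tracking the two real diameter endpoints (the point $1$ is fixed, and $1-2r\mapsto 1/(1-2r)$). One checks that $r\mapsto r'$ is a decreasing bijection of $(1/2,\infty)$ onto itself, and that the \emph{interior} of $D_r$ (which lies to the left of $1$) maps to the \emph{exterior} region $D_{r'}'$. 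Then the set inclusion $g_n(\overline{D}_r)\subset D_r\cup\{1\}$ from Lemma \ref{lema-g-n-g-mn}, conjugated by $\psi$, becomes $h_n(\overline{D_{r'}'})\subset D_{r'}'\cup\{1\}$; reindexing by $r'$ (which ranges over all of $(1/2,\infty)\supset[1,\infty)$ as $r$ does) gives the stated inclusion for every $r\ge 1$. The same argument applies verbatim to $h_{m,n}$ and $g_{m,n}$. Finally, taking $r'=1$ (i.e. $r=1$, since $r=r'=1$ is the fixed point of the involution $r\mapsto r/(2r-1)$) shows the immediate parabolic basin of $1$ for $h_n$ and $h_{m,n}$ contains $D_1'=\EC\setminus\overline{\D}$.

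The main obstacle, such as it is, is bookkeeping the Möbius image of the disk correctly—specifically confirming that $\psi$ sends the \emph{interior} of $D_r$ to the \emph{unbounded} component $D_{r'}'$ (rather than to a bounded disk), and pinning down the radius relation $r'=r/(2r-1)$ together with its fixed point at $r=1$. Since $0\in D_r$ for $r>1/2$ and $\psi(0)=\infty$, the image region is forced to be the unbounded one, which settles the orientation of the inclusion. Once this correspondence is nailed down, the proof is a one-line transport of Lemma \ref{lema-g-n-g-mn} and requires no new dynamical estimate; this is presumably why the authors write that the result follows from the preceding lemma.
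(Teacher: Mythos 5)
Your proposal is correct and follows essentially the same route as the paper: the authors likewise observe $h_n=\psi\circ g_n\circ\psi^{-1}$ and $h_{m,n}=\psi\circ g_{m,n}\circ\psi^{-1}$ with $\psi(z)=1/z$, note that $\psi(D_r')=D_s$ with $s=r/(2r-1)\leq 1$ for $r\geq 1$, and then invoke Lemma \ref{lema-g-n-g-mn}. Your extra care in checking that the bounded disk goes to the unbounded component (via $0\mapsto\infty$) and that the multiplier is preserved under Möbius conjugation only fills in details the paper leaves implicit.
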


\begin{proof}
By a straightforward calculation, we have $h_n=\psi\circ g_n\circ \psi^{-1}$ and $h_{m,n}=\psi\circ g_{m,n}\circ \psi^{-1}$, where $\psi(z)=1/z$ and $g_n$ and $g_{m,n}$ are defined in \eqref{g_m-g-mn}. For $r\geq 1$, we have $\psi(D_r')=D_s$, where $s=r/(2r-1)\leq 1$. By Lemma \ref{lema-g-n-g-mn}, it is easy to see Lemma \ref{lema-h-n-h-mn} holds. See Figure \ref{Fig_parabolic-polynomial} for the pictures of the Julia sets of $h_4$ and $h_{4,4}$.
\end{proof}

\begin{figure}[!htpb]
  \setlength{\unitlength}{1mm}
  \centering
  \includegraphics[width=65mm]{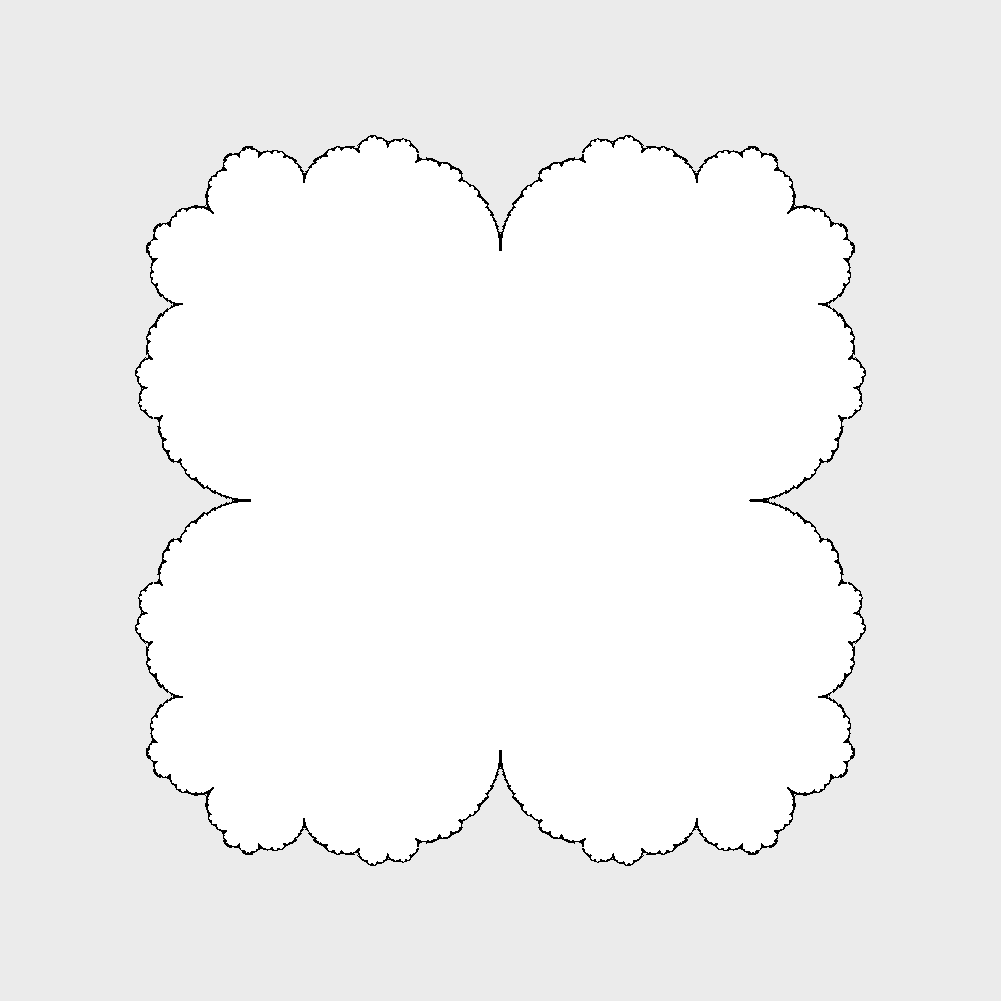}
  \includegraphics[width=65mm]{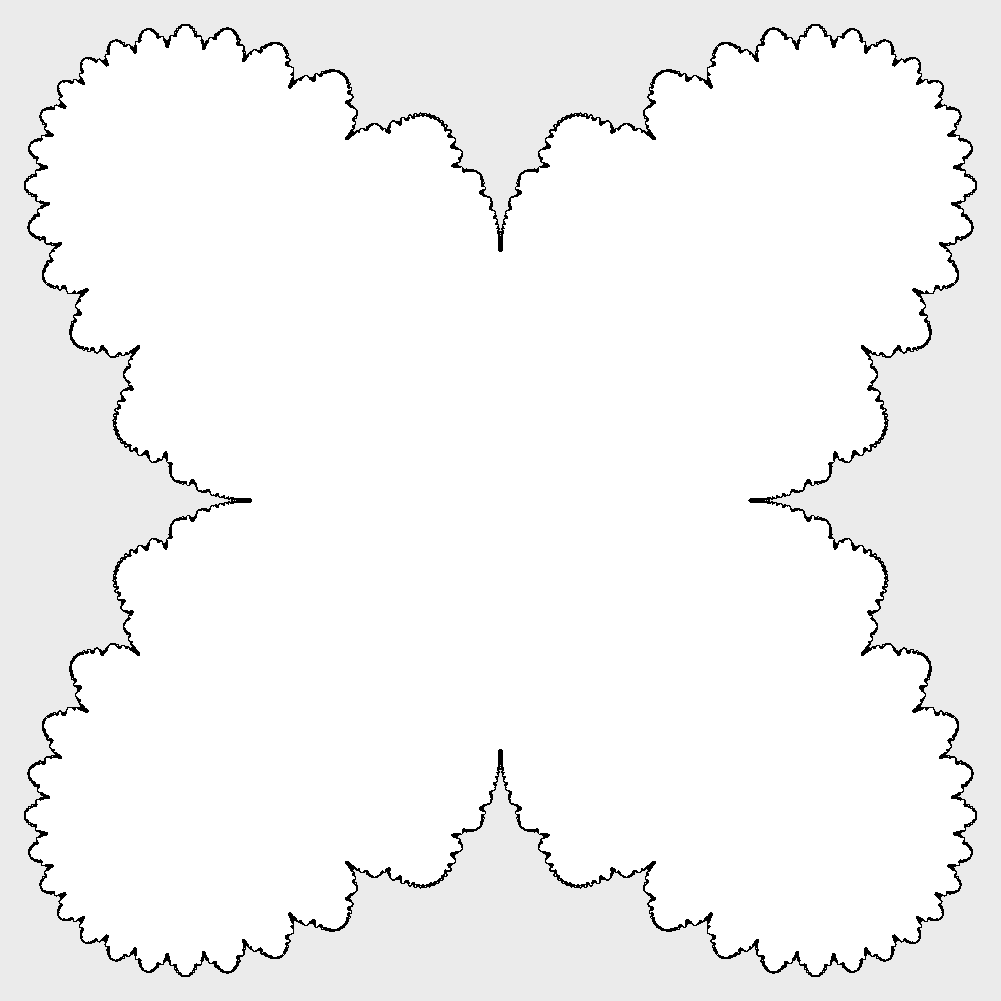}
  \includegraphics[width=65mm]{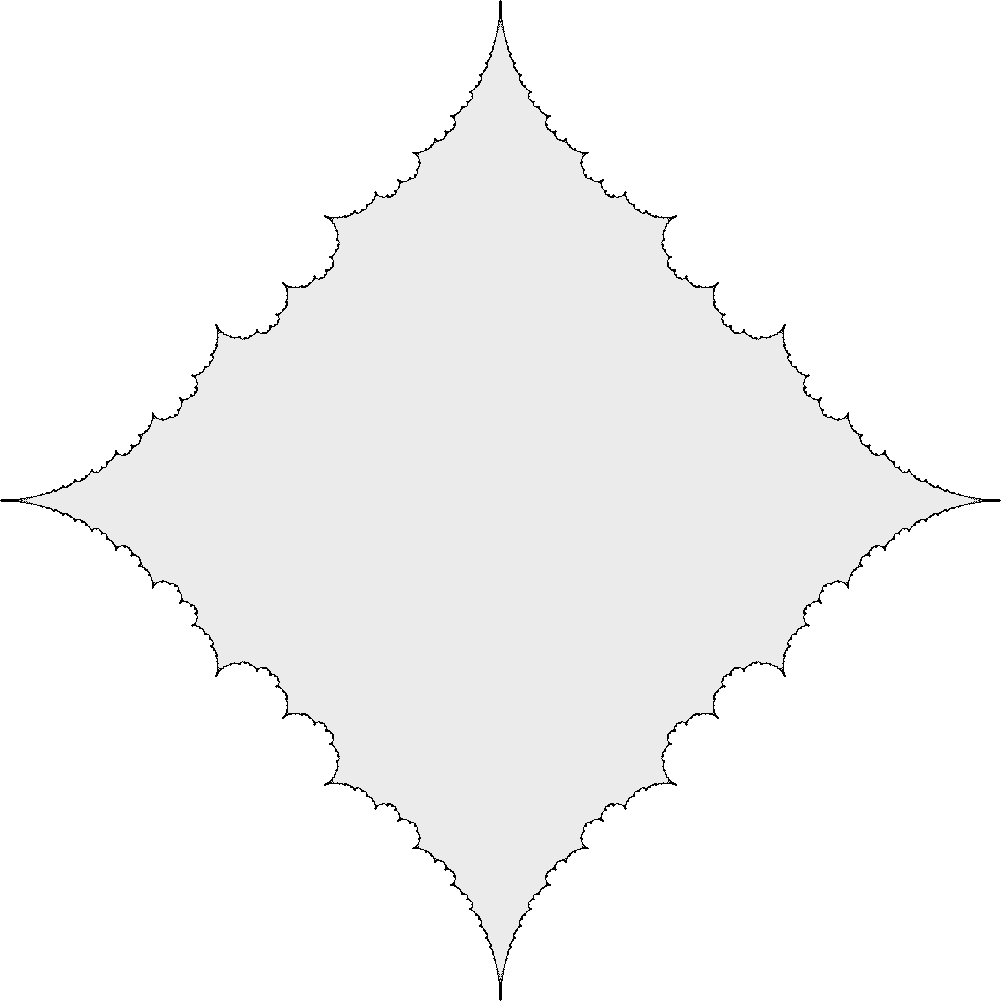}
  \includegraphics[width=65mm]{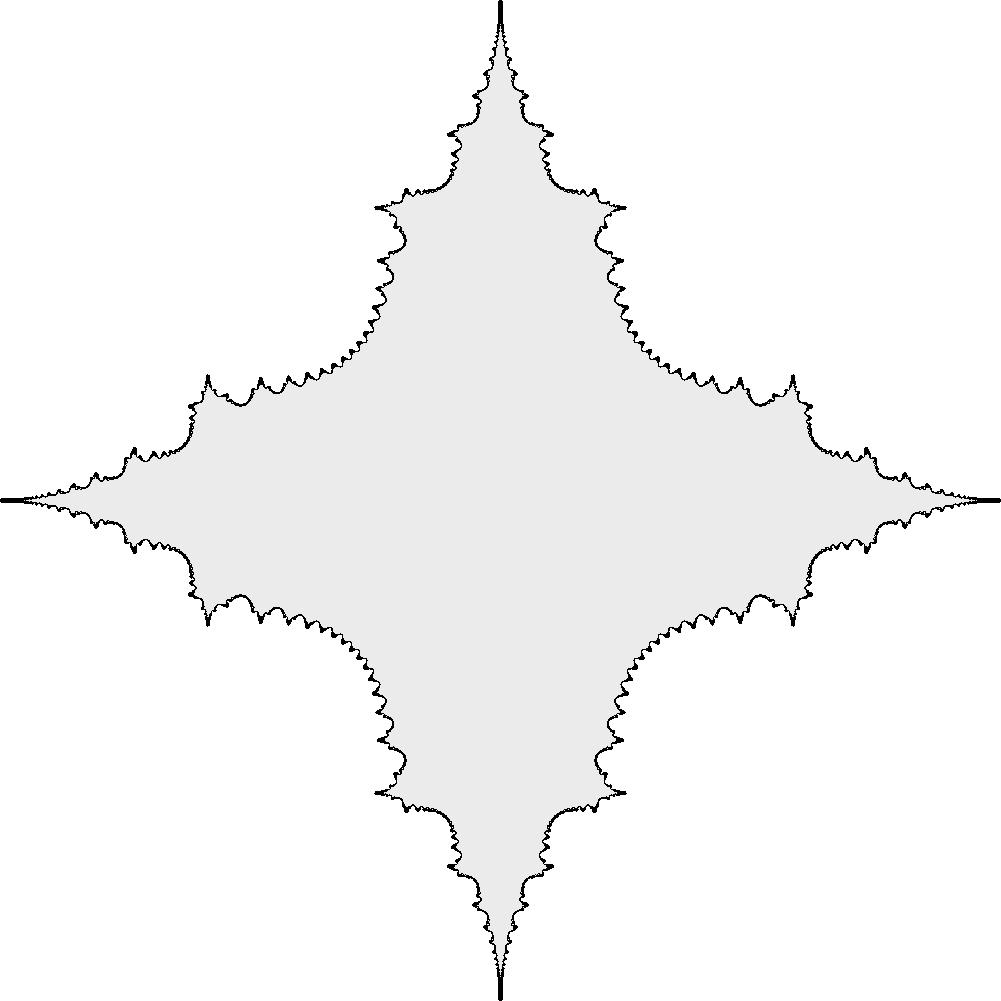}
  \caption{The Julia sets of $g_4$ and $g_{4,4}$ (up two), $h_4$ and $h_{4,4}$ (below two). The immediate parabolic basins of $1$ of $g_4$ and $g_{4,4}$ contain the unit disk and the immediate parabolic basins of $1$ of $h_4$ and $h_{4,4}$ contain the outside of the closed unit disk. Figure ranges:  up two are $[-2,2]^2$ and below two are $[-1,1]^2$.}
  \label{Fig_parabolic-polynomial}
\end{figure}

The following corollary shows that the rational map $P_n$ defined  in \eqref{family-para} has a very similar parabolic basin as that of $h_{d_1}$ if the parameter $s$ in $P_n$ is sufficiently small.

\begin{cor}\label{key-lemma-complex}
For any $r>1$, the open disk $D_r'=\EC\setminus \overline{\D}(1-r,r)$ lies in the immediate parabolic basin of $1$ of $P_n$ and $P_n(\overline{D_r'})\subset D_r'\cup\{1\}$ if the parameter $s>0$ is small enough.
\end{cor}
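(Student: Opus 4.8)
The plan is to deduce the statement from the model map $h_{d_1}$, using Lemmas \ref{lema-h-n-h-mn} and \ref{P_n_limit} away from the parabolic point and the Fatou-coordinate estimate already obtained in the proof of Lemma \ref{lema-approx} near it. Fix $r>1$ throughout. First I would record the elementary geometry: the disks $\D(1-r,r)$ and $\D=\D(0,1)$ are internally tangent at $1$, so $\overline{\D}\subset\overline{\D(1-r,r)}$ and hence $\overline{D_r'}\setminus\{1\}\subset\EC\setminus\overline{\D}=D_1'$, while $0\in\D(1-r,r)$; thus $\overline{D_r'}$ is a compact subset of $\EC\setminus\{0\}$ touching $\partial D_r'$ near $1$ only at $1$. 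By Lemma \ref{lema-h-n-h-mn} (with $n=d_1$) we then have $\overline{D_r'}\setminus\{1\}\subset U$, the immediate parabolic basin of $1$ of $h_{d_1}$, and $h_{d_1}(\overline{D_r'})\subset D_r'\cup\{1\}$; moreover $h_{d_1}(z)=1$ on $\overline{D_r'}$ only at $z=1$, since the solutions of $h_{d_1}(z)=1$ are the $d_1$-th roots of unity and only $1$ lies in $\overline{D_r'}$. Finally, by Lemma \ref{P_n_limit}, $P_n\to h_{d_1}$ locally uniformly on $\EC\setminus\{0\}$.

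The key device is a Möbius coordinate sending $1$ to $\infty$ that turns $D_r'$ into a half-plane. A short expansion at the parabolic fixed point gives $h_{d_1}(z)=1+(z-1)-\tfrac{d_1-1}{2}(z-1)^2+\cdots$, so the leading parabolic coefficient is $c_\infty=-(d_1-1)/2<0$ and the attracting direction at $1$ is $\re(z-1)>0$, i.e. into $D_r'$. Set $w=W(z):=-1/(c_\infty(z-1))$. Since $W$ is Möbius and carries the circle $\partial D_r'$ (which passes through $1$) to a straight line, $W(D_r')$ is exactly the half-plane $H:=\{\re w>b_r\}$ with $b_r=-1/(2r|c_\infty|)<0$, and $W(\overline{D_r'})=\overline{H}$ in $\EC$. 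In this coordinate $F:=W\circ h_{d_1}\circ W^{-1}$ fixes $\infty$ with cauliflower normal form $F(w)=w+1+O(1/w)$, and $F_k:=W\circ P_n\circ W^{-1}\to F$ locally uniformly on $W(\EC\setminus\{0\})$. The invariance claim $P_n(\overline{D_r'})\subset D_r'\cup\{1\}$ is then equivalent to $F_k(\overline{H})\subset H\cup\{\infty\}$, and the basin claim is that $H$ lies in the immediate parabolic basin of $\infty$ of $F_k$.

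To prove $F_k(\overline{H})\subset H\cup\{\infty\}$ I would split $\overline{H}$ into a bounded part and a neighbourhood of $\infty$. On $\overline{H}\cap\{|w|\leq M_0\}$, which corresponds to a compact subset of $\overline{D_r'}\subset\EC\setminus\{0\}$, no point maps to $\infty$ (its $W^{-1}$-image is not a root of unity), so $F(\overline{H}\cap\{|w|\leq M_0\})$ is a compact subset of the open half-plane $H$, at positive distance from $\partial H$; by the local uniform convergence $F_k\to F$ this forces $F_k(\overline{H}\cap\{|w|\leq M_0\})\subset H$ once $s$ is small. On the part near $\infty$ I would invoke the estimate from the proof of Lemma \ref{lema-approx}: for $R,M$ large and $s$ small, $\re F_k(w)>\re w+1/2$ whenever $\re w>R$ or $|\im w|>M$. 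Every $w\in\overline{H}$ with $|w|>M_0$ (for $M_0$ large) meets one of these conditions, so $\re F_k(w)>\re w+1/2\geq b_r+1/2>b_r$, i.e. $F_k(w)\in H$. Combining the two parts yields the invariance assertion.

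The basin assertion then follows. With the petal $\mathcal{P}=\{\re w>R\}$ the same estimate gives $F_k(\mathcal{P})\subset\mathcal{P}$; and since $\overline{D_r'}\setminus\{1\}\subset U$ contains no fixed point of $h_{d_1}$, the half-plane $H$ contains no fixed point of $F$ other than $\infty$, so $|F(w)-w|$ is bounded below on every bounded part of $H$, and for small $s$ the same holds for $F_k$. Hence every forward orbit starting in $H$ leaves each bounded part of $H$, eventually enters $\mathcal{P}$, and converges to $\infty$; translating back, $D_r'$ lies in the immediate parabolic basin of $1$ of $P_n$. The main obstacle is precisely the behaviour near $1$ (near $\infty$ in the $w$-coordinate): there the margin between $h_{d_1}(\overline{D_r'})$ and $\partial D_r'$ collapses quadratically while $\|P_n-h_{d_1}\|$ does not, so crude uniform closeness is useless, and it is the exact half-plane description of $D_r'$ together with the unit-translation estimate $\re F_k(w)>\re w+1/2$ supplied by Lemma \ref{lema-approx} that resolves it.
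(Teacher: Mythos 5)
Your argument is correct in substance and follows essentially the same route as the paper: both pass to a M\"obius coordinate that sends the tangency point $1$ to $\infty$ and turns $D_r'$ into a half-plane, treat the part of $\overline{D_r'}$ away from $1$ by the locally uniform convergence $P_n\to h_{d_1}$ together with Lemma \ref{lema-h-n-h-mn}, and treat the part near $1$ by the unit-translation estimate from the proof of Lemma \ref{lema-approx}. (The paper uses $\varphi(z)=\lambda(1/z-1)$ followed by $\xi(z)=-1/z$; your $W$ is that composition up to an inessential factor.) Your two-region proof of the invariance $P_n(\overline{D_r'})\subset D_r'\cup\{1\}$ actually spells out the point the paper passes over in one line; the only caveat is that the proof of Lemma \ref{lema-approx} states the estimate $\textup{Re}(F_k(w))>\textup{Re}(w)+1/2$ for the perturbed maps only on $\HR_R$, so you should note that it extends to $\overline{H}\cap\{|w|>M_0\}$ (including the strip $b_r\leq\textup{Re}(w)\leq R$ with $|\textup{Im}(w)|$ large) via the uniform expansion $F_k(w)=w+a_{2,k}+O(1/w)$ near $\infty$.

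The one place you genuinely depart from the paper is the basin claim, and there your argument has a gap. The paper obtains it directly from the second half of Lemma \ref{lema-approx}: $\overline{\varphi(D_r')}$ is a compact subset of $V=\{z\in U:\textup{Re}(z)<0\}\cup\{0\}$, so $\overline{\varphi(D_r')}\setminus\{0\}$ lies in the immediate basin of the perturbed germ. You instead re-derive this dynamically, but the implication ``$|F_k(w)-w|$ is bounded below on every bounded part of $H$, hence every forward orbit leaves each bounded part of $H$'' is not valid: a uniform lower bound on the displacement does not prevent an orbit from circulating forever inside a bounded region (a rotation already defeats it). To close this, either invoke Lemma \ref{lema-approx} as the paper does, or use normality: the invariance you have already established shows that all iterates $P_n^{\circ m}(D_r')$ avoid $\D(1-r,r)$, so $D_r'$ lies in the Fatou set, hence in a single invariant Fatou component; that component contains the petal $W^{-1}(\mathcal{P})$, on which orbits converge to the boundary point $1$, so by the classification of invariant Fatou components it must be the immediate parabolic basin of $1$.
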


\begin{proof}
By Lemma \ref{P_n_limit}, $P_n$ converges to the parabolic rational map $h_{d_1}$ locally uniformly in $\EC\setminus \{0\}$ as the parameter $s>0$ tends to zero. Let $\varphi(z)=\lambda(1/z-1)$, where $\lambda=(d_1-1)/2$. Then $\varphi\circ P_n\circ\varphi^{-1}$ converges to
\begin{equation}
\widehat{h}_{d_1}(z):=\varphi\circ h_{d_1}\circ \varphi^{-1}(z)=\sum_{k=1}^{d_1}\frac{\lambda C_{d_1}^k}{d_1}\left(\frac{z}{\lambda}\right)^k=z+z^2+\frac{2(d_1-2)}{3(d_1-1)^2}z^3+\cdots
\end{equation}
locally uniformly on $\C$. The disk $\D(-\lambda,\lambda)$ is contained in the immediate parabolic basin of $0$ of $\widehat{h}_{d_1}$ since $h_{d_1}(\EC\setminus\overline{\D})\subset\EC\setminus\overline{\D}$. Since $\D\subset\D(1-r,r)$ for $r>1$, we have $\varphi(D_r')\subset\D(-\lambda,\lambda)$. Note that $\varphi(D_r')$ is a round disk whose boundary contains $0$. Then the closed disk $\overline{\varphi(D_r')}$ satisfies the condition in Lemma \ref{lema-approx} and $\overline{\varphi(D_r')}\setminus\{0\}$ is contained in the immediate parabolic basin of $0$ of $\varphi\circ P_n\circ \varphi^{-1}$ if the parameter $s$ in $P_n$ is small enough. This means that the open disk $D_r'$ lies in the immediate parabolic basin of $1$ of $P_n$ if $s$ is small enough.

By Lemma \ref{lema-h-n-h-mn}, we have $h_{d_1}(\overline{D_r'})\subset D_r'\cup\{1\}$. Let $\xi(z)=-1/z$. Then $\xi(\varphi(D_r'))=\mathbb{H}_{s}$ for some $s\in\R$ and $\xi\circ\widehat{h}_{d_1}\circ\xi^{-1}(\overline{\mathbb{H}}_s)\subset \mathbb{H}_s$. This means that $\xi\circ\varphi\circ P_n\circ\varphi^{-1}\circ\xi^{-1}(\overline{\mathbb{H}}_s)\subset \mathbb{H}_s$ since $P_n$ converges to $h_{d_1}$ locally uniformly on $\EC\setminus\{0\}$ as $s$ tends to zero by Lemma \ref{P_n_limit}. Therefore, we have $P_n(\overline{D_r'})\subset D_r'\cup\{1\}$. The proof is complete.
\end{proof}

In order to understand the dynamical behaviors of $P_n$, we now locate the critical points of $P_n$ (Lemma \ref{crit-close-Parameter}) and study the corresponding critical orbits (Lemma \ref{lemma-want}). It is easy to see $0$ and $\infty$ are critical points of $P_n$ with multiplicity $d_n-1$ and $d_1-1$ respectively, and the degree of $P_n$ is $\sum_{i=1}^{n}d_i$. Recall that $D_i:=d_i+d_{i+1}$ for $1\leq i\leq n-1$. Besides $0$ and $\infty$, by \eqref{solu-crit-parabolic}, the remaining $\sum_{i=1}^{n-1}D_i$ critical points of $P_n$ are the solutions of following equation:
\begin{equation}\label{equ-solu-F_n}
\sum_{i=1}^{n-1}\frac{(-1)^{i-1}D_i z^{D_i}}{z^{D_i}-a_i^{D_i}}+(-1)^{n-1} d_n-\frac{(d_1-1)d_1z^{d_1}}{(d_1-1)z^{d_1}+1}=0.
\end{equation}
For $1\leq i\leq n-1$, define $r_i:=\sqrt[D_i]{d_{i+1}/d_{i}}$ and let
\begin{equation}\label{defi-w-i-j}
\widetilde{\Crit}_i:=\{\widetilde{w}_{i,j}=r_i a_i e^{\pi \textup{i}(2j-1)/D_i}:1\leq j\leq D_i\}
\end{equation}
be the collection of $D_i$ points lying on the circle $\mathbb{T}_{r_i|a_i|}=\{z:|z|=r_i|a_i|\}$ uniformly.
The following Lemma \ref{crit-close-Parameter} shows that the positions of the remaining $\sum_{i=1}^{n-1}D_i$  critical points of $P_n$ are very `close' to $\bigcup_{i=1}^{n-1} \widetilde{\Crit}_i$.

\begin{lema}\label{crit-close-Parameter}
For any $\widetilde{w}_{i,j}\in\widetilde{\Crit}_i$, there exists $w_{i,j}$, which is a solution of \eqref{equ-solu-F_n}, such that $|w_{i,j}-\widetilde{w}_{i,j}|<s^{1/2}|a_i|$ if $s$ is small enough, where $1\leq i\leq n-1$ and $1\leq j\leq D_i$.
\end{lema}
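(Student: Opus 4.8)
\emph{Proof idea.} The plan is to work one level at a time: fix $1\le i\le n-1$ and $1\le j\le D_i$, and apply Rouché's theorem on the small disk $B_{i,j}:=\D(\widetilde{w}_{i,j},s^{1/2}|a_i|)$. Writing $F_n(z)$ for the left-hand side of \eqref{equ-solu-F_n}, I would compare $F_n$ with the single-level model
\[
G_i(z):=\frac{(-1)^{i-1}D_i z^{D_i}}{z^{D_i}-a_i^{D_i}}+(-1)^i d_{i+1}=(-1)^{i-1}\,\frac{d_i z^{D_i}+d_{i+1}a_i^{D_i}}{z^{D_i}-a_i^{D_i}}.
\]
Since each $\widetilde{w}_{i,l}$ satisfies $\widetilde{w}_{i,l}^{D_i}=-(d_{i+1}/d_i)\,a_i^{D_i}$, the numerator factors as $d_i\prod_{l=1}^{D_i}(z-\widetilde{w}_{i,l})$, so the zeros of $G_i$ are exactly the $D_i$ points of $\widetilde{\Crit}_i$, of which only $\widetilde{w}_{i,j}$ lies in $B_{i,j}$, and it is simple. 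The poles of $G_i$ sit at radius $|a_i|$ (versus $r_i|a_i|$ for the zeros) and at interlaced angles, hence a distance $\asymp|a_i|$ from $\widetilde{w}_{i,j}$, so $G_i$ is holomorphic and zero-free on $\partial B_{i,j}$.

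The first key step is the algebraic identity that makes $G_i$ the correct model, namely
\[
\sum_{k=i+1}^{n-1}(-1)^{k-1}D_k+(-1)^{n-1}d_n=(-1)^i d_{i+1},
\]
which follows by telescoping $D_k=d_k+d_{k+1}$. Writing each level $k>i$ as $(-1)^{k-1}D_k+(-1)^{k-1}D_k a_k^{D_k}/(z^{D_k}-a_k^{D_k})$ and using this identity, the constant part of $F_n-G_i$ cancels, leaving only the levels $k<i$, the tails of the levels $k>i$, and the parabolic term $-(d_1-1)d_1z^{d_1}/((d_1-1)z^{d_1}+1)$. On $B_{i,j}$ one has $|z|\asymp|a_i|$, so Lemma \ref{very-useful-est}(2) bounds the $k<i$ terms by $\asymp|a_i/a_k|^{D_k}\preceq s^{1+2/\dm}$ and the $k>i$ tails by $\asymp|a_k/a_i|^{D_k}\preceq s^{1+2/\dm}$, while $|z|\ll1$ bounds the parabolic term by $\asymp|a_i|^{d_1}\preceq s$; altogether $|F_n-G_i|\preceq s$ uniformly on $\overline{B_{i,j}}$.

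The second key step is a matching \emph{lower} bound for $|G_i|$ on $\partial B_{i,j}$. From the factorization, on this circle the distinguished factor is $|z-\widetilde{w}_{i,j}|=s^{1/2}|a_i|$, while the remaining $D_i-1$ numerator factors and all $D_i$ denominator factors $|z-a_ie^{2\pi\textup{i}k/D_i}|$ are of size $\asymp|a_i|$: consecutive points of $\widetilde{\Crit}_i$ are $\asymp|a_i|$ apart (as $r_i\asymp1$) and the poles stay a distance $\asymp|a_i|$ away, all of which dominate $s^{1/2}|a_i|$ once $s$ is small. Hence $|G_i(z)|\asymp s^{1/2}|a_i|^{D_i}/|a_i|^{D_i}=s^{1/2}$ on $\partial B_{i,j}$. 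Moreover every pole of $F_n$ (from the other levels, sitting at radius $\asymp|a_k|\ne|a_i|$, and from $(d_1-1)z^{d_1}+1=0$, sitting at radius $\asymp1$) stays a distance $\asymp|a_i|$ or $\asymp1$ from $B_{i,j}$, so $F_n$ too is holomorphic on $\overline{B_{i,j}}$.

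Since $s\ll s^{1/2}$, we obtain $|F_n-G_i|<|G_i|$ on $\partial B_{i,j}$ for all sufficiently small $s$, and Rouché's theorem gives that $F_n$ has exactly as many zeros in $B_{i,j}$ as $G_i$, namely one; this zero is the required critical point $w_{i,j}$ with $|w_{i,j}-\widetilde{w}_{i,j}|<s^{1/2}|a_i|$. The main obstacle is the bookkeeping behind the two competing estimates: one must verify that the simple zero of $G_i$ forces a lower bound of exact order $s^{1/2}$ that strictly beats the accumulated perturbation of order $s$, and that the disks $B_{i,j}$ are mutually disjoint and avoid every pole of $F_n$. Both rest entirely on the strong separation of scales $1\gg|a_1|\gg\cdots\gg|a_{n-1}|>0$, which is what keeps all the off-level contributions polynomially small in $s$.
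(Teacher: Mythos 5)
Your argument is correct and follows essentially the same route as the paper: the identical splitting of the left side of \eqref{equ-solu-F_n} into the level-$i$ model $(-1)^{i-1}\bigl(D_iz^{D_i}/(z^{D_i}-a_i^{D_i})-d_{i+1}\bigr)$ plus an error of size $\preceq s$ (controlled by Lemma \ref{very-useful-est}(2), the bound $|z|^{d_1}\preceq s$, and the telescoping identity \eqref{estim-3}), followed by Rouch\'e. The only difference is technical: the paper multiplies through by $(z^{D_i}-a_i^{D_i})/d_i$ and applies Rouch\'e to the polynomial $z^{D_i}+a_i^{D_i}d_{i+1}/d_i$ on the sublevel set $\Omega_i=\{|z^{D_i}+a_i^{D_i}d_{i+1}/d_i|\le s^{1/2}|a_i|^{D_i}\}$, converting back via Lemma \ref{very-useful-est}(3), whereas you keep the rational model on round disks $\D(\widetilde{w}_{i,j},s^{1/2}|a_i|)$, which costs you the extra lower bound $|G_i|\asymp s^{1/2}$ from the factorization but localizes exactly one critical point near each $\widetilde{w}_{i,j}$.
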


\begin{proof}
By a direct calculation, the equation \eqref{equ-solu-F_n} is equivalent to
\begin{equation}\label{solu-crit-111}
(-1)^{i-1}\left(\frac{D_i z^{D_i}}{z^{D_i}-a_i^{D_i}}-d_{i+1}\right)+G_i(z)=0,
\end{equation}
where
\begin{equation}\label{G_n}
G_{i}(z)=\sum_{1\leq j\leq n-1,\,j\neq i}\frac{(-1)^{j-1}D_j z^{D_j}}{z^{D_j}-a_j^{D_j}}+(-1)^{i-1}d_{i+1}+(-1)^{n-1} d_n -\frac{(d_1-1)d_1z^{d_1}}{(d_1-1)z^{d_1}+1}.
\end{equation}
Multiplying both sides of \eqref{solu-crit-111} by $(z^{D_i}-a_i^{D_i})/d_i$, where $1\leq i\leq n-1$, we have
\begin{equation}\label{solu-crit-3}
(-1)^{i-1}(z^{D_i}+a_i^{D_i}d_{i+1}/d_i)+(z^{D_i}-a_i^{D_i})\,G_{i}(z)/d_i=0.
\end{equation}

For each $1\leq i\leq n-1$, define $\Omega_i:=\{z:|z^{D_i}+a_i^{D_i}d_{i+1}/d_{i}|\leq s^{1/2}|a_i|^{D_i}\}$.
By Lemma \ref{very-useful-est}(1), if $z\in\Omega_i$, we have
\begin{equation}\label{estim-0}
|z|^{d_1}\asymp |a_i|^{d_1}\asymp s^{\sum_{j=1}^i(d_1/d_j)}\preceq s.
\end{equation}
For $1\leq j<i$ and $z\in\Omega_i$, by Lemma \ref{very-useful-est}(2), we have
\begin{equation}\label{estim-1}
|z/a_j|^{D_j}\asymp |a_i/a_j|^{D_j}\asymp s^{\alpha_2}\leq s^{1+2/\dm},
\end{equation}
where $\alpha_2$ is defined in \eqref{alpha_2}. Similarly, if $i<j\leq n-1$ and $z\in\Omega_i$,  by Lemma \ref{very-useful-est}(2) and \eqref{alpha_1}, we have
\begin{equation}\label{estim-2}
|a_j/z|^{D_j}\asymp |a_j/a_i|^{D_j}\asymp s^{\alpha_1}\leq s^{1+2/\dm}.
\end{equation}

Since $D_i=d_i+d_{i+1}$ for $1\leq i\leq n-1$, we have
\begin{equation}\label{estim-3}
\sum_{i< j\leq n-1}(-1)^{j-1}D_j+(-1)^{i-1}d_{i+1}+(-1)^{n-1} d_n=0.
\end{equation}
From \eqref{G_n}, \eqref{estim-0}, \eqref{estim-1}, (\ref{estim-2}) and \eqref{estim-3}, if $z\in\Omega_i$, we have
\begin{equation*}\label{bound}
|G_{i}(z)|=   \left|\sum_{1\leq j<i}\frac{(-1)^{j}D_j (z/a_j)^{D_j}}{1-(z/a_j)^{D_j}}+
                   \sum_{i< j\leq n-1}\frac{(-1)^{j-1}D_j(a_j/z)^{D_j}}{1-(a_j/z)^{D_j}}
                  -\frac{(d_1-1)d_1z^{d_1}}{(d_1-1)z^{d_1}+1}\right| \preceq s.
\end{equation*}
This means that if $z\in\Omega_i$, we have
\begin{equation}\label{estim-4}
|z^{D_i}-a_i^{D_i}|\cdot|\,G_{i}(z)|/d_i \preceq|a_i|^{D_i}s.
\end{equation}

By \eqref{solu-crit-3}, \eqref{estim-4} and the definition of $\Omega_i$, there exist $D_i$ solutions $w_{i,j}$ of \eqref{equ-solu-F_n} such that $w_{i,j}\in\Omega_i$ by Rouch\'{e}'s Theorem, where $1\leq j\leq D_i$. By Lemma \ref{very-useful-est}(3) and \eqref{defi-w-i-j}, we have $|w_{i,j}-\widetilde{w}_{i,j}|<s^{1/2}|a_i|$ if $s$ is small enough. The proof is complete.
\end{proof}

For $1\leq i\leq n-1$, we use $\Crit_i:=\{w_{i,j}: 1\leq j\leq D_i\}$ to denote the collection of $D_i$ critical points of $P_n$ which is near the circle $\mathbb{T}_{r_i|a_i|}$.
A connected set $X\subset \overline{\mathbb{C}}$ is said \textit{separates} $0$ from $\infty$ if $0$ and $\infty$ lie in the two different components of $\overline{\mathbb{C}}\setminus X$ respectively. Let $X$ and $Y$ be two disjoint sets that both separate $0$ from $\infty$ respectively. We denote $X\prec Y$ if $X$ is contained in the component of $\overline{\mathbb{C}}\setminus Y$ which contains $0$.

\begin{lema}\label{lemma-want}
If $s>0$ is small enough, there exist two simply connected domains $U_0$ and $U_\infty$ containing $0$ and $\infty$, respectively, and $n-1$ annuli $A_1,\cdots, A_{n-1}$ satisfying $A_j\prec A_i$ for $1\leq i<j\leq n-1$, such that

$(1)$ $\mathbb{T}_{|a_i|}\cup \mathbb{T}_{r_i|a_i|} \cup \Crit_i\subset\subset A_i$ for $1\leq i\leq n-1$;

$(2)$ $P_n(\overline{A}_i)\subset U_0$ for odd $i$ and $P_n(\overline{A}_i)\subset U_\infty$ for even $i$;

$(3)$ $P_n(\overline{U}_0)\subset U_0$ for odd $n$ and $P_n(\overline{U}_0)\subset U_\infty$ for even $n$; and

$(4)$ $P_n(\overline{U}_\infty)\subset U_\infty\cup\{1\}$.
\end{lema}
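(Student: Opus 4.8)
The plan is to build the three regions by hand and then read off all four properties from the size of $P_n$ on each of them. I would set $U_\infty=D_r'=\EC\setminus\overline{\D}(1-r,r)$ for a single $r>1$ to be pinned down in the even case below; property $(4)$ is then immediate from Corollary \ref{key-lemma-complex}, which holds for every $r>1$. For $U_0$ I would take a round disk $\D(0,\delta)$ with $\delta=\delta(s)$ chosen last, and for the annuli the concentric family $A_i=\{\lambda_i^{-1}|a_i|<|z|<\lambda_i|a_i|\}$, where $\lambda_i>\max(1,r_i)$ is a constant depending only on the degrees. Since $r_i=(d_{i+1}/d_i)^{1/D_i}$ is of order $1$ and, by Lemma \ref{crit-close-Parameter}, $\Crit_i$ lies within $s^{1/2}|a_i|$ of $\mathbb{T}_{r_i|a_i|}$, property $(1)$ holds for small $s$; and since $|a_{i+1}|/|a_i|=s^{1/d_{i+1}}\to0$ by Lemma \ref{very-useful-est}, the $A_i$ are pairwise disjoint, ordered $A_{i+1}\prec A_i$, and disjoint from $U_0,U_\infty$ once $s$ is small. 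Everything then reduces to estimating $|P_n|$ on $\overline{A}_i$, $\overline{U}_0$ and $\overline{U}_\infty$.

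The heart of the matter is the local behaviour at scale $|a_i|$. Substituting $z=a_iw$ with $\lambda_i^{-1}\le|w|\le\lambda_i$ and using Lemma \ref{very-useful-est} to replace $z^{D_j}-a_j^{D_j}$ by $-a_j^{D_j}$ for $j<i$, by $z^{D_j}$ for $j>i$, and $(d_1-1)z^{d_1}+1$ by $1$ -- each replacement costing only a multiplicative factor $1+O(s^{c})$ with $c>0$ -- I obtain
\begin{equation*}
P_n(z)-B_n=C_i\,w^{(-1)^id_{i+1}}\,(w^{D_i}-1)^{(-1)^{i-1}}\bigl(1+O(s^{c})\bigr).
\end{equation*}
Collecting the powers of the $a_j$ and invoking the identity \eqref{estim-3}, the exponent of $|a_i|$ collapses to $(-1)^{i-1}d_i$ and the $a_j$-powers conspire to give $|C_i|\asymp\dm^{2}s^{E_i}$, where $E_i=(-1)^{i-1}d_i\sigma_i+\sum_{j<i}(-1)^{j-1}D_j\sigma_j$ with $\sigma_j=\sum_{k\le j}1/d_k$, and where the factor $\dm^{2}$ is precisely the normalization built into \eqref{range-s}. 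A short computation using $\sigma_{i+1}-\sigma_i=1/d_{i+1}$ and $D_i-d_i=d_{i+1}$ gives $E_{i+1}-E_i=(-1)^i$, and since $E_1=1$ we conclude $E_i=1$ for odd $i$ and $E_i=0$ for even $i$.

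For odd $i$ the factor $w^{-d_{i+1}}(w^{D_i}-1)$ is bounded on the compact annulus, so $|P_n(z)-B_n|\preceq\dm^2 s\to0$; as $|B_n|\preceq s^{\beta}$ with $\beta>1$, the image $P_n(\overline{A}_i)$ lies in $\D(0,\delta)=U_0$ as soon as $\delta\succeq\dm^2 s$. The even case is the genuinely delicate step, since there $E_i=0$ and the images stay at modulus of order one rather than escaping to $\infty$; what saves it is exactly the $\dm^2$ in \eqref{range-s}. On $\overline{A}_i$ the factor $|w|^{d_{i+1}}/|w^{D_i}-1|$ has a positive minimum $\gtrsim1/\dm$ (it only blows up at the roots of unity, which helps), so $|C_i|\asymp\dm^2$ forces $|P_n(z)-B_n|$ below by a constant $m_0$ that grows with $\dm$. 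I then fix $r>1$ with $2r-1<m_0$, so that every image point, having modulus $\ge m_0>2r-1$, lies outside $\overline{\D}(1-r,r)$, i.e.\ in $D_r'=U_\infty$; this is the even half of $(2)$. Note that this is a pure modulus bound, so the arguments of the $a_i$ are irrelevant.

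It remains to fix $\delta$ and prove $(3)$. Near the origin $P_n(z)=B_n+\kappa\,z^{(-1)^{n-1}d_n}(1+o(1))$, and the same bookkeeping gives $|\kappa|\asymp s^{F}$ with $F=1-d_n\sigma_n$ for odd $n$ and $F=d_n\sigma_n$ for even $n$, where $\sigma_n=\sum_{k=1}^n1/d_k<1$ by hypothesis. For odd $n$ the origin sits in the basin of an attracting fixed point $z^\ast\approx B_n$, and $\D(0,\delta)$ maps into itself exactly when $3|B_n|<\delta\preceq s^{-F/(d_n-1)}$; the inequality $\sigma_n<1$ is what makes $-F/(d_n-1)<1$, so this window overlaps the window $\delta\succeq\dm^2 s$ demanded above and a single $\delta$ serves, giving $P_n(\overline{U}_0)\subset U_0$. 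For even $n$ the origin is a pole, $|P_n(z)|\gtrsim s^{d_n\sigma_n}\delta^{-d_n}$ on $\overline{\D}(0,\delta)$, which exceeds $2r-1$ once $\delta\preceq s^{\sigma_n}$; again $\sigma_n<1$ leaves room for $\dm^2 s\preceq\delta\preceq s^{\sigma_n}$, so $P_n(\overline{U}_0)\subset U_\infty$. With $\delta$ so chosen, $(3)$ holds for both parities and $(4)$ is Corollary \ref{key-lemma-complex} for the fixed $r$. The main obstacle throughout is the even case of $(2)$: unlike in the hyperbolic model the even annuli are sent to an $O(1)$ neighbourhood, and it is the magnitude boost $\dm^2$ of \eqref{range-s} that lifts these images clear of the round disk defining $U_\infty$.
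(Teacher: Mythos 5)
Your construction and all of the exponent bookkeeping coincide with the paper's own proof: the identity $E_i=1$ for odd $i$ and $E_i=0$ for even $i$ is exactly the content of the paper's display \eqref{abs-f-n-yang}, where the same quantity appears as $\dm^2 s\,|(z/a_i)^{d_i}-(a_i/z)^{d_{i+1}}|^{\pm1}H_i(z)$; your treatment of $U_0$ via the local model $B_n+\kappa z^{\pm d_n}$ reproduces \eqref{bound-lower-in-disk-even} and its even-$n$ counterpart with the same exponents (your conditions $-F/(d_n-1)<1$ and $\sigma_n<1$ are the paper's $\ell>1$); and $(4)$ is quoted from Corollary \ref{key-lemma-complex} in both. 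The only cosmetic difference is that the paper fixes $r=2$ and $\delta=\dm^4 s$ at the outset instead of leaving them as parameters.

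The one step that is not yet a proof is precisely the one you single out as delicate: the lower bound on the even annuli. You take $A_i=\{\lambda_i^{-1}|a_i|<|z|<\lambda_i|a_i|\}$ with $\lambda_i$ merely a constant depending on the degrees, and assert that $|w|^{d_{i+1}}/|w^{D_i}-1|$ has minimum $\succeq 1/\dm$ on $\lambda_i^{-1}\le|w|\le\lambda_i$. But that minimum is attained at the \emph{edges} of the annulus, where it is of size $\lambda_i^{-d_i}$ (outer edge) or $\lambda_i^{-d_{i+1}}$ (inner edge), not near the roots of unity; for a generic constant $\lambda_i$ it can be far below $1/\dm$, so $m_0=\dm^2\cdot\min(\cdots)$ can drop below $1$, and then no $r>1$ with $2r-1<m_0$ exists — indeed $D_r'\subset\{|z|>1\}$ for every $r>1$, so images of modulus $\le 1$ cannot land in any admissible $U_\infty$ of this form. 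The paper avoids this by taking $A_i$ only $2s^{1/2}|a_i|$ wider than the region between $\mathbb{T}_{|a_i|}$ and $\mathbb{T}_{r_i|a_i|}$, which gives $|w|^{d_i}+|w|^{-d_{i+1}}<\dm/2+1$ as in \eqref{esti-other} and hence the reciprocal bound $>1/\dm$ and $|P_n|>\dm\ge3$, so $r=2$ works (see \eqref{P_n-est-1}). Your argument is repaired by the same shrinking: any $\lambda_i$ with $\lambda_i^{\max(d_i,d_{i+1})}\preceq\dm$ suffices, and such $\lambda_i>\max(r_i,1/r_i)$ exist because $D_i>\max(d_i,d_{i+1})$. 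Note also that you need $\lambda_i>\max(r_i,1/r_i)$ rather than $\max(1,r_i)$ for $A_i$ to contain $\mathbb{T}_{r_i|a_i|}$ when $r_i<1$.
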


\begin{proof}
(1) For every $1\leq i\leq n-1$, define the annulus
\begin{equation*}
A_i:=\{z:(\min\{r_i,1\}-2s^{1/2})|a_i|<|z|<(\max\{r_i,1\}+2s^{1/2})|a_i|\},
\end{equation*}
where $r_i=\sqrt[D_i]{d_{i+1}/d_{i}}$ and $s>0$ is small enough. Obviously, $\mathbb{T}_{|a_i|}\cup \mathbb{T}_{r_i|a_i|} \cup \Crit_i\subset\subset A_i$. Moreover, $\overline{A}_i\cap \overline{A}_j=\emptyset$ and $A_j\prec A_i$ if $1\leq i<j\leq n-1$ since $0<|a_j|\ll|a_i|$.

(2) By the definition of $r_i$, we have
\begin{equation*}
(2/\dm)^\frac{1}{D_i}\leq \min\{r_i,1\}\leq \max\{r_i,1\}\leq (\dm/2)^\frac{1}{D_i}.
\end{equation*}
This means that if $z\in \overline{A}_i$ and $s$ is small enough, then
\begin{equation}\label{esti-other}
|z/a_i|^{d_{i}}+|a_i/z|^{d_{i+1}}<\dm/2+1<\dm
\end{equation}
since $\dm\geq 3$ and at least one of $|z/a_i|$ and $|z/a_i|$ is less or equal to $1$.
By the definition of $P_n$, define
\begin{equation}\label{Phi-n}
\Phi_n(z): = \frac{P_n(z)-B_n}{A_n}\cdot\frac{(d_1-1)z^{d_1}+1}{d_1}.
\end{equation}
If $z\in \overline{A}_i$, by \eqref{esti-other}, we have
\begin{equation}\label{abs-f-n-yang}
\begin{split}
&|\Phi_n(z)| =
   |z|^{(-1)^{n-1} d_n}\,|z^{D_i}-a_i^{D_i}|^{(-1)^{i-1}}
        \,\prod_{j=1}^{i-1}|a_j|^{(-1)^{j-1}D_j}
           \prod_{j=i+1}^{n-1}|z|^{(-1)^{j-1}D_j}\cdot H_i(z)\\
= &
\left\{                         
\begin{array}{ll}               
\dm^2s\, |(z/a_i)^{d_i}-(a_i/z)^{d_{i+1}}| \ H_i(z) < \dm^2(\dm/2+1) s H_i(z)        &~\text{if}~i~\text{is odd}, \\
\dm^2\, |(z/a_i)^{d_i}-(a_i/z)^{d_{i+1}}|^{-1} \ H_i(z) >\dm H_i(z)  &~\text{if}~i~\text{is even},  
\end{array}                     
\right.                         
\end{split}
\end{equation}
where
\begin{equation*}\label{Q-i-yang}
H_i(z)=\prod_{j=1}^{i-1}\left|1-({z}/{a_j})^{D_j}\right|^{(-1)^{j-1}}
       \prod_{j=i+1}^{n-1}\left|1-({a_j}/{z})^{D_j}\right|^{(-1)^{j-1}}.
\end{equation*}
If $z\in \overline{A}_i$ for $1\leq i\leq n-1$, by Lemma \ref{very-useful-est}(2), we have \eqref{estim-1} and \eqref{estim-2} since $|z|\asymp|a_i|$. This means that
\begin{equation}\label{Q-i-esti-1-yang}
|H_i(z)-1|\preceq s^{1+2/\dm}.
\end{equation}

By Lemma \ref{para-fixed}(2), we have $|A_n-1| \preceq  s^{1+2/\dm}$, $|B_n| \preceq  s^{1+2/\dm}$ and $|z|^{d_1}\asymp|a_i|^{d_1}\preceq s$ by \eqref{estim-0}. For odd $i$ and sufficiently small $s>0$, if $z\in \overline{A}_i$, by \eqref{Phi-n}--\eqref{Q-i-esti-1-yang}, we have
\begin{equation}\label{P_n-est-0}
 |P_n(z)|\leq  \frac{d_1|A_n|\cdot |\Phi_n(z)|}{1-(d_1-1)|z|^{d_1}}+|B_n| <\dm^4 s.
\end{equation}
Similarly, for even $i$ and sufficiently small $s>0$, if $z\in \overline{A}_i$, we have
\begin{equation}\label{P_n-est-1}
 |P_n(z)|\geq  \frac{d_1|A_n|\cdot |\Phi_n(z)|}{1+(d_1-1)|z|^{d_1}}-|B_n| >\dm\geq 3.
\end{equation}

Let $U_0:=\D(0,\dm^4 s)$ be the disk centered at the origin with radius $\dm^4 s$ and $U_\infty:=D_2'=\EC\setminus \overline{{\D(-1,2)}}$.
Then we have $P_n(\overline{A}_i)\subset U_0$ for odd $i$ and $P_n(\overline{A}_i)\subset U_\infty$ for even $i$.

(3) Let $n\geq 3$ be an odd number and assume that $|z|\leq \dm^4 s$. By Lemma \ref{very-useful-est}(1) and \eqref{Phi-n}, we have
\begin{equation}\label{bound-lower-in-disk-even}
|\Phi_n(z)| =\frac{\dm^2|z|^{d_n}}{|a_{n-1}|^{d_n}} \prod_{i=1}^{n-1}\left|1-\frac{z^{D_i}}{a_i^{D_i}}\right|^{(-1)^{i-1}}
      \asymp \frac{|z|^{d_n}}{|a_{n-1}|^{d_n}}\preceq s^\ell,
\end{equation}
where $\ell=d_n(1-\sum_{i=1}^{n-1}(1/d_i))>1$.
Let $\widetilde{\ell}=\min\{\ell,1+2/\dm\}>1$. By Lemma \ref{para-fixed}(2) and \eqref{Phi-n}, we have
\begin{equation*}
|P_n(z)|\leq \frac{d_1|A_n|\cdot |\Phi_n(z)|}{1-(d_1-1)|z|^{d_1}}+|B_n| \preceq s^{\widetilde{\ell}} \text{~and hence~} |P_n(z)|<\dm^4 s
\end{equation*}
if $s$ is small enough. This means that $P_n(\overline{U}_0)\subset U_0$ for odd $n$.

If $n$ is even, $s$ is small enough and $|z|\leq \dm^4 s$, we have
\begin{equation*}\label{bound-lower-in-disk-odd}
|\Phi_n(z)|=\frac{|a_{n-1}|^{d_n}\dm^2 s}{|z|^{d_n}}\,\prod_{i=1}^{n-1}\left|1-\frac{z^{D_i}}{a_i^{D_i}}\right|^{(-1)^{i-1}} \asymp \frac{|a_{n-1}|^{d_n} s}{|z|^{d_n}}
 \succeq  s^{1-\ell}.
\end{equation*}
This means that if $|z|\leq \dm^4 s$ and $s>0$ is small enough, then $|\Phi_n(z)|>\dm$. Hence, we have
\begin{equation*}
|P_n(z)|\geq   \frac{d_1|A_n|\cdot |\Phi_n(z)|}{1+(d_1-1)|z|^{d_1}}-|B_n| >\dm.
\end{equation*}
Therefore, $P_n(\overline{U}_0)\subset \EC\setminus\overline{\D(0,\dm)}\subset U_\infty$ for even $n$.
This ends the proof of (3).

(4) Since $U_\infty=D_2'=\EC\setminus \overline{{\D(-1,2)}}$, it follows by Corollary \ref{key-lemma-complex} that $P_n(\overline{U}_\infty)\subset U_\infty\cup\{1\}$. The proof is complete.
\end{proof}

\begin{thm}\label{parameter-parabolic-resta}
Let $|a_1|=(\dm^2 s)^{1/d_1}$ and $|a_i|=s^{1/d_i}|a_{i-1}|$ be the numbers defined in \eqref{range-s} for $2\leq i\leq n-1$. If the parameter $s>0$ is small enough, then the Julia set of $P_n$ is a Cantor set of circles with a parabolic fixed point $1$.
\end{thm}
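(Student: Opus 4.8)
The plan is to realize $J(P_n)$ as a nested intersection of essential annuli, using the invariant regions of Lemma \ref{lemma-want} to manufacture a full covering (Markov) structure on a single fundamental annulus, and then to run the classical argument that produces a Cantor set of circles.

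First I would set $A := \EC \setminus (\overline{U}_0 \cup \overline{U}_\infty)$, where $U_0 = \D(0,\dm^4 s)$ and $U_\infty = \EC \setminus \overline{\D(-1,2)}$ are the simply connected regions of Lemma \ref{lemma-want}. For small $s$ these disks are disjoint, so $A$ is a topological annulus separating $0$ from $\infty$, and it contains the critical annuli $A_1 \succ \cdots \succ A_{n-1}$, hence by Lemma \ref{crit-close-Parameter} all critical points of $P_n$ other than $0,\infty$. Removing the $\overline{A}_i$ from $A$ leaves $n$ complementary essential gaps $B_0 \succ B_1 \succ \cdots \succ B_{n-1}$, with $B_0$ abutting $\partial U_\infty$, $B_{n-1}$ abutting $\partial U_0$, and $B_j$ lying between $A_j$ and $A_{j+1}$. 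By Lemma \ref{lemma-want}(2)--(4) one has $P_n(\overline{U}_0 \cup \overline{U}_\infty \cup \bigcup_i \overline{A}_i)\cap A = \emptyset$, so $P_n^{-1}(A) \subset \bigsqcup_{j=0}^{n-1} B_j \subset A$ and $P_n$ carries no critical point into $P_n^{-1}(A)$.

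Next I would pin down the covering structure gap by gap. On $B_j$ the subdominant factors of $P_n$ are negligible by the order estimates behind Lemma \ref{lemma-want}, so $P_n$ is well approximated there by a monomial $z \mapsto c_j z^{(-1)^j d_{j+1}}$; a short bookkeeping with $D_i = d_i + d_{i+1}$ shows the net exponent on $B_j$ is exactly $(-1)^j d_{j+1}$. Combining this with the boundary data of Lemma \ref{lemma-want}(2)--(4), where the two neighbouring critical annuli map into $U_0$ and $U_\infty$ with opposite parity, shows that as $z$ sweeps across $B_j$ its image sweeps once radially across $A$ while winding $d_{j+1}$ times; hence $\widetilde{B}_j := B_j \cap P_n^{-1}(A)$ is a single essential sub-annulus and $P_n:\widetilde{B}_j \to A$ is an unbranched covering of degree $d_{j+1} \geq 2$. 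A degree count closes the argument: $\sum_{j=0}^{n-1} d_{j+1} = \sum_{i=1}^{n} d_i = \deg P_n$, so $P_n^{-1}(A) = \bigsqcup_{j=0}^{n-1} \widetilde{B}_j$ with no further components (the outermost degree $d_1$ may also be read off as the local degree of $P_n$ at $\infty$ via Corollary \ref{key-lemma-complex}, or simply obtained by subtraction).

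Finally I would iterate. Setting $E_0 = \overline{A}$ and $E_{k+1} = P_n^{-1}(E_k)\cap \overline{A}$, induction gives that $E_k$ is a disjoint union of $n^k$ closed essential sub-annuli coded by words in $\{0,\dots,n-1\}^k$ and separated by the removed gaps, the component coded by $(j_0,\dots,j_{k-1})$ mapping onto $\overline{A}$ under $P_n^{\,k}$ with degree $\prod_i d_{j_i+1}\geq 2^k$. Thus its modulus is $\mathrm{mod}(A)/\prod_i d_{j_i+1}\leq \mathrm{mod}(A)/2^k \to 0$, so each component of $E_\infty := \bigcap_k E_k$, coded by an infinite word, is a single Jordan curve and $E_\infty \cong \mathcal{C}\times \mathbb{S}^1$. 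Every point outside $E_\infty$ eventually enters $U_0\cup U_\infty$ and so lies in the (attracting or parabolic) Fatou set, giving $J(P_n)\subseteq E_\infty$, while the uniform expansion of moduli forces nonnormality on $E_\infty$, giving $E_\infty\subseteq J(P_n)$; hence $J(P_n)=E_\infty$ is a Cantor set of circles, and since $1\in\partial U_\infty\subset E_\infty$ is the parabolic fixed point, it lies in the Julia set. I expect the main obstacle to be the second step: showing uniformly over all $j$, and in particular for the outermost gap $B_0$ where $|z|$ is no longer small and the monomial approximation degrades near $\partial U_\infty$, that each gap contributes exactly one essential annulus of the correct degree and that no spurious components of $P_n^{-1}(A)$ appear. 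One must either push the winding-number computation through using the parabolic control near $\infty$ from Corollary \ref{key-lemma-complex} or close the count by subtracting the cleanly estimated inner degrees $d_2,\dots,d_n$ from $\deg P_n$; by contrast the concluding modulus-shrinking and Cantor-circle argument is routine.
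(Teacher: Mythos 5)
Your overall architecture matches the paper's: a fundamental annulus between the two simply connected regions of Lemma \ref{lemma-want}, a degree count showing its preimage consists of exactly $n$ essential sub-annuli of degrees $d_1,\dots,d_n$, and a symbolic coding of the nested intersection by $\{1,\dots,n\}^{\mathbb{N}}$. The first two stages are essentially sound (the paper makes the degree count rigorous by locating explicit preimages of $B_n$ and of $\infty$ in each candidate component and then applying Riemann--Hurwitz, which is one concrete way to carry out the ``close the count by subtraction'' option you mention).

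The genuine gap is in the step you call routine. You conclude that each component of $E_\infty=\bigcap_k E_k$ is a Jordan curve because the moduli of the nested coding annuli tend to zero. That implication is false in general: a nested sequence of closed essential annuli whose moduli tend to zero can intersect in a compact connected separating set that is not locally connected (e.g.\ a Warsaw-circle type continuum), so modulus decay alone gives a separating continuum, not a Jordan curve. This is precisely where the parabolic setting differs from the hyperbolic one: there is no uniform expansion near the parabolic point $1\in\partial D_\infty$, so the usual shadowing/expansion argument that upgrades the continuum to a curve is unavailable. The paper has to work for this conclusion: it first invokes local connectivity of Julia components of geometrically finite maps (Tan--Yin, Pilgrim--Tan) and the Pilgrim--Tan criterion for components whose orbits stay in the fundamental annulus, and then treats separately the components that land on $\partial D_0$ or $\partial D_\infty$ by exhibiting polynomial-like restrictions of $P_n$ that straighten to the parabolic polynomial $g_{d_1}(z)=(z^{d_1}+d_1-1)/d_1$ (resp.\ to $z\mapsto z^{d_n}$), whose Julia sets are known to be Jordan curves. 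Without some substitute for this input --- the boundary $\partial D_\infty$ carries the parabolic point and infinitely many cusps, so its being a Jordan curve is not free --- your argument does not yet establish that $J(P_n)$ is homeomorphic to $\mathcal{C}\times\mathbb{S}^1$.
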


\begin{proof}
Let $U_0=\D(0,\dm^4 s)$ and $U_\infty=\EC\setminus\overline{\D(-1,2)}$ be the simply connected domains defined in Lemma \ref{lemma-want}. By Lemma \ref{lemma-want} (1) and (2), there exists a component $U_i$ of $P_n^{-1}(U_0\cup U_\infty)$ which contains the the annulus $A_i$, the round circle $\T_{|a_i|}$ and the critical points $\Crit_i$, where $1\leq i\leq n-1$. Moreover, $U_i\cap U_{i+1}=\emptyset$ since $P_n(U_i)\cap P_n(U_{i+1})\subset U_0\cap U_\infty=\emptyset$ by Lemma \ref{lemma-want} (2). This means that $U_i\cap U_j=\emptyset$ for different $i,j$ and in particular, $U_{n-1}\prec \cdots\prec U_1$ since $A_j\prec A_i$ for $1\leq i<j\leq n-1$. By Lemma \ref{lemma-want} (3) and (4), there exist two components $\widetilde{U}_0$ and $\widetilde{U}_\infty$ of $P_n^{-1}(U_0\cup U_\infty)$ such that they contain $U_0$ and $U_\infty$ respectively. Then $\widetilde{U}_0\cap U_{n-1}=\emptyset$ and $\widetilde{U}_\infty\cap U_1=\emptyset$ since
\begin{equation*}
P_n(\widetilde{U}_0)\cap P_n(U_{n-1})=P_n(\widetilde{U}_\infty)\cap P_n(U_1)=U_0\cap U_\infty=\emptyset.
\end{equation*}
Therefore, $\widetilde{U}_0$ lies in the component of $\EC\setminus \overline{U}_{n-1}$ which contains $0$ and $\widetilde{U}_\infty$ lies in the component of $\EC\setminus \overline{U}_1$ which contains $\infty$ (see Figure \ref{fig_P_n}).

\begin{figure}[!htpb]
  \setlength{\unitlength}{1mm}
  \centering
  \includegraphics[width=140mm]{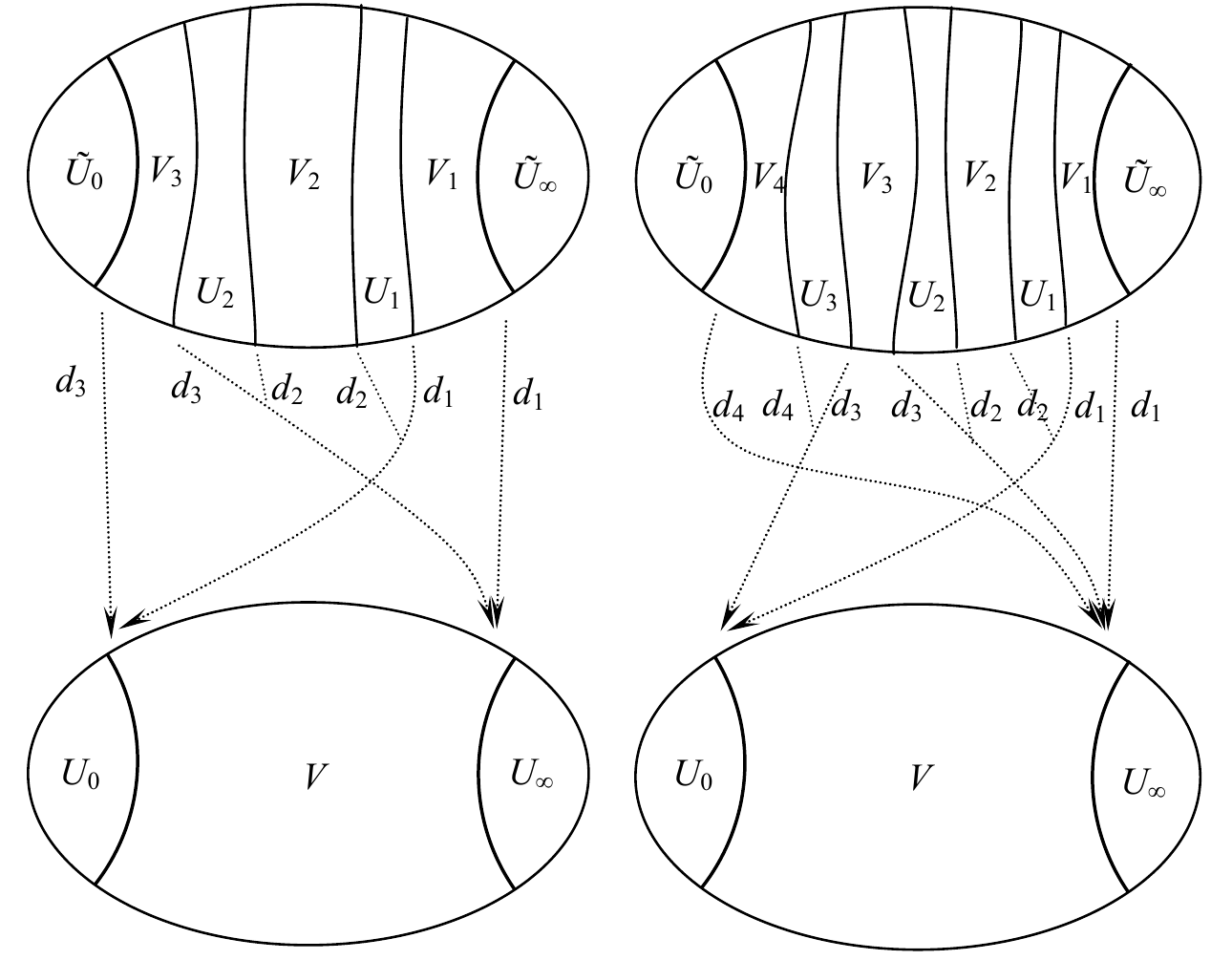}
  \caption{Sketch illustrating of the mapping relations of $P_3=P_{d_1,d_2,d_3}$ and $P_4=P_{d_1,d_2,d_3,d_4}$.}
  \label{fig_P_n}
\end{figure}

For $1\leq i\leq n-1$, we claim that $P_n:U_i\rightarrow U_0$ with odd $i$ and $P_n:U_i\rightarrow U_\infty$ with even $i$, respectively, are branched coverings with degree $D_i$. Recall that $B_n$ is a number defined in \eqref{family-para} satisfying $|B_n|\preceq s^{1+2/\dm}$ by Lemma \ref{para-fixed}. Then we have $B_n\in U_0$ if $s$ is small enough. Let $\textup{ZP}_i=\{e^{2\pi \textup{i} j/D_i}a_i:1\leq j\leq D_i\}\subset A_i$ be the collection of $D_i$ points lying on the circle $\T_{|a_i|}$ uniformly, where $1\leq i\leq n-1$. If $i$ is odd, then $P_n:U_i\rightarrow U_0$ is a branched covering with degree \textit{at least} $D_i$ since
\begin{equation*}
\textup{ZP}_i\subset P_n^{-1}(B_n) \cap U_i \subset P_n^{-1}(U_0)\cap U_i=U_i.
\end{equation*}
Similarly, $P_n:U_i\rightarrow U_\infty$ is a branched covering with degree \textit{at least} $D_i$ for even $i$ since $P_n^{-1}(\infty)\cap U_i$ also contains at least $D_i$ different points $\textup{ZP}_i$.

Since $P_n(\infty)=\infty$ and the local degree at $\infty$ is $d_1$, it follows that $P_n:\widetilde{U}_\infty\to U_\infty$ is a branched covering with degree at least $d_1$. If $n=2m+1\geq 3$ is odd, then $P_n^{-1}(U_\infty)$ contains $W_1:=\widetilde{U}_\infty\cup\bigcup_{k=1}^m U_{2k}$. The degree of the restriction of $P_n$ on $W_1$ is at least $d_1+\sum_{k=1}^m D_{2k}=\sum_{i=1}^n d_i$, which is equal to the degree of $P_n$. This means that $P_n^{-1}(U_\infty)=W_1$, $\deg(P_n:\widetilde{U}_\infty\to U_\infty)=d_1$ and $\deg(P_n:U_{2k}\to U_\infty)=D_{2k}$ for $1\leq k\leq m$. On the other hand, $P_n^{-1}(U_0)$ contains $W_2:=\widetilde{U}_0\cup\bigcup_{k=1}^m U_{2k-1}$. Since $P_n(0)=B_n\in U_0$ and the local degree at $0$ is $d_n$, it follows that $P_n:\widetilde{U}_0\to U_0$ is a branched covering with degree at least $d_n$. Then the degree of the restriction of $P_n$ on $W_2$ is at least $d_n+\sum_{k=1}^m D_{2k-1}=\sum_{i=1}^n d_i=\deg(P_n)$. This means that $P_n^{-1}(U_0)=W_2$, $\deg(P_n:\widetilde{U}_0\to U_0)=d_n$ and $\deg(P_n:U_{2k-1}\to U_\infty)=D_{2k-1}$ for $1\leq k\leq m$. This ends the proof of the claim in the case of odd $n$. If $n$ is even, the argument is completely similar and we omit the details here (see right picture in Figure \ref{fig_P_n}).

Note that for each $1\leq i\leq n-1$, the open set $U_i$ contains at least $D_i$ critical points, and $\widetilde{U}_0$, $\widetilde{U}_\infty$ contain at least $d_n-1$ and $d_1-1$ critical points respectively. This means that $U_i$, $\widetilde{U}_0$ and $\widetilde{U}_\infty$, respectively, contain exactly $D_i=d_i+d_{i+1}$, $d_n-1$ and $d_1-1$ critical points since $P_n$ contains exactly $2\sum_{i=1}^n{d_i}-2$ critical points. By Riemann-Hurwitz formula, $U_i$ has Euler characteristic zero and hence must be an annulus. The opens sets $\widetilde{U}_0$ and $\widetilde{U}_\infty$ both have Euler characteristic one and hence must be two topological disks. Moreover, $U_i$ is an annulus separating $0$ from $\infty$ for every $1\leq i\leq n-1$ since $U_i$ is disjoint with $\widetilde{U}_0\cup \widetilde{U}_\infty$ and $U_i$ contains the annulus $A_i$ which is defined in Lemma \ref{lemma-want}.

Define a closed annulus $V:=\EC\setminus (U_0\cup U_\infty)$. Then $P_n^{-1}(V)$ consists of $n$ annuli $V_1,\cdots,V_n$ which satisfies $V_n\prec V_{n-1}\prec \cdots\prec V_1$ since $V$ is disjoint with the critical orbits of $P_n$. For $1\leq j\leq n$, each $P_n:V_j\to V$ is a covering map with degree $d_j$. The Julia set of $P_n$ is $J(P_n)=\bigcap_{k\geq 0}P_n^{-k}(V)$. Let $g_i:V\rightarrow V_j$ be the inverse branch of $P_n:V_j\to V$ for $1\leq j\leq n$. Then each component of $J(P_n)$ can be written as $J_{j_1 j_2\cdots j_k\cdots}=\bigcap_{k=1}^\infty g_{j_k}\cdots g_{j_2}\circ g_{j_1}(V)$, where $(j_1,j_2,\cdots,j_k,\cdots)$ is an infinite sequence satisfying $1\leq j_k\leq n$. By the construction, each component $J_{j_1 j_2\cdots j_k\cdots}$ is a compact set separating $0$ and $\infty$. Since $P_n$ is geometrically finite, it follows that the Julia component $J_{j_1 j_2\cdots j_k\cdots}$ is locally connected (see \cite{TY} and \cite{PT}).

Let $E=J'\cup J''$, where $ J'=J_{n,n,\cdots,n,\cdots}=\partial D_0$ and $J''=J_{1,1,\cdots,1,\cdots}=\partial D_\infty$ and $D_0$, $D_\infty$ are the Fatou components containing $U_0$ and $U_\infty$ respectively. Let $A=\EC\setminus(\overline{D}_0\cup \overline{D}_\infty)$ be the annulus determined uniquely by $J'$ and $J''$. Then each component $A'$ of $P_n^{-1}(A)$ satisfies $A'\subset A$ and the identity $\textup{id}:A'\hookrightarrow A$ is not homotopic to a constant map. We now divide the arguments into two cases. The first case: if the forward orbit of the Julia component $J_{j_1 j_2\cdots j_k\cdots}$ is contained in $A$, then $J_{j_1 j_2\cdots j_k\cdots}$ is a Jordan curve by \cite[Lemma 2.4, Case 2]{PT} and \cite[Proposition Case 2]{PT} (Note that \cite[Lemma 2.4]{PT} holds for geometrically finite rational maps (see \cite[$\S$9]{PT}). The second case: $J_{j_1 j_2\cdots j_k\cdots}$ is eventually iterated onto $J'$ or $J''$ by $P_n$. Note that $P_n:(V_1\cup \widetilde{U}_\infty)^o\to (V\cup U_\infty)^o$ is a polynomial-like map with degree $d_1$, where the superscript `$^o$' means the interior of a set. By Douady and Hubbard's Straighten Theorem \cite[p.\,296]{DH}, $P_n:(V_1\cup \widetilde{U}_\infty)^o\to (V\cup U_\infty)^o$ is quasiconformally equivalent to a parabolic polynomial $\widetilde{P}_n$ with degree $d_1$. Note that $\infty$ is the unique critical point of $P_n$ in $(V_1\cup \widetilde{U}_\infty)^o$. This means that $\widetilde{P}_n$ is conformally conjugated to $g_{d_1}(z)=(z^{d_1}+d_1-1)/d_1$. It is well known that the Julia set of $g_{d_1}$ is a Jordan curve. Hence the Julia component $J''$ is a Jordan curve. Similarly, one can prove that the Julia component $J'$ is a Jordan curve since either $P_n(J')=J''$ ($n$ is even) or $P_n:(\widetilde{U}_0 \cup V_n )^o\to (U_0 \cup V )^o$ is a polynomial-like map with degree $d_n$ which is quasiconformally conjugated to $z\mapsto z^{d_n}$ ($n$ is odd). This means that $J_{j_1 j_2\cdots j_k\cdots}$ is a Jordan curve since it was eventually iterated onto $J'$ or $J''$ by $P_n$ and there are no critical points on $J(P_n)$.

Up to now, we have proved that all the Julia components of $P_n$ are Jordan curves. On the other hand, the dynamics on the Julia components $\{J_{j_1 j_2\cdots j_k\cdots}:1\leq j_k\leq n\}$  is isomorphic to the one-sided shift on the space of $n$ symbols $\Sigma_{n}:=\{1,\cdots,n\}^{\mathbb{N}}$. In particular, the Julia set $J(P_n)$ is homeomorphic to $\Sigma_{n}\times\mathbb{S}^1$, which is a Cantor set of circles, as desired. This ends the proof of Theorem \ref{parameter-parabolic-resta} and hence Theorem \ref{parameter-parabolic}.
\end{proof}

\begin{figure}[!htpb]
  \setlength{\unitlength}{1mm}
  \centering
  \includegraphics[width=70mm]{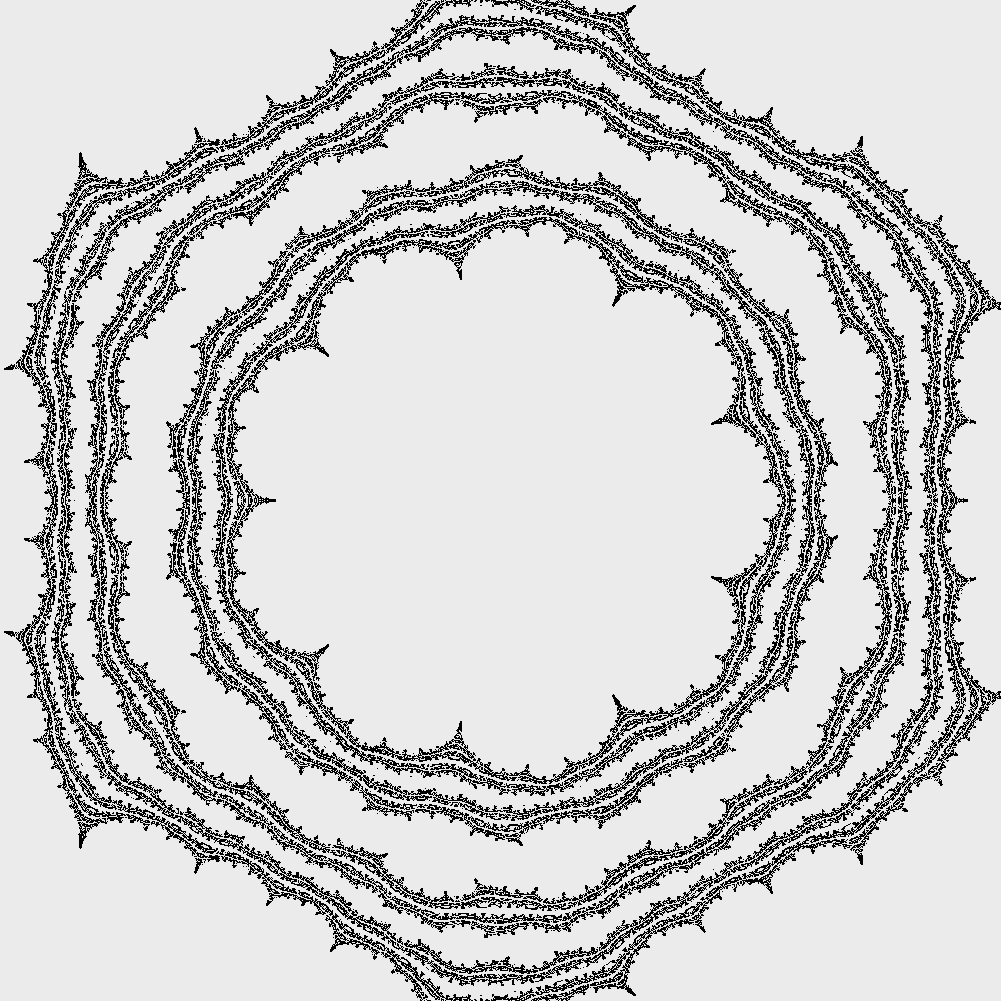}
  \includegraphics[width=70mm]{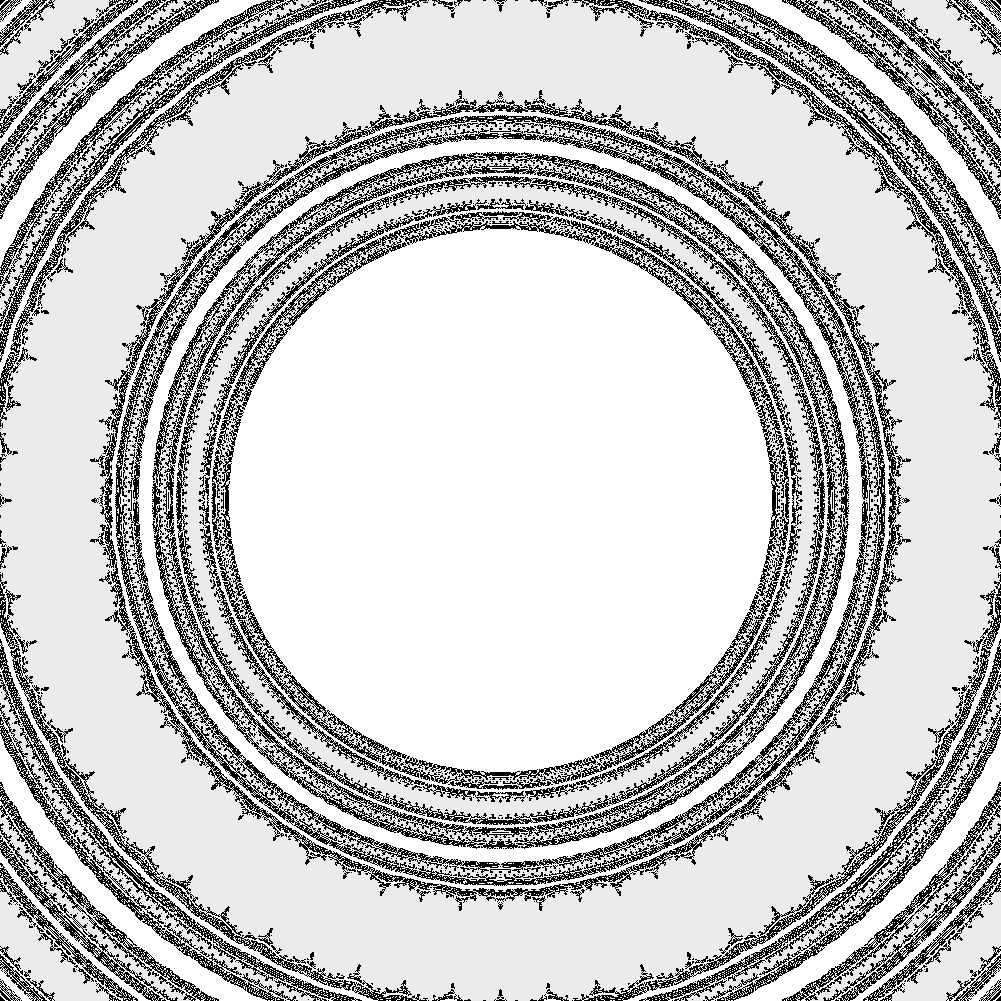}
  \caption{The zoom of the Julia sets of $P_{3,3}$ and $P_{4,4,4}$ near the Fatou components containing the origin with the same parameters as defined in the introduction. Figure ranges: $[-0.2,0.2]^2$ and $[-2.5\times 10^{-3},2.5\times 10^{-3}]^2$.}
  \label{Fig_Cantor-cicle_local}
\end{figure}

\section{Cantor circles with two parabolic fixed points}\label{sec-2-para-fixed}

In this section, we construct a family of non-hyperbolic rational maps with Cantor circles Julia sets such that each one of them has two parabolic fixed points lying on two fixed simply connected parabolic basins. The construction is more difficult than before since we have to control two fixed parabolic Fatou components.

Before we provide the specific formulas of the rational maps which satisfy the above conditions, we recommend the reader to take a look at the left picture in Figure \ref{fig_parabolic_III-IV} in which a rough indication of the dynamics of such rational maps has been made. Let $n\geq 3$ be an odd number and $d_1,\cdots,d_n$ be $n$ positive integers such that $\sum_{i=1}^{n}(1/d_i)<1$. We define
\begin{equation}\label{family-para-para-fixed}
Q_{d_1,\cdots,d_n}(z)=\frac{d_1 z^{d_n}}{(d_1-1)X_n z^{d_1}+Y_n z+Z_n}
\prod_{i=1}^{n-1}(z^{d_i+d_{i+1}}-b_i^{d_i+d_{i+1}})^{(-1)^{i-1}}+W_n,
\end{equation}
where $b_1,\cdots,b_{n-1}$ are $n-1$ small positive real numbers and $X_n,Y_n,Z_n,W_n$ are numbers depending only on $b_1,\cdots,b_{n-1}$ which will be determined later. Our main work in this section is to find suitable $X_n,Y_n,Z_n$ and $W_n$ such the Julia set of each $Q_{d_1,\cdots,d_n}$ is a Cantor set of circles containing two parabolic fixed points. The difficult we will face first is to locate the position of the two parabolic fixed points. As in \S\ref{sec-para-1-fixed}, we can arrange one of the parabolic fixed points is $1$. For another parabolic fixed point, we do it by trial-and-error method (see the remark in the last part of this section).

As in \S\ref{sec-para-1-fixed}, denote the maximal number among $d_1,\cdots,d_n$ by $\dm\geq 3$. Let
\begin{equation}\label{tau}
\tau:=(d_1 d_n \dm^{2(d_1-d_n)/d_1})^{1/\sum_{i=1}^{n-1}(d_n/d_i)}>0
\end{equation}
be a constant depending only on $d_1,\cdots,d_n$ and define $b_1,\cdots,b_{n-1}$ as
\begin{equation}\label{range-s-fixed}
b_1=(\dm^2 \tau s)^{1/d_1} \text{~ and ~} b_i=(\tau s)^{1/d_i}\,b_{i-1} \text{~for~} 2\leq i\leq n-1,
\end{equation}
where the parameter $s>0$ is small enough such that $1\gg b_1\gg b_2\gg\cdots\gg b_{n-1}>0$.

For simplicity, we use $Q_n$ to denote $Q_{d_1,\cdots,d_n}$ as defined in \eqref{family-para-para-fixed}. Moreover, define $D_i:=d_i+d_{i+1}$ as in \S\ref{sec-para-1-fixed}, where $1\leq i\leq n-1$. Note that $\sum_{i=1}^{n}(1/d_i)<1$, we have
\begin{equation}\label{my-nu}
\nu:=\frac{d_n}{d_n-1}\sum_{i=1}^{n-1}\frac{1}{d_i}<1.
\end{equation}

\begin{lema}\label{lema-useful-fixed-2}
For $1\leq i\leq n-1$,  we have $s^{D_i\nu}/b_i^{D_i}\asymp s^{\alpha_3}$, where $\alpha_3=\alpha_3(i,d_1,\cdots,d_n)\geq (1+2/\dm)\nu$.
\end{lema}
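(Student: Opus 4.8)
The plan is to first pin down the order of magnitude of $b_i$ in $s$, exactly as in Lemma~\ref{very-useful-est}(1), and thereby reduce the statement to a purely arithmetic inequality among $d_1,\dots,d_n$. Iterating the recursion \eqref{range-s-fixed} gives $b_i=\dm^{2/d_1}\,\tau^{\sigma_i}s^{\sigma_i}$, where I write $\sigma_i:=\sum_{j=1}^i 1/d_j$. Since $\tau$ (defined in \eqref{tau}) and $\dm^{2/d_1}$ are constants depending only on $d_1,\dots,d_n$, this means $b_i\asymp s^{\sigma_i}$, hence $b_i^{D_i}\asymp s^{D_i\sigma_i}$ and
\[
\frac{s^{D_i\nu}}{b_i^{D_i}}\asymp s^{D_i(\nu-\sigma_i)}.
\]
Thus the exponent is $\alpha_3=D_i(\nu-\sigma_i)$, and the entire content of the lemma is the inequality $D_i(\nu-\sigma_i)\ge (1+2/\dm)\nu$.

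The key algebraic step is the identity $\nu-\sigma_i=\rho_i+\nu/d_n$, where $\rho_i:=\sum_{j=i+1}^{n-1}1/d_j=\sigma_{n-1}-\sigma_i$ is the tail sum; this follows in one line from $\nu=\frac{d_n}{d_n-1}\sigma_{n-1}$ (see \eqref{my-nu}), and it yields the clean decomposition $\alpha_3=D_i\rho_i+\frac{D_i}{d_n}\nu$, separating a ``tail'' contribution from a term proportional to $\nu$. I would then split into two cases. For $1\le i\le n-2$ the tail is nonempty, so $\rho_i\ge 1/d_{i+1}$ and therefore $D_i\rho_i\ge (d_i+d_{i+1})/d_{i+1}=1+d_i/d_{i+1}\ge 1+2/\dm$, using $d_i\ge 2$ and $d_{i+1}\le\dm$; since $\nu<1$ by \eqref{my-nu}, this already gives $\alpha_3\ge D_i\rho_i\ge 1+2/\dm\ge(1+2/\dm)\nu$. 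For $i=n-1$ the tail vanishes, $\rho_{n-1}=0$, so $\alpha_3=\frac{D_{n-1}}{d_n}\nu=(1+d_{n-1}/d_n)\nu$, and $d_{n-1}/d_n\ge 2/d_n\ge 2/\dm$ closes this case.

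The subtlety to watch is that the target bound is sharp: equality holds at $i=n-1$ when $d_{n-1}=2$ and $d_n=\dm$, so there is no slack to squander. In particular the naive estimate $\sigma_i\le\sigma_{n-1}$, which only delivers $\alpha_3\ge(D_i/d_n)\nu$, is too weak for the intermediate indices $1\le i\le n-2$; it is precisely the decomposition isolating the summand $\nu/d_n$, together with the fact $\nu<1$, that makes the argument go through. The remaining verifications are the elementary inequalities $d_i\ge 2$ and $d_{i+1},d_n\le\dm$ recorded above, so no further estimation is required.
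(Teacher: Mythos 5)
Your proof is correct and follows essentially the same route as the paper's: both reduce to $\alpha_3=D_i(\nu-\sigma_i)$, use the decomposition $\nu-\sigma_i=\rho_i+\nu/d_n$ coming from \eqref{my-nu}, and split into the cases $i=n-1$ (where $\alpha_3=(1+d_{n-1}/d_n)\nu$) and $i\le n-2$ (where the tail term already exceeds $1+2/\dm$ and $\nu<1$ finishes). The only cosmetic difference is that the paper retains the $\tfrac{D_i}{d_n}\nu$ summand in the intermediate case while you discard it, which changes nothing.
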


\begin{proof}
By \eqref{range-s-fixed}, we have $s^{D_i\nu}/b_i^{D_i}=(s^\nu/b_i)^{D_i}\asymp s^{\alpha_3}$, where
\begin{equation*}
\alpha_3 =(\frac{d_n}{d_n-1}\sum_{j=1}^{n-1}\frac{1}{d_j}-\sum_{j=1}^i\frac{1}{d_j})(d_i+d_{i+1})
= \frac{d_i+d_{i+1}}{d_n-1}\sum_{j=1}^{n-1}\frac{1}{d_j}+(d_i+d_{i+1})\sum_{j=i+1}^{n-1}\frac{1}{d_j}.
\end{equation*}
Note that $2\leq d_j\leq \dm$ for $1\leq j\leq n$. If $i=n-1$, we have
\begin{equation*}
\alpha_3 =\frac{d_{n-1}+d_n}{d_n-1}\sum_{j=1}^{n-1}\frac{1}{d_j}=(1+\frac{d_{n-1}}{d_n})\nu\geq (1+\frac{2}{\dm})\nu.
\end{equation*}
If $1\leq i<n-1$, we have
\begin{equation*}
\alpha_3 =\frac{d_i+d_{i+1}}{d_n}\nu+1+d_{i+1}\sum_{j=i+2}^{n-1} \frac{1}{d_j}+ d_i\sum_{j=i+1}^{n-1} \frac{1}{d_i}
\geq \frac{5}{\dm}\nu+1+\frac{2}{\dm}>(1+\frac{2}{\dm})\nu.
\end{equation*}
The proof is complete.
\end{proof}

\begin{lema}\label{lema-para-fixed-II}
There exist suitable $X_n\approx 1$, $Y_n\approx 0$, $Z_n\approx 1$ and $W_n\approx 0$ in \eqref{family-para-para-fixed} such that $Q_n$ has two parabolic fixed points $1$ and $s^\nu$ and both with multiplier $1$, where
\begin{equation}\label{X_n-Y_n}
\begin{split}
\lim_{s\to 0^+}\frac{X_n-1}{s^\nu}=\frac{d_1(d_1-3)(d_n-1)}{(d_1-1)^2 d_n},
&~~ \lim_{s\to 0^+}\frac{Y_n}{s^\nu}=\frac{2d_1(d_n-1)}{(d_1-1) d_n},\\
\lim_{s\to 0^+}\frac{Z_n-1}{s^\nu}=0\text{~and}
&~~ \lim_{s\to 0^+}\frac{W_n}{s^\nu}=\frac{d_n-1}{d_n}.
\end{split}
\end{equation}
\end{lema}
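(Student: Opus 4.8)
The plan is to read off the four unknowns $X_n,Y_n,Z_n,W_n$ as the solution of the four scalar equations expressing that $1$ and $s^\nu$ are fixed points of $Q_n$ with multiplier $1$, and to solve this system by Newton's method around an explicit leading-order guess. Write $\Pi(z)=\prod_{i=1}^{n-1}(z^{D_i}-b_i^{D_i})^{(-1)^{i-1}}$ and $N(z)=(d_1-1)X_nz^{d_1}+Y_nz+Z_n$, so that $Q_n(z)=d_1z^{d_n}\Pi(z)/N(z)+W_n$. With $R:=Q_n-W_n$ one has the logarithmic-derivative identity
\[
\frac{zQ_n'(z)}{Q_n(z)-W_n}=d_n+\sum_{i=1}^{n-1}\frac{(-1)^{i-1}D_iz^{D_i}}{z^{D_i}-b_i^{D_i}}-\frac{zN'(z)}{N(z)},
\]
and the system to be solved is $Q_n(1)=1$, $Q_n'(1)=1$, $Q_n(s^\nu)=s^\nu$, $Q_n'(s^\nu)=1$. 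I expect the correct scaling to be $X_n-1,\,Y_n,\,W_n=O(s^\nu)$ and $Z_n-1=o(s^\nu)$, so I set $X_n=1+\hat u s^\nu+o(s^\nu)$, $Y_n=\hat v s^\nu+o(s^\nu)$, $W_n=\hat q s^\nu+o(s^\nu)$ and aim to compute $\hat u,\hat v,\hat q$ and to confirm $\lim(Z_n-1)/s^\nu=0$.

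Next I would carry out the leading-order reduction of the two conditions at each fixed point. At $z=1$ the quantities $b_i^{D_i}$ are of order $s^{\beta_i}$ with $\beta_i=(d_i+d_{i+1})\sum_{j=1}^i 1/d_j\ge 1+2/\dm$ (computed exactly as the exponents $\beta_i$ in Lemma \ref{para-fixed}), hence negligible relative to $s^\nu$ since $\nu<1$; using the telescoping identity $\sum_{i=1}^{n-1}(-1)^{i-1}D_i=d_1-d_n$ (valid for odd $n$), the conditions $Q_n(1)=1$ and $Q_n'(1)=1$ collapse, to order $s^\nu$, into two linear relations among $\hat u,\hat v,\hat q$. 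At $z=s^\nu$, Lemma \ref{lema-useful-fixed-2} gives $(s^\nu)^{D_i}/b_i^{D_i}\asymp s^{\alpha_3}$ with $\alpha_3>0$, so $\Pi(s^\nu)=\Pi_0\,(1+O(s^{\alpha_3}))$ with $\Pi_0:=\prod_{i=1}^{n-1}(-b_i^{D_i})^{(-1)^{i-1}}$ (which is positive since $n$ is odd), while $N(s^\nu)=Z_n+o(s^\nu)$. The decisive computation is the evaluation of $\Pi_0$: combining the telescoping sum $\sum_{i=1}^{n-1}(-1)^{i-1}D_i\big(\sum_{j=1}^i 1/d_j\big)=-d_n\sum_{j=1}^{n-1}1/d_j$ with the exact value of $\tau$ in \eqref{tau} yields the clean identity
\[
d_1\,(s^\nu)^{d_n}\,\Pi_0=\frac{1}{d_n}\,s^\nu .
\]
Here the calibration of $\tau$ is precisely what fixes the constant $1/d_n$. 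Consequently $Q_n(s^\nu)=s^\nu$ forces $\hat q=(d_n-1)/d_n$, whereas the multiplier condition $Q_n'(s^\nu)=1$ is automatically satisfied to leading order (since $R(s^\nu)\sim s^\nu/d_n$ and $s^\nu R'(s^\nu)/R(s^\nu)\to d_n$) and serves only to fix the finer corrections.

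With $\hat q=(d_n-1)/d_n$ in hand, the two relations coming from $z=1$ become a $2\times 2$ linear system for $\hat u,\hat v$; solving it gives
\[
\hat u=\frac{d_1(d_1-3)(d_n-1)}{(d_1-1)^2d_n},\qquad \hat v=\frac{2d_1(d_n-1)}{(d_1-1)d_n},
\]
matching the asserted limits for $X_n$ and $Y_n$, and the relation $(d_1-1)\hat u+\hat v=d_1\hat q$ confirms internal consistency. To turn this into a proof I would take $\mathbf{x}_0=(1+\hat u s^\nu,\,\hat v s^\nu,\,1,\,\hat q s^\nu)$ as the Newton seed, verify that the residual of the system at $\mathbf{x}_0$ is $o(s^\nu)$ in each component (which is exactly what the estimates above provide), and show that the Jacobian of the system at $\mathbf{x}_0$, after rescaling its rows and columns by the appropriate powers of $s^\nu$, converges as $s\to 0$ to an explicit invertible matrix. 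Newton's method then produces a genuine solution $\mathbf{x}_\ast=\mathbf{x}_0+o(s^\nu)$; in particular its third coordinate stays within $o(s^\nu)$ of $1$, which is the assertion $\lim_{s\to 0^+}(Z_n-1)/s^\nu=0$.

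The main obstacle is the \emph{multiscale} nature of the system: the conditions at $z=1$ are of size $O(1)$ in the unknowns while those at $z=s^\nu$ are of size $O(s^\nu)$, so the raw Jacobian degenerates as $s\to 0$ and one must choose the correct diagonal row/column rescalings to expose a nondegenerate limit. Verifying this invertibility uniformly in small $s$, and in particular checking that the Newton correction keeps $Z_n-1$ genuinely below the order $s^\nu$, is the technical heart of the argument. A secondary but essential point is getting the exact constant in $d_1(s^\nu)^{d_n}\Pi_0=s^\nu/d_n$ right: it is the sole source of the value of $\lim W_n/s^\nu$ and, through the two relations at $z=1$, of the limits of $X_n$ and $Y_n$ as well, and it is exactly why $\tau$ is defined as in \eqref{tau}.
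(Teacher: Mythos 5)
Your proposal follows essentially the same route as the paper: the same four equations $Q_n(1)=1$, $Q_n'(1)=1$, $Q_n(s^\nu)=s^\nu$, $Q_n'(s^\nu)=1$, the same calibration identity $d_1 (s^\nu)^{d_n}\prod_i b_i^{(-1)^{i-1}D_i}=s^\nu/d_n$ coming from the choice of $\tau$ (this is exactly the paper's $\rho_2\to 1/d_n$), the same leading-order ansatz and $2\times2$ linear system yielding $x_n,y_n,w_n$, and a Newton-based existence argument for the exact solution. The one divergence is at the very end: the paper explicitly avoids proving convergence of the Newton iterates (which it calls "not easy") and instead shows the Newton map sends a carefully scaled box $\Theta$ into itself and invokes Brouwer's fixed point theorem, whereas you propose a Newton--Kantorovich-style conclusion from residual smallness plus invertibility of the rescaled Jacobian — a workable but not yet executed substitute for precisely the step you correctly identify as the technical heart.
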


\begin{proof}
We find $X_n,Y_n,Z_n$ and $W_n$ by solving four equations. By $Q_n(1)=1$, we have
\begin{equation}\label{Q-n-equ-1}
\frac{d_1\rho_1}{(d_1-1)X_n+Y_n+Z_n}+W_n=1, \text{~where~}\rho_1=\prod_{i=1}^{n-1}(1-b_i^{D_i})^{(-1)^{i-1}}.
\end{equation}
By $Q_n(s^\nu)=s^\nu$, we have
\begin{equation}\label{Q-n-equ-2}
\frac{Q_n(s^\nu)}{s^\nu}=\frac{\rho_2}{(d_1-1)s^{d_1\nu}X_n+s^\nu Y_n+Z_n}+\frac{W_n}{s^\nu}=1,
\end{equation}
where
\begin{equation}\label{rho-2-defi}
\rho_2=d_1 s^{(d_n-1)\nu}\prod_{i=1}^{n-1}(s^{D_i\nu}-b_i^{D_i})^{(-1)^{i-1}}.
\end{equation}
By the definitions of $\tau$ and $\nu$ in \eqref{tau} and \eqref{my-nu}, it follows that $\tau^{(d_n-1)\nu}=d_1 d_n\dm^{\, 2(d_1-d_n)/d_1}$.
Since $b_1=(\dm^2 \tau s)^{1/d_1}$ and $b_i=(\tau s)^{1/d_i}\,b_{i-1}$ for $2\leq i\leq n-1$, we have
\begin{equation}\label{b_n-prod-simplify}
\prod_{i=1}^{n-1}b_i^{(-1)^{i-1}D_i}=\frac{\dm^2}{b_{n-1}^{d_n}}=\frac{\dm^{\, 2(d_1-d_n)/d_1}}{(\tau s)^{d_n((1/d_1)+\cdots+(1/d_{n-1}))}}=\frac{\dm^{\, 2(d_1-d_n)/d_1}}{(\tau s)^{(d_n-1)\nu}}=\frac{1}{d_1 d_n s^{(d_n-1)\nu}}.
\end{equation}
Note that $n\geq 3$ is odd, then by Lemma \ref{lema-useful-fixed-2}, \eqref{rho-2-defi} and \eqref{b_n-prod-simplify}, we have
\begin{equation}\label{rho_2}
\left|\rho_2-\frac{1}{d_n}\right|=\frac{1}{d_n}\left|\prod_{i=1}^{n-1}\left(1-\frac{s^{D_i\nu}}{b_i^{D_i}}\right)^{(-1)^{i-1}}-1\right|
\preceq s^{(1+2/\dm)\nu}.
\end{equation}

In order to calculate the derivation of $Q_n$, we consider
\begin{equation*}\label{solu-crit-parabolic-1}
\frac{zQ_n'(z)}{Q_n(z)-W_n}= \sum_{i=1}^{n-1}\frac{(-1)^{i-1}D_iz^{D_i}}{z^{D_i}-b_i^{D_i}}+d_n
-\frac{(d_1-1)d_1 X_n z^{d_1}+Y_n z}{(d_1-1)X_n z^{d_1}+Y_n z+Z_n}.
\end{equation*}
Then $Q_n'(1)=1$ and $Q_n'(s^\nu)=1$ are equivalent to the following two equations:
\begin{equation}\label{Q-n-equ-3}
\frac{1}{1-W_n}=\rho_3+\frac{(d_1-1)Y_n+d_1 Z_n}{(d_1-1)X_n+Y_n+Z_n},
\end{equation}
and
\begin{equation}\label{Q-n-equ-4}
\frac{1}{1-W_n/s^\nu}=\rho_4+d_n-\frac{(d_1-1) d_1 s^{d_1\nu}X_n+s^\nu Y_n}{(d_1-1)s^{d_1\nu}X_n+s^\nu Y_n+Z_n},
\end{equation}
where
\begin{equation}\label{rho-3-4-defi}
\rho_3=\sum_{i=1}^{n-1}\frac{(-1)^{i-1}D_i\,b_i^{D_i}}{1-b_i^{D_i}} \text{~~and~~} \rho_4=\sum_{i=1}^{n-1}\frac{(-1)^{i-1}D_is^{D_i\nu}}{s^{D_i\nu}-b_i^{D_i}}.
\end{equation}

For $a\in\R$ and $r>0$, let $\I(a,r):=\{x\in\R:|x-a|<r\}$ be the interval centered at $a$ with length $2r$. Lemma \ref{lema-para-fixed-II} holds if the following Lemma \ref{zero-solution} has been proved.
\end{proof}

\begin{lema}\label{zero-solution}
The set of four equations \eqref{Q-n-equ-1}, \eqref{Q-n-equ-2}, \eqref{Q-n-equ-3} and \eqref{Q-n-equ-4} has a solution $(X_n,Y_n,Z_n,W_n)$ in
\begin{equation*}
\Theta:=\I(1+x_n s^\nu,s^{\nu+\beta_1})\times \I(y_n s^\nu,s^{\nu+\beta_2})
\times \I(1,s^{\nu+\beta_3})\times \I(w_n s^\nu,s^{\nu+\beta_4}),
\end{equation*}
where $0<\beta_1<\beta_2<\beta_3<\beta_4\leq \nu/\dm$ and
\begin{equation}\label{xn-yn-wn}
x_n=\frac{d_1(d_1-3)(d_n-1)}{(d_1-1)^2 d_n}, ~ y_n=\frac{2d_1(d_n-1)}{(d_1-1) d_n} \text{~and~} w_n=\frac{d_n-1}{d_n}.
\end{equation}
\end{lema}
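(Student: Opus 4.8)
The plan is to read the four equations \eqref{Q-n-equ-1}--\eqref{Q-n-equ-4} as a single $\R^4$-valued equation for $(X_n,Y_n,Z_n,W_n)$ and to solve it by a quantitative Newton argument anchored at the candidate point $v_0=(1+x_n s^\nu,\,y_n s^\nu,\,1,\,w_n s^\nu)$, the centre of $\Theta$. The first step is to record the sizes of the quantities $\rho_1,\dots,\rho_4$ from \eqref{Q-n-equ-1}, \eqref{rho-2-defi} and \eqref{rho-3-4-defi}: using the estimate \eqref{rho_2}, Lemma~\ref{lema-useful-fixed-2}, and the analogue of Lemma~\ref{very-useful-est}(1) for the sequence $b_1,\dots,b_{n-1}$, one finds
\[
\rho_1=1+O(s^{1+2/\dm}),\quad \rho_2=\tfrac1{d_n}+O(s^{(1+2/\dm)\nu}),\quad \rho_3=O(s^{1+2/\dm}),\quad \rho_4=O(s^{(1+2/\dm)\nu}).
\]
Since $\nu<1$ and $\dm\ge3$, the largest of these corrections is of order $s^{(1+2/\dm)\nu}=s^{\nu+2\nu/\dm}$, which is still smaller than every box radius $s^{\nu+\beta_i}$ because $\beta_i\le\nu/\dm<2\nu/\dm$.

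The second step is a leading-order expansion, which reveals a staggered (rather than uniform) structure. Substituting $X_n=1+x s^\nu$, $Y_n=y s^\nu$, $Z_n=1+z s^\nu$, $W_n=w s^\nu$ and expanding, the $O(1)$ part of \eqref{Q-n-equ-2} reads $\tfrac1{d_n}+w=1$ and fixes $w=w_n=(d_n-1)/d_n$, while the $O(1)$ parts of \eqref{Q-n-equ-1}, \eqref{Q-n-equ-3} and \eqref{Q-n-equ-4} are the consistent identities $1=1$, $1=1$ and $d_n=d_n$. At order $s^\nu$, equations \eqref{Q-n-equ-1} and \eqref{Q-n-equ-3} become the pair of linear relations $(d_1-1)x_n+y_n=d_1 w_n$ and $(d_1-1)x_n-(d_1-2)y_n=-d_1 w_n$, whose coefficient determinant has absolute value $(d_1-1)^2\ne0$ and whose unique solution is exactly the $x_n,y_n$ of \eqref{xn-yn-wn}. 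Finally, equation \eqref{Q-n-equ-4} pins $w$ to within $O(s^{\nu+2\nu/\dm})$ of $w_n$, whereas \eqref{Q-n-equ-2} ties the $s^\nu$-correction of $w$ to $z/d_n$; comparing the two forces $z=O(s^{2\nu/\dm})$, i.e.\ $Z_n=1+o(s^\nu)$, matching \eqref{X_n-Y_n}.

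The third step is to upgrade $v_0$ to an exact root inside $\Theta$ by Newton's method, and this is where the real difficulty lies. The obstacle is exactly the redundancy just seen: \eqref{Q-n-equ-2} and \eqref{Q-n-equ-4} both reduce to $w=w_n$ at leading order, so the naive Jacobian at $v_0$ is nearly singular and the four unknowns are resolved at four genuinely different orders in $s$ --- the orders that correspond to the four radii $s^{\nu+\beta_1}>s^{\nu+\beta_2}>s^{\nu+\beta_3}>s^{\nu+\beta_4}$ of $\Theta$ (most loosely the $X_n$ coordinate, most tightly the $W_n$ coordinate). Accordingly I would not apply the implicit function theorem isotropically, but run the Newton iteration in a weighted norm adapted to the chain $\beta_1<\beta_2<\beta_3<\beta_4\le\nu/\dm$ (equivalently, solve the system in the staggered order $w$, then $(x,y)$, then $z$, and bootstrap), checking at each stage that the coupling terms are of strictly higher order so that the corrected $i$-th coordinate remains within $s^{\nu+\beta_i}$ of its centre. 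The constraint $\beta_i\le\nu/\dm$ is precisely what keeps the $\rho$-driven residual, of size $s^{\nu+2\nu/\dm}$ by the first step, below all four box radii; verifying that the residual orders match the exponents $\beta_i$ in this anisotropic setting --- not the invertibility of the explicit $2\times2$ leading block, which is transparent --- is the crux of the argument.
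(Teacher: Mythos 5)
Your first two steps reproduce the paper's setup faithfully: the orders $\rho_1=1+O(s^{1+2/\dm})$, $\rho_2=1/d_n+O(s^{(1+2/\dm)\nu})$, $\rho_3=O(s^{1+2/\dm})$, $\rho_4=O(s^{(1+2/\dm)\nu})$ are exactly the estimates used in the proof, and your leading-order derivation of $x_n,y_n,w_n$ (the collapse of \eqref{Q-n-equ-2} and \eqref{Q-n-equ-4} to $w=w_n$, the $2\times2$ system from \eqref{Q-n-equ-1} and \eqref{Q-n-equ-3} with determinant $\pm(d_1-1)^2$) is precisely the heuristic the authors record in the remark \emph{after} the lemma. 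The gap is in your third step, which is where the lemma is actually proved and which you leave as a plan. You propose to ``run the Newton iteration'' (in a weighted norm, or in staggered order) and take its limit as the root; but convergence of the Newton iteration is exactly what the paper states it could not establish, and your sketch supplies no substitute. The paper's resolution is different and is the essential point: it takes an \emph{arbitrary} $\Lambda_0\in\Theta$, written as $(1+x_ns^\nu+t_1s^{\nu+\beta_1},\,y_ns^\nu+t_2s^{\nu+\beta_2},\,1+t_3s^{\nu+\beta_3},\,w_ns^\nu+t_4s^{\nu+\beta_4})$ with $|t_i|\le1$, computes $\Jac(\Lambda_0)^{-1}$ and $F(\Lambda_0)$ explicitly, verifies that the single Newton correction $\Jac(\Lambda_0)^{-1}F(\Lambda_0)$ equals $(t_1s^{\nu+\beta_1},\dots,t_4s^{\nu+\beta_4})$ up to errors which are $o(s^{\nu+\beta_i})$ coordinatewise, concludes $N_F(\Theta)\subset\Theta$, and then obtains the zero from \emph{Brouwer's fixed point theorem} on the convex compact box $\Theta$. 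Without this (or an equivalent topological or contraction input), observing that ``the residual orders match the exponents $\beta_i$'' does not produce a solution; that matching is the hypothesis you must verify by the explicit matrix computation, and Brouwer is the mechanism that converts it into existence.

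A smaller inaccuracy: the Jacobian at the centre is not ``nearly singular.'' Because $f_2$ and $f_4$ involve $w/s^\nu$ and $1/(1-w/s^\nu)$, the $w$-column of $\Jac$ is of size $s^{-\nu}$, so it is $\Jac^{-1}$ whose last row degenerates to $0$ as $s\to0^+$. Far from being an obstacle, this is what makes the $W_n$-coordinate of the Newton correction $o(s^{2\nu})$ and hence lets it fit in the tightest interval $\I(w_ns^\nu,s^{\nu+\beta_4})$. Your diagnosis of anisotropy is correct, but the cure in the paper is the explicit computation of $\Jac(\Lambda_0)^{-1}F(\Lambda_0)$, not a reordering of the unknowns.
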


\begin{proof}
In order to simplify the notations, we use $x,y,z$ and $w$ to denote $X_n,Y_n,Z_n$ and $W_n$ respectively. According to \eqref{Q-n-equ-1}, \eqref{Q-n-equ-2}, \eqref{Q-n-equ-3} and \eqref{Q-n-equ-4}, we define
\begin{equation}\label{4-equations}
\left\{
\begin{split}
f_1(x,y,z,w)&:=\frac{d_1\rho_1}{(d_1-1)x+y+z}+w-1,\\
f_2(x,y,z,w)&:=\frac{\rho_2}{(d_1-1)s^{d_1\nu}x+s^\nu y+z}+\frac{w}{s^\nu}-1,\\
f_3(x,y,z,w)&:=\frac{1}{1-w}-\rho_3-\frac{(d_1-1)y+d_1 z}{(d_1-1)x+y+z},\text{~and~}\\
f_4(x,y,z,w)&:=\frac{1}{1-w/s^\nu}-\rho_4-d_n+\frac{(d_1-1) d_1 s^{d_1\nu}x+s^\nu y}{(d_1-1)s^{d_1\nu}x+s^\nu y+z}.
\end{split}
\right.
\end{equation}
We want to prove the collection of four equations $f_i(x,y,z,w)=0$, where $1\leq i\leq 4$, has a solution in a small neighborhood of $(1,0,1,0)$ if $s>0$ is small enough.

Let $\beta_1,\beta_2,\beta_3,\beta_4>0$ be four positive numbers satisfying $0<\beta_1<\beta_2<\beta_3<\beta_4\leq \nu/\dm$. We denote $F:=(f_1,f_2,f_3,f_4)^\textup{T}$.
For $\Lambda=(x,y,z,w)\in\R^4$, the \emph{Newton's method} of $F$ is defined as
\begin{equation}\label{Newton's-mtd}
N_F(\Lambda):=\Lambda^\textup{T}-\Jac(\Lambda)^{-1}F(\Lambda),
\end{equation}
where $\Jac(\Lambda)$ is the Jacobi matrix of \eqref{4-equations} at $\Lambda$. If the sequence $\{\Lambda_k=N_F^{\circ k}(\Lambda)\}_{k\in\mathbb{N}}$ is convergent, then the limit of this sequence is a zero solution of \eqref{4-equations}. However, in our case, it is not easy to prove that $\{\Lambda_k\}_{k\in\mathbb{N}}$ is convergent in $\Theta$. Instead, we will prove $N_F(\Theta)\subset \Theta$ if $s>0$ is small enough. Then \eqref{4-equations} has a zero solution in $\Theta$ by Brouwer's Fixed Point Theorem.

Let $\Lambda_0 := (x_0, y_0, z_0,w_0)=(1+x_n s^\nu+t_1 s^{\nu+\beta_1},y_n s^\nu+t_2 s^{\nu+\beta_2},1+t_3 s^{\nu+\beta_3},w_n s^\nu+t_4s^{\nu+\beta_4})\in \Theta$ be an initial point, where $|t_i|\leq 1$ for $1\leq i\leq 4$. Comparing \eqref{range-s} with \eqref{range-s-fixed}, it follows that $a_i$ and $b_i$ has the same order in $s$ for all $1\leq i\leq n-1$. By the definition of $\rho_1$ in \eqref{Q-n-equ-1}, $\rho_3$ in \eqref{rho-3-4-defi} and Lemma \ref{para-fixed} (2), we have
\begin{equation}
|\rho_1-1|\preceq s^{1+2/\dm} \text{ and } |\rho_3|\preceq s^{1+2/\dm}.
\end{equation}
Similarly, by \eqref{rho_2}, Lemma \ref{lema-useful-fixed-2} and the definition of $\rho_4$ in \eqref{rho-3-4-defi}, we have
\begin{equation}
|\rho_2-1/d_n|\preceq s^{(1+2/\dm)\nu} \text{~and~} |\rho_4|\preceq s^{(1+2/\dm)\nu}.
\end{equation}
The inverse of the Jacobi matrix of \eqref{4-equations} at $\Lambda_0$ is equal to
\[
\Jac(\Lambda_0)^{-1}=
\begin{pmatrix}
\frac{\partial f_1}{\partial x} & \frac{\partial f_1}{\partial y} & \frac{\partial f_1}{\partial z} & \frac{\partial f_1}{\partial w}\\
\frac{\partial f_2}{\partial x} & \frac{\partial f_2}{\partial y} & \frac{\partial f_2}{\partial z} & \frac{\partial f_2}{\partial w}\\
\frac{\partial f_3}{\partial x} & \frac{\partial f_3}{\partial y} & \frac{\partial f_3}{\partial z} & \frac{\partial f_3}{\partial w}\\
\frac{\partial f_4}{\partial x} & \frac{\partial f_4}{\partial y} & \frac{\partial f_4}{\partial z} & \frac{\partial f_4}{\partial w}
\end{pmatrix}_{\Lambda_0}^{-1}
=
\begin{pmatrix}
-\frac{d_1(d_1-2)}{(d_1-1)^2} & -\frac{d_n}{(d_1-1)^2} & \frac{d_1}{(d_1-1)^2} & \frac{1}{(d_1-1)^2 d_n}\\
-\frac{d_1}{d_1-1} & \frac{d_1 d_n}{d_1-1} & -\frac{d_1}{d_1-1} & -\frac{d_1}{(d_1-1)d_n}\\
0 & -d_n & 0 & 1/d_n\\
0 & 0     & 0  & 0
\end{pmatrix}+\mathcal{O}(s^\nu),
\]
where $\mathcal{O}(s^\nu)$ means that every element in the matrix $\Jac(\Lambda_0)^{-1}$ is differ at most $Cs^\nu$ from the corresponding element in the matrix at the rightmost above and $C<+\infty$ is a positive constant depending only on $d_1,\cdots,d_n$.

Now we can estimate $F(\Lambda_0)$. By \eqref{xn-yn-wn}, we have
\begin{equation*}
\left\{
\begin{split}
f_1(\Lambda_0)&=-\frac{d_1-1}{d_1}t_1 s^{\nu+\beta_1}-\frac{1}{d_1}t_2 s^{\nu+\beta_2}
 -\frac{1}{d_1}t_3 s^{\nu+\beta_3}+t_4 s^{\nu+\beta_4}+\mathcal{O}(s^{\varsigma_1}),\\
f_2(\Lambda_0)&=-\frac{1}{d_n}t_3 s^{\nu+\beta_3}+t_4 s^{\beta_4}+\mathcal{O}(s^{\varsigma_2}),\\
f_3(\Lambda_0)&=\frac{d_1-1}{d_1}t_1 s^{\nu+\beta_1}-\frac{d_1-2}{d_1}t_2 s^{\nu+\beta_2}
 -\frac{d_1-1}{d_1}t_3 s^{\nu+\beta_3}+t_4 s^{\nu+\beta_4}+\mathcal{O}(s^{\varsigma_1}),\text{~and~}\\
f_4(\Lambda_0)&=d_n^2 t_4 s^{\beta_4}+\mathcal{O}(s^{\varsigma_2}).
\end{split}
\right.
\end{equation*}
where $\varsigma_1\geq \min\{2\nu,1+2/\dm\}$ and $\varsigma_2\geq (1+2/\dm)\nu$. Note that $\varsigma_1>\varsigma_2$. Then we have
\begin{equation*}
\Jac(\Lambda_0)^{-1}F(\Lambda_0)
= \Jac(\Lambda_0)^{-1}
\begin{pmatrix}
f_1(\Lambda_0)\\
f_2(\Lambda_0)\\
f_3(\Lambda_0)\\
f_4(\Lambda_0)
\end{pmatrix}
=
\begin{pmatrix}
t_1 s^{\nu+\beta_1}+\mathcal{O}(s^{v+\beta_4})\\
t_2 s^{\nu+\beta_2}+\mathcal{O}(s^{v+\beta_4})\\
t_3 s^{\nu+\beta_3}+\mathcal{O}(s^{\varsigma_2})\\
t_4 s^{\nu+\beta_4}+o(s^{2v})
\end{pmatrix}.
\end{equation*}

Let $\Lambda_1=N_F(\Lambda_0)=(x_1,y_1,z_1,w_1)$. By \eqref{Newton's-mtd}, we have
\begin{equation*}
|x_1-(1+x_n s^\nu)|\preceq s^{\nu+\beta_4}, |y_1-y_n s^\nu|\preceq s^{\nu+\beta_4}, |z_1-1|\preceq s^{\varsigma_2} \text{~and~} |w_1-w_n s^\nu|\preceq s^{2\nu}.
\end{equation*}
By the choice of $\beta_1,\beta_2,\beta_3$ and $\beta_4$, this means that $N_F(\Theta)\subset \Theta$ if $s>0$ is small enough.
According to Brouwer's Fixed Point Theorem, every continuous function $f$ defined from a convex compact subset $K$ of a Euclidean space to $K$ itself has a fixed point. Since $N_F$ is continuous, $\Theta$ is convex and compact, it follows that the set of equations \eqref{4-equations} has a zero solution in $\Theta$.
This ends the proof of Lemma \ref{zero-solution} and hence Lemma \ref{lema-para-fixed-II} holds.
\end{proof}

As a remark, we explain here how to find the constants $x_n,y_n$ and $w_n$ which are defined in \eqref{xn-yn-wn}.
By the expressions of $\rho_1,\rho_2,\rho_3$ and $\rho_4$, we have $(\rho_1,\rho_2,\rho_3,\rho_4)\to (1,1/d_n,0,0)$ as $s\to 0^+$. As a priori, it is naturally conjecture that the set of four equations \eqref{4-equations} has a zero solution near $(1,0,1,0)$ if $s$ is small enough.

By \eqref{Q-n-equ-2} and \eqref{rho_2}, we have $\lim_{s\to 0^+}W_n/s^\nu=(d_n-1)/d_n=w_n$. As a priori, let $X_n=1+x_n s^\nu+o(s^\nu)$, $Y_n=y_n s^\nu+o(s^\nu)$ and $Z_n=1+z_n s^{\nu+\varsigma}+o(s^{\nu+\varsigma})$ for some $\varsigma>0$. By \eqref{Q-n-equ-1} and \eqref{Q-n-equ-3}, if we compare the order $s^\nu$, we have
\begin{equation*}
((d_1-1)X_n+Y_n+Z_n)^2-d_1((d_1-1)Y_n+d_1 Z_n)=o(s^\nu),
\end{equation*}
which is equivalent to
\begin{equation}\label{xn-yn-1}
2(d_1-1)x_n=(d_1-3)y_n.
\end{equation}
Since $\lim_{s\to 0^+}W_n/s^\nu=w_n$, if we compare the order $s^\nu$ in the two sides of equation \eqref{Q-n-equ-1}, we have
\begin{equation}\label{xn-yn-2}
(d_1-1)x_n+y_n=d_1(d_n-1)/d_n.
\end{equation}
Combining \eqref{xn-yn-1} and \eqref{xn-yn-2}, we then have \eqref{xn-yn-wn}.

As a special case of Lemma \ref{lema-para-fixed-II}, if $d_i=K$ for $1\leq i\leq n$ and $K\geq n+1$, then $\nu=(n-1)/(K-1)$ and
\begin{equation}
\begin{split}
\lim_{s\to 0^+}\frac{X_n-1}{s^\nu}=\frac{K-3}{K-1},
&~~ \lim_{s\to 0^+}\frac{Y_n}{s^\nu}=2,\\
\lim_{s\to 0^+}\frac{Z_n-1}{s^{2\nu}}=-\frac{2(K+1)}{K}\text{~and}
&~~ \lim_{s\to 0^+}\frac{W_n}{s^\nu}=\frac{K-1}{K}.
\end{split}
\end{equation}

Let $\varphi(z)=s^\nu/z$, define
\begin{equation*}
\widehat{Q}_n:=\varphi\circ Q_n\circ\varphi^{-1}.
\end{equation*}
Note that the conjugacy $\varphi$ exchanges the two parabolic fixed points $1$ and $s^\nu$. The following Lemma \ref{Q_n_limit} indicates that the rational maps $Q_n$ and $\widehat{Q}_n$, respectively, can be served as a small perturbation of $h_{d_1}(z)= d_1 z^{d_1}/((d_1-1)z^{d_1}+1)$ and $h_{d_n}(z)= d_n z^{d_n}/((d_n-1)z^{d_n}+1)$, which are parabolic maps defined in \eqref{defi-h}.

\begin{lema}\label{Q_n_limit}
The rational maps $Q_n$ and $\widehat{Q}_n$, respectively, converge to $h_{d_1}$ and $h_{d_n}$ locally uniformly on $\EC\setminus\{0\}$ as the parameter $s>0$ tends to zero.
\end{lema}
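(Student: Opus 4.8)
The plan is to prove the two convergence statements separately, because the first is a direct analogue of Lemma~\ref{P_n_limit} whereas the second is genuinely harder: the conjugating map $\varphi(z)=s^\nu/z$ carries an explicit dependence on the parameter $s$, so one cannot simply pass to the limit inside $\varphi$.

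For $Q_n\to h_{d_1}$, I would argue exactly as in Lemma~\ref{P_n_limit}. By Lemma~\ref{lema-para-fixed-II} the coefficients satisfy $X_n\to 1$, $Y_n\to 0$, $Z_n\to 1$, $W_n\to 0$ as $s\to 0^+$, and by \eqref{range-s-fixed} every $b_i\to 0$. Hence on any compact $K\subset\EC\setminus\{0\}$ the zeros $b_i e^{2\pi\textup{i}k/D_i}$ eventually leave $K$, so each factor $z^{D_i}-b_i^{D_i}$ tends to $z^{D_i}$ uniformly on $K$ while staying bounded away from $0$; consequently the product converges to $z^{\sum_{i=1}^{n-1}(-1)^{i-1}D_i}$. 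The telescoping identity $\sum_{i=1}^{n-1}(-1)^{i-1}(d_i+d_{i+1})=d_1-d_n$, valid since $n$ is odd, collapses this to $z^{d_1-d_n}$, and combining it with the limit of $d_1 z^{d_n}/((d_1-1)X_n z^{d_1}+Y_n z+Z_n)$ and $W_n\to 0$ yields $Q_n\to d_1 z^{d_1}/((d_1-1)z^{d_1}+1)=h_{d_1}$ locally uniformly on $\EC\setminus\{0\}$.

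For $\widehat{Q}_n$, note that $\varphi$ is an involution, so $\widehat{Q}_n(w)=s^\nu/Q_n(s^\nu/w)$, and I would substitute $z=s^\nu/w$ into \eqref{family-para-para-fixed} with $w$ ranging over a compact subset of $\EC\setminus\{0\}$. The decisive step is to factor each product term as
$$s^{\nu D_i}/w^{D_i}-b_i^{D_i}=-b_i^{D_i}\Big(1-\frac{s^{\nu D_i}}{b_i^{D_i}w^{D_i}}\Big),$$
since Lemma~\ref{lema-useful-fixed-2} gives $s^{D_i\nu}/b_i^{D_i}\asymp s^{\alpha_3}\to 0$, so the correction factors tend to $1$ uniformly, while the remaining factors assemble, using $(-1)^{n-1}=1$ and \eqref{b_n-prod-simplify}, into
$$\prod_{i=1}^{n-1}\big(-b_i^{D_i}\big)^{(-1)^{i-1}}=\prod_{i=1}^{n-1}b_i^{(-1)^{i-1}D_i}=\frac{1}{d_1 d_n\,s^{(d_n-1)\nu}}.$$

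Finally I would collect the powers of $s$. The denominator $(d_1-1)X_n s^{\nu d_1}/w^{d_1}+Y_n s^\nu/w+Z_n$ tends to $1$, so together with the numerator $d_1 s^{\nu d_n}/w^{d_n}$ and the product above one obtains $Q_n(s^\nu/w)=\frac{s^\nu}{d_n w^{d_n}}(1+o(1))+W_n$, the exponents cancelling precisely because $\nu d_n-(d_n-1)\nu=\nu$. Substituting $W_n=\frac{d_n-1}{d_n}s^\nu(1+o(1))$ from Lemma~\ref{lema-para-fixed-II} and dividing $s^\nu$ by the outcome gives
$$\widehat{Q}_n(w)=\frac{d_n w^{d_n}}{1+(d_n-1)w^{d_n}}\,(1+o(1))\longrightarrow h_{d_n}(w).$$
The main obstacle is exactly this $s$-dependence of $\varphi$: the product blows up like $s^{-(d_n-1)\nu}$ while the prefactor contributes $s^{\nu d_n}$ and the outer $\varphi$ contributes $s^\nu$, so the whole argument hinges on verifying that the three sources of error—the correction factors, the coefficient asymptotics, and the $W_n$ expansion—remain uniformly $o(1)$ on $K$ once these competing powers of $s$ are balanced. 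Carrying out this bookkeeping in the spherical metric then yields local uniform convergence on $\EC\setminus\{0\}$.
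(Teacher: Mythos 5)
Your proposal is correct and follows essentially the same route as the paper: the first limit is obtained by letting the coefficients $X_n,Y_n,Z_n,W_n$ and the $b_i$ tend to their limits in the explicit formula (the telescoping $\sum_{i=1}^{n-1}(-1)^{i-1}D_i=d_1-d_n$ you make explicit is implicit in the paper's expression), and the second is the same bookkeeping the paper performs on $1/\widehat{Q}_n$, using \eqref{b_n-prod-simplify} to evaluate $\prod_i b_i^{(-1)^{i-1}D_i}$, Lemma \ref{lema-useful-fixed-2} to kill the correction factors, and $\lim_{s\to 0^+}W_n/s^\nu=(d_n-1)/d_n$. The only cosmetic difference is that you track $\widehat{Q}_n$ directly where the paper tracks its reciprocal.
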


\begin{proof}
Let $X_n$, $Y_n$, $Z_n$ and $W_n$ be the four numbers in \eqref{family-para-para-fixed}. Since $n\geq 3$ is odd, we have
\begin{equation*}\label{family-para-odd-Q_n}
Q_n(z)=\frac{d_1 z^{d_n}}{(d_1-1)X_n z^{d_1}+Y_n z+Z_n}\,\frac{z^{d_1+d_2}-b_1^{d_1+d_2}}{z^{d_2+d_3}-b_2^{d_2+d_3}}
\cdots\frac{z^{d_{n-2}+d_{n-1}}-b_{n-2}^{d_{n-2}+d_{n-1}}}{z^{d_{n-1}+d_n}-b_{n-1}^{d_{n-1}+d_n}}+W_n.
\end{equation*}
By Lemma \ref{lema-para-fixed-II}, it follows that $X_n$ and $Z_n$ tend to $1$, $b_1,\cdots,b_{n-1}$, $Y_n$ and $W_n$ tend to $0$ as $s>0$ tends to $0$. By the expressions of $Q_n$, it follows that $Q_n$ converges to $h_{d_1}$ locally uniformly on $\EC\setminus\{0\}$ as $s>0$ tends to zero.

Note that $n\geq 3$ is odd. By \eqref{b_n-prod-simplify} and making a straightforward calculation, we have
\begin{equation*}
1/\widehat{Q}_n(z) =
\frac{1/d_n}{(d_1-1)X_n s^{d_1\nu}z^{d_n-d_1}+Y_n s^\nu z^{d_n-1}+Z_n z^{d_n}}
\prod_{i=1}^{n-1}(1-\frac{s^{D_i\nu}}{b_i^{D_i}z^{D_i}})^{(-1)^{i-1}}+\frac{W_n}{s^\nu}.
\end{equation*}
By Lemma \ref{lema-para-fixed-II}, we have $\lim_{s\to 0^+}W_n/s^\nu=1-1/d_n$. For any $z\in\EC\setminus\{0\}$, we know that $(d_1-1)X_n s^{d_1\nu}z^{d_n-d_1}+Y_n s^\nu z^{d_n-1}+Z_n z^{d_n}$ tends to $z^{d_n}$ as $s\to 0^+$ since $(X_n,Y_n,Z_n)\to(1,0,1)$. Moreover, by Lemma \ref{lema-useful-fixed-2} or \eqref{rho_2}, we know that $\prod_{i=1}^{n-1}(1-{s^{D_i\nu}}/{(b_i z)^{D_i}})^{(-1)^{i-1}}$ tends to $1$ as $s\to 0^+$ if $z\neq 0$. This means that $1/\widehat{Q}_n(z)$ tends to $z\mapsto ((d_n-1)z^{d_n}+1)/(d_n z^{d_n})$. Equivalently, $\widehat{Q}_n$ tends to $h_{d_n}$ as $s>0$ tends to zero.
\end{proof}

\begin{thm}\label{parameter-parabolic-two-fixed-resta}
Let $b_1=(\dm^2 \tau s)^{1/d_1}$ and $b_i=(\tau s)^{1/d_i}\,b_{i-1}$ be the numbers defined in \eqref{range-s-fixed} for $2\leq i\leq n-1$. There exist suitable $X_n,Y_n,Z_n$ and $W_n$ such that if the parameter $s>0$ is small enough, then the Julia set of $Q_n$ is a Cantor set of circles with two parabolic fixed points $1$ and $s^\nu$.
\end{thm}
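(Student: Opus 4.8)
The plan is to transplant the entire architecture of the proof of Theorem~\ref{parameter-parabolic-resta} from $P_n$ to $Q_n$, the only structural change being that the Fatou component surrounding $0$ is now parabolic rather than attracting. Two ingredients are already in hand: Lemma~\ref{lema-para-fixed-II} supplies the quantities $X_n,Y_n,Z_n,W_n$ for which $1$ and $s^\nu$ are parabolic fixed points of multiplier $1$, and Lemma~\ref{Q_n_limit} gives the local uniform convergences $Q_n\to h_{d_1}$ and $\widehat Q_n\to h_{d_n}$ on $\EC\setminus\{0\}$, where $\widehat Q_n=\varphi\circ Q_n\circ\varphi^{-1}$ with $\varphi(z)=s^\nu/z$. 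Because $\varphi$ interchanges $0\leftrightarrow\infty$ and $1\leftrightarrow s^\nu$, the petal of $Q_n$ attached to $s^\nu$ near $0$ is the $\varphi$-image of the petal of $\widehat Q_n$ attached to $1$ near $\infty$, so both ends of the picture can be handled by the single device used for $P_n$.

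First I would erect the two parabolic petals. The proof of Corollary~\ref{key-lemma-complex} uses only the convergence $P_n\to h_{d_1}$; running it verbatim with $Q_n$ in place of $P_n$, which is legitimate by Lemma~\ref{Q_n_limit}, produces a round disk $U_\infty=D_r'$ about $\infty$ with $Q_n(\overline U_\infty)\subset U_\infty\cup\{1\}$ for small $s$. Feeding the convergence $\widehat Q_n\to h_{d_n}$ into the same argument and pulling the resulting petal back by $\varphi$ yields a topological disk $U_0$ about $0$ with $Q_n(\overline U_0)\subset U_0\cup\{s^\nu\}$. Next I would locate the free critical points as in Lemma~\ref{crit-close-Parameter}: the logarithmic derivative of $Q_n$ has the same shape as that of $P_n$, so a Rouch\'e argument based on Lemmas~\ref{very-useful-est} and \ref{lema-useful-fixed-2} confines the $\sum_{i=1}^{n-1}D_i$ remaining critical points, within error $s^{1/2}b_i$, to the circles $\T_{r_i b_i}$ with $r_i=\sqrt[D_i]{d_{i+1}/d_i}$. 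Enclosing each cluster in a thin round annulus $A_i$ and re-running the estimates of Lemma~\ref{lemma-want} then gives $Q_n(\overline A_i)\subset U_0$ for odd $i$ and $Q_n(\overline A_i)\subset U_\infty$ for even $i$, with $A_j\prec A_i$ for $i<j$; since $n$ is odd, the two fixed disks satisfy $Q_n(\overline U_0)\subset U_0\cup\{s^\nu\}$ and $Q_n(\overline U_\infty)\subset U_\infty\cup\{1\}$.

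With this combinatorial skeleton the Riemann--Hurwitz bookkeeping is identical to that of Theorem~\ref{parameter-parabolic-resta}. The preimage $Q_n^{-1}(U_0\cup U_\infty)$ consists of the annuli $U_i\supset A_i$ together with two disks $\widetilde U_0\supset U_0$ and $\widetilde U_\infty\supset U_\infty$; counting the $2\sum_{i=1}^n d_i-2$ critical points (namely $d_n-1$ at $0$, $d_1-1$ at $\infty$, and $D_i$ inside each $U_i$) forces every $U_i$ to have Euler characteristic $0$, hence to be an annulus separating $0$ from $\infty$, and forces $\widetilde U_0,\widetilde U_\infty$ to be disks. Setting $V=\EC\setminus(U_0\cup U_\infty)$, the set $Q_n^{-1}(V)$ is a nested chain of $n$ annuli $V_n\prec\cdots\prec V_1$ with $\deg(Q_n\colon V_j\to V)=d_j$, so the Julia components are coded by the full one-sided shift $\Sigma_n=\{1,\cdots,n\}^{\mathbb N}$ and each separates $0$ from $\infty$.

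It remains to show every Julia component is a Jordan curve, and this is where the only genuine departure from the $P_n$ argument occurs. Both invariant Fatou components being parabolic, $Q_n$ is geometrically finite with no critical points on $J(Q_n)$, so a component whose orbit stays in the annulus $\EC\setminus(\overline D_0\cup\overline D_\infty)$ is a Jordan curve by \cite[Lemma~2.4]{PT}. For the two invariant components I would pass to polynomial-like restrictions and straighten via \cite{DH}: near $\infty$ the map $Q_n\colon(V_1\cup\widetilde U_\infty)^o\to(V\cup U_\infty)^o$ is polynomial-like of degree $d_1$ with $\infty$ as its only critical point, hence straightens to the parabolic polynomial $g_{d_1}$, whose Julia set is a Jordan curve. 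The new point is the component at $0$: since $s^\nu$ is now \emph{parabolic} rather than attracting, the degree-$d_n$ polynomial-like map $Q_n\colon(\widetilde U_0\cup V_n)^o\to(U_0\cup V)^o$ straightens not to $z\mapsto z^{d_n}$ but to the parabolic polynomial $g_{d_n}(z)=(z^{d_n}+d_n-1)/d_n$, whose Julia set is again a Jordan curve. Propagating the Jordan property along the shift yields that $J(Q_n)$ is homeomorphic to $\Sigma_n\times\mathbb{S}^1$, a Cantor set of circles, carrying the two parabolic fixed points $1$ and $s^\nu$ by construction. The main obstacle is therefore the simultaneous control of \emph{both} parabolic petals — proving the two-sided analogue of Corollary~\ref{key-lemma-complex} and checking that the restriction near $0$ is genuinely parabolic rather than superattracting — since everything downstream is the Riemann--Hurwitz and shift-coding argument already established for $P_n$.
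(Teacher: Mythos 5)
Your proposal is correct and follows essentially the same route as the paper: transplant the $P_n$ argument, control the parabolic basin at $\infty$ via $Q_n\to h_{d_1}$ and the one at $s^\nu$ via $\widehat{Q}_n\to h_{d_n}$ together with Lemma~\ref{lema-approx}, then repeat the critical-point location, the Riemann--Hurwitz bookkeeping, and the shift coding. Your observation that the polynomial-like restriction near $0$ now straightens to the parabolic polynomial $g_{d_n}$ rather than $z\mapsto z^{d_n}$ is exactly the one genuine modification the paper's (sketched) proof requires.
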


\begin{proof}
We will not give the very detailed proof of Theorem \ref{parameter-parabolic-two-fixed-resta} since the proof technique is almost the same as that in the last section. By Lemmas \ref{P_n_limit} and \ref{Q_n_limit}, the parabolic maps $P_n$ and $Q_n$ can be both seen as a small perturbation of $h_{d_1}$. Compare \eqref{range-s} and \eqref{range-s-fixed}, the parameters $a_i$ in $P_n$ and $b_i$ in $Q_n$ are chosen in the same order of $s$ and the only difference is the constant $\tau$. As in Lemma \ref{crit-close-Parameter}, besides $0$ and $\infty$, the rest $\sum_{i=1}^{n-1}D_i$ critical points $\bigcup_{i=1}^{n-1}\Crit_i$ of $Q_n$ are very `close' to the reference points $\bigcup_{i=1}^{n-1}\{r_i b_i e^{\pi \textup{i}(2j-1)/D_i}:1\leq j\leq D_i\}$, where $r_i:=\sqrt[D_i]{d_{i+1}/d_{i}}$ and $1\leq i\leq n-1$. Hence, as in Lemma \ref{lemma-want}, there exist a simply connected domain $U_\infty$ such that $Q_n(\overline{U}_\infty)\subset U_\infty\cup\{1\}$ and $n-1$ annuli $A_1,\cdots, A_{n-1}$ satisfying $A_{n-1}\prec \cdots\prec A_1$ and each $A_i$ contains $\mathbb{T}_{b_i}\cup \mathbb{T}_{r_i b_i} \cup \Crit_i$ compactly.

Similar to the argument of \eqref{P_n-est-1}, one can obtain $Q_n(\overline{A}_i)\subset U_\infty$ for even $i$. Now we prove the existence of $U_0$ such that $Q_n(\overline{A}_i)\subset U_0$ for odd $i$ and $U_0$ is contained in the immediate parabolic basin of $s^\nu$. By Lemma \ref{Q_n_limit}, the map  $\widehat{Q}_n(z)$ can be regarded as a small perturbation of the parabolic map $h_{d_n}(z)= d_n z^{d_n}/((d_n-1)z^{d_n}+1)$. Therefore, the immediate parabolic basin of $1$ of $\widehat{Q}_n$ `almost' contains the outside of the closed unit disk by Lemma \ref{lema-approx} (compare Corollary \ref{key-lemma-complex}). This means that the immediate parabolic basin of $s^\nu$ of $Q_n$ `almost' contains the round disk $\D(0,s^\nu)$ (see Figure \ref{Fig_C-C-F}). In particular, let $U_0:=\D(s^\nu/4,3s^\nu/4)$, then $U_0$ is contained in the immediate parabolic basin of $s^\nu$. By \eqref{P_n-est-0}, one can also obtain that $Q_n$ maps the annulus $A_i$ to a round disk with radius less than $Cs$ for odd $i$, where $C>0$ is a constant depending only on $d_1,\cdots,d_n$. Note that $\nu<1$, this means that $Q_n(\overline{A}_i)\subset U_0$ for odd $i$ if $s$ is small enough.

Up to now, we have obtained the four conclusions which are parallel to Lemma \ref{lemma-want} for the rational map $Q_n$. By applying a completely similar argument in the proof of Theorem \ref{parameter-parabolic-resta}, one can prove the Julia set of $Q_n$ is a Cantor set of circles. This ends the proof of Theorem \ref{parameter-parabolic-two-fixed-resta} and hence Theorem \ref{parameter-parabolic-two-fixed}.
\end{proof}

We now give a specific example such the Julia set of $Q_{d_1,\cdots,d_n}$ is a Cantor set of circles. Let $n=3$, $d_1=d_2=d_3=4$ and $s=10^{-8}$. By \eqref{tau}--\eqref{my-nu}, we have $\dm =4$, $\tau=4$, $\nu=2/3$, $b_1=(\dm^2 \tau s)^{1/d_1}=2\sqrt{2}\times 10^{-2}$ and $b_2=(\tau s)^{1/d_2}b_1=4\times 10^{-4}$. By \eqref{Q-n-equ-1}, \eqref{rho-2-defi}, \eqref{rho-3-4-defi} and a direct calculation, we have
\begin{equation*}
\rho_1\approx 1-4.096\times 10^{-13}, \rho_2\approx 0.25+10^{-16},
\rho_3\approx 3.2768\times 10^{-12}\text{~and~}\rho_4\approx 2.6\times 10^{-15}.
\end{equation*}
Solving the following set of equations which corresponds to \eqref{Q-n-equ-1}, \eqref{Q-n-equ-2}, \eqref{Q-n-equ-3} and \eqref{Q-n-equ-4}  with unknown numbers $X_3$, $Y_3$, $Z_3$ and $W_3$:
\begin{equation*}
\left\{                         
\begin{array}{l}               
4\rho_1=(1-W_3)(3X_3+Y_3+Z_3)          \\
\rho_2=(1-10^{16/3}W_3)(3\times 10^{-64/3}X_3+10^{-16/3}Y_3+Z_3) \\
1/(1-W_3)=\rho_3+(3Y_3+4 Z_3)/(3X_3+Y_3+Z_3) \\
\DF{1}{1-10^{16/3}W_3}=\rho_4+4-\DF{12\times 10^{-64/3}X_3+10^{-16/3}Y_3}{3\times 10^{-64/3}X_3+10^{-16/3}Y_3+Z_3},
\end{array}
\right.                         
\end{equation*}
we have
\begin{equation*}
\begin{split}
X_3\approx 1+1.5471913857\times 10^{-6}, ~ & ~ Y_3\approx 9.2832930409\times 10^{-6},\\
Z_3\approx 1-5.38605\times 10^{-11} \text{~and}~ & ~ W_3\approx 3.4811916252\times 10^{-6}.
\end{split}
\end{equation*}
The Julia set of
\begin{equation*}
Q_{4,4,4}(z)=\frac{4z^4(z^8-b_1^8)}{(3X_3z^4+Y_3 z+Z_3)(z^8-b_2^8)}+W_3,
\end{equation*}
is a Cantor set of circles with two parabolic fixed points $1$ and $s^\nu=10^{-16/3}\approx 4.64\times 10^{-6}$ (see Figure \ref{Fig_C-C-F} and compare Figure \ref{Fig_parabolic-polynomial}).

\begin{figure}[!htpb]
  \setlength{\unitlength}{1mm}
  \centering
  \includegraphics[width=70mm]{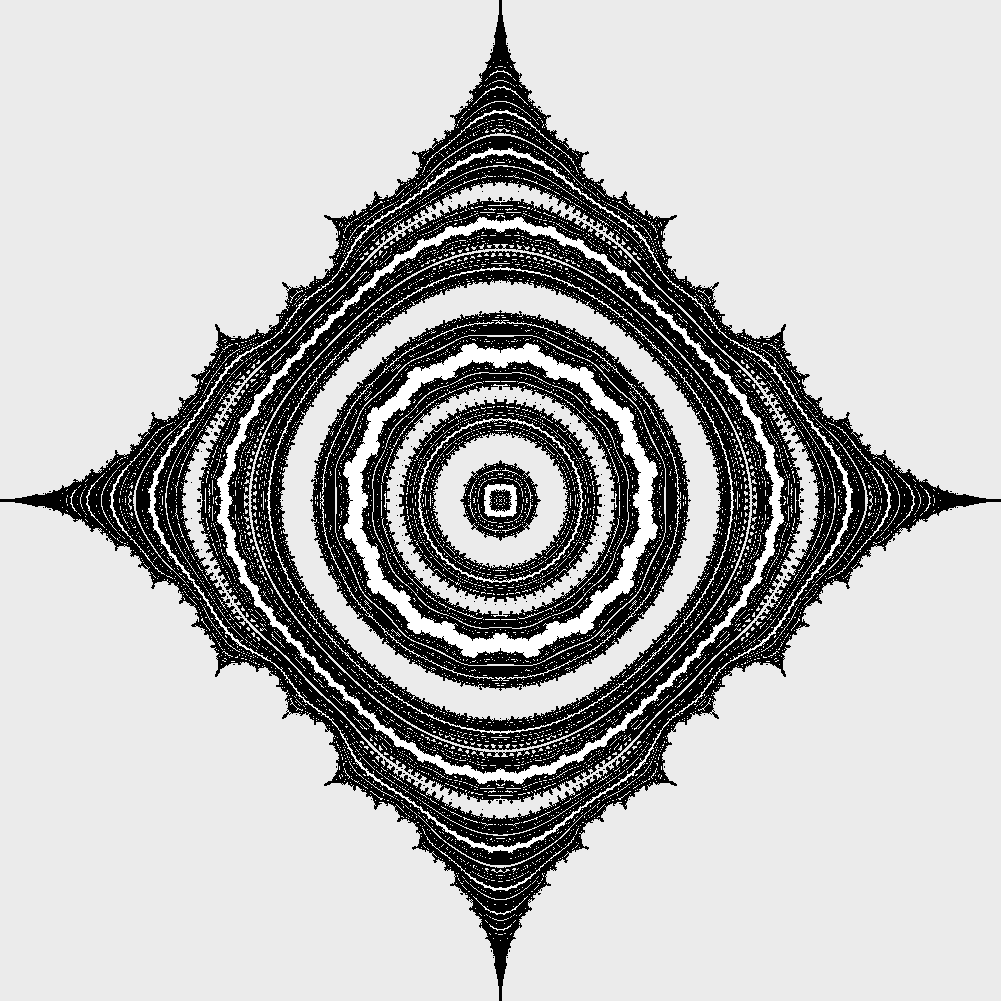}
  \includegraphics[width=70mm]{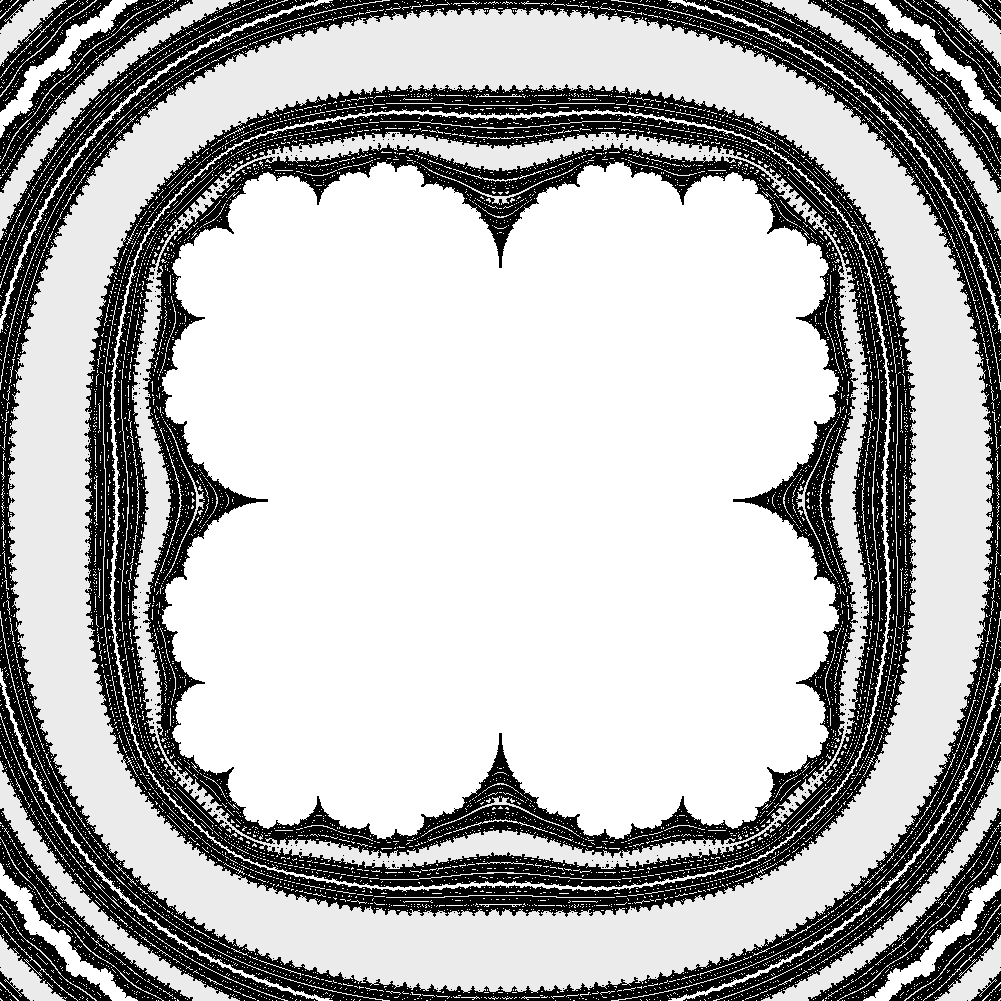}
  \caption{The Julia set of $Q_{4,4,4}$ and its zoom near the parabolic fixed point $s^\nu\approx 4.64\times 10^{-6}$ for suitable parameters, which is a Cantor set of circles. The white and gray parts in the Figure denote the Fatou components which are iterated to the parabolic fixed Fatou components whose boundaries containing $s^\nu$ and $1$ respectively. Figure ranges: $[-1,1]^2$ and $[-10^{-5},10^{-5}]^2$.}
  \label{Fig_C-C-F}
\end{figure}

To finish this section, we say some words about the expression of $Q_{d_1,\cdots,d_n}$ defined in \eqref{family-para-para-fixed}. One may ask why we don't choose the following expression for consideration:
\begin{equation*}\label{family-para-fixed-old}
\widetilde{Q}_{d_1,\cdots,d_n}(z)=\frac{d_1 (\widetilde{X}_n z^{d_n}+\widetilde{Y}_n z+\widetilde{Z}_n)}{(d_1-1)z^{d_1}+1}
\prod_{i=1}^{n-1}(z^{d_i+d_{i+1}}-b_i^{d_i+d_{i+1}})^{(-1)^{i-1}}+\widetilde{W}_n.
\end{equation*}
This map can be also regarded as a perturbation of the parabolic rational map $h_{d_1}(z)=d_1 z^{d_1}/((d_1-1)z^{d_1}+1)$ as $(\widetilde{X}_n,\widetilde{Y}_n,\widetilde{Z}_n,\widetilde{W}_n)\to(1,0,0,0)$ and $b_i\to 0$ for $1\leq i\leq n-1$. Actually, we did it originally. We try to find suitable $\widetilde{X}_n\approx 1,\widetilde{Y}_n\approx 0,\widetilde{Z}_n\approx 0$ and $\widetilde{W}_n\approx 0$ such that $\widetilde{Q}_{d_1,\cdots,d_n}$ has two parabolic fixed points and the corresponding Julia set is a Cantor set of circles. Unfortunately, it is easy to find $\widetilde{X}_n,\widetilde{Y}_n,\widetilde{Z}_n$ and $\widetilde{W}_n$ such that $\widetilde{Q}_{d_1,\cdots,d_n}$ has two parabolic fixed points $1$ and $s^\nu$ and both with multiplier $1$, but it cannot grantee the Julia set of $\widetilde{Q}_{d_1,\cdots,d_n}$ is a Cantor set of circles. An important reason is we cannot control the critical orbits near the parabolic fixed point $s^\nu$ since there are $d_n-1$ different critical points there. What we have done is to let the $d_n-1$ different critical points near $s^\nu$ to become only one, but with multiplicity $d_n-1$. This is the essential reason why we choose $Q_{d_1,\cdots,d_n}$ but not $\widetilde{Q}_{d_1,\cdots,d_n}$ eventually.

Finally, we say some words about the constant $\nu$ defined in \eqref{my-nu}. As the order of the parabolic fixed point $s^\nu$, this number was found by considering \eqref{bound-lower-in-disk-even}. Note that \eqref{bound-lower-in-disk-even} is an inequality. What we want to obtain is an equation $Q_n(s^\nu)=s^\nu$. Comparing the order of $s$ in both sides of $s^{d_n\nu}/b_{n-1}^{d_n}\asymp s^\nu$, we have $d_n\nu-\sum_{i=1}^{n-1}(d_n/d_i)=\nu$ by \eqref{range-s-fixed}. The solution of $\nu$ in this equation is just \eqref{my-nu}, as desired.

\section{Cantor circles with parabolic periodic points}\label{sec-para-period=2}

In this section, we will construct a family of non-hyperbolic rational maps whose Julia sets are Cantor circles such that each one of them has a parabolic periodic Fatou component with period $2$. The construction is the \textit{most} difficult case among all the constructions of the Cantor circles (including the constructions in \cite{QYY}) since we need to control the sizes of two parabolic periodic Fatou components and locate the positions of two parabolic periodic points which lie on the boundaries of these two parabolic basins.

As in last section, we recommend the reader to take a look at the right picture in Figure \ref{fig_parabolic_III-IV} in which a rough indication of the dynamics of such rational maps has been made. Let $n\geq 3$ be an odd number and $d_1,\cdots,d_n$ be $n$ positive integers such that $\sum_{i=1}^{n}(1/d_i)<1$. We define
\begin{equation}\label{family-para-R_n}
R_{d_1,\cdots,d_n}(z)=\frac{S_n}{z^{d_n}}
\prod_{i=1}^{n-1}(z^{d_i+d_{i+1}}-c_i^{d_i+d_{i+1}})^{(-1)^i}+T_n,
\end{equation}
where $c_1,\cdots,c_{n-1}$ are $n-1$ small \textit{real} numbers and $S_n,T_n$ are numbers depending only on $c_1,\cdots,c_{n-1}$. As before, let $\dm\geq 3$ be the maximal number among $d_1,\cdots,d_n$. Set
\begin{equation}\label{range-s-periodic}
c_1=(\dm^2 s)^{1/d_1} \text{~ and ~} c_i=s^{1/d_i}\,c_{i-1} \text{~for~} 2\leq i\leq n-1,
\end{equation}
where $s>0$ is the parameter which is small enough.

In the following, we show that there exist suitable $S_n$ and $T_n$ such the Julia set of $R_{d_1,\cdots,d_n}$ is a Cantor set of circles which contain two parabolic periodic points with period $2$. Actually, the largest trouble in the construction of $R_{d_1,\cdots,d_n}$ has been solved since we have already wrote the specific expression for $R_{d_1,\cdots,d_n}$. The next problem is to locate the positions of two parabolic periodic points. As in \S\ref{sec-para-1-fixed}, we can set $1$ as one of the parabolic periodic point. For another parabolic periodic point, we find it by solving a set of equations.

For simplicity, we use $R_n$ to denote $R_{d_1,\cdots,d_n}$ for the fixed integers $d_1,\cdots,d_n$ satisfying $\sum_{i=1}^{n}(1/d_i)<1$. As before, define $D_i:=d_i+d_{i+1}$, where $1\leq i\leq n-1$.

\begin{lema}\label{lema-para-periodic-2}
There exist suitable $S_n$ and $T_n$ such that $R_n$ has a parabolic periodic orbits $1\leftrightarrow z_0$ with multiplier $1$. In particular, $R_n(1)=z_0$, $R_n(z_0)=1$ and $R_n'(1)R_n'(z_0)=1$. Moreover,
\begin{equation}\label{S_n-T_n}
\lim_{s\to 0^+}S_n/s^\nu=\mu,~~ \lim_{s\to 0^+}T_n/s^\nu=(d_1 d_n -1)\mu \text{~and~} \lim_{s\to 0^+}z_0/s^\nu= d_1 d_n\mu,
\end{equation}
where $\nu$ is the constant depending only on $d_1,\cdots,d_n$ which was defined in \eqref{my-nu} and
\begin{equation}\label{my-mu}
\mu=(d_1 d_n)^{-\frac{d_n}{d_n-1}}\dm^{\frac{2(d_n-d_1)}{d_1(d_n-1)}}.
\end{equation}
\end{lema}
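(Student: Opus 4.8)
The plan is to treat $S_n$, $T_n$ and the second periodic point $z_0$ as three unknowns and to impose the three scalar conditions making $1\leftrightarrow z_0$ a parabolic $2$-cycle with multiplier $1$, namely
\begin{equation*}
R_n(1)=z_0,\qquad R_n(z_0)=1,\qquad R_n'(1)\,R_n'(z_0)=1,
\end{equation*}
the last being $(R_n^{\circ 2})'(1)=1$. As in the proof of Lemma \ref{zero-solution}, I would solve this $3\times3$ system by a fixed-point argument in a small box and read off the asymptotics \eqref{S_n-T_n} from its leading order. First I would record the logarithmic-derivative identity
\begin{equation*}
\frac{zR_n'(z)}{R_n(z)-T_n}=-d_n+\sum_{i=1}^{n-1}\frac{(-1)^iD_iz^{D_i}}{z^{D_i}-c_i^{D_i}},
\end{equation*}
which turns the multiplier condition into an algebraic relation once $R_n(1)-T_n=S_n\rho_1$ and $R_n(z_0)-T_n$ are known.

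Next I would carry out the leading-order reduction. Since the $c_i$ obey \eqref{range-s-periodic} (the same order in $s$ as the parameters of \S\ref{sec-para-1-fixed}), the estimates of Lemma \ref{very-useful-est} apply, and the telescoping computation behind \eqref{b_n-prod-simplify} gives the product identity
\begin{equation*}
\prod_{i=1}^{n-1}c_i^{(-1)^iD_i}=\frac{c_{n-1}^{d_n}}{\dm^2}=\dm^{2(d_n-d_1)/d_1}\,s^{(d_n-1)\nu},
\end{equation*}
using $d_n\sum_{i=1}^{n-1}(1/d_i)=(d_n-1)\nu$ from \eqref{my-nu} (note that $n-1$ is even, so the signs in $\prod(-c_i^{D_i})^{(-1)^i}$ cancel). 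Anticipating $z_0\asymp s^\nu$, and observing that $\nu>\sum_{i=1}^{n-1}(1/d_i)$ forces $z_0\ll c_i$ for every $i$, the three equations become, after substituting $R_n(z_0)=1$ into the multiplier relation,
\begin{equation*}
S_n\rho_1+T_n=z_0,\qquad
\frac{S_n\,\dm^{2(d_n-d_1)/d_1}s^{(d_n-1)\nu}\rho_2}{z_0^{d_n}}+T_n=1,
\end{equation*}
\begin{equation*}
\frac{S_n}{z_0}\,\rho_1\,(-d_1+\rho_3)(1-T_n)(-d_n+\rho_4)=1,
\end{equation*}
where $\rho_1=\prod(1-c_i^{D_i})^{(-1)^i}\to1$ and $\rho_3=\sum\frac{(-1)^iD_ic_i^{D_i}}{1-c_i^{D_i}}\to0$, while $\rho_2=\prod(1-(z_0/c_i)^{D_i})^{(-1)^i}\to1$ and $\rho_4=\sum\frac{(-1)^iD_i(z_0/c_i)^{D_i}}{(z_0/c_i)^{D_i}-1}\to0$ uniformly on the box, because $z_0/c_i\asymp s^{\nu-\sum_{j\le i}(1/d_j)}\to0$.

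Then I would rescale by $S_n=s^\nu\sigma$, $T_n=s^\nu\theta$, $z_0=s^\nu\zeta$ and let $s\to0^+$. All powers of $s$ cancel (for the middle equation because $\nu+(d_n-1)\nu-d_n\nu=0$), and the system converges, in the $C^1$ sense on a fixed box around $(\mu,(d_1d_n-1)\mu,d_1d_n\mu)$, to the limiting system
\begin{equation*}
\sigma+\theta-\zeta=0,\qquad \frac{\sigma\,\dm^{2(d_n-d_1)/d_1}}{\zeta^{d_n}}=1,\qquad \frac{d_1d_n\,\sigma}{\zeta}=1.
\end{equation*}
The third equation gives $\zeta=d_1d_n\sigma$, the first gives $\theta=(d_1d_n-1)\sigma$, and the second gives $\sigma^{d_n-1}=\dm^{2(d_n-d_1)/d_1}/(d_1d_n)^{d_n}$, i.e. $\sigma=\mu$ with $\mu$ exactly as in \eqref{my-mu}; this recovers \eqref{S_n-T_n}. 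A direct computation shows that the Jacobian of the limiting system at this zero has determinant $-(d_n-1)/(\sigma\zeta)\neq0$, so the zero is nondegenerate. Since the rescaled $s$-system is a $C^1$-small perturbation of the limiting one on the box, it has for all small $s>0$ a unique zero there converging to $(\mu,(d_1d_n-1)\mu,d_1d_n\mu)$; equivalently, one checks that the associated Newton map preserves a shrinking box and invokes Brouwer's Fixed Point Theorem precisely as in Lemma \ref{zero-solution}.

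The main obstacle is the two-scale nature of the problem: the two periodic points live at the incomparable scales $1$ and $s^\nu$, so the three equations carry residuals of different orders in $s$, and the reduction above is only self-consistent once one verifies, inside the box, that $z_0\ll c_i$ for every $i$ (this is exactly what pins down the exponent $\nu$, via $d_n\nu-\sum_{i=1}^{n-1}(d_n/d_i)=\nu$) and that the correction terms $\rho_2,\rho_4$ are small together with their derivatives in $(\sigma,\theta,\zeta)$. Keeping these orders straight—rather than any single estimate—is the delicate point, just as it was in the passage from \eqref{bound-lower-in-disk-even} to the definition of $\nu$.
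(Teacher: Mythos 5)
Your proposal is correct and follows essentially the same route as the paper: the paper likewise imposes $R_n(1)=z_0$, $R_n(z_0)=1$, $R_n'(1)R_n'(z_0)=1$, rescales via the ansatz $S_n=\mu s^\nu I_n$, $T_n=(d_1d_n-1)\mu s^\nu J_n$, $z_0=d_1d_n\mu s^\nu z_1$, uses the same product identity for $\prod c_i^{(-1)^iD_i}$, and solves the resulting system near $(1,1,1)$ by the Newton/Brouwer argument of Lemma \ref{zero-solution}. Your derivation of $\mu$ from the limiting system and your check that its Jacobian has determinant $-(d_n-1)/(\sigma\zeta)\neq 0$ supply exactly the details the paper omits.
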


\begin{proof}
Let
\begin{equation}\label{S_n-T_n-z_0}
S_n=\mu s^\nu I_n,~T_n=(d_1 d_n-1) \mu s^\nu J_n \text{~~and~~} z_0= d_1 d_n \mu s^\nu z_1.
\end{equation}
We need to show that $I_n, J_n$ and $z_1$ all tend to $1$ as $s>0$ tends to $0$. By $R_n(1)=z_0$ and \eqref{family-para-R_n}, we have $z_0=R_n(1)=\kappa_1 S_n+T_n$, which means that
\begin{equation}\label{R-n-equ-1}
d_1 d_n z_1=\kappa_1 I_n+(d_1d_n-1)J_n, \text{~where~} \kappa_1=\prod_{i=1}^{n-1}(1-c_i^{D_i})^{(-1)^i}.
\end{equation}
Since $c_1=(\dm^2 s)^{1/d_1}$ and $c_i=s^{1/d_i}\,c_{i-1}$ for $2\leq i\leq n-1$ by \eqref{range-s-periodic}, we have
\begin{equation}\label{R-n-use-1}
\prod_{i=1}^{n-1}c_i^{(-1)^i D_i}=\frac{c_{n-1}^{d_n}}{\dm^2}
=\frac{s^{d_n((1/d_1)+\cdots+(1/d_{n-1}))}}{\dm^{\, 2(d_1-d_n)/d_1}}=\frac{s^{(d_n-1)\nu}}{\dm^{\, 2(d_1-d_n)/d_1}}.
\end{equation}
By the condition $R_n(z_0)=1$, \eqref{R-n-use-1} and the definitions of $\nu$ in \eqref{my-nu} and $\mu$ in \eqref{my-mu}, we have
\begin{equation}\label{R-n-equ-2}
1=R_n(z_0)=\kappa_2 I_n/z_1^{d_n}+ (d_1 d_n-1) \mu s^\nu J_n,
\end{equation}
where
\begin{equation}\label{defi-kappa-2}
\kappa_2=\prod_{i=1}^{n-1}\left(\frac{z_0^{D_i}}{c_i^{D_i}}-1\right)^{(-1)^i}
=\prod_{i=1}^{n-1}\left(\frac{(d_1 d_n \mu s^\nu)^{D_i} z_1^{D_i}}{c_i^{D_i}}-1\right)^{(-1)^i}.
\end{equation}

In order to calculate the derivation of $R_n$, we consider
\begin{equation*}
\frac{zR_n'(z)}{R_n(z)-T_n}= \sum_{i=1}^{n-1}\frac{(-1)^i D_iz^{D_i}}{z^{D_i}-c_i^{D_i}}-d_n.
\end{equation*}
Then we have following two equations:
\begin{equation}\label{solu-crit-parabolic-2}
\frac{R_n'(1)}{z_0-T_n}=\kappa_3-d_1 \text{~and~}
\frac{z_0R_n'(z_0)}{1-T_n}=\kappa_4-d_n,
\end{equation}
where
\begin{equation}\label{kappa-3-4}
\kappa_3=\sum_{i=1}^{n-1}\frac{(-1)^i D_i\,c_i^{D_i}}{1-c_i^{D_i}} \text{~~and~~}
\kappa_4=\sum_{i=1}^{n-1}\frac{(-1)^i D_i z_0^{D_i}}{z_0^{D_i}-c_i^{D_i}}
             =\sum_{i=1}^{n-1}\frac{(-1)^i D_i ( d_1 d_n\mu s^\nu)^{D_i} z_1^{D_i}}{( d_1 d_n\mu s^\nu)^{D_i} z_1^{D_i}-c_i^{D_i}}.
\end{equation}
By the condition $R_n'(1)R_n'(z_0)=1$ and \eqref{solu-crit-parabolic-2}, we have
\begin{equation}
\frac{z_0}{(z_0-T_n)(1-T_n)}=(\kappa_3-d_1)(\kappa_4-d_n).
\end{equation}
This equation is equivalent to
\begin{equation}\label{R-n-equ-3}
\frac{z_1}{(d_1 d_n z_1-(d_1 d_n -1) J_n)(1- (d_1 d_n-1)\mu s^\nu J_n)}=(1-\frac{\kappa_3}{d_1})(1-\frac{\kappa_4}{d_n}).
\end{equation}

By the formulas of $\kappa_1,\kappa_2,\kappa_3$ and $\kappa_4$ in \eqref{R-n-equ-1}, \eqref{defi-kappa-2} and \eqref{kappa-3-4}, we have $(\kappa_1,\kappa_2,\kappa_3,\kappa_4)\rightarrow (1,1,0,0)$ as $s\rightarrow 0^+$. On the other hand, if $s=0$ and $(\kappa_1,\kappa_2,\kappa_3,\kappa_4)= (1,1,0,0)$, then the set of three equations \eqref{R-n-equ-1}, \eqref{R-n-equ-2} and \eqref{R-n-equ-3} has a solution $(I_n,J_n,z_1)=(1,1,1)$. By using the Newton's method as in the proof of Lemma \ref{zero-solution}, one can also show that the solution $(I_n,J_n,z_1)\approx (1,1,1)$ satisfying \eqref{R-n-equ-1}, \eqref{R-n-equ-2} and \eqref{R-n-equ-3} exists. We omit the details here.
\end{proof}

Let $\psi(z)=z_0/z$, where $z_0\approx d_1d_n\mu s^\nu$ is the parabolic periodic point of $R_n$ appeared in Lemma \ref{lema-para-periodic-2}. Define
\begin{equation*}
\widehat{R}_n:=\psi\circ R_n\circ\psi^{-1}.
\end{equation*}
Note that the conjugacy $\psi$ exchanges the two parabolic periodic points $1$ and $z_0$ by \eqref{S_n-T_n}. The following Lemma \ref{R_n_limit} indicates that the rational maps $R_n^{\circ 2}$ and $\widehat{R}_n^{\circ 2}$ both can be served as a small perturbation of two parabolic rational maps.

\begin{lema}\label{R_n_limit}
The rational maps $R_n^{\circ 2}$ and $\widehat{R}_n^{\circ 2}$, respectively, converge to
\begin{equation}\label{new-parabolic}
h_{d_1,d_n}(z)=\frac{(d_1 d_n)^{d_n}z^{d_1 d_n}}{(1+(d_1 d_n-1)z^{d_1})^{d_n}} \text{~and~} h_{d_1 d_n}(z)=\frac{d_1 d_n z^{d_1 d_n}}{1+(d_1 d_n-1)z^{d_1 d_n}}
\end{equation}
locally uniformly on $\EC\setminus\{0\}$ as the parameter $s>0$ tends to zero.
\end{lema}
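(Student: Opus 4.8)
The plan is to exploit the two--scale structure of the parabolic period--two orbit $1\leftrightarrow z_0$. By Lemma \ref{lema-para-periodic-2}, one of its points, $1$, lives at scale $1$, while the other, $z_0\sim d_1 d_n\mu s^\nu$, lives at scale $s^\nu$; moreover $R_n$ swaps these two scales. Accordingly I would describe $R_n^{\circ2}$ near $1$ as the composition of an ``outgoing'' passage (scale $1\to$ scale $s^\nu$) with an ``incoming'' passage (scale $s^\nu\to$ scale $1$), compute the asymptotics of each separately, and then glue them, exactly as Lemma \ref{Q_n_limit} handled the single--step maps $Q_n,\widehat Q_n$. The one combinatorial fact needed at the outset is that the signed leading exponent of the product in \eqref{family-para-R_n} telescopes: since $D_i=d_i+d_{i+1}$ and $n$ is odd, $\sum_{i=1}^{n-1}(-1)^iD_i=d_n-d_1$.

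First I would establish the outgoing asymptotic. For $z$ in a compact subset of $\EC\setminus\{0\}$ the product in \eqref{family-para-R_n} converges to $z^{d_n-d_1}$, so using $S_n\sim\mu s^\nu$ and $T_n\sim(d_1d_n-1)\mu s^\nu$ from \eqref{S_n-T_n} one gets, uniformly on such compacta,
\begin{equation*}
R_n(z)=s^\nu\big(\mu(z^{-d_1}+d_1d_n-1)+o(1)\big).
\end{equation*}
Next, the incoming asymptotic: for $\zeta=s^\nu u$ with $u$ in a compact subset of $\C\setminus\{0\}$, each factor satisfies $\zeta^{D_i}/c_i^{D_i}\asymp u^{D_i}s^{D_i\nu}/c_i^{D_i}\to 0$, because $s^{D_i\nu}/c_i^{D_i}\asymp s^{\alpha_3}$ with $\alpha_3>0$ (the estimate of Lemma \ref{lema-useful-fixed-2}, whose proof uses only the common order in $s$ shared by the $b_i$ and the $c_i$). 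Factoring out $c_i^{D_i}$ and invoking the normalization $\prod_{i=1}^{n-1}c_i^{(-1)^iD_i}=s^{(d_n-1)\nu}/\dm^{\,2(d_1-d_n)/d_1}$ from \eqref{R-n-use-1}, the surviving product of $(\zeta^{D_i}/c_i^{D_i}-1)^{(-1)^i}$ tends to $1$ (here $\sum_{i=1}^{n-1}(-1)^i=0$), and the $s$--powers cancel against $S_n/\zeta^{d_n}\sim\mu s^{-(d_n-1)\nu}u^{-d_n}$ while $T_n\to 0$, giving $R_n(s^\nu u)\to \mu\,\dm^{\,2(d_n-d_1)/d_1}u^{-d_n}$.

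Composing the two passages, with $u=w:=\mu(z^{-d_1}+d_1d_n-1)$, yields
\begin{equation*}
R_n^{\circ2}(z)\to \frac{\mu^{\,1-d_n}\dm^{\,2(d_n-d_1)/d_1}\,z^{d_1d_n}}{(1+(d_1d_n-1)z^{d_1})^{d_n}}.
\end{equation*}
The decisive check is that the leading constant equals $(d_1d_n)^{d_n}$: substituting the explicit value \eqref{my-mu} of $\mu$, the factor $\mu^{1-d_n}$ contributes exactly $(d_1d_n)^{d_n}\dm^{\,-2(d_n-d_1)/d_1}$, so all powers of $\dm$ cancel identically and the limit is $h_{d_1,d_n}$. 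This cancellation is precisely what the choices of $\nu$ in \eqref{my-nu} and $\mu$ in \eqref{my-mu} were engineered to force. For $\widehat R_n^{\circ2}=\psi\circ R_n^{\circ2}\circ\psi$ with $\psi(z)=z_0/z$, I would run the same two passages in the reversed order (incoming first, then outgoing) to obtain the asymptotics of $R_n^{\circ2}$ near $z_0$, and then conjugate by $\psi$; the analogous bookkeeping again collapses every $\dm$--power, now leaving the symmetric constant $d_1d_n$ and hence the limit $h_{d_1d_n}$.

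Finally I would upgrade pointwise convergence to locally uniform convergence on all of $\EC\setminus\{0\}$. The estimates above are already uniform on compacta avoiding the finitely many exceptional points, namely the $d_1$ zeros of $w$ (equivalently $z^{d_1}=-1/(d_1d_n-1)$), which are exactly the poles of $h_{d_1,d_n}$; there $R_n^{\circ2}\to\infty$ and one argues uniformly in a chart after passing to $1/R_n^{\circ2}$, whose limit is holomorphic. Since the $R_n^{\circ2}$ are rational self--maps of $\EC$ of fixed degree $(\sum_i d_i)^2$, a normal--families (Vitali) argument then promotes convergence off this finite set to locally uniform convergence in the spherical metric, as required. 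I expect the main obstacle to be the bookkeeping of the incoming passage: one must simultaneously track the rescaled coordinate $u$, the exact product normalization \eqref{R-n-use-1}, and the vanishing ratios $\zeta^{D_i}/c_i^{D_i}$, and verify that all accumulated constants conspire to cancel --- this is the heart of the argument and the reason the specific constants $\nu$ and $\mu$ appear.
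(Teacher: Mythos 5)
Your proposal is correct and follows essentially the same route as the paper: the same two-scale decomposition (the product telescoping to $z^{d_n-d_1}$ so that $R_n(z)\approx S_n/z^{d_1}+T_n$ at scale $s^\nu$, then the second application evaluated at scale $s^\nu$ using Lemma \ref{lema-useful-fixed-2} and the normalization \eqref{R-n-use-1}, with the constants $\nu$ and $\mu$ forcing the cancellation to $(d_1d_n)^{d_n}$), and the same conjugation by $\psi$ with the passages reversed for $\widehat R_n^{\circ 2}$. The only difference is your explicit normal-families/chart argument at the poles of $h_{d_1,d_n}$, which the paper leaves implicit; this is a harmless (and slightly more careful) addition.
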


\begin{proof}
Let $S_n$ and $T_n$ be the two numbers in \eqref{family-para-R_n}. Since $n\geq 3$ is odd, we have
\begin{equation*}\label{family-para-odd-R_n}
R_n(z)=\frac{S_n}{z^{d_n}}\,\frac{z^{d_2+d_3}-c_2^{d_2+d_3}}{z^{d_1+d_2}-c_1^{d_1+d_2}}
\cdots\frac{z^{d_{n-1}+d_n}-c_{n-1}^{d_{n-1}+d_n}}{z^{d_{n-2}+d_{n-1}}-c_{n-2}^{d_{n-2}+d_{n-1}}}+T_n.
\end{equation*}
By \eqref{range-s-periodic}, it follows that $c_1,\cdots,c_{n-1}\to 0$ as $s\to 0^+$. This means that $R_n(z)/(S_n/z^{d_1}+T_n)$ converges to $1$ locally uniformly in $\EC\setminus\{0\}$ as $s\to 0^+$. By Lemma \ref{lema-para-periodic-2}, $\lim_{s\to 0^+}S_n/s^\nu=\mu$ and $\lim_{s\to 0^+}T_n/s^\nu=(d_1 d_n -1)\mu$. This means that $S_n/z^{d_1}+T_n\asymp s^\nu$ if $z\neq 0$. By Lemma \ref{lema-useful-fixed-2} and compare \eqref{range-s-fixed} and \eqref{range-s-periodic}, we have $\lim_{s\to 0^+}(S_n/z^{d_1}+T_n)^{D_i}/c_i^{D_i}=0$. Therefore, by \eqref{S_n-T_n}, \eqref{my-mu} and \eqref{R-n-use-1}, we have
\begin{equation}
\lim_{s\to 0^+}R_n(R_n(z))=\lim_{s\to 0^+}\frac{S_n}{(S_n/z^{d_1}+T_n)^{d_n}}\prod_{i=1}^{n-1}c_i^{(-1)^i D_i}=\frac{(d_1 d_n)^{d_n}z^{d_1 d_n}}{(1+(d_1 d_n-1)z^{d_1})^{d_n}}.
\end{equation}
This means that $R_n^{\circ 2}$ converges to $h_{d_1,d_n}$ locally uniformly on $\EC\setminus\{0\}$ as $s\to 0^+$.

Note that $n\geq 3$ is odd. After making a straightforward calculation, we have
\begin{equation*}
R_n\circ\psi(z) =R_n(z_0/z)=
\frac{S_n z^{d_n}}{z_0^{d_n}}\prod_{i=1}^{n-1}c_i^{(-1)^i D_i}
\prod_{i=1}^{n-1}(1-\frac{z_0^{D_i}}{c_i^{D_i}z^{D_i}})^{(-1)^i}+T_n.
\end{equation*}
By Lemma \ref{lema-useful-fixed-2} and compare \eqref{range-s-fixed} and \eqref{range-s-periodic}, we know that $\prod_{i=1}^{n-1}(1-{z_0^{D_i}}/{(c_i z)^{D_i}})^{(-1)^i}$ tends to $1$ as $s\to 0^+$ if $z\neq 0$. By \eqref{S_n-T_n}, \eqref{my-mu} and \eqref{R-n-use-1}, we know that $R_n\circ\psi(z)$ converges to $z^{d_n}$ locally uniformly on $\EC\setminus\{0\}$ as $s\to 0^+$. Hence, if $z\neq 0$, we have
\begin{equation}
\lim_{s\to 0^+}\psi\circ R_n(R_n\circ\psi(z))=\lim_{s\to 0^+}\frac{z_0}{S_n/z^{d_1d_n}+T_n}=\frac{d_1 d_n z^{d_1 d_n}}{1+(d_1 d_n-1)z^{d_1 d_n}}.
\end{equation}
This means that $\widehat{R}_n^{\circ 2}$ converges to $h_{d_1 d_n}$ locally uniformly on $\EC\setminus\{0\}$ as $s\to 0^+$.
\end{proof}

\begin{thm}\label{parameter-parabolic-two-periodic-resta}
Let $c_1=(\dm^2 s)^{1/d_1}$ and $c_i=s^{1/d_i}\,c_{i-1}$ be the numbers defined in \eqref{range-s-periodic} for $2\leq i\leq n-1$. There exist suitable $S_n$ and $T_n$ such that if the parameter $s>0$ is small enough, then the Julia set of $R_n$ is a Cantor set of circles containing a parabolic periodic orbit with period two.
\end{thm}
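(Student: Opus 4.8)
The plan is to follow the scheme of the proofs of Theorems~\ref{parameter-parabolic-resta} and~\ref{parameter-parabolic-two-fixed-resta}, the only genuinely new feature being that the two parabolic basins now form one orbit of period two rather than being fixed. First I would apply Lemma~\ref{lema-para-periodic-2} to fix $S_n$ and $T_n$ so that $R_n$ carries the parabolic $2$-cycle $1\leftrightarrow z_0$ with multiplier $1$, recording from \eqref{S_n-T_n} that $S_n,T_n$ and $z_0$ are all of order $s^\nu$. Next, exactly as in Lemma~\ref{crit-close-Parameter}, I would locate the free critical points: apart from $0$ and $\infty$ (of multiplicities $d_n-1$ and $d_1-1$), the remaining $\sum_{i=1}^{n-1}D_i$ critical points cluster, to within $s^{1/2}c_i$, on the circles $\mathbb{T}_{r_ic_i}$ with $r_i=\sqrt[D_i]{d_{i+1}/d_i}$; the estimates are identical to those of \S\ref{sec-para-1-fixed} since each $c_i$ has the same order in $s$ as the corresponding $a_i$ there. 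Write $\Crit_i$ for the cluster near $\mathbb{T}_{r_ic_i}$.

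The heart of the matter is to establish the analogue of Lemma~\ref{lemma-want}: two simply connected domains $U_0\ni0$ and $U_\infty\ni\infty$ together with $n-1$ nested annuli $A_{n-1}\prec\cdots\prec A_1$, each $A_i$ compactly containing $\mathbb{T}_{c_i}\cup\mathbb{T}_{r_ic_i}\cup\Crit_i$, such that $R_n$ sends the $A_i$ alternately into $U_0$ and $U_\infty$ (the parity now opposite to \S\ref{sec-para-1-fixed}, since the factors of $R_n$ carry exponents $(-1)^i$ instead of $(-1)^{i-1}$), sends $\overline{U}_\infty$ into $U_0\cup\{z_0\}$, and sends $\overline{U}_0$ into $U_\infty\cup\{1\}$. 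The annulus estimates come verbatim from \eqref{P_n-est-0}--\eqref{P_n-est-1}, and the gap $\nu<1$ guarantees (as in Theorem~\ref{parameter-parabolic-two-fixed-resta}) that the $O(s)$-sized images of the relevant annuli lie well inside the $O(s^\nu)$-sized disk $U_0$. To control the two basins themselves I would pass to the second iterate: by Lemma~\ref{R_n_limit} the maps $R_n^{\circ2}$ and $\widehat{R}_n^{\circ2}$ converge locally uniformly on $\EC\setminus\{0\}$ to $h_{d_1,d_n}$ and $h_{d_1d_n}$, and by Lemma~\ref{lema-h-n-h-mn} the immediate parabolic basin of $1$ of each limit map contains $\EC\setminus\overline{\D}$. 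Feeding $R_n^{\circ2}$ and $\widehat{R}_n^{\circ2}$ into the perturbation Lemma~\ref{lema-approx} (as in Corollary~\ref{key-lemma-complex}), I would conclude that for small $s$ the immediate parabolic basin of $1$ of $R_n^{\circ2}$ almost contains $\EC\setminus\overline{\D}$ while that of $z_0$ almost contains $\D(0,|z_0|)$. Choosing $U_\infty$ a round neighbourhood of $\infty$ inside the first basin and $U_0$ a round disk with $z_0$ on its boundary inside $\D(0,|z_0|)$ then yields all four inclusions, with $1\in\partial D_\infty$ and $z_0\in\partial D_0$.

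Granting these four properties, the conclusion is a line-by-line repetition of the proof of Theorem~\ref{parameter-parabolic-resta}. The set $R_n^{-1}(U_0\cup U_\infty)$ splits into topological disks $\widetilde{U}_0,\widetilde{U}_\infty$ and annuli $U_1,\ldots,U_{n-1}$; a Riemann--Hurwitz count forces each $U_i$ to be an annulus separating $0$ from $\infty$, so $R_n^{-1}(V)$ with $V=\EC\setminus(U_0\cup U_\infty)$ consists of $n$ disjoint sub-annuli $V_n\prec\cdots\prec V_1$, each mapping over $V$ with degree $d_j$. The resulting symbolic coding $J_{j_1j_2\cdots}$ realises the one-sided shift on $\Sigma_n$; geometric finiteness gives local connectivity of each component, and the polynomial-like straightening together with the \cite{PT} argument show that every component is a Jordan curve, whence $J(R_n)\cong\Sigma_n\times\mathbb{S}^1$ is a Cantor set of circles. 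I expect the genuine obstacle to be the bookkeeping forced by the period-two structure: because $R_n$ interchanges the two basins rather than fixing them, every basin estimate must be routed through the second iterates $R_n^{\circ2}$ and $\widehat{R}_n^{\circ2}$, whose limit germs $h_{d_1,d_n}$ and $h_{d_1d_n}$ are \emph{different} from each other and from the $h_{d_j}$ of the earlier sections, and one must check that the single iterate $R_n$ transports $\overline{U}_\infty$ and $\overline{U}_0$ correctly across the cycle so that Lemma~\ref{lema-approx} is applied to the correct germ at each of $1$ and $z_0$.
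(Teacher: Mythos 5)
Your proposal follows the paper's proof essentially step for step: fix $S_n,T_n$ by Lemma \ref{lema-para-periodic-2}, locate the free critical points near the circles $\mathbb{T}_{r_ic_i}$, establish the analogue of Lemma \ref{lemma-want} with the parity of the annuli reversed, control the two period-two basins through the second iterates $R_n^{\circ 2}$ and $\widehat{R}_n^{\circ 2}$ via Lemmas \ref{R_n_limit} and \ref{lema-approx}, and then repeat the endgame of Theorem \ref{parameter-parabolic-resta}. One calibration in your sketch is off, however: for $R_n$ the images of the even-indexed annuli are not $O(s)$-sized sets sitting deep inside an $O(s^\nu)$-sized disk (that is the mechanism used for $Q_n$ in Theorem \ref{parameter-parabolic-two-fixed-resta}); here $R_n(\overline{A}_i)$ has diameter of order $s^\nu$ and is centred at $T_n\approx (d_1d_n-1)\mu s^\nu$, which is comparable to the radius $|z_0|\approx d_1d_n\mu s^\nu$ of the target disk, so the inclusion $R_n(\overline{A}_i)\subset\D(0,|z_0|)$ rests on the constant comparison $(d_1d_n-\tfrac12)\mu<d_1d_n\mu$ built into the normalisation of $S_n$, $T_n$ and $z_0$ in \eqref{S_n-T_n}, not on $\nu<1$. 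The inequality $\nu<1$ is instead what sends the odd-indexed annuli into the basin at infinity, since there $|R_n|\succeq s^{\nu-1}\to\infty$; with that correction your argument coincides with the paper's.
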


\begin{proof}
Similar to the proof of Theorem \ref{parameter-parabolic-two-fixed-resta}, we will not give the very detailed proof of Theorem \ref{parameter-parabolic-two-periodic-resta} since the proof technique is almost the same as that of Theorem \ref{parameter-parabolic}. Like the cases of $P_n$ and $Q_n$, besides $0$ and $\infty$, the rest $\sum_{i=1}^{n-1}D_i$ critical points $\bigcup_{i=1}^{n-1}\Crit_i$ of $R_n$ are very `close' to $\bigcup_{i=1}^{n-1}\{r_i c_i e^{\pi \textup{i}(2j-1)/D_i}:1\leq j\leq D_i\}$, where $r_i:=\sqrt[D_i]{d_{i+1}/d_{i}}$ and $1\leq i\leq n-1$. Although the specific expression of $R_n$ are very different from that of $P_n$ and $Q_n$, we only need to check the similar things in Lemma \ref{lemma-want} beginning with \eqref{Phi-n} since the parameters $c_i$ in $R_n$, $b_i$ in $Q_n$ and $a_i$ in $P_n$ have the same orders in $s$, where $1\leq i\leq n-1$ (see \eqref{range-s}, \eqref{range-s-fixed} and \eqref{range-s-periodic}).

Let $A_1,\cdots,A_{n-1}$ be the annuli defined as in Lemma \ref{lemma-want}. Similar to \eqref{esti-other}, if $z\in \overline{A}_i$ and $s$ is small enough, we have
\begin{equation}\label{esti-other-r-n}
|z/c_i|^{d_{i}}+|c_i/z|^{d_{i+1}}<\dm.
\end{equation}
By the definition of $R_n$ in \eqref{family-para-R_n}, define $\Psi_n(z): = (R_n(z)-T_n)/S_n$. By \eqref{esti-other-r-n}, if $z\in \overline{A}_i$ and $s$ is small enough, we have
\begin{equation}\label{abs-f-n-yang-R_n}
\begin{split}
& |\Psi_n(z)| =
\left\{                         
\begin{array}{ll}               
\dm^{-2}s^{-1}\, |(z/c_i)^{d_i}-(c_i/z)^{d_{i+1}}|^{-1} \, \Upsilon_i(z)>\dm^{-3}s^{-1}\Upsilon_i(z)       &~\text{if}~i~\text{is odd}, \\
\dm^{-2}\, |(z/c_i)^{d_i}-(c_i/z)^{d_{i+1}}| \, \Upsilon_i(z)<\dm^{-1}\Upsilon_i(z)   &~\text{if}~i~\text{is even},   
\end{array}                     
\right.                         
\end{split}
\end{equation}
where
\begin{equation*}\label{Q-i-yang-R_n}
\Upsilon_i(z)=\prod_{j=1}^{i-1}\left|1-({z}/{c_j})^{D_j}\right|^{(-1)^j}
       \prod_{j=i+1}^{n-1}\left|1-({c_j}/{z})^{D_j}\right|^{(-1)^j}.
\end{equation*}
If $z\in \overline{A}_i$ for $1\leq j<i$, by Lemma \ref{very-useful-est}(2), we have
\begin{equation*}\label{estim-1-r-n}
|z/c_j|^{D_j}\asymp |c_i/c_j|^{D_j}\asymp s^{\alpha_2}\leq s^{1+2/\dm},
\end{equation*}
where $\alpha_2$ is defined in \eqref{alpha_2}. Similarly, if $z\in \overline{A}_i$ for $i<j\leq n-1$, we have
\begin{equation*}\label{estim-2-r-n}
|c_j/z|^{D_j}\asymp |c_j/c_i|^{D_j}\asymp s^{\alpha_1}\leq s^{1+2/\dm},
\end{equation*}
where $\alpha_1$ is defined in \eqref{alpha_1}. This means that
\begin{equation}\label{Q-i-esti-1-yang-R_n}
|\Upsilon_i(z)-1|\preceq s^{1+2/\dm}.
\end{equation}

For odd $i$ and sufficiently small $s>0$, if $z\in \overline{A}_i$, by \eqref{S_n-T_n}, \eqref{abs-f-n-yang-R_n} and \eqref{Q-i-esti-1-yang-R_n}, we have
\begin{equation}\label{est-0-R_n}
 |R_n(z)|\geq  |S_n|\,|\Psi_n(z)|-|T_n| >\dm^{-4}\mu s^{\nu-1}-1\asymp s^{\nu-1}.
\end{equation}
On the other hand, let $z\in \overline{A}_i$ for even $i$. By \eqref{S_n-T_n}, \eqref{abs-f-n-yang-R_n} and \eqref{Q-i-esti-1-yang-R_n}, if $s>0$ is sufficiently small, we have
\begin{equation}\label{est-1-R_n}
 |R_n(z)|\leq  |S_n|\,|\Psi_n(z)|+|T_n| <(d_1 d_n-1/2)\mu s^\nu.
\end{equation}

By Lemma \ref{R_n_limit}, the map  $\widehat{R}_n^{\circ 2}$ can be regarded as a small perturbation of the parabolic map $h_{d_1 d_n}$ if $s$ is small enough. Therefore, the immediate parabolic basin of $1$ of $\widehat{R}_n^{\circ 2}$ `almost' contains the outside of the closed unit disk by Lemma \ref{lema-approx} (compare Corollary \ref{key-lemma-complex}). This means that the immediate parabolic basin of $z_0\approx d_1 d_n\mu s^\nu$ of $R_n$ `almost' contains the round disk $\D(0,|z_0|)$. Hence ,we know that $R_n(A_i)$ is contained in the immediate parabolic basin of $z_0\approx d_1 d_n\mu s^\nu$ by \eqref{est-1-R_n} if $i$ is even. On the other hand, $s^{\nu-1}$ tends to $\infty$ as $s\to 0^+$ since $\nu<1$. It follows that $R_n(A_i)$ is contained in the immediate parabolic basin of $1$ of $R_n$ by \eqref{est-0-R_n} with odd $i$. By applying a completely similar argument in the proof of Theorem \ref{parameter-parabolic-resta}, one can also prove the Julia set of $R_n$ is a Cantor set of circles. This ends the proof of Theorem \ref{parameter-parabolic-two-periodic-resta} and hence Theorem \ref{parameter-parabolic-two-periodic}.
\end{proof}

Now we give a specific example such the Julia set of $R_{d_1,\cdots,d_n}$ is a Cantor set of circles. Let $n=3$, $d_1=d_2=d_3=4$ and $s=2^8\times 10^{-12}$. By \eqref{my-nu}, \eqref{range-s-periodic} and \eqref{my-mu}, we have $\dm =4$, $\mu=2^{-16/3}$, $\nu=2/3$, $\mu s^\nu=10^{-8}$, $c_1=(\dm^2  s)^{1/d_1}=8\times 10^{-3}$ and $c_2=s^{1/d_2}c_1=3.2\times 10^{-5}$. By \eqref{R-n-equ-1}, \eqref{defi-kappa-2}, \eqref{kappa-3-4} and a straightforward calculation, we have
$\kappa_1\approx 1$ and $\kappa_3\approx -1.34217728\times 10^{-16}$. Moreover,
\begin{equation*}
\kappa_2=\frac{5^8\times 10^{-24}z_1^8-1}{2^8\times 10^{-40}z_1^8-1} \text{~and~}
\kappa_4=-\frac{2^{11}\times 10^{-40}z_1^8}{2^8\times 10^{-40}z_1^8-1}+\frac{5^5\times 10^{-21}z_1^8}{5^8\times 10^{-24}z_1^8-1}.
\end{equation*}
Solving the following set of equations which corresponds to \eqref{R-n-equ-1}, \eqref{R-n-equ-2} and \eqref{R-n-equ-3} with unknown numbers $I_3$, $J_3$ and $z_1$:
\begin{equation*}
\left\{                         
\begin{array}{l}               
16z_1=\kappa_1 I_3+15 J_3          \\
1=\kappa_2 I_3/z_1^4+15\times 10^{-8}J_3\\
z_1=(1-\kappa_3/4)(1-\kappa_4/4)(16z_1-15 J_3)(1-15\times 10^{-8}J_3),
\end{array}
\right.                         
\end{equation*}
we have
\begin{equation*}
I_3\approx 1+2.5\times 10^{-7}, ~ J_3\approx 1+9\times 10^{-8} \text{~and~} z_1\approx 1+ 10^{-7}.
\end{equation*}
Therefore, by \eqref{S_n-T_n-z_0}, we have
\begin{equation*}
\begin{split}
S_3 & =10^{-8}I_3\approx 10^{-8}, ~ T_3=15\times 10^{-8}J_3\approx 1.5\times 10^{-7} \text{~and~} \\
z_0 & =16\times 10^{-8}J_3\approx 1.6\times 10^{-7}.
\end{split}
\end{equation*}
The Julia set of
\begin{equation*}
R_{4,4,4}(z)=\frac{S_3(z^8-c_2^8)}{z^4(z^8-c_1^8)}+T_3,
\end{equation*}
is a Cantor set of circles with two parabolic periodic points $1$ and $z_0\approx 1.6\times 10^{-7}$ and the period are both two (see Figure \ref{Fig_C-C-F-periodic} and compare Figure \ref{Fig_parabolic-polynomial}).

\begin{figure}[!htpb]
  \setlength{\unitlength}{1mm}
  \centering
  \includegraphics[width=70mm]{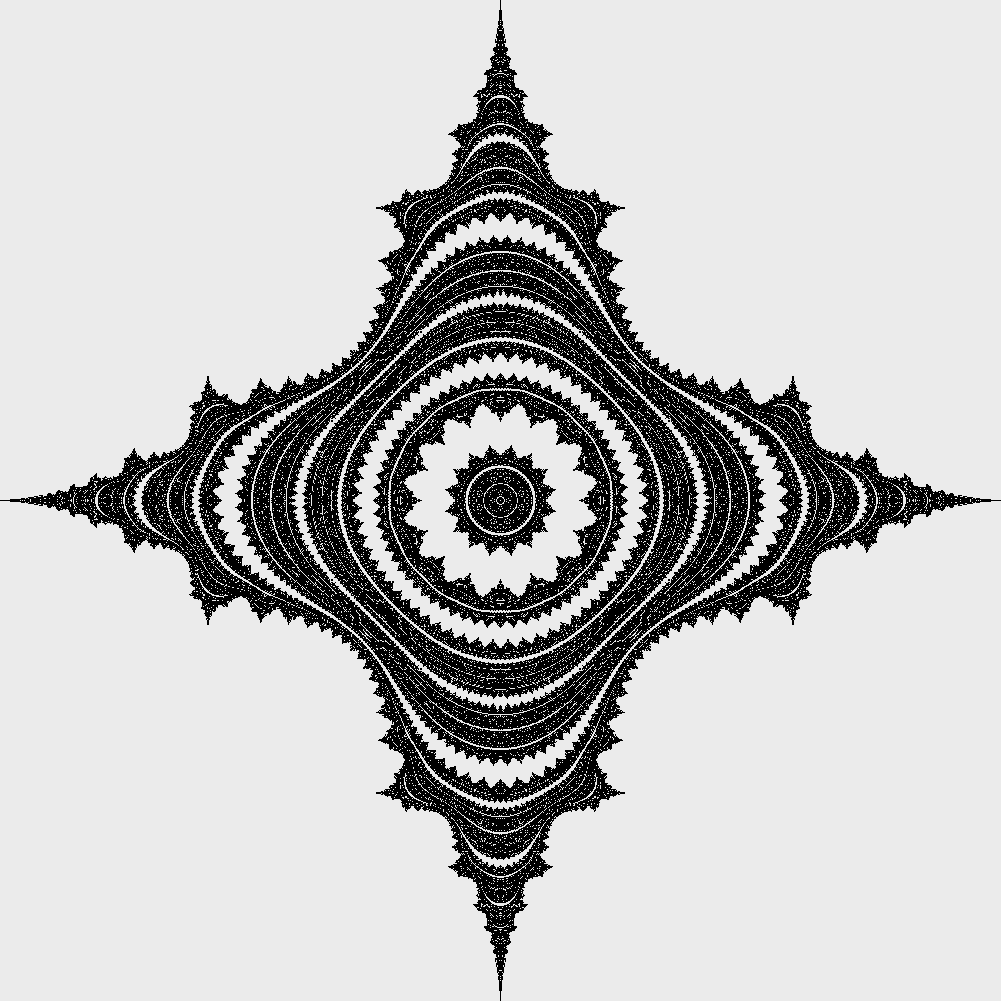}
  \includegraphics[width=70mm]{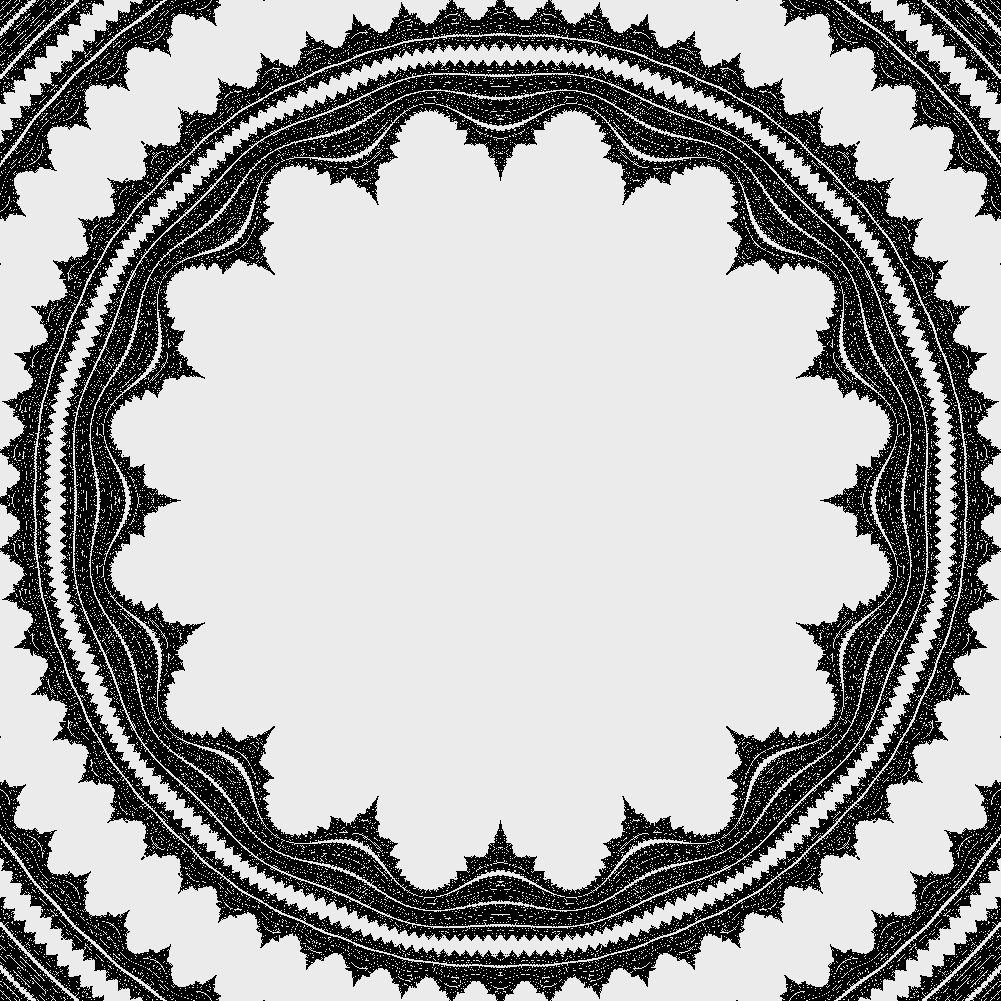}
  \caption{The Julia set of $R_{4,4,4}$ and its zoom near the parabolic periodic point $z_0\approx 1.6\times 10^{-7}$ for suitable parameters, which is a Cantor set of circles. The gray parts in the Figure denote the Fatou components which are iterated eventually to the parabolic periodic Fatou components whose boundaries containing $z_0$ and $1$. Figure ranges: $[-1,1]^2$ and $[-2.5\times 10^{-7},2.5\times 10^{-7}]^2$.}
  \label{Fig_C-C-F-periodic}
\end{figure}

\section{Quasisymmetric uniformization of the Cantor circles}\label{sec-quasi-unifor}

In this section, we give the quasisymmetric classification of the Cantor circles as the Julia sets of rational maps. The classification is divided into two broad categories: the hyperbolic case and the non-hyperbolic case.
The classification of topological conjugacy classes on the Cantor circles Julia sets for hyperbolic rational maps in \cite{QYY} is actually a quasiconformal classification. We state the main result in \cite{QYY} here for our further reference.

\begin{thm}\label{this-is-all-hyper}
Let $f$ be a hyperbolic rational map whose Julia set is a Cantor set of circles. Then there exist $p\in\{0,1\}$ and $n\geq 2$ positive integers $d_1,\cdots,d_n$ satisfying $\sum_{i=1}^{n}(1/d_i)<1$ such that $f$ is conjugate to $f_{p,d_1,\cdots,d_n}$ for suitable parameters on their corresponding Julia sets by a quasiconformal mapping.
\end{thm}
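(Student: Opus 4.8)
The plan is to upgrade the purely topological conjugacy furnished by Theorem \ref{thm-QYY} to a quasiconformal one, using the fact that a hyperbolic map is uniformly expanding on its Julia set. First I would apply Theorem \ref{thm-QYY} to $f$ to extract the combinatorial data $p\in\{0,1\}$ and the integers $d_1,\cdots,d_n$ together with a topological conjugacy between $f$ and $g:=f_{p,d_1,\cdots,d_n}$ on their Julia sets, which fixes the marking used below. Since $f$ is hyperbolic, its two periodic simply connected Fatou components $D_0$ and $D_\infty$ are \emph{attracting} basins, and by the Cantor circles structure they contain the critical points $0$ and $\infty$, of local degrees $d_n$ and $d_1$, as superattracting periodic points. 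Böttcher's theorem then supplies conformal isomorphisms from $D_0$ and $D_\infty$ onto $\D$ conjugating the appropriate first-return maps to $w\mapsto w^{d_n}$ and $w\mapsto w^{d_1}$, and $g$ enjoys the identical normal form. Composing these Böttcher coordinates gives a \emph{conformal} identification of $f$ with $g$ on $D_0\cup D_\infty$.

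Next I would build the conjugacy on a fundamental annular region. Let $A=\EC\setminus(\overline{D}_0\cup\overline{D}_\infty)$, so that $f^{-1}(A)$ consists of $n$ subannuli $V_1,\cdots,V_n\subset A$ on which $f$ restricts to \emph{unramified} coverings of degrees $d_1,\cdots,d_n$ (there are no critical points in $J(f)$ since the Julia components are Jordan curves), and $g$ has the same structure. By Corollary \ref{cor-regularity-hyper} the boundaries $\partial D_0$ and $\partial D_\infty$ are quasicircles, hence the boundary identifications induced by the Böttcher coordinates are quasisymmetric; I would interpolate these across $A$ to obtain a quasiconformal homeomorphism $\Phi_0\colon A\to A'$ whose boundary values agree with the Böttcher identification and whose marking of the $n$ slots $V_1,\cdots,V_n$ matches the symbolic coding of the topological conjugacy (this compatibility is exactly what makes the subsequent lifts single-valued). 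I would then spread $\Phi_0$ over $\EC$ inductively: having defined $\Phi$ on $f^{-(k-1)}(A)$, I extend it to $f^{-k}(A)$ as the lift of $\Phi\circ f$ through the covering $g$, branch by branch, using the combinatorial matching. Because $f$ and $g$ are holomorphic over the annular pieces, the functional equation $\Phi\circ f=g\circ\Phi$ forces the complex dilatation of each lift to have the same essential supremum as that of $\Phi_0$; thus every level is $K$-quasiconformal with one and the same $K$, and the pieces glue to a homeomorphism $\Phi$ of $\EC$ that conjugates $f$ to $g$ and is conformal on all Fatou components.

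The crux, and the step I expect to be the main obstacle, is to promote this homeomorphism, which is $K$-quasiconformal on $\EC\setminus J(f)$ by the dilatation-invariance just described, to a genuinely quasiconformal map of the whole sphere. Here the geometry of the Cantor circles is decisive: $J(f)$ is a Cantor set of quasicircles, so it has zero area and is removable for quasiconformal maps, whence $\Phi$ is in fact $K$-quasiconformal on all of $\EC$. (The delicate point is to ensure uniform control of the lifts near $J(f)$, for which one uses the uniform expansion of the hyperbolic map together with the confinement of the critical orbits to the Fatou set; without hyperbolicity the bounded-turning of the components fails and no such $K$ exists.) Finally, by \cite[Theorem 11.14]{Hei} the quasiconformal map $\Phi$ is quasisymmetric, so it realizes the desired quasiconformal conjugacy between $f$ and $f_{p,d_1,\cdots,d_n}$ on their Julia sets, completing the proof.
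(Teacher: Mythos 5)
Your overall strategy — pull back a boundary identification through the holomorphic dynamics and control the dilatation of the lifts — is the right one, and it is essentially how the paper handles the analogous non-hyperbolic statement (Theorem \ref{this-is-all-resta}); note that for the hyperbolic case itself the paper gives no proof but simply quotes the result from \cite{QYY}. As written, however, your argument has two genuine gaps. The first is the Böttcher step. For a general hyperbolic $f$ with Cantor circles Julia set you may normalize so that $0\in D_0$ and $\infty\in D_\infty$, but you cannot assume these points are critical, let alone superattracting periodic points of local degrees $d_n$ and $d_1$: the attracting cycle met by $D_\infty$ may have nonzero multiplier, and the $d_1-1$ critical points of $f$ in $D_\infty$ need not lie on that cycle. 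The first-return map of $D_\infty$ is then a proper degree-$d_1$ self-map of a disk that is in general \emph{not} conformally conjugate to $w\mapsto w^{d_1}$, so the claimed conformal identification of $f$ with $g$ on $D_0\cup D_\infty$ does not exist. The fix is the one the paper uses in the parabolic case: restrict to a polynomial-like map around $\partial D_\infty$, straighten it (Douady--Hubbard), and use that two hyperbolic degree-$d_1$ polynomial-like maps whose Julia sets are Jordan curves are quasiconformally (not conformally) conjugate near their Julia sets; a quasiconformal, boundary-compatible identification is all the induction needs.

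The second gap is the removability step. ``Zero area implies removable for quasiconformal homeomorphisms'' is false: there are compact plane sets of Hausdorff dimension one, hence of zero area, that are not quasiconformally removable, and the removability criteria in \cite{LV} require $\sigma$-finite length or the NED property, neither of which is evident for a Cantor set of (possibly nonrectifiable) quasicircles. Your limit map $\Phi$ is a priori only a homeomorphism that is $K$-quasiconformal off $J(f)$, which is not enough to conclude. The standard way to close this — and the way the paper's proof of Theorem \ref{this-is-all-resta} does it — is to make each finite stage of the pullback a \emph{globally} defined $K$-quasiconformal homeomorphism $\phi_k$ of $\EC$ (extending quasiconformally, in an arbitrary but uniformly controlled way, over the deepest-level annuli), note that $K(\phi_{k+1})=K(\phi_1)$ because the lifts are taken through holomorphic maps, and then invoke compactness of normalized $K$-quasiconformal maps \cite[Theorem 5.1]{LV} to get a limit that is automatically $K$-quasiconformal on all of $\EC$ and conjugates $f$ to $g$ on $\overline{\bigcup_{k\geq 0}f^{-k}(\partial D_0\cup\partial D_\infty)}=J(f)$. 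With those two repairs your outline becomes a correct proof.
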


According to \cite[Theorem 11.14]{Hei}, an orientation preserving homeomorphism between the Riemann sphere to itself is quasisymmetric if and only if it is quasiconformal. This means that Theorem \ref{this-is-all-hyper} gives the quasisymmetric classification of the Cantor circles Julia sets for the hyperbolic rational maps. Moreover, the family $f_{d_1,\cdots,d_n}$ defined in \eqref{family-QYY} serves the quasisymmetric models for all hyperbolic Cantor circles Julia sets (compare Table \ref{Tab-classif}).

For non-hyperbolic rational maps with Cantor circles Julia sets, the family \eqref{family-QYY} only gives the topological models. In order to give a quasisymmetric classification of the non-hyperbolic Cantor circles Julia sets, one needs to give the corresponding quasisymmetric models. This is what we have done in last three sections. In the rest of this section, we will prove that every non-hyperbolic Cantor circles Julia sets is quasisymmetrically equivalent to the Julia set of one map in the three families $P_{d_1,\cdots,d_n}$, $Q_{d_1,\cdots,d_n}$ and $R_{d_1,\cdots,d_n}$.

\begin{thm}\label{thm_parabolic-conj-std}
Let $f$ be a parabolic polynomial with degree $n\geq 2$ whose Julia set $J(f)$ is a Jordan curve. Then $J(f)$ contains infinitely many cusps and there exists a quasiconformal mapping $\phi:\EC\rightarrow\EC$ such that $\phi\circ f=g_n\circ \phi$ on $J(f)$, where $g_n(z)=(z^n+n-1)/n$.
\end{thm}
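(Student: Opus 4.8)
The plan is to first extract the dynamical structure forced by the hypothesis that $J(f)$ is a Jordan curve, then to transport the conformal B\"ottcher conjugacy on the basin of infinity across $J(f)$ into the bounded basin by a quasiconformal extension. Since $J(f)$ is a Jordan curve, $\EC\setminus J(f)$ has exactly two components, the basin of infinity $\Omega_\infty$ and one bounded Fatou component $\Omega$; the latter is the unique bounded component and hence forward invariant, $f(\Omega)=\Omega$. No critical point can lie on the Jordan curve $J(f)$ (otherwise $J(f)$ would branch there), so all $n-1$ finite critical points lie in $\Omega$, making $\Omega$ the immediate parabolic basin of a parabolic fixed point $z_0\in J(f)$ with $f|_\Omega$ proper of degree $n$. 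The requirement that $J(f)$ be locally a single arc at $z_0$ forces the multiplier there to be $1$ with exactly one repelling petal (multiplicity two), which is precisely the local model of $g_n$ at its fixed point $1$, since $g_n(1+w)=1+w+\tfrac{n-1}{2}w^2+\cdots$. This single repelling petal is exactly what produces a cusp: the two boundary arcs of $\Omega$ land at $z_0$ tangentially. To obtain infinitely many cusps I would pull this one back: the backward orbit $\bigcup_{k}f^{-k}(z_0)$ is dense in $J(f)$, and as $J(f)$ carries no critical points, $f$ is a local biholomorphism at each preimage and carries a cusp to a cusp; hence the cusps form a dense (in particular infinite) subset of $J(f)$.

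Next I would produce the topological conjugacy on $J(f)$ from the exterior. By B\"ottcher's theorem the basin of infinity carries a conformal coordinate $\beta_f\colon\Omega_\infty\to\EC\setminus\overline{\D}$ with $\beta_f\circ f=(\beta_f)^{\,n}$ after the usual affine normalisation, and likewise $\beta_{g_n}$ for $g_n$; since $J(f)$ and $J(g_n)$ are Jordan curves, hence locally connected, Carath\'eodory's theorem lets both B\"ottcher maps extend to homeomorphisms of the closures. Then $h:=\beta_{g_n}^{-1}\circ\beta_f\colon\overline{\Omega_\infty}\to\overline{\Omega_{0,\infty}}$ is conformal in the interior, is a homeomorphism of the closures, and satisfies $h\circ f=g_n\circ h$. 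Its restriction $h|_{J(f)}\colon J(f)\to J(g_n)$ is the desired conjugacy, matching the cusp at $z_0$ to the cusp at $1$ and, by equivariance, all their respective preimages.

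It remains to upgrade $h$ to a global quasiconformal $\phi$, and this is the main task. I would set $\phi:=h$ on $\overline{\Omega_\infty}$, which is conformal and therefore quasiconformal, and extend it across $J(f)$ into $\Omega$; note this side is unproblematic precisely because it is realised by a \emph{conformal} map, so the cusps cause no distortion. For the bounded side I would not try to make $\phi$ conjugate $f$ to $g_n$ inside $\Omega$: the interior maps $f|_\Omega$ and $g_n|_{\Omega_0}$ are both proper of degree $n$ but can have different critical data, so no interior conjugacy exists in general, and fortunately the theorem only demands $\phi\circ f=g_n\circ\phi$ \emph{on} $J(f)$. Instead I would uniformise, choosing Riemann maps $R\colon\D\to\Omega$ and $R_0\colon\D\to\Omega_0$ and extending the induced circle homeomorphism $\gamma:=R_0^{-1}\circ h\circ R$ (via, say, the Douady--Earle extension) to a quasiconformal self-map $E$ of $\D$, so that $\phi|_{\overline\Omega}:=R_0\circ E\circ R^{-1}$ is quasiconformal and agrees with $h$ on $J(f)$. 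Matching the two sides along $J(f)$ then yields a quasiconformal $\phi\colon\EC\to\EC$ with $\phi\circ f=g_n\circ\phi$ on $J(f)$.

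The crux, and the step I expect to be the hardest, is that $J(f)$ and $J(g_n)$ both carry a \emph{dense} set of cusps, so neither bounded basin is a quasidisc and the extension $E$ is quasiconformal only if $\gamma$ is quasisymmetric. The way to secure this is to exploit that $f$ and $g_n$ share the same local parabolic model (multiplier $1$, a single petal), so that corresponding cusps have the same order; consequently the conformal weldings encoded by $R$ and $R_0$ agree up to bounded distortion along the entire cusp set and $\gamma$ is quasisymmetric. One organises the cusp analysis dynamically, treating the fixed cusp at $z_0\leftrightarrow 1$ by passing to attracting Fatou coordinates $\alpha_f,\alpha_{g_n}$ (each conjugating the map to $w\mapsto w+1$ on a right half-plane, by the parabolic linearisation theorem, cf.\ Lemma \ref{lema-approx}) so that $\alpha_{g_n}^{-1}\circ\alpha_f$ gives a model extension with controlled dilatation near the cusp, and then transporting the estimate to all preimage cusps through the equivariance of $h$; the uniformity of the resulting bound, coming from the single shared local normal form, is exactly what makes $\gamma$ quasisymmetric and hence furnishes the required quasiconformal $\phi$.
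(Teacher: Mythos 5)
Your structural analysis (the bounded component is the fixed parabolic basin carrying all $n-1$ finite critical points, one petal, cusp at the parabolic point and at its dense backward orbit) and your construction of the boundary conjugacy $h=\beta_{g_n}^{-1}\circ\beta_f$ via B\"ottcher coordinates and Carath\'eodory extension are correct, and your route is genuinely different from the paper's. The paper does quasiconformal surgery instead: it builds a domain $U_1$ inside the bounded basin absorbing all critical points with $\partial U_1\cap J(f)=\{0\}$, replaces $f$ on $U_2=f^{-1}(U_1)$ by $\varphi^{-1}\circ g\circ\varphi$, interpolates $\varphi$ quasiconformally across the single degenerate annulus $U_2\setminus\overline{U}_1$ using Fatou coordinates at the one pinch point, and integrates the resulting invariant Beltrami form. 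In that scheme global quasiconformality of $\phi$ comes for free from the measurable Riemann mapping theorem, and all the cusp analysis is localized at one point; in yours it is spread over the whole dense cusp set, and that is where the proposal has two genuine gaps.

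First, the closing step ``matching the two sides along $J(f)$ then yields a quasiconformal $\phi$'' is not automatic. A homeomorphism of $\EC$ that is quasiconformal on each complementary Jordan domain and continuous across the common boundary is globally quasiconformal only if that boundary is quasiconformally removable; this holds for quasicircles, but $J(f)$ is precisely \emph{not} a quasicircle here (it has a dense set of cusps), and there exist non-removable Jordan curves of measure zero. So you need a removability theorem for $J(f)$ (e.g.\ Jones--Smirnov via H\"older/John regularity of a complementary basin), a substantive ingredient you never invoke. Second, the quasisymmetry of $\gamma=R_0^{-1}\circ h\circ R$, which you correctly identify as the crux, is only asserted: $\gamma$ is the composition of the conformal welding of $J(f)$ with the inverse welding of $J(g_n)$, and \emph{neither factor is quasisymmetric} since neither curve is a quasicircle, so one must prove an exact cancellation. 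Your mechanism --- match the parabolic normal forms at the fixed cusp and ``transport the estimate to all preimage cusps through equivariance'' --- requires uniform Koebe-type distortion bounds for the inverse branches of $f^{\circ k}$ along orbits that linger arbitrarily long near the parabolic point, where the derivative degenerates; equivalently, $\gamma$ conjugates two degree-$n$ Blaschke products with parabolic Denjoy--Wolff points on the circle, and the quasisymmetry of such a conjugacy is a genuine theorem, not a consequence of the two germs merely having the same multiplier and petal count. As written, neither of these two steps is established, and both would have to be supplied before the argument closes; the paper's surgery is designed exactly to avoid having to prove either.
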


\begin{proof}
Without loss of generality, suppose that the unique parabolic fixed point of $f$ is located at the origin and the local expansion of $f$ at the origin is $f(z)=z+z^2+\cdots$ since $f$ has exactly one parabolic petal. This means that the parabolic fixed point $0$ is a cusp on $J(f)$. Moreover, all $k$-th preimages of $0$ are cusps, where $k\geq 1$. Therefore, $J(f)$ contains infinitely many cusps. By the assumption, the Julia set $J(f)$ divides the complex plane into two components. The bounded one, denoted by $U$, is the fixed parabolic Fatou component which contains $n-1$ critical points of $f$. According to the dynamical behaviors of $f$ in the parabolic basin $U$, there exists a simply connected domain $U_1\subset U$ satisfying the following conditions:
\begin{itemize}
\item[(1)] all the critical points of $f$ in $U$ are contained in $U_1$;
\item[(2)] $\partial U_1\cap\partial U=\{0\}$, $f(\overline{U}_1)\subset U_1\cup\{0\}$; and
\item[(3)] $\partial U_1$ is real analytic.
\end{itemize}
Let $U_2=f^{-1}(U_1)$. Then $U_2$ is simply connected whose boundary is also real analytic and $f:U_2\to U_1$ is a branched covering with degree $n$.

Let $g(z)=((1+z)^n-1)/n$ be the parabolic polynomial which is conjugated to $g_n$ by the translation $z\mapsto z+1$. Then $g$ has a parabolic fixed point at the origin with multiplier $1$. Let $W_1=\D(-3/4,3/4)$ be the round disk centered at $-3/4$ with radius $3/4$ and define $W_2=g^{-1}(W_1)$, Then $\overline{W}_1\subset W_2\cup\{0\}$ since $g(\overline{W}_1)\subset W_1\cup\{0\}$ by Lemma \ref{lema-g-n-g-mn}. Moreover, $W_2$ is a simply connected domain whose boundary is real analytic and $g:W_2\to W_1$ is a branched covering with degree $n$. By Riemann Mapping Theorem, there is a conformal map $\varphi:U_1\to W_1$ such that $\varphi(0)=0$. This map can be extended naturally to a homeomorphism $\varphi:\partial U_1\to \partial W_1$ because the boundaries $\partial U_1$ and $\partial W_1$ are both real analytic. Since $f:\partial U_2\to\partial U_1$ and $g:\partial W_2\to \partial W_1$ are both covering maps with degree $n$, there exists a unique lift of $\varphi:\partial U_1\to \partial W_1$, denoted also by $\varphi:\partial U_2\to \partial W_2$, which is a homeomorphism such that $\varphi(0)=0$ and $\varphi\circ f=g\circ\varphi$ holds on $\partial U_2$.

We now extend $\varphi|_{\partial U_1\cup\partial U_2}$ to a quasiconformal map $\varphi: U_2\setminus \overline{U}_1 \to W_2\setminus \overline{W}_1$ as follows. Note that $\overline{U}_2\setminus U_1$ and $\overline{W}_2\setminus W_1$ are degenerated annulus. Hence the extension is not trivial. Let $w=\psi(z)= -1/z$ be a coordinate transformation and define $\mathcal{C}_1=\psi(U_2\setminus \overline{U}_1)$ and $\mathcal{C}_2=\psi(W_2\setminus \overline{W}_1)$. Then $\mathcal{C}_1$ and $\mathcal{C}_2$ are two strips with infinite length. Let $\mathcal{C}_i^-$ and $\mathcal{C}_i^+$ be the boundaries of $\mathcal{C}_i$ which lie on the left and right respectively, where $i=1,2$. Then $\varphi|_{\partial U_1\cup\partial U_2}$ induces a map $\widehat{\varphi}=\psi\circ\varphi\circ\psi^{-1}:\mathcal{C}_1^\pm\to\mathcal{C}_2^\pm$ such that $\widehat{\varphi}\circ F=G\circ \widehat{\varphi}$, where $F=\psi\circ f\circ\psi^{-1}$ and $G=\psi\circ g\circ\psi^{-1}$. By parabolic linearization theorem, there exists a large $R>0$ and conformal maps $\alpha_i$ defined on $\{w\in\mathcal{C}_i^-:|\textup{Im}(w)|>R\}$ such that $\alpha_1(F(w))=\alpha_1(w)+1$ and $\alpha_2(G(w))=\alpha_2(w)+1$. Moreover, $\alpha_i(w)\sim w$ as $\textup{Im}(w)\to\infty$ (see \cite[\S 10]{Mil}).

Take $w_1\in\mathcal{C}_1^-$ and $\widehat{\varphi}(w_1)\in\mathcal{C}_2^-$ such that $\textup{Im}(w_1)>R$ and $\textup{Im}(\widehat{\varphi}(w_1))>R$. Note that the linearization maps $\alpha_1$ and $\alpha_2$ are unique up to an additive constant. We could normalize $\alpha_2$ such that $\alpha_2(\widehat{\varphi}(w_1))=\xi_1+ i\eta_1=\alpha_1(w_1)$, where $\xi_1,\eta_1\in\R$. Define $S_1=\{\zeta:\xi_1\leq\textup{Re}(\zeta)\leq \xi_1+1,\textup{Im}(\zeta)\geq \eta_1\}$ and let $S_1^-$, $S_1^+$ be the left and right edges of $S_1$ respectively. Then $\widehat{\varphi}:\mathcal{C}_1^-\to \mathcal{C}_2^-$ induces a map $\widetilde{\varphi}:S_1^-\to S_1^-$ which is defined by
\begin{equation*}
\widetilde{\varphi}:\xi_1+i\eta \mapsto\alpha_2\circ\widehat{\varphi}\circ\alpha_1^{-1}(\xi_1+i\eta)=\xi_1+i v(\eta),
\end{equation*}
where $\eta\geq \eta_1$ and $v:[\eta_1,+\infty)\to[\eta_1,+\infty)$ is a homeomorphism, which means that $\eta$ is continuous and increasing. On the other hand, $\widehat{\varphi}:\mathcal{C}_1^+\to \mathcal{C}_2^+$ induces a map $\widetilde{\varphi}:S_1^+\to S_1^+$ which is defined by
\begin{equation*}
\widetilde{\varphi}:\xi_1+1+i\eta \mapsto\alpha_2\circ\widehat{\varphi}\circ\alpha_1^{-1}(\xi_1+1+i\eta)=\xi_1+1+i v_1(\eta).
\end{equation*}
One can easily check that $v(\eta)=v_1(\eta)$ for $\eta\geq \eta_1$. Hence, the map $\widetilde{\varphi}:S_1^-\cup S_1^+\to S_1^-\cup S_1^+$ has an extension $\widetilde{\varphi}:S_1\to S_1$ such that $\widetilde{\varphi}(\xi+i\eta )=\xi+iv(\eta)$, where $\zeta=\xi+i\eta\in S_1$.

Since the boundaries $\partial U_1$ and $\partial W_1$ are both real-analytic, the conformal map $\varphi:U_1\to W_1$ can be extended a larger domain $U_1'$ such that $\overline{U}_1\subset U_1'$ by Schwarz's Reflection Principle. This means that $\varphi'(0)$ exists. Hence,  $\lim_{w\to\infty}\widehat{\varphi}'(w)=\varphi'(0)$ when $w\in \mathcal{C}_1^-\cup\mathcal{C}_1^+$. By the definition of $\widetilde{\varphi}$, we have $v'(\eta)\to |\varphi'(0)|$ as $\eta\to\infty$ since $\alpha_i(w)\sim w$ as $\textup{Im}(w)\to\infty$ for $i=1,2$. Note that $\varphi'(z)\neq 0$ in a small punctured neighborhood of $0$. This means that $v'(\eta)>0$ if $\eta$ is large enough. Without loss of generality, we suppose $v'(\eta)>0$ for $\eta\geq \eta_1$ since we can choose the number $R>0$ large enough. By a direct calculation, we have $\partial \widetilde{\varphi}/\partial\xi=1$ and $\partial \widetilde{\varphi}/\partial\eta=iv'(\eta)$, it follows that
\begin{equation*}
\frac{\partial \widetilde{\varphi}}{\partial\overline{\zeta}}/\frac{\partial \widetilde{\varphi}}{\partial\zeta}
=\frac{\partial \widetilde{\varphi}/\partial\xi+i\, \partial \widetilde{\varphi}/\partial\eta}{\partial \widetilde{\varphi}/\partial\xi - i\, \partial \widetilde{\varphi}/\partial\eta}=\frac{1-v'(\eta)}{1+v'(\eta)}<1.
\end{equation*}
This means that $\widetilde{\varphi}:S_1\to S_1$ is a quasiconformal mapping in the interior of $S_1$.

Let $\mathcal{A}_i=\alpha_i^{-1}(S_1)\subset \mathcal{C}_i$ for $i=1,2$. For $w\in\mathcal{A}_1$, define $\widehat{\varphi}(w)=\alpha_2^{-1}\circ\widetilde{\varphi}\circ\alpha_1(w)$. Then $\widehat{\varphi}:\mathcal{A}_1\to \mathcal{A}_2$ is a quasiconformal mapping defined from the upper end of $\mathcal{C}_1$ to the upper end of $\mathcal{C}_2$. Similarly, one can define a quasiconformal map $\widehat{\varphi}:\mathcal{B}_1\to \mathcal{B}_2$ from the lower end of $\mathcal{C}_1$ to that of $\mathcal{C}_2$. The map $\widehat{\varphi}:\mathcal{C}_1^\pm\to\mathcal{C}_2^\pm$ can be extended to the middle part $\mathcal{C}_1\setminus(\mathcal{A}_1\cup \mathcal{B}_1)$ of $\mathcal{C}_1$ as a quasiconformal map $\widehat{\varphi}:\mathcal{C}_1\setminus(\mathcal{A}_1\cup \mathcal{B}_1)\to\mathcal{C}_2\setminus(\mathcal{A}_2\cup \mathcal{B}_2)$ is well known (since $\mathcal{C}_1^\pm$ and $\mathcal{C}_2^\pm$ are both real-analytic). Now we obtain a quasiconformal map $\widehat{\varphi}:\mathcal{C}_1\to \mathcal{C}_2$ which is the extension of $\widehat{\varphi}:\mathcal{C}_1^\pm\to\mathcal{C}_2^\pm$. Let $\varphi:=\psi\circ\widetilde{\varphi}\circ\psi^{-1}$. Then $\varphi: U_2\setminus \overline{U}_1 \to W_2\setminus \overline{W}_1$ is a quasiconformal extension of $\varphi|_{\partial U_1\cup\partial U_2}$.

Now we define a quasi-regular map as follows:
\[                
\widetilde{f}(z)=
\left\{                         
\begin{array}{ll}               
f(z)  &~~~~~~\text{if}~z\in \C\setminus U_2 \\             
\varphi^{-1}\circ g \circ \varphi(z) &~~~~~\text{if}~z\in U_2.    
\end{array}                     
\right.                         
\]
Let $\sigma_0$ be the standard complex structure in $\C$ and $\sigma$ the unique $\widetilde{f}$-invariant almost complex structure on $\C$ satisfying
\[                
\sigma(z)=
\left\{                         
\begin{array}{ll}               
\varphi^*\sigma_0  &~~~~~~~z\in U_2\setminus \overline{U}_1 \\             
\sigma_0 &~~~~~~z\in (\C \setminus \overline{U})\cup \overline{U}_1.    
\end{array}                     
\right.                         
\]

Let $\phi : (\C,\sigma) \rightarrow (\C,\sigma_0)$ be the integrating orientation preserving quasiconformal homeomorphism which is normalized by $\phi(0)=1$ and $\phi(\varphi^{-1}(-1))=0$. Then $\phi\circ \widetilde{f}\circ\phi^{-1}$ is an unicritical parabolic polynomial with parabolic fixed point at $1$ and critical points at the origin with multiplicity $n-1$. This means that $\phi\circ \widetilde{f}\circ\phi^{-1}=g_n$. By the definition of $\widetilde{f}$, we know that $\phi\circ f=g_n\circ \phi$ holds on $J(f)$. The proof is complete.
\end{proof}

Note that the Julia sets $J(f)$ and $J(g_n)$ are not quasicircles since they contain infinitely many cusps.
Actually, Theorem \ref{thm_parabolic-conj-std} is well known for experts (compare \cite[\S 6]{McM}).

\begin{cor}\label{cor_parabolic-conj-std}
Let $f_1,f_2$ be two parabolic polynomials with the same degree whose Julia set are both Jordan curves. Then there exists a quasiconformal map $\phi:\EC\rightarrow\EC$ such that $\phi\circ f_1=f_2\circ \phi$ on $J(f_1)$.
\end{cor}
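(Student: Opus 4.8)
The plan is to use Theorem \ref{thm_parabolic-conj-std} as a bridge through the common normal form $g_n$, so that the corollary reduces to composing two conjugacies. Since $f_1$ and $f_2$ have the same degree $n\geq 2$ and both Julia sets are Jordan curves, each satisfies the hypotheses of Theorem \ref{thm_parabolic-conj-std} with the \emph{same} model map $g_n(z)=(z^n+n-1)/n$. Applying that theorem twice, I obtain quasiconformal homeomorphisms $\phi_1,\phi_2:\EC\to\EC$ with $\phi_j\circ f_j=g_n\circ\phi_j$ on $J(f_j)$ for $j=1,2$.

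Next I would record that each $\phi_j$ carries the Julia set of $f_j$ onto that of $g_n$, that is, $\phi_j(J(f_j))=J(g_n)$. This follows from the conjugacy relation on the forward-invariant set $J(f_j)$ together with the dynamical characterization of the Julia set; indeed the map produced in Theorem \ref{thm_parabolic-conj-std} is a global homeomorphism of the sphere conjugating $f_j$ to $g_n$ along $J(f_j)$. Consequently the composition $\phi:=\phi_2^{-1}\circ\phi_1$ is again a quasiconformal homeomorphism of $\EC$ (a composition of quasiconformal maps and the inverse of one), and it maps $J(f_1)$ homeomorphically onto $J(f_2)$.

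It then remains to verify the conjugacy identity on $J(f_1)$. Restricting to $J(f_1)$ and using $\phi_1\circ f_1=g_n\circ\phi_1$, I compute
\begin{equation*}
\phi\circ f_1=\phi_2^{-1}\circ\phi_1\circ f_1=\phi_2^{-1}\circ g_n\circ\phi_1.
\end{equation*}
On the other hand, the relation $\phi_2\circ f_2=g_n\circ\phi_2$ on $J(f_2)$ rearranges, on $J(g_n)=\phi_2(J(f_2))$, into $\phi_2^{-1}\circ g_n=f_2\circ\phi_2^{-1}$. Since $\phi_1$ sends $J(f_1)$ into $J(g_n)$, substituting this into the previous line yields $\phi\circ f_1=f_2\circ\phi_2^{-1}\circ\phi_1=f_2\circ\phi$ on $J(f_1)$, which is exactly the assertion.

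There is essentially no obstacle beyond Theorem \ref{thm_parabolic-conj-std} itself, which already carries out the hard parabolic-linearization and quasiconformal-surgery work. The only point demanding care is the bookkeeping of domains: every equality must be asserted only where it is valid, namely $\phi_j\circ f_j=g_n\circ\phi_j$ on $J(f_j)$ and the rearranged identity only on $J(g_n)$, and one must invoke $\phi_1(J(f_1))=J(g_n)$ precisely so that the second relation may be applied to points $\phi_1(z)$ with $z\in J(f_1)$.
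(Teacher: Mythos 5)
Your proposal is correct and is exactly the argument the paper intends: the corollary is stated without proof as an immediate consequence of Theorem \ref{thm_parabolic-conj-std}, obtained by conjugating both $f_1$ and $f_2$ to the common model $g_n$ and composing $\phi=\phi_2^{-1}\circ\phi_1$. Your care about where each identity holds (on $J(f_j)$ versus on $J(g_n)=\phi_j(J(f_j))$) is the right bookkeeping and nothing further is needed.
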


Let $A$ be an annulus whose closure separates $0$ and $\infty$, we use $\partial_-A$ and $\partial_+A$ to denote the two components of the boundary of $A$ such that $\partial_-A\prec \partial_+A$.

\begin{thm}\label{this-is-all-resta}
Let $f$ be a non-hyperbolic rational map whose Julia set is a Cantor set of circles. Then there exist $n\geq 2$ positive integers $d_1,\cdots,d_n$ satisfying $\sum_{i=1}^{n}(1/d_i)<1$ such that $f$ is conjugate to $P_{d_1,\cdots,d_n}$, $Q_{d_1,\cdots,d_n}$ or $R_{d_1,\cdots,d_n}$ for suitable parameters on their corresponding Julia sets by a quasiconformal mapping.
\end{thm}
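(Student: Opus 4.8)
The plan is to upgrade the \emph{topological} conjugacy supplied by Theorem~\ref{thm-QYY} to a \emph{quasiconformal} one, using the explicit parabolic models constructed in \S\ref{sec-para-1-fixed}--\S\ref{sec-para-period=2} together with the parabolic surgery of Theorem~\ref{thm_parabolic-conj-std}. First I would invoke Theorem~\ref{thm-QYY} to produce the integers $n\geq 2$ and $d_1,\cdots,d_n$ with $\sum_{i=1}^n(1/d_i)<1$ together with a topological conjugacy between $f$ and some $f_{p,d_1,\cdots,d_n}$ on their Julia sets. As recalled in the introduction, $f$ has exactly two simply connected Fatou components $D_0,D_\infty$, no critical point lies on $J(f)$, and $f(D_0\cup D_\infty)\subset D_0\cup D_\infty$; since $f$ is non-hyperbolic, at least one periodic Fatou component is parabolic. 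Recording the action of $f$ on $\{D_0,D_\infty\}$ and which components are parabolic places $f$ in exactly one of the rows Parabolic~I--IV of Table~\ref{Tab-classif}, and I accordingly choose the model $g$ to be $P_{d_1,\cdots,d_n}$ (even $n$ for Parabolic~I, odd $n$ for Parabolic~II), $Q_{d_1,\cdots,d_n}$ (Parabolic~III), or $R_{d_1,\cdots,d_n}$ (Parabolic~IV), whose existence with a Cantor circles Julia set is guaranteed by Theorems~\ref{parameter-parabolic}, \ref{parameter-parabolic-two-fixed} and \ref{parameter-parabolic-two-periodic}. By construction $g$ realizes the same degrees $d_1,\cdots,d_n$ and the same combinatorial type as $f$.

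Next I would match the dynamics on the periodic Fatou components. As in the proof of Theorem~\ref{parameter-parabolic-resta}, both $f$ and $g$ possess a fundamental closed annulus $V$ (respectively $V^g$) with $f^{-1}(V)=\bigsqcup_{j=1}^n V_j$, each restriction $f\colon V_j\to V$ a covering of degree $d_j$, and $J(f)=\bigcap_{k\geq0}f^{-k}(V)$ with Jordan-curve components. The first-return map to a periodic parabolic Fatou component is a parabolic polynomial-like map; by the Douady--Hubbard straightening theorem it is hybrid equivalent to the parabolic polynomial of the appropriate degree $d$, and the corresponding component of $g$ straightens to the \emph{same} polynomial. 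Hence Corollary~\ref{cor_parabolic-conj-std}, built on Theorem~\ref{thm_parabolic-conj-std}, yields a quasiconformal map conjugating $f$ to $g$ on that boundary Julia component and carrying cusps to cusps. In the period-two case (Parabolic~IV) I would apply the same argument to the second iterates $f^{\circ2}$ and $g^{\circ2}$ on each component of the parabolic cycle, and for the attracting component $D_0$ occurring in Parabolic~II the first-return map is polynomial-like, hybrid equivalent to $z\mapsto z^{d}$ with a round-circle Julia set, giving a standard quasiconformal boundary conjugacy.

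With quasiconformal conjugacies on $\partial D_0$ and $\partial D_\infty$ in hand, I would assemble them into a single quasiconformal homeomorphism $\phi_0\colon\EC\to\EC$ conjugating $f$ to $g$ on $\overline{D}_0\cup\overline{D}_\infty$ and on $\partial V$, respecting the degree-$d_j$ identification $f^{-1}(V)\to V$ dictated by the topological conjugacy of the first step. One then extends $\phi_0$ to the remaining Julia components by pulling it back through the inverse branches of $f$ and $g$ along each itinerary in the shift space $\Sigma_n$. Because $f$ and $g$ are holomorphic, every such lift carries the Beltrami coefficient of $\phi_0$ transported by a conformal map, so all lifts have dilatation bounded by that of $\phi_0$ and no distortion accumulates. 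Since $f$ is geometrically finite its Julia components are locally connected and the pieces cut out by longer and longer itineraries shrink in diameter, so the lifts converge to a homeomorphism $\phi\colon\EC\to\EC$ satisfying $\phi\circ f=g\circ\phi$ on $J(f)$.

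The main obstacle is the regularity of this limit at the parabolic periodic points and their dense backward orbit. Away from these points $f$ and $g$ are uniformly expanding, the pieces shrink geometrically, and the uniform dilatation bound passes to the limit in the usual way; at a parabolic cusp, however, the pieces shrink only polynomially and accumulate tangentially, so the continuity and the quasiconformal bound for $\phi$ are not automatic there. This is precisely the difficulty resolved by Theorem~\ref{thm_parabolic-conj-std}: in the coordinate $w=-1/z$ the return map becomes an approximate unit translation on a half-plane end, where the conjugacy is realized as a stretch of uniformly bounded dilatation glued across real-analytic strip boundaries, and this local normal form is common to $f$ and $g$ because both return maps straighten to the same parabolic polynomial. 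Transplanting this normal form to every parabolic cusp shows that $\phi$ is quasiconformal across the parabolic points as well, hence on all of $\EC$, which completes the proof.
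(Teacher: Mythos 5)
Your proposal is correct and follows essentially the same route as the paper: construct an initial quasiconformal conjugacy on $\partial D_0\cup\partial D_\infty$ via polynomial-like straightening together with Corollary \ref{cor_parabolic-conj-std}, then pull it back through the holomorphic dynamics with uniformly bounded dilatation and pass to a limit that conjugates $f$ to the model on the closure $J(f)=\overline{\bigcup_{k\geq 0}f^{-k}(\partial D_0\cup\partial D_\infty)}$. The only cosmetic differences are that the paper reads off $d_1,\cdots,d_n$ and the inequality $\sum_{i=1}^{n}(1/d_i)<1$ directly from the annular Fatou structure via Gr\"{o}tzsch's inequality rather than quoting Theorem \ref{thm-QYY}, and it concludes by normality of the uniformly quasiconformal family $\{\phi_k\}$ plus continuity, so the regularity worry at the parabolic points in your last paragraph is already absorbed into the construction of $\phi_0$ and needs no separate treatment in the limit.
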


\begin{proof}
Let $J(f)$ be the Julia set of $f$ which is a Cantor set of circles. Assume that the two simply connected Fatou components of $f$, which are denoted by $D_0$ and $D_\infty$, contain two points $0$ and $\infty$, respectively. Let $\mathcal{D}=\{D_0,D_\infty\}$ and $\mathcal{A}$ be the collection of all simply and doubly connected Fatou components of $f$ respectively. Then $f(\mathcal{D})\subset\mathcal{D}$ and there exists an integer $k\geq 1$ such that $f^{\circ k}(A)\in\mathcal{D}$ for every $A\in \mathcal{A}$ by Riemann-Hurwitz's formula and Sullivan's eventually periodic theorem.

Note that every periodic Fatou component of $f$ must be attracting or parabolic because the Cantor circles Julia sets cannot contain any critical points. Since $f(\mathcal{D})\subset\mathcal{D}$, without loss of generality, we assume that $f(D_0)=D_\infty$, $f(D_\infty)=D_\infty$ and $D_\infty$ is a parabolic Fatou component since the rest three cases can be proved completely similarly:

(1) $f(D_0)=D_0$ and $f(D_\infty)=D_\infty$, where $D_0$ is an attracting Fatou component and $D_\infty$ is parabolic (The potential quasisymmetric model is $P_{d_1,\cdots,d_n}$ with odd $n\geq 3$);

(2) $f(D_0)=D_0$ and $f(D_\infty)=D_\infty$, where $D_0$ and $D_\infty$ are both parabolic Fatou components (The potential quasisymmetric model is $Q_{d_1,\cdots,d_n}$ with odd $n\geq 3$); and

(3) $f(D_0)=D_\infty$ and $f(D_\infty)=D_0$, where $D_0$ and $D_\infty$ are parabolic periodic Fatou components with period $2$ (The potential quasisymmetric model is $R_{d_1,\cdots,d_n}$ with odd $n\geq 3$).

Our aim is to prove that the dynamics on the Julia set of $f$ has a quasisymmetric model $P_{d_1,\cdots,d_n}$, where $n\geq 2$ is even.
The basic idea of the proof is as follows: Let $D_0'$ and $D_\infty'$ be the Fatou components of $P_{d_1,\cdots,d_n}$ containing the origin and infinity respectively, where $n\geq 2$ is even. We construct a sequence of quasiconformal maps $\phi_k:\EC\to\EC$ with uniform dilatation which conjugates $f$ to $P_{d_1,\cdots,d_n}$ on $f^{-k}(\partial D_0\cup\partial D_\infty)$. Then the limit $\phi_\infty:\EC\to\EC$ of $\phi_k:\EC\to\EC$ is also a quasiconformal map which conjugates $f$ to $P_{d_1,\cdots,d_n}$ on $\bigcup_{k\geq 0}f^{-k}(\partial D_0\cup\partial D_\infty)$, where $k\in\mathbb{N}$. By the continuity, $\phi_\infty:\EC\to\EC$ conjugates $f$ to $P_{d_1,\cdots,d_n}$ on $J(f)=\overline{\bigcup_{k\geq 0}f^{-k}(\partial D_0\cup\partial D_\infty)}$.

Now we give the detailed proof. Since $f(D_0)=D_\infty$, $f(D_\infty)=D_\infty$ and the Julia set $J(f)$ is a Cantor set of circles, then $f^{-1}(D_0)$ consists of finite many Fatou components which are all annular domains. In particular, we can write $f^{-1}(D_0)=\bigcup_{i=1}^m A_{2i-1}$, where $A_1,A_3,\cdots, A_{2m-1}$ are $m$ annular Fatou components separating $0$ from $\infty$ such that $A_{2i+1}\prec A_{2i-1}$ for every $1\leq i\leq m-1$. Suppose that $\deg (f:\partial_+A_{2i-1}\rightarrow \partial D_0)=d_{2i-1}$ and $\deg (f:\partial_-A_{2i-1}\rightarrow \partial D_0)=d_{2i}$ for $1\leq i\leq m$. Then, we have $\deg(f)=\sum_{j=1}^{2m}d_j$ (see Figure \ref{Fig_find-conj} for $m=3$).

\begin{figure}[!htpb]
  \setlength{\unitlength}{1mm}
  \centering
  \includegraphics[width=130mm]{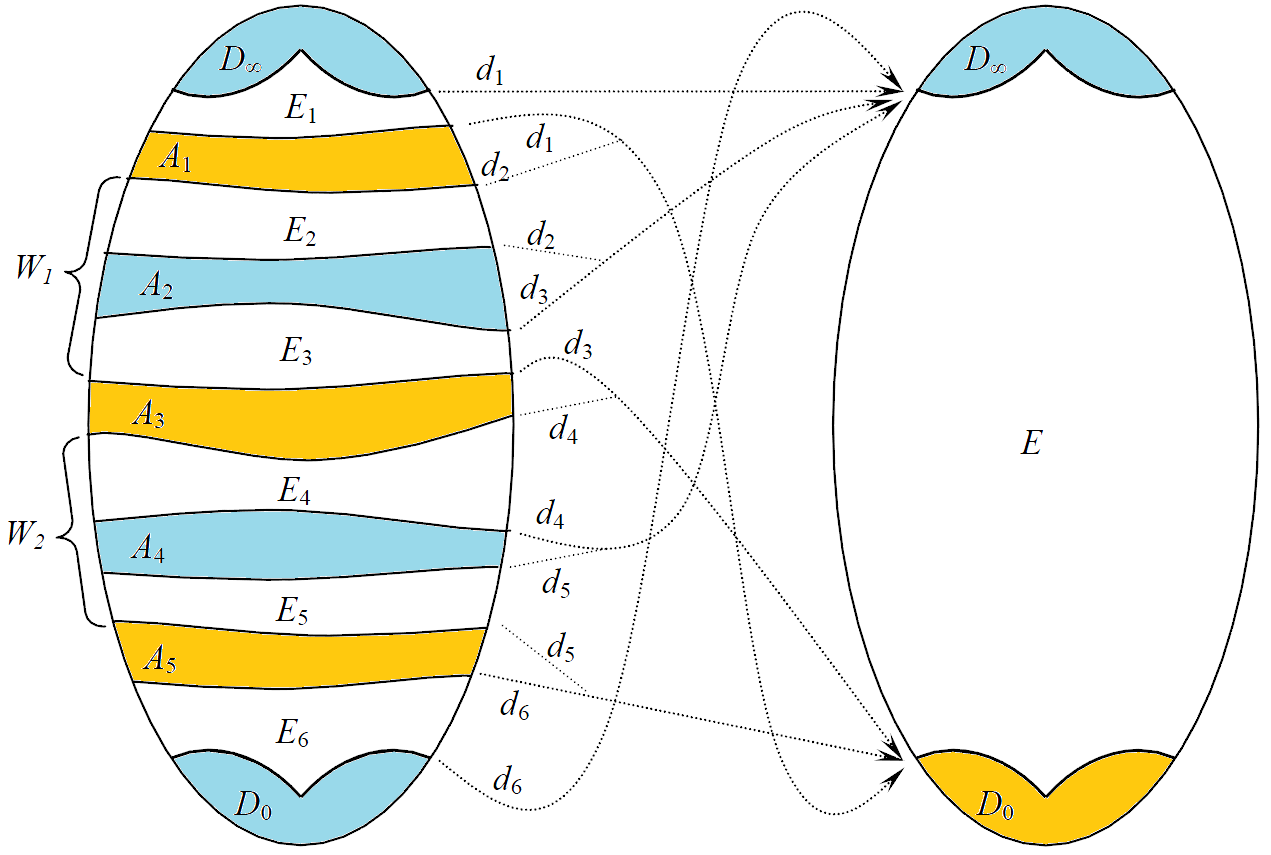}
  \caption{Sketch illustrating of the mapping relation of $f$. The numbers $d_1,d_2,\cdots, d_{2m}$ denote the degrees of the restriction of $f$ on the boundaries of Fatou components, where $m=3$.}
  \label{Fig_find-conj}
\end{figure}

Let $W_i$ be the annular domain between $\overline{A}_{2i-1}$ and $\overline{A}_{2i+1}$, where $1\leq i\leq m-1$. Then we have $f(W_i)=\EC\setminus \overline{D}_0$ and $\deg(f:W_i\rightarrow\EC\setminus \overline{D}_0)=d_{2i}+d_{2i+1}$. This means that there exists at least one Fatou component $A_{2i}\subsetneq W_i$ such that $f(A_{2i})=D_\infty$. Suppose that there exists another Fatou component (which is an annulus) $A_{2i}'\neq A_{2i}$ such that $A_{2i}'\subsetneq W_i$ and $f(A_{2i}')=D_\infty$. Then there must exist a component of $f^{-1}(D_0)$ contained in the region between $A_{2i}'$ and $A_{2i}$, which contradicts the assumption that $f^{-1}(D_0)=\bigcup_{i=1}^m A_{2i-1}$ is disjoint with $W_i$. Hence, there exists exactly one Fatou component $A_{2i}\subsetneq W_i$ such that $f(A_{2i})=D_\infty$ and $\deg(f:A_{2i}\rightarrow D_\infty)=d_{2i}+d_{2i+1}$. Similar argument shows that $D_0$ is the unique component of $f^{-1}(D_\infty)$ lying in the bounded component of $\EC\setminus \overline{A}_{2m-1}$, and $D_\infty$ is the unique component of $f^{-1}(D_\infty)$ lying in the unbounded component of $\EC\setminus \overline{A}_1$. Therefore, we have $f^{-1}(D_\infty)=D_0\cup D_\infty\cup\bigcup_{i=1}^{m-1}A_{2i}$. Moreover, $\deg (f:D_0\rightarrow D_\infty)=d_{2m}$ and $\deg (f:D_\infty\rightarrow D_\infty)=d_1$.

Define the closed annulus $E=\overline{\mathbb{C}}\setminus ({D_0\cup D_\infty})$. Then $f^{-1}(E)=\bigcup_{i=1}^{2m}E_i$, where $E_1$, $E_2$, $\cdots$, $E_{2m}$ are $2m$ annuli separating $0$ from $\infty$ such that $E_j\prec E_i$ for $1\leq i<j\leq 2m$. The map $f:E_i\rightarrow E$ is an unramified covering map with degree $d_i$, where $1\leq i\leq 2m$.

Define $n=2m$. By Gr\"{o}tzsch's modulus inequality, we have $\sum_{i=1}^{n}(1/d_i)<1$ since each $E_i$ is essentially contained in $E$ and $\text{mod} (E_i)=\text{mod} (E)/d_i$. In the following, we will construct a quasiconformal mapping $\phi:\EC\to\EC$ which conjugates the dynamics on the Julia set of $f$ to that of $P_{d_1,\cdots,d_n}$ with even $n$. For simplicity, we use $P$ to denote $P_{d_1,\cdots,d_n}$.

By Lemma \ref{lemma-want} and Theorem \ref{parameter-parabolic-resta}, there exists a simply connected parabolic Fatou component $D_\infty'\ni\infty$ of $P$ which satisfies $P^{-1}(D_\infty')=D_0'\cup D_\infty'$, where $D_0'$ is the Fatou component of $P$ containing the origin. For $1\leq i\leq n-1$, let $A_i'$ be the annular Fatou component of $P$ such that $A_i\subset A_i'$, where $A_i$ is the annulus defined in Lemma \ref{lemma-want}. By Lemma \ref{lemma-want} and the proof of Theorem \ref{parameter-parabolic-resta}, we have $P^{-1}(D_\infty')=D_0'\cup D_\infty'\cup\bigcup_{i=1}^{m-1} A_{2i}'$ and $P^{-1}(D_0')=\bigcup_{i=1}^{m}A_{2i-1}'$. Moreover, $\deg (P:D_\infty'\rightarrow D_\infty')=d_1$, $\deg (P:D_0'\rightarrow D_\infty')=d_{2m}$, $\deg(P:A_{2i-1}'\rightarrow D_0')=d_{2i-1}+d_{2i}$ and $\deg(P:A_{2i}'\rightarrow D_\infty')=d_{2i}+d_{2i+1}$.

Define the closed annulus $E':=\overline{\mathbb{C}}\setminus ({D_0'\cup D_\infty'})$. Then $P^{-1}(E')=\bigcup_{i=1}^{2m}E_i'$, where $E_1'$, $E_2'$, $\cdots$, $E_{2m}'$ are $2m$ annuli separating $0$ from $\infty$ such that $E_j'\prec E_i'$ for $1\leq i<j\leq 2m$. The map $P:E_i'\rightarrow E'$ is an unramified covering map with degree $d_i$, where $1\leq i\leq 2m$. Up to now, one can see that the rational maps $f$ and $P$ `almost' have the same dynamical behaviours. We now begin to define a sequence of quasiconformal mappings.

Let $U$ and $V$ be the interior of $E_1\cup D_\infty$ and $E\cup D_\infty$ respectively. Then $f:U\to V$ is a polynomial-like map with degree $d_1$. By Douady and Hubbard's Straighten Theorem \cite[p.\,296]{DH}, $f:U\to V$ is quasiconformally conjugate to a parabolic polynomial $\widehat{f}$ with degree $d_1$ in a neighborhood of the filled-in Julia set of $f:U\to V$. On the other hand, it is known that $P:(E_1'\cup D_\infty')^o\to (E'\cup D_\infty')^o$ is a polynomial-like map which is quasiconformally conjugate to the parabolic polynomial $g_{d_1}=(z^{d_1}+d_1-1)/d_1$. By Corollary \ref{cor_parabolic-conj-std}, there exists a quasiconformal mapping $\phi_0:\overline{\mathbb{C}}\rightarrow\overline{\mathbb{C}}$ such that $\phi_0(D_\infty)=D_\infty'$ and $\phi_0\circ f=P\circ\phi_0$ on $\partial D_\infty$. Since $f:D_0\to D_\infty$ and $P:D_0'\to D_\infty'$ are both holomorphic maps with degree $d_n$, the quasiconformal mapping $\phi_0:\EC\to\EC$ can be chosen such that it also satisfies $\phi_0\circ f=P\circ\phi_0$ on $\partial D_0$ and $\phi_0(D_0)=D_0'$. Hence the quasiconformal map $\phi_0:\EC\to\EC$ satisfies:
\begin{itemize}
\item[(1)] $\phi_0(D_0)=D_0'$ and $\phi_0(D_\infty)=D_\infty'$; and
\item[(2)] $\phi_0\circ f=P\circ\phi_0$ on $\partial D_0\cup\partial D_\infty$.
\end{itemize}

Next we define $\phi_1:\EC\to\EC$. Note that $f:E_1\rightarrow E$ and $P:E_1'\rightarrow E'$ are both covering maps with degree $d_1$. There exists a unique lift $\phi_{E_1}:E_1\rightarrow E_1'$ of $\phi_0:E\rightarrow E'$ such that $\phi_{E_1}=\phi_0$ on $\partial D_\infty$ and $\phi_0\circ f =P\circ \phi_{E_1}$ on $E_1$. In particular, $\phi_{E_1}:E_1\rightarrow E_1'$ is quasiconformal in the interior of $E_1$ since $\phi_0:E\rightarrow E'$ is quasiconformal and $f,P$ are both holomorphic. Similarly, there exists a unique quasiconformal mapping $\phi_{E_{2m}}:E_{2m}\rightarrow E_{2m}'$, which is the lift of $\phi_0:E\rightarrow E'$ such that $\phi_{E_{2m}}=\phi_0$ on $\partial D_0$ and $\phi_0\circ f =P\circ \phi_{E_{2m}}$ on $E_{2m}$. For $2\leq i\leq 2m-1$, there exists a lift (but not unique) of $\phi_0:E\rightarrow E'$ defined from $E_i$ to $E_i'$, which we denote by $\phi_{E_i}$ such that $\phi_0\circ f =P\circ \phi_{E_i}$ on $E_i$. For $1\leq i\leq 2m$, define $\phi_1=\phi_{E_i}$ on $E_i$ and define $\phi_1=\phi_0$ on $\overline{D}_0\cup\overline{D}_\infty$. Then we have $\phi_0\circ f =P\circ \phi_1$ on $\bigcup_{i=1}^{2m}E_i$ and $\phi_1\circ f =P\circ \phi_1$ on $\bigcup_{i=1}^{2m}\partial E_i$.

We need to define $\phi_1$ on $\bigcup_{j=1}^{2m-1}A_j$ (see Figure \ref{Fig_find-conj}). Note that for each $A_j$, its two boundary components $\partial_+ E_{i+1}$ and $\partial_- E_i$ are not quasicircles since $\partial D_\infty$ is not. Hence, we cannot take a quasiconformal extension directly. However, the lift $\phi_{E_i}:E_i\to E_i'$ defined in the last paragraph can be actually defined in a larger domain $U_i$, which is an open annular neighborhood  of $E_i$, such that the both boundary components of $U_i$ are quasicircles and they satisfy $U_{2m}\prec U_{2m-1}\prec\cdots\prec U_1$. For $1\leq i\leq 2m$, define $U_i'=\phi_{E_i}(U_i)$. For $1\leq j\leq 2m-1$, let $V_j\subset A_j$ and $V_j'\subset A_j'$, respectively, be the annular domains between $U_j$ and $U_{j+1}$, $U_j'$ and $U_{j+1}'$, respectively. It follows that the map $\phi_1|_{\partial_+ U_{j+1}\cup\partial_- U_j}$ has an extension $\phi_{V_j}:\overline{V_j}\rightarrow \overline{V_j'}$ such that $\phi_{V_j}:V_j\rightarrow V_j'$ is quasiconformal since $\phi_{E_j}|_{U_j}$ and $\phi_{E_{j+1}}|_{U_{j+1}}$ are both quasiconformal mappings and  $\partial_+ U_{j+1}$ and $\partial_- U_j$ are quasicircles. Define $\phi_1=\phi_{V_j}$ on $V_j$ for $1\leq j\leq 2m-1$. Then we obtain a quasiconformal mapping $\phi_1:\overline{\mathbb{C}}\rightarrow\overline{\mathbb{C}}$ which satisfies the following conditions:
\begin{itemize}
\item[(1)] $\phi_1=\phi_0$ on $D_0\cup D_\infty$;
\item[(2)] $\phi_0\circ f =P\circ \phi_1$ on $\bigcup_{i=1}^{2m}E_i$; and
\item[(3)] $\phi_1\circ f =P\circ \phi_1$ on $f^{-1}(\partial D_0\cup \partial D_\infty)$.
\end{itemize}

Now we define $\phi_2$. For $1\leq j\leq 2m-1$, define $\phi_2=\phi_1$ on $A_j$ and $\phi_2=\phi_1=\phi_0$ on $D_0\cup D_\infty$. For $1\leq i\leq 2m$, we lift $\phi_1:E\rightarrow E'$ to obtain $\phi_2:E_i\rightarrow E_i'$ such that $\phi_1\circ f=P\circ \phi_2$ on $E_2$. Now $\phi_2:\EC\to\EC$ has been defined but we need to check its continuity. There exists only one way to lift $\phi_1:E\rightarrow E'$ to obtain $\phi_2:E_1\rightarrow E_1'$ if one wants to guarantee the continuity of $\phi_2$ on $D_\infty\cup E_1$ since $\phi_2$ is required to equal to $\phi_1$ on $\partial_+E_1$. Note that $\phi_2=\phi_1$ on $A_1$, we need to check the continuity of $\phi_2$ at the boundary $\partial_- E_1$. In fact, $\phi_0|_E$ and $\phi_1|_E$ are homotopic to each other and $\phi_1|_{\partial E}=\phi_0|_{\partial E}$, it follows that $\phi_2=\phi_1$ on $\partial_- E_1$ since $\phi_2=\phi_1$ on $\partial_+ E_1$. This means that $\phi_2$ is continuous on $\partial_- E_1$. By taking a similar way, one can lift $\phi_1:E\rightarrow E'$ to obtain $\phi_2:E_i\rightarrow E_i'$ for $2\leq i\leq 2m$ and guarantee the continuity of $\phi_2$ on the corresponding boundaries.
Since $f$ and $P$ are both holomorphic, the dilatation of $K(\phi_2:E_i\rightarrow E_i')\leq K(\phi_1)$ for $1\leq i\leq 2m$. Note that $\phi_2=\phi_1$ on $A_j$ and $\phi_2=\phi_1=\phi_0$ on $D_0\cup D_\infty$, the map $\phi_2:\overline{\mathbb{C}}\rightarrow\overline{\mathbb{C}}$ has following properties:
\begin{itemize}
\item[(1)] $\phi_2=\phi_1$ on $f^{-1}(D_0\cup D_\infty)$;
\item[(2)] $\phi_1\circ f=P\circ\phi_2$ on $\bigcup_{i=1}^{2m}E_i$;
\item[(3)] $\phi_2\circ f=P\circ\phi_2$ on $f^{-2}(\partial D_0\cup \partial D_\infty)$; and
\item[(4)] The dilatation of $\phi_2$ satisfies $K(\phi_2)=K(\phi_1)$.
\end{itemize}

We now use the method of induction to prove the theorem. If we have obtained a quasiconformal mapping $\phi_k:\EC\rightarrow \EC$ for some $k\geq 1$ as above, then $\phi_{k+1}:\EC\rightarrow \EC$ can be defined completely similarly to the procedure of the derivation of $\phi_2$ from $\phi_1$. Inductively, we can obtain a sequence of quasiconformal mappings $\{\phi_k\}_{k\geq 0}$ with following properties:
\begin{itemize}
\item[(1)] $\phi_{k+1}(z)=\phi_{k}(z)$ for $z\in f^{-k}(D_0\cup D_\infty)$;
\item[(2)] $\phi_{k}\circ f=P\circ\phi_{k+1}$ on $\bigcup_{i=1}^{2m}E_i$;
\item[(3)] $\phi_{k+1}\circ f=P\circ\phi_{k+1}$ on $f^{-(k+1)}(\partial D_0\cup \partial D_\infty)$; and
\item[(4)] The dilatation satisfies $K(\phi_{k+1})=K(\phi_k)=K(\phi_1)$ for $k\geq 1$.
\end{itemize}

By \cite[Theorem 5.1]{LV}, the sequence $\{\phi_k:\EC\to\EC\}_{k\geq 0}$ is a normal family. Let $\phi_\infty:\EC\to\EC$ be the limit of any convergent subsequence in $\{\phi_k\}_{k\geq 0}$. Then $\phi_\infty$ is a quasiconformal mapping satisfying $\phi_\infty\circ f=P\circ\phi_\infty$ on $\bigcup_{k\geq 0} f^{-k}(\partial D_0\cup \partial D_\infty)=\bigcup_{k\geq 0}f^{-k}(\partial D_\infty)$. In particular, $\phi_\infty\circ f=P\circ\phi_\infty$ holds on $\overline{\bigcup_{k\geq 0}f^{-k}(\partial D_\infty)}$ since $\phi_\infty$ is continuous. Note that the Julia set $J(f)=\overline{\bigcup_{k\geq 0}f^{-k}(\partial D_\infty)}$. Therefore, $\phi_\infty:\EC\to\EC$ is a quasiconformal mapping which conjugates $f$ to $P$ on their corresponding Julia sets. The proof is complete.
\end{proof}

For each hyperbolic rational map whose Julia set is a Cantor set of circles, Theorem \ref{this-is-all-hyper} gives the corresponding quasisymmetric model of the Julia set. For the non-hyperbolic case, Theorem \ref{this-is-all-resta} gives the models.

\vskip 0.2cm
\noindent{\textit{Proof of Theorem \ref{this-is-all-intro}.}}
Combine Theorems \ref{this-is-all-hyper} and \ref{this-is-all-resta}.
\hfill $\square$

\begin{cor}\label{coro-cusp-CC}
Each parabolic Cantor circles Julia set has infinitely many cusps.
\end{cor}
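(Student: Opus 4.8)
The plan is to produce a single genuine cusp on $J(f)$ coming from a parabolic periodic point, and then spread it over the entire backward orbit of that point. First I would recall the structure supplied by Theorem \ref{thm-QYY} and the discussion preceding Theorem \ref{this-is-all-intro}: if $f$ has Cantor circles Julia set then it has exactly two simply connected Fatou components $D_0$ and $D_\infty$ with $f(D_0\cup D_\infty)\subset D_0\cup D_\infty$, and there are \emph{no} critical points on $J(f)$, so every periodic Fatou component is either attracting or parabolic. Since all critical points of $f$ are eventually attracted to the periodic cycle formed by $D_0$ and $D_\infty$, non-hyperbolicity of $f$ is equivalent to this cycle being parabolic. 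Hence $f$ possesses a parabolic periodic Fatou component $D$ of some period $q$, whose boundary is a single Julia component, hence a Jordan curve, and this curve contains a parabolic periodic point $p$.

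Next I would show that $p$ is a cusp of $J(f)$. The proof of Theorem \ref{this-is-all-resta} exhibits $f^{\circ q}$ near $p$ as a polynomial-like restriction whose straightening is a parabolic polynomial with Jordan-curve Julia set; by Theorem \ref{thm_parabolic-conj-std} that polynomial is conjugate to $g_m(z)=(z^m+m-1)/m$, which has multiplier $1$ and exactly one parabolic petal. Since the rotation number and the number of petals of a parabolic germ are invariants of topological conjugacy, the germ of $f^{\circ q}$ at $p$ has trivial rotation number and a single petal, so in a local coordinate one may write $f^{\circ q}(z)=p+(z-p)+a(z-p)^2+\cdots$ with $a\neq 0$. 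The immediate basin $D$ then fills this single attracting petal, whose two boundary arcs approach $p$ tangentially, so the Jordan curve $\partial D\subset J(f)$ fails to have bounded turning at $p$; that is, $p$ is a cusp. The essential point here is to argue with the \emph{actual} germ of $f$, not with the model, because the quasiconformal conjugacy of Theorem \ref{this-is-all-resta} need not send cusps to cusps; only the combinatorial data (trivial rotation number, one petal) is transferred from $g_m$.

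Then I would propagate this cusp backwards. Because $J(f)$ contains no critical points, every iterate $f^{\circ j}$ is a local biholomorphism at each of its points. For any $p'$ with $f^{\circ j}(p')=p$ — and all such $p'$ lie on $J(f)$ by complete invariance — the map $f^{\circ j}$ carries a neighborhood of $p'$ conformally onto a neighborhood of $p$, mapping $J(f)$ onto $J(f)$. A conformal map preserves the tangential contact of two arcs, so the cusp at $p$ pulls back to a cusp at $p'$. It remains only to note that the backward orbit $\bigcup_{j\geq 0}f^{-j}(p)$ is infinite: the point $p$ lies on $J(f)$ and is not exceptional (the at most two exceptional points of a rational map of degree $\geq 2$ belong to the Fatou set, whereas $p$ is a parabolic point on the Julia set), so its grand backward orbit is an infinite subset of the perfect set $J(f)$. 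Each of these infinitely many distinct points is a cusp, which proves Corollary \ref{coro-cusp-CC}.

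The main obstacle is the middle step. One must keep carefully separated the two roles played by the polynomial-like straightening: it is used only to read off the topological type of the parabolic germ (multiplier $1$, a single petal), while the genuine cuspidal contact of $\partial D$ at $p$ must be extracted from the true local expansion of $f^{\circ q}$, so that the resulting cusp is a bona fide feature of $J(f)$ itself rather than an artifact imported through the quasiconformal model.
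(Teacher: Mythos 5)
Your proof is correct, and it reorganizes the argument in a way that genuinely differs from the paper's. The paper first produces, for each of the three model families $P_{d_1,\cdots,d_n}$, $Q_{d_1,\cdots,d_n}$, $R_{d_1,\cdots,d_n}$, a polynomial-like restriction that straightens to a parabolic polynomial with Jordan-curve Julia set; Theorem \ref{thm_parabolic-conj-std} supplies infinitely many cusps for that polynomial, and these are then transported back to the model and finally to an arbitrary parabolic Cantor circles Julia set through the quasiconformal conjugacy of Theorem \ref{this-is-all-resta}. You instead work directly with $f$: you use the polynomial-like structure only to read off that the germ of $f^{\circ q}$ at its parabolic periodic point has multiplier $1$ and a single petal, you deduce the cusp from the actual local expansion of $f^{\circ q}$, and you then spread it over the infinite backward orbit by local biholomorphisms, which is legitimate since $J(f)$ carries no critical points and $p$ is not exceptional. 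Your stated reason for avoiding the transfer --- that a quasiconformal conjugacy need not send cusps to cusps --- is valid for the strict geometric notion of a cusp (zero angle), but the paper's transfer does survive if one reads ``cusp'' as ``point at which bounded turning fails'': a global quasiconformal map of $\EC$ is quasisymmetric, hence distorts the turning ratio only by a fixed homeomorphism $\eta$, and this metric notion is the one the paper actually exploits elsewhere (e.g.\ in Theorem \ref{thm-regularity} and in separating quasicircles from non-quasicircles). What your route buys is a self-contained statement about $f$ itself, producing genuine tangential cusps rather than merely points of unbounded turning; the price is that you must justify (as the paper also does, implicitly, in the proof of Theorem \ref{thm_parabolic-conj-std}) that the boundary of a one-petal immediate parabolic basin meets the fixed point tangentially along the repelling direction.
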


\begin{proof}
For $P_{d_1,\cdots,d_n}$, $Q_{d_1,\cdots,d_n}$ or $R_{d_1,\cdots,d_n}$, one can easily get a polynomial-like map which is quasiconformally conjugate to a parabolic polynomial. Specifically, as in the proof in Theorem \ref{parameter-parabolic-resta}, $P_{d_1,\cdots,d_n}:(V_1\cup\widetilde{U}_\infty)^o\to(V\cup\widetilde{U}_\infty)^o$ is a polynomial-like map with degree $d_1$ which is quasiconformally conjugated to the parabolic polynomial $g_{d_1}=(z^{d_1}+d_1-1)/d_1$ (see Figure \ref{fig_P_n}). For $Q_{d_1,\cdots,d_n}$ and $R_{d_1,\cdots,d_n}$, one can refer Figure \ref{fig_parabolic_III-IV} to get two polynomial-like maps $Q_{d_1,\cdots,d_n}:(E_1\cup D_\infty)^o\to(E\cup D_\infty)^o$ and $R_{d_1,\cdots,d_n}^{\,\circ 2}:(\widehat{E}_1\cup D_\infty)^o\to(E\cup D_\infty)^o$ which are quasiconformally conjugated to the parabolic polynomials $g_{d_1}$ and $g_{d_1,d_n}$, where $\widehat{E}_1=E_1\cap R_{d_1,\cdots,d_n}^{-1}(E_3)$ and $g_{d_1,d_n}$ is defined in \eqref{g_m-g-mn}. By Theorem \ref{thm_parabolic-conj-std}, it follows that the boundary of the unbounded Fatou component of $P_{d_1,\cdots,d_n}$, $Q_{d_1,\cdots,d_n}$ and $R_{d_1,\cdots,d_n}$ contains infinitely many cusps. Then each parabolic Cantor circles Julia set has infinitely many cusps by Theorem \ref{this-is-all-resta}.
\end{proof}

\begin{rmk}
The hyperbolic Cantor circles Julia sets (three types) are not quasisymmetrically equivalent to the parabolic Cantor circles Julia sets (four types) since the former does not have cusps but the latter have (see Table \ref{Tab-classif}). However, we don't know whether there exist two different types hyperbolic Cantor circles Julia sets which are quasisymmetrically equivalent to each other or not.
\end{rmk}

\section{Regularity of the components of parabolic Cantor circles}\label{sec-regularity}

By definition (see  for example, \cite[p.\,100]{LV}), a Jordan curve $\gamma\subset \EC$ is called a \textit{quasicircle} if there exists a positive constant $C$ such that for any different points $x,y\in\gamma$, the \textit{turning} of $I$ about $x$ and $y$ satisfies
\begin{equation*}
\diam(I)/|x-y|\leq C,
\end{equation*}
where $I$ is the component of $\gamma \setminus\{x,y\}$ with smaller diameter.

If $f$ is a hyperbolic rational map whose Julia set is a Cantor set of circles, one can prove that all Julia components of $f$ are quasicircles by using the structure of the Cantor circles and a standard argument (see Remark after the proof of Theorem \ref{thm-regularity}). However, if $f$ is a parabolic rational map whose Julia set is a Cantor set of circles,  it needs to do more work if one wants to study the regularity of the Julia components.

Obviously, for parabolic Cantor circles, there are at least countably many Julia components which are not quasicircles since they contain infinitely many cusps (Corollary \ref{coro-cusp-CC}). The boundaries of the grand orbit of the parabolic periodic Fatou component are examples. On the other hand, surprisingly, there exist also many Julia components of parabolic Cantor circles which are quasicircles. We give a complete characterization for this dichotomy in the following theorem.

\begin{thm}\label{thm-regularity}
Let $f$ be a parabolic rational map whose Julia set is a Cantor set of circles and $J_0$ a Julia component of $f$. Then, $J_0$ is a quasicircle if and only if the closure of the forward orbit of $J_0$ is disjoint with the boundaries of the parabolic periodic Fatou components.
\end{thm}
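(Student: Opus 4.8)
The plan is to work with the bounded-turning characterization of quasicircles from the start of this section, and to exploit two structural facts that are now available. First, $f$ has no critical points on $J(f)$, so it maps every Julia component onto another by an unramified covering which is a local conformal homeomorphism. Second, every parabolic periodic Fatou boundary contains infinitely many cusps (Corollary \ref{coro-cusp-CC}), hence has unbounded turning and is not a quasicircle. Since being a quasicircle is invariant under a global quasiconformal (equivalently quasisymmetric) homeomorphism of $\EC$, by Theorem \ref{this-is-all-resta} I may and do assume that $f$ is one of the explicit models $P_{d_1,\cdots,d_n}$, $Q_{d_1,\cdots,d_n}$ or $R_{d_1,\cdots,d_n}$, so that near each parabolic cycle the germ is the explicit one-petal parabolic germ linearized by Leau--Fatou coordinates. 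The elementary observation used repeatedly is that a local conformal homeomorphism preserves the bounded-turning condition up to multiplicative constants (on compact sets it is bi-Lipschitz, and the blow-up of $\diam(I)/|x-y|$ at a cusp is a conformal invariant); thus bounded turning is transported along any finite piece of the orbit whose corresponding points stay away from the parabolic cycle.

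For \emph{necessity} I argue by contraposition: assume the closure $\overline{\bigcup_{k\ge 0} f^{\circ k}(J_0)}$ meets the boundary $\partial D$ of some parabolic periodic Fatou component, with parabolic point $p\in\partial D$. There are two subcases. If some iterate satisfies $f^{\circ k}(J_0)=\partial D$, pick $\tilde q\in J_0$ with $f^{\circ k}(\tilde q)=p$; the orbit $\tilde q,\dots,f^{\circ(k-1)}(\tilde q)$ lies in $J(f)$ and so avoids the critical points, whence $f^{\circ k}$ is a local conformal homeomorphism carrying a neighborhood of $\tilde q$ in $J_0$ onto a neighborhood of the cusp $p$ in $\partial D$; the cusp, and hence unbounded turning, is transported back to $\tilde q$, so $J_0$ is not a quasicircle. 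In the remaining subcase there is a sequence $f^{\circ k_j}(J_0)\to\partial D$ with no iterate equal to $\partial D$; here I pass to the Fatou coordinate at $p$ and show that, as these components nestle against the cusp of $\partial D$, they acquire near-cusps, i.e.\ points $a_j,b_j$ on $f^{\circ k_j}(J_0)$ whose turning tends to infinity. Pulling these witnesses back by the inverse branches of $f^{\circ k_j}$ along $J_0$---which have bounded distortion on the relevant small scale by a Koebe estimate---produces pairs on $J_0$ of arbitrarily large turning, so again $J_0$ fails bounded turning.

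For \emph{sufficiency}, suppose $K:=\overline{\bigcup_{k\ge 0}f^{\circ k}(J_0)}$ is disjoint from all parabolic periodic Fatou boundaries. Then $K$ is a compact forward-invariant subset of $J(f)$ lying at a positive distance from the parabolic cycles, and since for a geometrically finite map the only obstruction to expansion on the Julia set is the parabolic cycle, there exist a neighborhood $W\supset K$, an integer $N$ and $\lambda>1$ with $\|(f^{\circ N})'\|\ge\lambda$ on $W$ in a suitable adapted (orbifold) metric. I then run the bounded-distortion pullback used in the hyperbolic case (cf.\ Corollary \ref{cor-regularity-hyper}): given $x,y\in J_0$ at a small scale, iterate forward until $f^{\circ m}(x),f^{\circ m}(y)$ reach a definite scale $\rho_0$; uniform expansion along the orbit together with Koebe distortion shows $f^{\circ m}$ is of bounded distortion on the ball $B(x,O(|x-y|))$, so the turning of $J_0$ at the pair $(x,y)$ is comparable to the turning of $f^{\circ m}(J_0)\subset K$ at scale $\rho_0$, which is uniformly bounded. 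Hence $J_0$ has bounded turning and is a quasicircle.

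The crux, and the only genuinely non-routine ingredient, is the behaviour near the parabolic periodic points, where $|f'|\to1$. In necessity this is the accumulation subcase: quantifying, via Leau--Fatou coordinates and the petal geometry, the rate at which the turning of the approaching components diverges, and controlling the distortion of the inverse branches on the corresponding scales so that the divergence survives the pullback. In sufficiency it is the upgrade from ``the orbit avoids the parabolic cycle'' to genuine uniform expansion with uniform Koebe distortion along the orbit, despite the derivative being arbitrarily close to $1$ near the parabolic points. Away from the parabolic set both directions reduce to the expanding, hyperbolic-type analysis, so I expect the parabolic local model to be where essentially all of the work concentrates.
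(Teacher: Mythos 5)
Your sufficiency argument is essentially the paper's: a stopping-time plus Koebe-distortion pullback, as in the hyperbolic case. Two remarks. First, the reduction to the explicit models via Theorem \ref{this-is-all-resta} is unnecessary (the paper argues for a general $f$ directly), and the uniform-expansion input can be replaced, as the paper does, by the soft observation that a $\delta$-neighbourhood of $\mathcal{J}=\overline{\bigcup_k f^{\circ k}(J_0)}$ misses the closure of the critical orbit, so the relevant inverse branches are univalent on disks of definite radius $\delta$. Second, be careful with your stopping rule: you stop when the \emph{points} $f^{\circ m}(x),f^{\circ m}(y)$ reach scale $\rho_0$, but Koebe must be applied on a disk containing the \emph{arc} $L$ between them, which at that moment may still be large. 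The paper stops when $\diam(f^{\circ k}(L))$ first reaches a definite scale $\delta/2$, and then uses the uniform continuity of the conjugacy $\phi:J(f)\to\mathcal{C}\times\mathbb{S}^1$ to bound $|f^{\circ k}(x)-f^{\circ k}(y)|$ from below at that moment; with your rule the distortion estimate does not close up as stated, though it is repairable.

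The genuine gap is in the necessity direction, in exactly the subcase you yourself flag as the crux: when $f^{\circ k_j}(J_0)$ accumulates on $\partial D_\infty$ without ever equalling it. You propose to work in the Fatou coordinate at the parabolic point $p$ and to control ``the distortion of the inverse branches on the corresponding scales'' there; but every neighbourhood of $p$ meets the closure of the critical orbit (the critical orbits converge to $p$ inside the parabolic basin), so the inverse branches of $f^{\circ k_j}$ are not univalent on disks about $p$ and the Koebe control you invoke is precisely what is unavailable at that location. The quantitative petal analysis you defer is therefore not a routine completion, and as written the argument does not go through. The paper sidesteps all of this with one device you missed: since $\partial D_\infty$ has infinitely many cusps (Corollary \ref{coro-cusp-CC}), choose a cusp $\zeta\neq p$ (a strict preimage of $p$ on $\partial D_\infty$); then $\zeta\in J(f)$ lies at positive distance from the closure of the critical orbit, a fixed disk $\D(\zeta,\rho_0)$ misses that closure, and the Hausdorff convergence $J_{n_k}\to\partial D_\infty$ forces the crossing points of $J_{n_k}$ with $\partial\D(\zeta,\rho_m/2)$ to inherit, up to a factor $2$, the turning of $\partial D_\infty$ at scale $\rho_m$ about $\zeta$, which blows up as $\rho_m\to 0$. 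Koebe on $\D(\zeta,\rho_m)\subset\D(\zeta,\rho_0)$ then transfers this unbounded turning back to $J_0$ with a universal constant, with no Fatou coordinates and no petal geometry needed.
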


\begin{proof}
Without loss of generality, we assume that the two points $0$ and $\infty$ are contained in $D_0$ and $D_\infty$ respectively, where $D_0$ and $D_\infty$ are the two simply connected Fatou components of $f$. Similar to the consideration in Theorem \ref{this-is-all-resta}, we assume that $f(D_0)=D_\infty$ and $f(D_\infty)=D_\infty$, where $D_\infty$ is a parabolic Fatou component. The other three cases stated in the proof of Theorem \ref{this-is-all-resta} can be proved completely similarly.

Let $\mathcal{J}$ be the closure of the union of the forward orbit $\{J_k:=f^{\circ k}(J_0)\}_{k\in\mathbb{N}}$ of the Julia component $J_0$.  We claim that $\mathcal{J}\subset J(f)$ is homeomorphic to the product of a closed subset of the standard middle third Cantor set $\mathcal{C}$ and the unit circle $\mathbb{S}^1$. Then each component of $\mathcal{J}$ is a Jordan curve. In fact, since $J(f)$ is a Cantor set of circles, there exists a homeomorphism $\phi:J(f)\to \mathcal{C}\times \mathbb{S}^1$ which maps the Julia set of $f$ to the standard Cantor set of circles. Let $z\in J(f)$ be a point which is accumulated by $\{J_k\}_{k\in\mathbb{N}}$. Then equivalently, $\phi(z)$ is an accumulation point of $\{\phi(J_k)\}_{k\in\mathbb{N}}$. Note that all components of $\mathcal{C}\times \mathbb{S}^1$ are round circles. Therefore, each $w'\in \phi(J_z)$ is accumulated by  $\{\phi(J_k)\}_{k\in\mathbb{N}}$, where $J_z$ is the Julia component containing $z$. This means that $\phi(\mathcal{J})$ is equal to the product  of a closed subset of $\mathcal{C}$ and $\mathbb{S}^1$. This ends the proof the claim.

If $\mathcal{J}$ is disjoint with the boundary $\partial D_\infty$, there exists a constant $\delta>0$ such that $\{z\in\C: \dist(z,\mathcal{J})\leq \delta\}$ is disjoint with the the closure of the critical orbit of $f$ since the image of the forward orbit of the critical points of $f$ is contained in $D_0\cup D_\infty$ and $\dist(\overline{D}_0\cup \overline{D}_\infty,\mathcal{J})>0$. Since $J(f)$ is a Cantor set of circles, we have $\inf_{k\in\mathbb{N}}\diam(J_k)>0$. We assume that $\delta>0$ is sufficiently small such that $\{z\in\C: \dist(z,\mathcal{J})\leq \delta\}$ is disjoint with the poles of $f$ and $\inf_{k\in\mathbb{N}}\diam(J_k)>\delta/2$.

For any two given points $x,y\in J_k=f^{\circ k}(J_0)$, let $L(x,y)$ be the component of $J_k\setminus\{x,y\}$ with the smaller diameter, where $k\in\mathbb{N}$. Let $\phi:J(f)\to \mathcal{C}\times \mathbb{S}^1$ be the homeomorphism defined above. Then $\phi$ is uniformly continuous since $J(f)$ is compact. This means that for each positive $\epsilon\ll \inf_{k\in\mathbb{N}}\diam(J_k)$, there exists a positive constant $\beta(\epsilon)<\epsilon$ depending only on $\epsilon$ such that for any $x,y\in \mathcal{J}$,
\begin{equation}\label{est-qwy}
\text{if~} |x-y|<\beta(\epsilon), \text{~then~} \diam (L(x,y))<\epsilon.
\end{equation}

Let $z_1,z_2\in J_0$ be any two different points. If $|z_1-z_2|\geq \beta(\delta/2)$, then
\begin{equation}\label{qc-case-1}
\frac{\diam(L(z_1,z_2))} {|z_1-z_2|} \leq \frac{M_1}{\beta(\delta/2)},
\end{equation}
where $M_1:=\sup_{k\in\mathbb{N}}\diam(J_k)<+\infty$. Otherwise, $|z_1-z_2|< \beta(\delta/2)$. Then $L(z_1,z_2)<\delta/2$ by \eqref{est-qwy}. Note that there exists a sufficiently large $N>0$ such that $f^{\circ m}(L(z_1,z_2))=J_m$ if $m\geq N$. There exists a smallest $k\geq 0$ such that
\begin{equation}\label{qwy-1}
\diam(f^{\circ k}(L(z_1,z_2)))<\delta/2 \text{~~and~~} \diam(f^{\circ k+1}(L(z_1,z_2)))\geq\delta/2.
\end{equation}
In particular, $f^{\circ k}:L(z_1,z_2)\to f^{\circ k}(L(z_1,z_2))\subset J_{k}$ is injective. Define $M_2:=\max\{|f'(z)|: \dist(z,\mathcal{J})\leq \delta/2\}$. There exist two points $w_1,w_2\in f^{\circ k}(L(z_1,z_2))\subset J_k$ such that
\begin{equation}\label{qwy-2}
\diam(f^{\circ k+1}(L(z_1,z_2)))= |f(w_1)-f(w_2)|\leq  M_2|w_1-w_2|\leq M_2\,\diam(f^{\circ k}(L(z_1,z_2))).
\end{equation}
By \eqref{qwy-1} and \eqref{qwy-2}, we have
\begin{equation*}
\delta/(2M_2)\leq  \diam(f^{\circ k}(L(z_1,z_2))) = \diam(L(f^{\circ k}(z_1),f^{\circ k}(z_2)))<\delta/2.
\end{equation*}
By \eqref{est-qwy}, there exists a constant $C_1(\delta,M_2)>0$ such that
\begin{equation*}
|f^{\circ k}(z_1)-f^{\circ k}(z_2)|\geq C_1(\delta,M_2).
\end{equation*}

Let $U$ and $V$ be the components of $f^{-k}(\D(f^{\circ k}(z_1),\delta/2))$ and $f^{-k}(\D(f^{\circ k}(z_1),\delta))$ both containing $z_1$, respectively. Consider the conformal mapping $f^{\circ k}:V\rightarrow \D(f^{\circ k}(z_1),\delta)$ and by Koebe's distortion theorem, we know that there is a constant $C_2>0$ such that
\begin{equation}\label{qc-case-2}
\frac{\diam(L(z_1,z_2))} {|z_1-z_2|} \leq C_2\frac{\diam(f^{\circ k}(L(z_1,z_2)))}{|f^{\circ k}(z_1)-f^{\circ k}(z_2)|}
\leq \frac{C_2\delta}{2C_1(\delta,M_2)}.
\end{equation}
Combining \eqref{qc-case-1} and \eqref{qc-case-2}, we know that $J_0$ is a quasicircle since it is bounded turning.

On the other hand, suppose the forward orbit of $J_0$ accumulates on the boundary $\partial D_\infty$. The idea of the proof is similar to that of \cite[Theorem 8.6]{PT}. There exists a subsequence $J_{n_k}\to \partial D_\infty$ in the Hausdorff topology as $k\to\infty$. Note that $\partial D_\infty$ is not a quasicircle since it contains infinitely many cusps (see Corollary \ref{coro-cusp-CC}). Choose one of the cusps, which is not the parabolic fixed point, and denote it by $\zeta$. There exists a sequence of small constants $\{\rho_m\}_{m\in\mathbb{N}}>0$ satisfying $1\gg \rho_0>\rho_1>\cdots>\rho_m>\cdots>0$ and $\lim_{m\to\infty}\rho_m=0$ such that the round disk $\D(\zeta,\rho_0)$ is disjoint with the critical orbit of $f$. Without loss of generality, for each $m\in\mathbb{N}$, assume that $J_{n_k}\cap\partial\D(\zeta,\rho_m/2)=\{x_{n_k}^{m},y_{n_k}^{m}\}$ and $L(x_{n_k}^m,y_{n_k}^m)\subset\overline{\D}(\zeta,\rho_m/2)$ for all $k\geq k_m$, where $L(x_{n_k}^m,y_{n_k}^m)$ is the component of $J_{n_k}\setminus\{x_{n_k}^m,y_{n_k}^m\}$ with smaller diameter and $k_m\geq 0$ is an integer depending only on $m$. Moreover, assume that $\partial D_\infty\cap\partial\D(\zeta,\rho_m/2)=\{x^m,y^m\}$ and $L(x^m,y^m)\subset\overline{\D}(\zeta,\rho_m/2)$, where $L(x^m,y^m)$ is the component of $\partial D_\infty\setminus\{x^m,y^m\}$ with smaller diameter.

Let $x_0^m,y_0^m\in J_0$, respectively, be the preimages of $x_{n_k}^m,y_{n_k}^m$ under $f^{\circ n_k}$ which are contained in a same component of  $f^{-n_k}(\D(\zeta,\rho_m))$. By Koebe's distortion theorem, there is a constant $C>0$ such that
\begin{equation}\label{est-my}
\frac{\diam(L(x_0^m,y_0^m))} {|x_0^m-y_0^m|} \geq C\frac{\diam(L(x_{n_k}^m,y_{n_k}^m))} {|x_{n_k}^m-y_{n_k}^m|}\geq C\frac{\diam(L(x^m,y^m))}{2|x^m-y^m|}
\end{equation}
for large $k\geq k_m$ since $J_{n_k}\to \partial D_\infty$ as $k\to\infty$, where $L(x_0^m,y_0^m)$ is the component of $J_0\setminus\{x_0^m,y_0^m\}$ with smaller diameter. Letting $m\to\infty$ and hence $\rho_m\to 0$, then the right most term in \eqref{est-my} tends to $\infty$ since $\zeta$ is a cusp. This means that $J_0$ is not bounded turning. Equivalently, $J_0$ is not a quasicircle.
This ends the proof of Theorems \ref{thm-regularity} and \ref{thm-regularity-intro}.
\end{proof}

\begin{rmk}\label{rmk}
By the completely similar arguments as Theorem \ref{thm-regularity}, one can prove that each Julia components of $f_{p,d_1,\cdots,d_n}$ in Theorem \ref{thm-QYY} is a quasicircle since $f_{p,d_1,\cdots,d_n}$ is hyperbolic if the parameters $a_1,\cdots,a_{n-1}$ are chosen suitably.

For the parabolic Cantor circles Julia sets, the quasicircle components of $f$ is dense in the space of the Julia components. On the other hand, the non quasicircle components form a residual set in Baire's category (see Figure \ref{Fig_Cantor-quasicircle}).
\end{rmk}

\begin{figure}[!htpb]
  \setlength{\unitlength}{1mm}
  \centering
  \includegraphics[width=70mm]{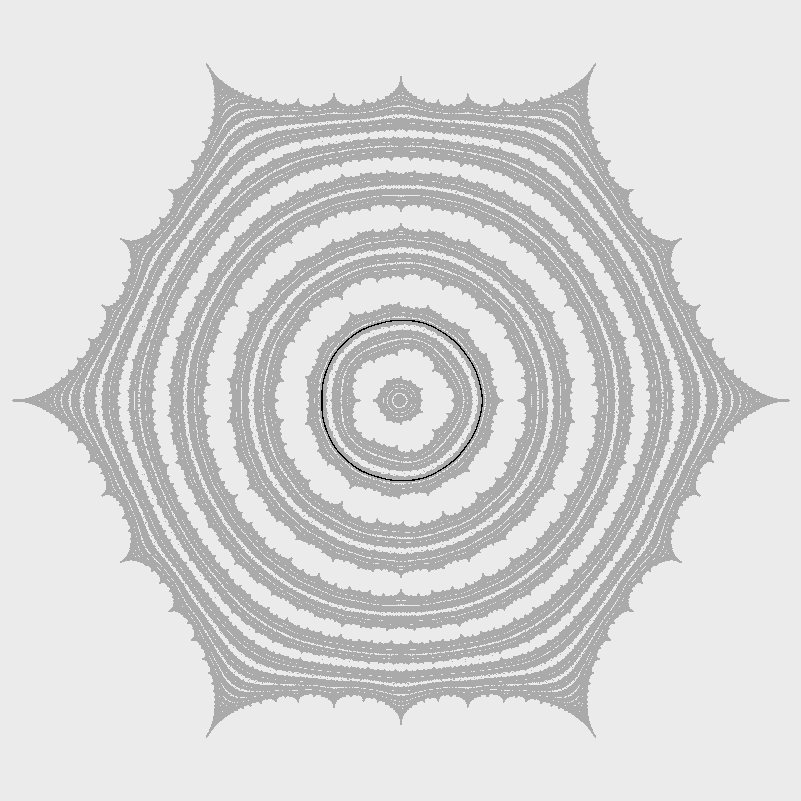}
  \includegraphics[width=70mm]{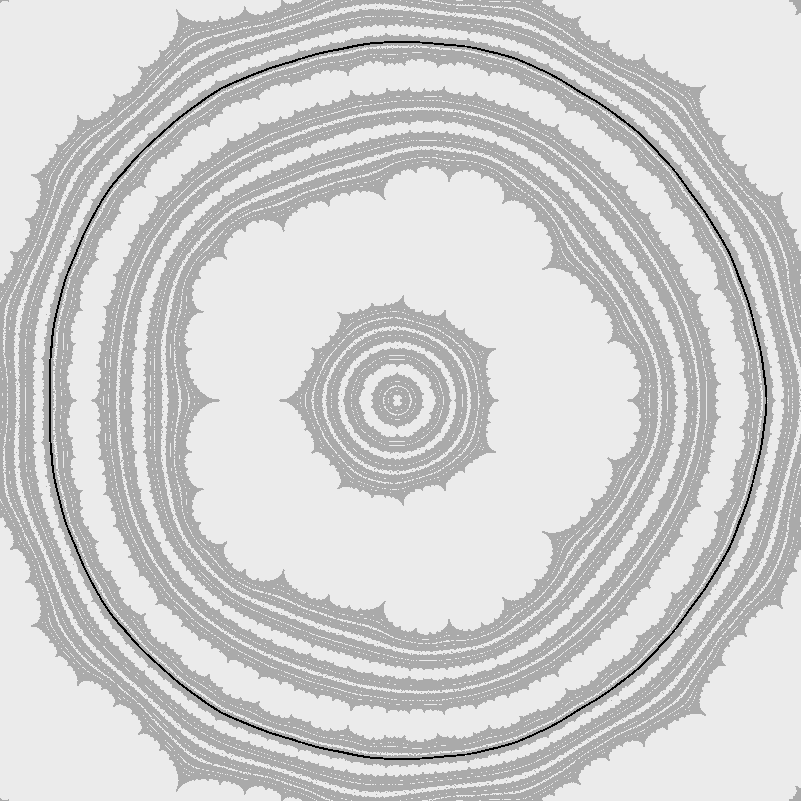}
  \caption{A quasicircle (black circle) in a parabolic Cantor circles Julia set (dark grey) and its partial enlargement.}
  \label{Fig_Cantor-quasicircle}
\end{figure}

In the rest of this section, we will use the language of symbolic dynamics to analyse the regularity of the Julia components, which can be served as a parallel result as Theorem \ref{thm-regularity}. The arguments will be divided into four cases.

For $n\geq 2$, let $\Sigma_n:=\{1,\cdots,n\}^{\mathbb{N}}=\{\s=s_0 s_1 s_2 \cdots|\ 1\leq s_k\leq n, \text{~and~} k\in\mathbb{N}\}$ be the space of $n$ symbols. The \emph{one-sided shift map} $\sigma:\Sigma_n\rightarrow\Sigma_n$ is defined by $\sigma(\s)=s_1 s_2 \cdots$, where $\s=s_0 s_1 s_2 \cdots$. We say the \textit{itinerary} $\s$ is \emph{periodic} if there exists an integer $p>0$ such that $s_{k+p}=s_k$ for all $k\geq 0$, and the minimal $p>0$ is called the \emph{periodic} of $\s$. The itinerary $\s$ is also denoted by $\overline{s_0 \cdots s_{p-1}}$ if it is periodic with period $p$.

Let $f$ be a rational map whose Julia set is Cantor set of circles. The dynamics on the set of Julia components of $P_n$ is conjugate to the one-sided shift on the space of $n$ symbols $\Sigma_n$. For example, let $J_0$ be a Julia component of $P_{d_1,\cdots,d_n}$. If $P_{d_1,\cdots,d_n}^{\circ k}(J_0)\subset E_{s_k}$ for $k\geq 0$, then $\s \circ P_{d_1,\cdots,d_n}(J_0)=\sigma\circ \s (J_0)$, where $\s (J_0)=s_0 s_1 s_2 \cdots\in\Sigma_n$ is the \textit{itinerary} of $J_0$ and $E_1,\cdots,E_n$ are the annuli appeared in Theorem \ref{this-is-all-resta}.

\subsection{Parabolic I.}
This subsection will deal with the first case: a Cantor circles parabolic rational map with only one parabolic fixed point and only one grand orbit of Fatou components. By Theorem \ref{thm_parabolic-conj-std}, there exists a quasisymmetric model $P_{d_1,\cdots,d_n}$ for this type, where $n\geq 2$ is even. This means that we only need to study the regularity of the Julia components of $P_{d_1,\cdots,d_n}$ if we want to study a Cantor circles parabolic rational map with only one parabolic fixed point and only one grand orbit of Fatou components.

For $P_{d_1,\cdots,d_n}$ with even $n\geq 2$, let $E:=\EC\setminus (D_0\cup D_\infty)$, where $D_0$ and $D_\infty$ are the Fatou components of $P_{d_1,\cdots,d_n}$ containing $0$ and $\infty$ respectively. Then $P_{d_1,\cdots,d_n}^{-1}(E)$ consists of $n$ components $E_1,\cdots,E_n$. Note that the Julia set $J(P_{d_1,\cdots,d_n})$ is contained in $E_1\cup\cdots\cup E_n$ (see left picture in Figure \ref{fig_parabolic_I-II}). For every Julia component $J_0$ of $P_{d_1,\cdots,d_n}$, if $P_{d_1,\cdots,d_n}^{\circ k}(J_0)\subset E_{s_k}$ for $k\geq 0$, then the \textit{itinerary} of $J_0$ is defined as $\s (J_0)=s_0 s_1 s_2 \cdots\in\Sigma_n$.

By Theorem \ref{thm-regularity} and the definition of $\s (J_0)$, we have following proposition \ref{thm-regularity-parabolic-I}.

\begin{prop}\label{thm-regularity-parabolic-I}
Let $n\geq 2$ be an even number. The Julia component $J_0$ of $P_{d_1,\cdots,d_n}$ is a quasicircle if and only if the sequence of the lengths of the maximal substrings with successive symbol $1$ in its itinerary $\s (J_0)$ is bounded.
\end{prop}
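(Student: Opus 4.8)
The plan is to specialize Theorem \ref{thm-regularity} to the present family and then translate its dynamical criterion into the symbolic language of \S\ref{sec-regularity}. For $P_{d_1,\cdots,d_n}$ with even $n$ the only periodic Fatou component is the fixed parabolic component $D_\infty$ (the component $D_0$ satisfies $P_{d_1,\cdots,d_n}(D_0)=D_\infty$, so it is preperiodic and \emph{not} periodic), and consequently ``the boundaries of the parabolic periodic Fatou components'' reduce to the single curve $\partial D_\infty$. From the proof of Theorem \ref{parameter-parabolic-resta} one has $\partial D_\infty=J_{1,1,\cdots}$, i.e. the itinerary of $\partial D_\infty$ is the constant sequence $\s(\partial D_\infty)=\overline{1}$; this is forced by the fact that $\partial D_\infty$ is fixed and lies on the boundary of the outermost annulus $E_1$. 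Thus Theorem \ref{thm-regularity} asserts exactly that $J_0$ is a quasicircle if and only if the closure $\mathcal{J}$ of the forward orbit $\{P_{d_1,\cdots,d_n}^{\circ k}(J_0)\}_{k\in\mathbb{N}}$ does not meet $\partial D_\infty$.

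First I would record the identification of the space of Julia components with $\Sigma_n$. By the nested–intersection description of the components established in the proof of Theorem \ref{parameter-parabolic-resta}, the itinerary map $\s$ is a continuous bijection from the set of Julia components (with the Hausdorff topology) onto $\Sigma_n$ (with the product topology), and since both are compact it is a homeomorphism; moreover it conjugates the action of $P_{d_1,\cdots,d_n}$ on components to the shift $\sigma$, so the forward orbit of $J_0$ corresponds to $\{\sigma^k(\s(J_0))\}_{k\geq 0}$. Combining this with the structural claim proved inside Theorem \ref{thm-regularity}, namely that $\mathcal{J}$ is homeomorphic to the product of a closed subset of $\mathcal{C}$ with $\mathbb{S}^1$ and hence is a union of \emph{entire} Julia components, I conclude that $\mathcal{J}\cap\partial D_\infty\neq\emptyset$ if and only if $\partial D_\infty\subset\mathcal{J}$, and that the itineraries of the components contained in $\mathcal{J}$ are precisely the closure of $\{\sigma^k(\s(J_0))\}_{k\geq 0}$ in $\Sigma_n$. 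Therefore $\mathcal{J}$ meets $\partial D_\infty$ exactly when $\overline{1}$ lies in this closure.

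It then remains to decide when $\overline{1}\in\overline{\{\sigma^k(\s(J_0))\}}$. Writing $\s(J_0)=s_0 s_1 s_2\cdots$, a subsequence $\sigma^{k_j}(\s(J_0))\to\overline{1}$ in the product topology means that for every $N$ there is a $k$ with $s_k=s_{k+1}=\cdots=s_{k+N-1}=1$, i.e. $\s(J_0)$ contains blocks of consecutive $1$'s of arbitrarily large length. Hence $\partial D_\infty\subset\mathcal{J}$ precisely when the sequence of lengths of the maximal substrings of successive $1$'s in $\s(J_0)$ is unbounded. Taking the contrapositive and invoking the equivalence from the first paragraph yields the proposition: $J_0$ is a quasicircle if and only if these lengths are bounded. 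As sanity checks, $\partial D_\infty$ itself has itinerary $\overline{1}$ (unbounded, not a quasicircle), and $\partial D_0$ has itinerary $n\overline{1}$ whose forward orbit already contains $\overline{1}$ (again not a quasicircle), both consistent with these curves carrying cusps.

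The only genuinely delicate point is the topology matching invoked in the second paragraph: that Hausdorff accumulation of the round circles in $\EC$ corresponds exactly to convergence of itineraries in $\Sigma_n$, and that accumulation occurs on whole components rather than on proper subarcs. Both facts are furnished by the homeomorphism $\phi:J(f)\to\mathcal{C}\times\mathbb{S}^1$ together with the structural claim from the proof of Theorem \ref{thm-regularity}; once they are in hand, the remaining symbolic reformulation is routine and the rest of the argument is purely combinatorial.
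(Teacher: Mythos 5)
Your proposal is correct and follows essentially the same route as the paper, which proves the proposition simply by invoking Theorem \ref{thm-regularity} together with the definition of the itinerary $\s(J_0)$; you have merely spelled out the details the paper leaves implicit (that the only parabolic periodic Fatou component here is $D_\infty$ with $\s(\partial D_\infty)=\overline{1}$, that $\mathcal{J}$ is a union of whole components so accumulation on $\partial D_\infty$ means containment, and that Hausdorff accumulation of components corresponds to convergence of itineraries in $\Sigma_n$). The symbolic translation of ``$\overline{1}$ lies in the orbit closure'' into ``arbitrarily long blocks of $1$'s'' is exactly the intended reading of the statement.
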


\begin{figure}[!htpb]
  \setlength{\unitlength}{1mm}
  \centering
  \includegraphics[width=80mm]{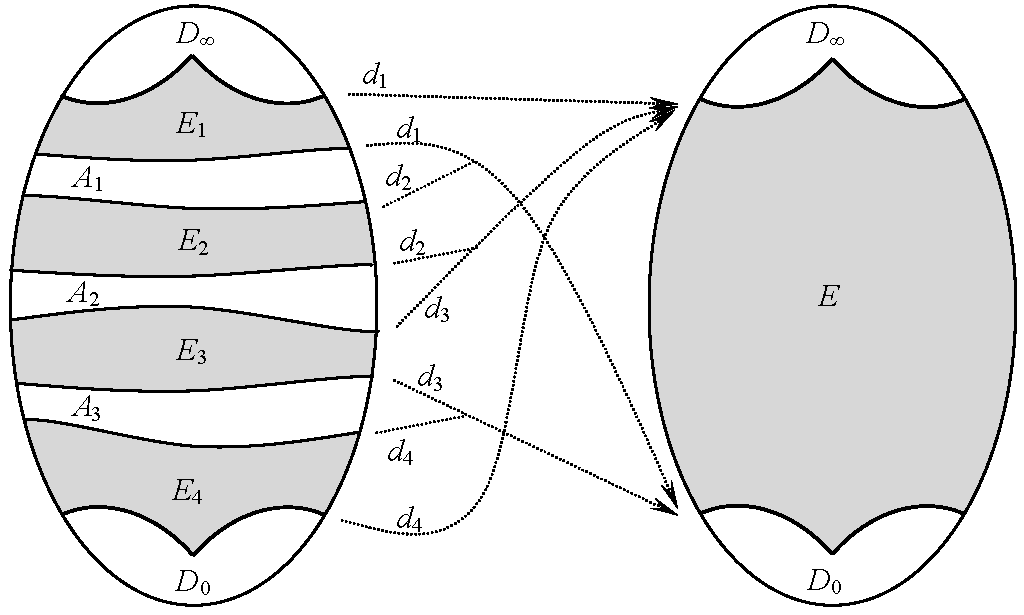}
  \includegraphics[width=80mm]{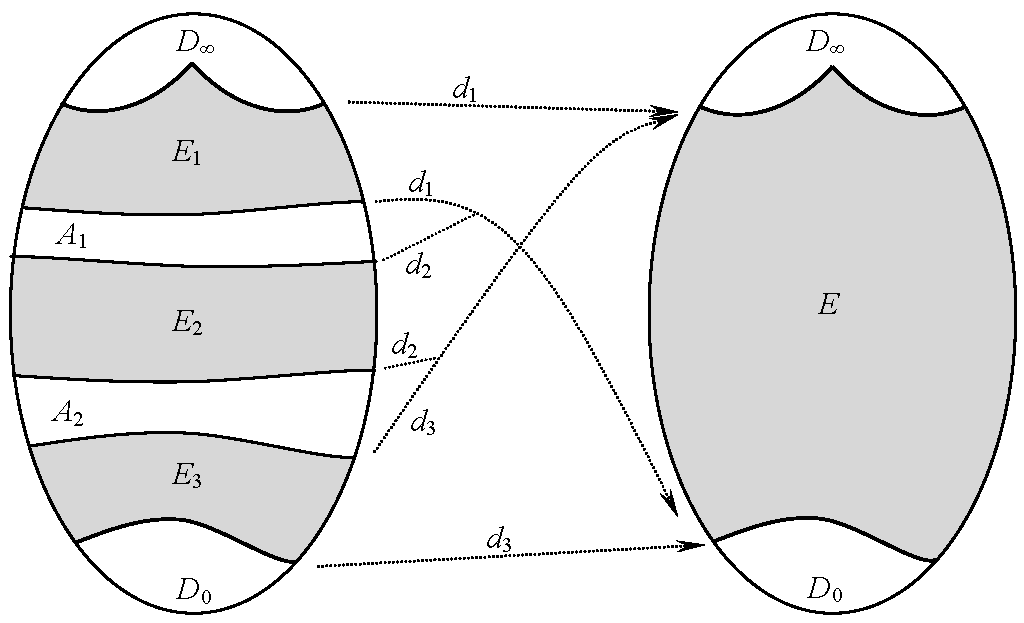}
  \caption{Sketch illustrating of the mapping relations of $P_{d_1,\cdots,d_n}$, where $n=4$ and $n=3$ (from left to right). Both Julia sets are contained in the gray parts.}
  \label{fig_parabolic_I-II}
\end{figure}

For example, let $J_0$ be a Julia component of $P_{d_1,d_2,d_3,d_4}$ such that $\s (J_0)=\overline{1133}$. Then the sequence of the lengths of the maximal substrings with successive symbol $1$ in $\overline{1133}$ is $2,2,2,\cdots$, hence bounded and the component $J_0$ is a quasicircle. If  $\s (J_0)=121121112\cdots$, the component $J_0$ is not a quasicircle since the the sequence of the lengths of the maximal substrings with successive symbol $1$ in $\s (J_0)$ is $1,2,3,\cdots$ and unbounded.

\subsection{Parabolic II.}
This subsection will deal with the second case: a Cantor circles parabolic rational map with one parabolic fixed point and one attracting fixed point. By Theorem \ref{thm_parabolic-conj-std}, there exists a quasisymmetric model $P_{d_1,\cdots,d_n}$ for this type, where $n\geq 3$ is odd.

For $P_{d_1,\cdots,d_n}$ with odd $n\geq 3$, the sets $E$ and $E_1,\cdots,E_n$ and the itinerary of of the Julia components can be defined similarly (see right picture in Figure \ref{fig_parabolic_I-II}). By Theorem \ref{thm-regularity} and the definition of $\s (J_0)$, we have following proposition \ref{thm-regularity-parabolic-II}.

\begin{prop}\label{thm-regularity-parabolic-II}
Let $n\geq 3$ be an odd number. The Julia component $J_0$ of $P_{d_1,\cdots,d_n}$ is a quasicircle if and only if the sequence of the lengths of the maximal substrings with successive symbol $1$ in its itinerary $\s (J_0)$ is bounded.
\end{prop}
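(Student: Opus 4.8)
The plan is to deduce the statement directly from the dynamical dichotomy of Theorem \ref{thm-regularity} by translating its hypothesis into the language of itineraries. Write $P_n:=P_{d_1,\cdots,d_n}$ with odd $n\geq 3$. In the Parabolic II case we have $P_n(D_0)=D_0$ and $P_n(D_\infty)=D_\infty$, where $D_0$ is attracting and $D_\infty$ is parabolic, so $\partial D_\infty$ is the \emph{only} boundary of a parabolic periodic Fatou component. Recalling from the proof of Theorem \ref{parameter-parabolic-resta} that the annuli are nested as $E_n\prec\cdots\prec E_1$ and that $\partial D_\infty=J_{1,1,1,\cdots}$ is exactly the Julia component whose itinerary is the constant sequence $\overline{1}$, the criterion of Theorem \ref{thm-regularity} reads: $J_0$ is a quasicircle if and only if the closure of its forward orbit $\{P_n^{\circ k}(J_0)\}_{k\in\mathbb{N}}$ does not contain $\partial D_\infty=J_{\overline{1}}$.

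Next I would express this accumulation condition symbolically. Since the coding is a conjugacy, $\s(P_n^{\circ k}(J_0))=\sigma^k(\s(J_0))$, so the forward orbit is encoded by the shift-orbit $\{\sigma^k(\s(J_0))\}_k$. Using the homeomorphism $\phi\colon J(P_n)\to\mathcal{C}\times\mathbb{S}^1$ constructed in the proof of Theorem \ref{thm-regularity}, two Julia components lie Hausdorff-close precisely when their itineraries agree on a long initial block; hence $P_n^{\circ k_j}(J_0)\to\partial D_\infty$ in the Hausdorff topology along some subsequence $k_j\to\infty$ if and only if $\sigma^{k_j}(\s(J_0))\to\overline{1}$ in $\Sigma_n$, i.e. the initial blocks of the symbol $1$ in $\sigma^{k_j}(\s(J_0))$ grow arbitrarily long.

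The remaining step is purely combinatorial: a subsequence of $\{\sigma^k(\s)\}_k$ converges to $\overline{1}$ if and only if, for every $N$, the itinerary $\s=\s(J_0)$ contains a block $s_{k}s_{k+1}\cdots s_{k+N-1}=1\cdots 1$ of $N$ successive $1$'s; equivalently, the sequence of lengths of the maximal substrings with successive symbol $1$ in $\s$ is unbounded. Taking the contrapositive, the closure of the forward orbit avoids $\partial D_\infty$ exactly when these run-lengths are bounded, which combined with the reformulation of Theorem \ref{thm-regularity} above yields the proposition. I expect the only delicate point to be making rigorous the equivalence between Hausdorff convergence of Julia components and convergence of itineraries in $\Sigma_n$, so that accumulation on the single component $\partial D_\infty$ corresponds exactly to unbounded runs of the symbol $1$; the fact that $D_0$ is attracting rather than parabolic is precisely what guarantees that runs of the symbol $n$ are irrelevant here, in contrast to the Parabolic III and IV cases.
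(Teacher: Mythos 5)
Your proposal is correct and follows exactly the route the paper takes: the paper derives Proposition \ref{thm-regularity-parabolic-II} directly from Theorem \ref{thm-regularity} together with the definition of the itinerary, which is precisely your argument, with the key observations that $\partial D_\infty=J_{\overline{1}}$ is the only boundary of a parabolic periodic Fatou component in this case (since $D_0$ is attracting) and that accumulation of the forward orbit on $J_{\overline{1}}$ is equivalent to unbounded runs of the symbol $1$. Your filling-in of the Hausdorff-versus-symbolic convergence step and the remark about why runs of the symbol $n$ are irrelevant here are accurate elaborations of what the paper leaves implicit.
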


Proposition \ref{thm-regularity-parabolic-II} is very similar to Proposition \ref{thm-regularity-parabolic-I} although $P_{d_1,\cdots,d_n}$ with even and odd $n$ have completely different dynamics. For example, let $J_0$ be a Julia component of $P_{d_1,d_2,d_3}$ such that $\s (J_0)=\overline{123}$. Then the sequence of the lengths of the maximal substrings with successive symbol $1$ in $\overline{123}$ is $1,1,1,\cdots$, hence bounded. Therefore, the component $J_0$ is a quasicircle.

\subsection{Parabolic III.}
We will consider the third case in this subsection: a Cantor circles parabolic rational map with two parabolic fixed points. By Theorem \ref{thm_parabolic-conj-std}, there exists a quasisymmetric model $Q_{d_1,\cdots,d_n}$ for this type.  We omit the definitions of the sets $E$ and $E_1,\cdots,E_n$, etc (see left picture in Figure \ref{fig_parabolic_III-IV}). By Theorem \ref{thm-regularity} and the definition of $\s (J_0)$, we have following proposition \ref{thm-regularity-parabolic-III}.

\begin{prop}\label{thm-regularity-parabolic-III}
Let $n\geq 3$ be an odd number. The Julia component $J_0$ of $Q_{d_1,\cdots,d_n}$ is a quasicircle if and only if the sequence of the lengths of the maximal substrings with successive symbol $1$ and $n$ in its itinerary $\s (J_0)$ is bounded.
\end{prop}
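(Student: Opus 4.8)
The plan is to deduce Proposition \ref{thm-regularity-parabolic-III} directly from the geometric criterion of Theorem \ref{thm-regularity} by translating the condition ``the closure of the forward orbit of $J_0$ is disjoint with the boundaries of the parabolic periodic Fatou components'' into a statement about the itinerary $\s(J_0)$. In the Parabolic III case the two parabolic periodic Fatou components are exactly the two fixed simply connected components $D_0$ and $D_\infty$, whose boundaries carry the parabolic fixed points $s^\nu$ and $1$ respectively. Since the annuli satisfy $E_n\prec\cdots\prec E_1$ and both $D_0,D_\infty$ are fixed, their boundaries are precisely the two Julia components with \emph{constant} itineraries, namely $\partial D_\infty=J_{\overline{1}}$ (the outermost component, staying in $E_1$) and $\partial D_0=J_{\overline{n}}$ (the innermost one, staying in $E_n$), exactly as in the model identification $J''=J_{\overline{1}}=\partial D_\infty$, $J'=J_{\overline{n}}=\partial D_0$ used in the proof of Theorem \ref{parameter-parabolic-resta}. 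Thus Theorem \ref{thm-regularity} says that $J_0$ is a quasicircle if and only if neither $\overline{1}$ nor $\overline{n}$ lies in the closure of the shift orbit $\{\sigma^k(\s(J_0))\}_{k\in\mathbb{N}}$.

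First I would record, as in the proof of Theorem \ref{thm-regularity}, that the homeomorphism $\phi:J(Q_{d_1,\cdots,d_n})\to\mathcal{C}\times\mathbb{S}^1$ carries the set of Julia components onto $\mathcal{C}$ and conjugates the induced action on components to the shift $\sigma$ on $\Sigma_n$ via the itinerary map $\s$. Consequently the closure $\mathcal{J}$ of the forward orbit $\{Q_{d_1,\cdots,d_n}^{\circ k}(J_0)\}_{k\in\mathbb{N}}$, taken in the Hausdorff topology on components, corresponds component-by-component to the closure of $\{\sigma^k(\s(J_0))\}_{k\in\mathbb{N}}$ in $\Sigma_n$ with the product topology. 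In particular $\mathcal{J}$ meets $\partial D_\infty$ if and only if $\overline{1}$ belongs to $\overline{\{\sigma^k(\s(J_0))\}}$, and $\mathcal{J}$ meets $\partial D_0$ if and only if $\overline{n}$ does.

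Next I would prove the elementary symbolic lemma: for $\s=s_0 s_1 s_2\cdots\in\Sigma_n$, the constant sequence $\overline{1}$ belongs to $\overline{\{\sigma^k(\s)\}_{k\in\mathbb{N}}}$ if and only if $\s$ contains runs of the symbol $1$ of arbitrarily large length (an infinite terminal run of $1$'s counting as unbounded). Indeed, $\sigma^k(\s)=s_k s_{k+1}\cdots$ agrees with $\overline{1}$ in its first $m$ coordinates precisely when $s_k=\cdots=s_{k+m-1}=1$; hence $\overline{1}$ is approximated to every finite depth $m$ by some shift iterate if and only if $\s$ has a block of $m$ successive $1$'s for every $m$, which is exactly the unboundedness of the lengths of the maximal substrings of successive $1$'s. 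The identical argument with the symbol $n$ in place of $1$ governs $\partial D_0=J_{\overline{n}}$. Combining this with the previous paragraph, $J_0$ \emph{fails} to be a quasicircle precisely when $\overline{1}$ or $\overline{n}$ lies in the closure of the shift orbit, i.e. precisely when the maximal runs of $1$'s or the maximal runs of $n$'s in $\s(J_0)$ are unbounded; taking the contrapositive yields the claimed equivalence. The only substantive point, which I would state carefully rather than compute, is the identification of the two parabolic boundaries with the constant itineraries $\overline{1}$ and $\overline{n}$, and I expect the one mild subtlety to be the eventually-constant case (where $J_0$ is pre-periodic onto $\partial D_\infty$ or $\partial D_0$), which must be subsumed under ``unbounded runs'' and treated uniformly with the genuine accumulation-point case.
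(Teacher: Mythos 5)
Your proposal is correct and is essentially the paper's own argument: the paper derives Proposition \ref{thm-regularity-parabolic-III} in one line from Theorem \ref{thm-regularity} together with the identification of $\partial D_\infty$ and $\partial D_0$ as the components with constant itineraries $\overline{1}$ and $\overline{n}$, and your symbolic lemma about accumulation of shift orbits on constant sequences is exactly the translation the paper leaves implicit. Your careful handling of the eventually-constant (pre-periodic) case is a welcome elaboration but does not change the route.
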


\begin{figure}[!htpb]
  \setlength{\unitlength}{1mm}
  \centering
  \includegraphics[width=80mm]{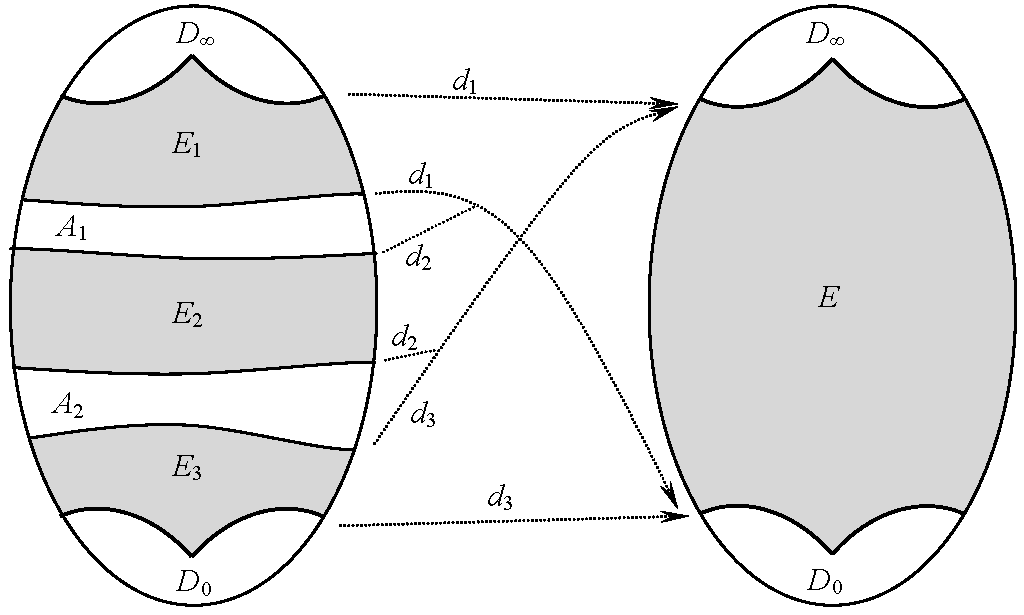}
  \includegraphics[width=80mm]{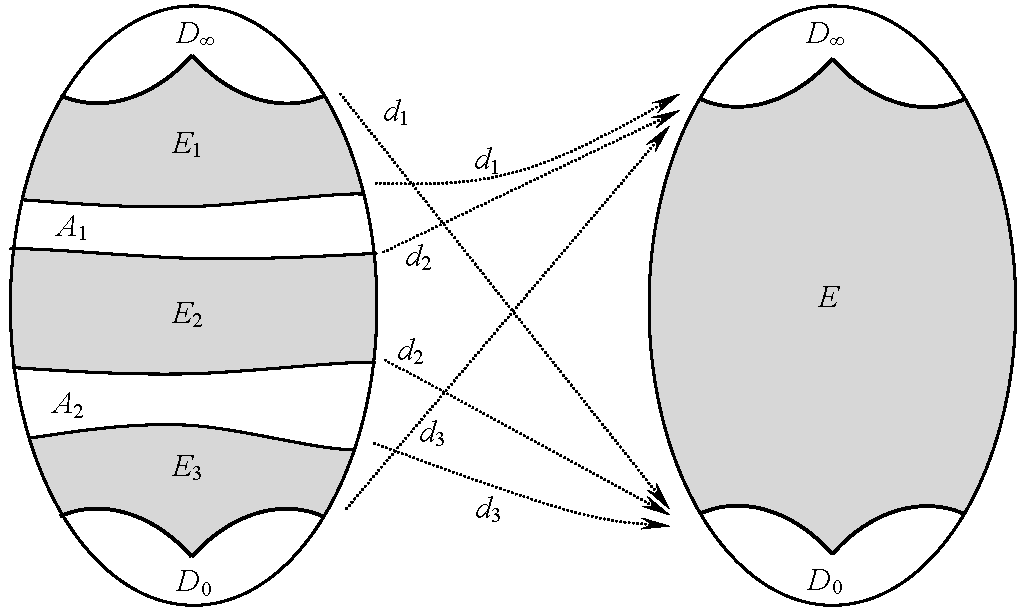}
  \caption{Sketch illustrating of the mapping relations of $Q_{d_1,\cdots,d_n}$ and $R_{d_1,\cdots,d_n}$ (from left to right), where $n=3$. Both Julia sets are contained in the gray parts.}
  \label{fig_parabolic_III-IV}
\end{figure}

For example, let $J_0$ be a Julia component of $Q_{d_1,d_2,d_3}$ such that $\s (J_0)=\overline{122}$. Then the sequence of the lengths of the maximal substrings with successive symbol $1$ in $\overline{122}$ is $1,1,1,\cdots$, hence bounded. Therefore, the Julia component $J_0$ is a quasicircle.

\subsection{Parabolic IV.}
Now we consider the last case: a Cantor circles parabolic rational map with a parabolic periodic orbit whose period is two. By Theorem \ref{thm_parabolic-conj-std}, there exists a quasisymmetric model $R_{d_1,\cdots,d_n}$ for this type. The definitions of the sets $E$ and $E_1,\cdots,E_n$, etc are also omitted here (see right picture in Figure \ref{fig_parabolic_III-IV}). By Theorem \ref{thm-regularity} and the definition of $\s (J_0)$, we have following proposition \ref{thm-regularity-parabolic-IV}.

\begin{prop}\label{thm-regularity-parabolic-IV}
Let $n\geq 3$ be an odd number. The Julia component $J_0$ of $R_{d_1,\cdots,d_n}$ is a quasicircle if and only if the sequence of the lengths of the maximal substrings with successive symbol $1$ and $n$ in its itinerary $\s (J_0)$ is bounded.
\end{prop}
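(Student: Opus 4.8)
The plan is to deduce Proposition~\ref{thm-regularity-parabolic-IV} directly from Theorem~\ref{thm-regularity} by translating its geometric criterion into the language of the itinerary $\s(J_0)$. Recall that for $R_n:=R_{d_1,\cdots,d_n}$ the two simply connected Fatou components $D_0\ni 0$ and $D_\infty\ni\infty$ form a parabolic cycle of period two, with $R_n(D_0)=D_\infty$ and $R_n(D_\infty)=D_0$ (see the right picture in Figure~\ref{fig_parabolic_III-IV}). By Theorem~\ref{thm-regularity}, the component $J_0$ is a quasicircle if and only if the closure of its forward orbit $\overline{\bigcup_{k\geq 0}R_n^{\circ k}(J_0)}$ is disjoint from $\partial D_0\cup\partial D_\infty$, the boundaries of the two parabolic periodic Fatou components. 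Thus the whole task reduces to describing, in symbolic terms, exactly when this closure meets one of these two curves.

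First I would pin down the itineraries of the two parabolic boundary components. Since the annuli $E_1,\cdots,E_n$ satisfy $E_n\prec\cdots\prec E_1$, the curve $\partial D_\infty$ is the outer boundary $\partial_+E_1$ and $\partial D_0$ is the inner boundary $\partial_-E_n$; hence $\partial D_\infty\subset\overline{E}_1$ and $\partial D_0\subset\overline{E}_n$, so their itineraries begin with the symbols $1$ and $n$ respectively. Applying $R_n$ and using $R_n(\partial D_\infty)=\partial D_0$ together with $R_n(\partial D_0)=\partial D_\infty$, the two symbols keep alternating, so that $\s(\partial D_\infty)=\overline{1n}$ and $\s(\partial D_0)=\overline{n1}$. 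This alternating pattern is precisely what distinguishes the present case from Parabolic~III, where both boundaries are \emph{fixed} and carry the constant itineraries $\overline{1}$ and $\overline{n}$; it is the reason the combinatorial condition here concerns alternating rather than constant blocks.

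Next I would invoke the coding set up in the proof of Theorem~\ref{thm-regularity}: the homeomorphism $\phi:J(R_n)\to\mathcal{C}\times\mathbb{S}^1$ conjugates the action of $R_n$ on Julia components to the one-sided shift $\sigma$ on $\Sigma_n$, and, because $\phi$ is a homeomorphism of compact spaces, Hausdorff convergence of a sequence of Julia components is equivalent to convergence of their itineraries in $\Sigma_n$. Writing $\s=\s(J_0)$, the forward orbit $\{R_n^{\circ k}(J_0)\}_{k\geq 0}$ carries the itineraries $\{\sigma^{\circ k}(\s)\}_{k\geq 0}$, so its closure meets $\partial D_\infty$ exactly when some subsequence $\sigma^{\circ n_k}(\s)$ converges to $\overline{1n}$, i.e. exactly when $\s$ contains the block $(1n)^m$ for every $m\geq 1$; meeting $\partial D_0$ corresponds in the same way to $\s$ containing $(n1)^m$ for all $m$. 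Since one shift turns an occurrence of $(1n)^m$ into one of $(n1)^{m-1}$ and conversely, these two conditions are equivalent and together say precisely that $\s$ contains arbitrarily long substrings in which the symbols $1$ and $n$ alternate. Hence the closure of the forward orbit avoids $\partial D_0\cup\partial D_\infty$ if and only if the lengths of the maximal substrings with successively alternating symbols $1$ and $n$ in $\s(J_0)$ are bounded, which by Theorem~\ref{thm-regularity} is equivalent to $J_0$ being a quasicircle.

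The only point demanding genuine care is the passage between the Hausdorff topology on Julia components and the product topology on $\Sigma_n$, and I expect this to be where the substance of the argument lies; fortunately the required input---the uniform continuity of $\phi$ on the compact set $J(R_n)$ together with $\inf_{k}\diam(R_n^{\circ k}(J_0))>0$ and a positive lower bound on the Hausdorff distance between distinct components---is already furnished by the proof of Theorem~\ref{thm-regularity}, so no new analytic estimates are needed. The remaining steps, namely identifying the two alternating periodic itineraries $\overline{1n}$ and $\overline{n1}$ and rephrasing ``contains $(1n)^m$ for every $m$'' as ``unbounded maximal alternating $\{1,n\}$-block'', are purely combinatorial.
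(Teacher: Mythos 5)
Your argument is correct and is exactly the deduction the paper intends: the paper offers no proof beyond citing Theorem \ref{thm-regularity} and the definition of the itinerary, and your identification of $\s(\partial D_\infty)=\overline{1n}$ and $\s(\partial D_0)=\overline{n1}$, together with the translation of Hausdorff accumulation into the containment of arbitrarily long blocks $(1n)^m$, supplies precisely the missing details. It is worth noting that your reading of the combinatorial condition---maximal \emph{alternating} $\{1,n\}$-blocks, forced by the period-two itineraries of the parabolic boundaries---is the geometrically correct one here, whereas the paper's worked example for $R_{d_1,d_2,d_3}$ computes runs of the constant symbol $1$, a criterion that is appropriate for the Parabolic I--III cases but would misclassify $\s(J_0)=\overline{1n}$ itself in the present case.
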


For example, let $J_0$ be a Julia component of $R_{d_1,d_2,d_3}$ such that $\s (J_0)=\overline{112}$. Then the sequence of the lengths of the maximal substrings with successive symbol $1$ in $\overline{112}$ is $2,2,2,\cdots$, and hence the component $J_0$ is a quasicircle.


\end{document}